\tikzset{> = {Stealth[inset=0pt]}}
\newtheorem{theorem}{Theorem}[section]
\newtheorem{lemma}[theorem]{Lemma}
\newtheorem{corollary}[theorem]{Corollary}
\newtheorem{proposition}[theorem]{Proposition}
\theoremstyle{definition}
\newtheorem{definition}[theorem]{Definition}
\theoremstyle{remark}
\newtheorem{remark}[theorem]{Remark}
\newcommand{\eps}{\varepsilon}
\newcommand{\N}{{\mathbb{N}}}
\newcommand{\Z}{{\mathbb{Z}}}
\newcommand{\R}{{\mathbb{R}}}
\newcommand{\from}{\colon}
\newcommand{\loc}{{\mathrm{loc}}}
\newcommand{\bx}{{\bm{x}}}
\newcommand{\by}{{\bm{y}}}
\newcommand{\be}{{\bm{e}}}
\newcommand{\bp}{{\bm{p}}}
\newcommand{\bnu}{{\bm{\nu}}}
\newcommand{\Rcal}{{\mathcal{R}}}
\newcommand{\Scal}{{\mathcal{S}}}
\newcommand{\Zcal}{{\mathcal{Z}}}
\DeclareMathOperator{\dist}{dist}
\DeclareMathOperator{\supp}{supp}
\DeclareMathOperator{\real}{Re}
\DeclareMathOperator{\area}{area}
\DeclareMathOperator{\diam}{diam}
\newcommand{\Xmin}{{X_{\min}}}
\newcommand{\Xmax}{{X_{\max}}}
\newcommand{\zeroset}{{\mathcal{Z}}}
\newcommand{\regset}{{\mathcal{R}}}
\newcommand{\singset}{{\mathcal{W}}}
\newcommand{\concomp}{D}
\newcommand{\np}{{n^*}}
\begin{document}                        

\title{Spiraling asymptotic profiles\\ of competition-diffusion systems}

\author{Susanna Terracini}\address{Dipartimento di Matematica ``Giuseppe Peano'', Universit\`a di Torino, Via Carlo Alberto 10, 10123 Torino, Italy}\email{susanna.terracini@unito.it}
\author{Gianmaria Verzini}\address{Dipartimento di Matematica, Politecnico di Milano, piazza Leonardo da Vinci 32, 20133 Milano, Italy}\email{gianmaria.verzini@polimi.it}
\author{Alessandro Zilio}\address{Universit\'e Paris Diderot, Sorbonne Paris Cit\'e, Laboratoire Jacques-Louis Lions (CNRS UMR 7598), 8 place Aur\'elie Nemours, 75205, Paris CEDEX 13, France}\email{azilio@math.univ-paris-diderot.fr}

\begin{abstract}
This paper describes the structure of the nodal set of segregation profiles arising in the singular limit of planar, stationary, reaction-diffusion systems with strongly competitive interactions of Lotka-Volterra type, when the matrix of the inter-specific competition coefficients is asymmetric and the competition parameter tends to infinity. Unlike the symmetric case, when it is known that the nodal set consists in a locally finite collection of curves meeting with equal angles at a locally finite number of singular points, the asymmetric case shows the emergence of spiraling nodal curves, still meeting at locally isolated points with finite vanishing order.
\end{abstract}

\maketitle

\section{Introduction}

This paper describes the structure of the nodal set of segregation profiles arising in the singular limit of planar, stationary, reaction-diffusion systems with strongly competitive interactions of Lotka-Volterra type, as and the competition parameter tends to infinity. This structure has been widely studied when the matrix of the inter-specific competition coefficients is symmetric in connection with either the free boundary of optimal partitions involving shape energies, or the singularities of harmonic maps with values in a stratified varyfold \cite{CoTeVe_AM2005, MR2529504, MR2644773, MR2483815, MR2984134}. For such problems, in the planar case, it is known that the nodal set consists in a locally finite collection of curves meeting with equal angles at a locally finite number of singular points. Our aim is to show that the effect of asymmetry of the inter-specific competition rates results in a dramatic change of the nodal pattern, now consisting of spiraling nodal curves, still meeting at locally isolated points with finite vanishing order. It has to be noticed that the asymmetry makes all the usual free boundary toolbox (Almgren and Alt-Caffarelli-Friedman monotonicity formul\ae, dimension estimates) unavailable and ad-hoc arguments have been designed. Finally, we point out that spiraling waves also occur in entirely different contexts of reaction-diffusion systems (cfr e.g. \cite{MR1952568,MR1438374,MR2318665}).

Lotka-Volterra type systems are the most popular mathematical models for the dynamics of many populations subject to spatial diffusion, internal reaction and either cooperative or competitive interaction.  Indeed, such models are associated with reaction-diffusion systems where the reaction is the sum of an intra-specific term, often expressed by logistic type functions, and an inter-specific interaction one, usually quadratic. The study of this
reaction-diffusion system has a long history and there exists a large literature on the subject. However, most of these works are concerned with the case of two species. As far as we know, the study in the case of many competing species has been
much more limited, starting from two pioneering papers by Dancer and Du \cite{MR1312772,MR1312773} in the 1990s, where the competition of three species were considered.

Let $\Omega$ be a domain of $\R^N$. We consider a system of $k$ non-negative densities, denoted by $u_1,\dots,u_k$, which are subject to diffusion, reaction and competitive interaction.
In the stationary case, the equations of the system take the form
\begin{equation}\label{eq:generalLV}
  -\Delta u_i = f_i(u_i)-\beta u_i \sum_{j\neq i} a_{ij} u_j
\quad \text{ in }\Omega,  \qquad i=1,\dots,k.
\end{equation}
Here $(a_{ij} )_{ij}$  is the matrix of the interspecific competition coefficients, with positive entries, and the parameter $\beta> 0$ measures the strength of the (competitive) interaction. Accordingly, we will distinguish between the \emph{symmetric} (i.e. when $a_{ij}=a_{ji}$, for every $i,j$) and  the \emph{asymmetric} case (i.e. when $a_{ij}\neq a_{ji}$ for some $i,j$). For concreteness we require the reaction terms $f_i$ to be locally Lipschitz, with $f_i(0)=0$,
even though specific results hold under less restrictive assumptions. One may consider also different diffusion coefficients $d_i>0$ on the left hand side of \eqref{eq:generalLV}. Nonetheless, in the stationary case, it is not restrictive to assume $d_i=1$ by a change of unknowns.

We will focus on the so called strong competition regime, that is when the parameter $\beta$ diverges to $+\infty$. In this case, the components satisfy uniform bounds in H\"older norms and converge, up to subsequences, to some limit profiles, having disjoint supports: \emph{the segregated states}.
\begin{theorem}[\cite{CoTeVe_AM2005}]\label{thm:intro_locconv}
Let $(u_{1,\beta},\ldots,u_{k,\beta})$, for $\beta>0$, be a family of solutions to system \eqref{eq:generalLV} satisfying a (uniform in $\beta$) $L^\infty_{\loc}(\Omega)$ bound.
Then, up to subsequences,  there exists $(\bar u_1,\dots,\bar u_k)$
such that,
\[
u_{i,\beta}\to \bar u_i\quad\text{ in }H^1_\loc(\Omega)\cap C^{0,\alpha}_\loc(\Omega), \qquad\text{as }\beta\to+\infty,
\]
for every $i=1,\dots,k$ and $0<\alpha<1$. Moreover, the $k$-tuple $(\bar u_1,\dots,\bar u_k)$ is a segregated state:
\[
\bar u_i\bar u_j\equiv 0 \; \text{ in }\Omega.
\]
\end{theorem}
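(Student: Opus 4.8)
This is a classical result; the strategy I would follow combines uniform elliptic estimates with a variational/energy argument to pass to the limit. Let me outline it.

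First, I would establish a uniform $H^1_{\loc}$ bound. Fix a ball $B_{2r} \Subset \Omega$ and a cutoff $\varphi \in C_c^\infty(B_{2r})$ with $\varphi \equiv 1$ on $B_r$. Testing the equation for $u_{i,\beta}$ against $\varphi^2 u_{i,\beta}$ and integrating by parts gives
\[
\int \varphi^2 |\nabla u_{i,\beta}|^2 + \beta \int \varphi^2 u_{i,\beta}\sum_{j\neq i} a_{ij} u_{j,\beta}
= \int \varphi^2 f_i(u_{i,\beta}) u_{i,\beta} - 2\int \varphi u_{i,\beta} \nabla u_{i,\beta}\cdot\nabla\varphi.
\]
Using the $L^\infty_{\loc}$ bound, the local Lipschitz character of $f_i$ with $f_i(0)=0$, the nonnegativity of $a_{ij}$ and $u_{j,\beta}$, and absorbing the gradient cross-term with Young's inequality, one obtains both a uniform bound on $\int_{B_r}|\nabla u_{i,\beta}|^2$ and, crucially, a uniform bound on the competition term $\beta \int_{B_r} u_{i,\beta} u_{j,\beta}$ for $i \neq j$. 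By reflexivity, up to a subsequence $u_{i,\beta}\rightharpoonup \bar u_i$ weakly in $H^1_{\loc}$ and (by Rellich) strongly in $L^2_{\loc}$ and a.e. The uniform bound on the interaction integral then forces, for each $i\neq j$, $\int_{B_r} u_{i,\beta} u_{j,\beta}\to 0$ as $\beta\to\infty$ (since it is $O(1/\beta)$), hence $\bar u_i \bar u_j = 0$ a.e., which is the segregation statement.

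Next, the $C^{0,\alpha}_{\loc}$ convergence. Here I would invoke the uniform Hölder estimate: under the $L^\infty_{\loc}$ bound, the family $(u_{i,\beta})$ is equibounded in $C^{0,\alpha}_{\loc}(\Omega)$ for every $\alpha\in(0,1)$, uniformly in $\beta$. This is the technical heart of \cite{CoTeVe_AM2005}; the argument there proceeds by contradiction through a blow-up analysis: if the Hölder seminorms blew up, one rescales around the worst points, obtains in the limit an entire problem, and derives a contradiction via a Liouville-type theorem (using the Alt--Caffarelli--Friedman monotonicity formula in the symmetric normalization). Granting this equicontinuity, Ascoli--Arzelà upgrades the $L^2_{\loc}$ convergence to convergence in $C^{0,\alpha'}_{\loc}$ for any $\alpha' < \alpha$, and a standard interpolation/diagonal argument recovers every $\alpha\in(0,1)$; this in turn identifies the weak $H^1_{\loc}$ limit with $\bar u_i$.

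Finally, I would upgrade weak $H^1_{\loc}$ convergence to strong. The standard device is the energy identity: using the equation and the already-established uniform bounds, one shows $\int_{B_r}\varphi^2|\nabla u_{i,\beta}|^2 \to \int_{B_r}\varphi^2|\nabla \bar u_i|^2$ by passing to the limit in the tested equation (the competition term is nonnegative and one only needs a one-sided inequality, combined with weak lower semicontinuity for the reverse), which together with weak convergence gives strong $H^1_{\loc}$ convergence. The main obstacle is precisely the uniform Hölder bound from \cite{CoTeVe_AM2005}: everything else is a fairly routine energy argument, but that equicontinuity estimate is a substantial result requiring the monotonicity-formula machinery, and — as the introduction emphasizes — it is exactly the ingredient that must later be replaced by \emph{ad hoc} arguments in the asymmetric setting which is the true subject of this paper.
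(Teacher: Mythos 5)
The paper does not reprove this theorem but cites \cite{CoTeVe_AM2005}, and your outline faithfully reflects the strategy of that reference: energy estimates yield a uniform $H^1_{\loc}$ bound together with $O(1/\beta)$ decay of the interaction integrals (hence segregation), the uniform H\"older bound is obtained by a blow-up argument hinging on the Alt--Caffarelli--Friedman monotonicity formula, and the energy identity upgrades weak to strong $H^1_{\loc}$ convergence. One small slip in the tested identity: testing the $i$-th equation against $\varphi^2 u_{i,\beta}$ produces $\beta\int\varphi^2 u_{i,\beta}^2\sum_{j\neq i}a_{ij}u_{j,\beta}$, not $\beta\int\varphi^2 u_{i,\beta}\sum_{j\neq i}a_{ij}u_{j,\beta}$ as written; to bound the latter directly one should test against $\varphi^2$ alone, though either form, combined with the $L^\infty_{\loc}$ bound and a.e.\ convergence, yields $\bar u_i\bar u_j\equiv0$.
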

In the last decade, both the asymptotics and the qualitative properties of the limit segregated profiles have been the object of an intensive study, mostly in the symmetric case, by different teams \cite{CoTeVe_CVPDE2005, CoTeVe_IUMJ2005, CoTeVe_IFB2006, MR2529504, MR2863857,MR2595199, MR3375537}. Similarly,  the dynamics of strongly competing species has been addressed as a singularly perturbed parabolic reaction-diffusion system in connection with spatially segregated limit profiles in \cite{MR2846360, MR1900331,MR2890532,MR2776460}.  In the quoted papers, a special attention was paid to the structure of the common zero set of these limit profiles. The following theorem collects the main known facts about the geometry  of the nodal set in the stationary symmetric case:
\begin{theorem}[\cite{CoTeVe_AM2005, MR2529504, MR2483815, MR2984134}]
\label{teo:nodal_set_symmetric}
Assume that
\[
a_{ij}=a_{ji}>0,\qquad\text{for every }i\neq j.
\]
Let $\bar U=(\bar u_1,\dots,\bar u_k)$ be a segregated limit profile
as in Theorem \ref{thm:intro_locconv}, and let
$\zeroset=\{x\in\Omega\;:\;\bar U(x)=0\}$ its nodal set.
Then, there exist complementary subsets $\regset$ and $\singset$  of $\zeroset$,  respectively the regular part, relatively open in $\zeroset$ and the singular part, relatively closed, such that:
\begin{itemize}
\item {$\regset$ is a collection of  hyper-surfaces of class $C^{1,\alpha}$} (for every $0<\alpha<1$),
and for every $x_0\in \regset$
\begin{equation*}\label{eq:reflection_principle}
\lim_{x\rightarrow x_0^+} |\nabla \bar U(x)|=  \lim_{x\rightarrow x_0^-} |\nabla \bar U(x)|\neq 0,
\end{equation*}
where the limits as $x\to x_0^\pm$ are taken from the opposite sides of the hyper-surface;
\item {$\mathcal{H}_\mathrm{dim} (\singset)\leq N-2$}, and if $x_0\in\singset$ then
$\lim_{x\rightarrow x_0} |\nabla \bar U(x)|= 0$.
\end{itemize}
Furthermore,  if $N=2$, then $\zeroset$ consists in a locally finite collection of curves meeting with definite semi-tangents with equal angles at a locally finite number of singular points.
\end{theorem}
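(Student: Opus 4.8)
The plan is to extract the whole nodal structure from the Almgren and Alt--Caffarelli--Friedman monotonicity formul\ae\ that become available in the singular limit precisely because $(a_{ij})$ is symmetric, and then to run a blow-up analysis together with an $\ep$-regularity statement and a Federer-type dimension reduction. The first step is to prove that a segregated limit $\bar U$ as in Theorem~\ref{thm:intro_locconv} belongs, on every compactly contained ball, to a class of maps for which the Almgren frequency
\[
N(x_0,r)=\frac{r\int_{B_r(x_0)}|\nabla\bar U|^2}{\int_{\partial B_r(x_0)}|\bar U|^2}
\]
is monotone nondecreasing in $r$ up to a lower-order correction coming from the reaction terms, and for which an Alt--Caffarelli--Friedman-type inequality holds. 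This is where symmetry is indispensable: when $a_{ij}=a_{ji}$ the limit is, locally, a limit of $H^1$-minimizers of the symmetric interaction energy $\int\sum_i|\nabla v_i|^2$ under the segregation constraint $v_iv_j\equiv 0$, so that the differences $\bar u_i-\bar u_j$ and the sum $\sum_i\bar u_i$ carry the sub- and super-harmonicity these formul\ae\ require. As a by-product, the \emph{vanishing order} $\nu(x_0):=\lim_{r\to 0^+}N(x_0,r)\ge 1$ is well defined at every $x_0\in\zeroset$, the map $x_0\mapsto\nu(x_0)$ is upper semicontinuous, and $\nu(x_0)=1$ whenever $|\nabla\bar U(x_0)|\ne 0$.

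Next, for $x_0\in\zeroset$ I would consider the rescalings
\[
\bar U_{x_0,r}(x)=\frac{\bar U(x_0+rx)}{\left(r^{1-N}\int_{\partial B_r(x_0)}|\bar U|^2\right)^{1/2}},
\]
and, combining the frequency bound with the compactness in Theorem~\ref{thm:intro_locconv}, extract blow-up limits that are $\nu(x_0)$-homogeneous global segregated profiles --- each component harmonic in its own positivity set, with matching normal derivatives along the regular part of the common boundary. When $\nu(x_0)=1$, every such limit equals, after a rotation, $(x_N^+,x_N^-,0,\dots,0)$; an improvement-of-flatness / $\ep$-regularity argument then gives that near $x_0$ exactly two components are non-trivial, that their difference extends harmonically across $\zeroset$, and that $\zeroset$ is there a $C^{1,\alpha}$ hypersurface along which $|\nabla\bar U|$ admits a common, nonzero one-sided limit --- the reflection principle. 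Setting accordingly
\[
\regset:=\{x_0\in\zeroset:\nu(x_0)=1\},\qquad\singset:=\zeroset\setminus\regset,
\]
the set $\regset$ is relatively open, $\singset$ is relatively closed, and $|\nabla\bar U|\equiv 0$ on $\singset$.

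Finally, since in one space dimension every nonzero homogeneous segregated profile has vanishing order $1$ and empty singular set, a Federer dimension-reduction argument applied to $\singset$ yields $\Hcal_{\mathrm{dim}}(\singset)\le N-2$. For $N=2$ this forces $\singset$ to be totally disconnected, and local finiteness follows by contradiction: if singular points accumulated at $x_0\in\singset$, blowing up at $x_0$ at the scale of the distance to the accumulating sequence would produce a homogeneous planar profile singular both at the origin and at a point of $\sphere^1$, which after one further blow-up contradicts the one-dimensional triviality. The planar homogeneous profiles of frequency $d$ are then classified --- $2d\in\N$ equal sectors of opening $\pi/d$, on each of which the active component behaves like $r^d\sin(d\theta)$ up to rotation --- so that $\nu\ge 1$ with equality exactly on $\regset$ and, at every (isolated) point of $\singset$, precisely $2\nu(x_0)\ge 3$ arcs of $\regset$ reach it with definite semi-tangents separated by equal angles $\pi/\nu(x_0)$; the rigidity of these configurations (uniqueness of the blow-up, absence of rotation) is exactly what excludes spiraling in the symmetric case. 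Patching these local pictures gives the stated global description.

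The step I expect to be the main obstacle is the first one: the monotonicity formul\ae\ are the single place where the symmetry hypothesis enters in an essential, non-removable way, and everything downstream --- compactness of blow-ups, upper semicontinuity of $\nu$, the $\ep$-regularity theorem, the dimension reduction --- rests on them. For the planar fine structure, the two further delicate points are the classification of homogeneous profiles and the non-accumulation of singular points.
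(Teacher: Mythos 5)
This theorem is cited in the paper as a survey of results from \cite{CoTeVe_AM2005, MR2529504, MR2483815, MR2984134}, not proved; the paper's own contribution is precisely the asymmetric case (Theorem \ref{thm topologia sing}), where the authors note that the Almgren and Alt--Caffarelli--Friedman monotonicity tools your sketch relies on become unavailable. Your proposal accurately reconstructs the strategy of the cited references: the Almgren frequency and Alt--Caffarelli--Friedman monotonicity formul\ae\ (valid because symmetry gives the sub/super-harmonicity of $\sum_i\bar u_i$ and of pairwise differences), well-definedness and upper semicontinuity of the vanishing order $\nu$, compactness and homogeneity of blow-ups, $\ep$-regularity at $\nu=1$ points yielding the $C^{1,\alpha}$ regular part and the reflection principle, Federer dimension reduction giving $\Hcal_{\mathrm{dim}}(\singset)\le N-2$, and the classification of planar homogeneous profiles into $2d$ equal sectors --- these are indeed the steps carried out in Caffarelli--Karakhanyan--Lin, Tavares--Terracini, and the earlier Conti--Terracini--Verzini work. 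Two small points worth flagging: the Almgren monotonicity in this setting is usually established directly for elements of a class $\mathcal{G}$ (or $\mathcal{S}$) defined by differential inequalities rather than by invoking a minimization principle (which is a theorem, not a definition, in the $k\ge 3$ case); and the local finiteness of $\singset$ in the plane in the references is obtained by combining the classification of blow-ups with a uniqueness-of-tangent-map argument (Simon-type or, in the planar case, a direct counting argument on the frequency), which is a bit more structured than the accumulation-contradiction you sketch, though morally the same. Overall your outline is a faithful account of the proof in the cited literature.
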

On the contrary, the present paper is concerned with \emph{asymmetric inter-specific competition rates}, with the purpose to highlighting the substantial differences with the symmetric case in two space dimensions.  To describe our main result, we consider a simplified, yet prototypical, boundary value problem with  competition terms of Lotka-Volterra type:
\begin{equation}\label{eq:betasys}
 \begin{cases}
  -\Delta u_i = -\beta u_i \sum_{j\neq i} a_{ij} u_j & \text{in }\Omega\\
  u_i = \varphi_i & \text{on }\partial\Omega,
 \end{cases}
 \qquad i=1,\dots,k.
\end{equation}
Throughout the whole paper we will assume that:
\renewcommand\labelenumi{(A\theenumi)}
\begin{enumerate}
	\item\label{ass:1} $\Omega\subset\R^2$ is a simply connected, bounded domain of class $C^{1,\alpha}$;
	\item $a_{ij}>0$ for every $j\neq i$;
	\item $\varphi_i\in C^{0,1}(\partial\Omega)$, $\varphi_i \geq 0$,
	$\varphi_i \cdot \varphi_j \equiv 0$ for every $1\leq i \neq j\leq k$;
	\item\label{eq:ass_non_deg_trace} the trace function $\varphi=\sum_{i=1}^k \varphi_i$ has only non-degenerate
	zero, that is,
	\begin{equation*}
		\forall \bx_0 \in \partial \Omega, \quad \varphi(\bx_0) = 0 \implies \liminf_{\bx \to \bx_0}
		\frac{\varphi(\bx)}{|\bx - \bx_0|} \geq C > 0.
	\end{equation*}
\end{enumerate}
\renewcommand\labelenumi{\theenumi.}
We are interested in component-wise non-negative solutions. As we mentioned, such solutions
segregate as $\beta\to+\infty$. Moreover, exploiting the cancelation properties of system \eqref{eq:betasys}, 
one can prove that a system of differential inequalities passes to the limit as well. More precisely, following
\cite{CoTeVe_AM2005}, let us introduce the following functional class:
\begin{equation}\label{essse*}
\Scal=
 \left\{U=(u_1,\cdots,u_k)\in (H^1(\Omega))^k:\,
  \begin{array}{l}
   u_i\geq0,\, u_i=\varphi_i\mbox{ on }\partial\Omega \\
   u_i\cdot u_j=0\text{ if }i\neq j \\
   -\Delta u_i\leq 0,\,-\Delta \widehat u_i\geq 0
  \end{array}
 \right\},
\end{equation}
where the $i$-th \emph{hat operator} is defined on the generic $i$-th component of  a $k$--tuple as
\begin{equation}\label{cappuccio}
\widehat u_i=u_i-\sum_{j\neq i}\dfrac{a_{ij}}{a_{ji}}u_j,
\end{equation}
and the differential inequalities are understood in variational sense. This is a free boundary problem, where the interfaces $\partial\{u_i > 0\}\cap \partial\{u_j > 0\}$, separate the supports of $u_i$ and $u_j$.
We can reformulate Theorem \ref{thm:intro_locconv} as follows. 
\begin{theorem}[\cite{CoTeVe_AM2005}]\label{thm:intro_oldconv}
For every $\beta>0$ there exists (at least) one vector function $(u_{1,\beta},\ldots,u_{k,\beta})\in (H^1(\Omega))^k$, solution of system \eqref{eq:betasys}.
For every sequence of solutions, there exists (at least) one  $(\bar u_1,\dots,\bar u_k)\in\Scal$ and,
up to a subsequence,
\[
u_{i,\beta_n}\to \bar u_i\quad\text{ in }H^1(\Omega)\cap C^{0,\alpha}(\overline{\Omega}),
\]
for every $i=1,\dots,k$ and $0<\alpha<1$.
\end{theorem}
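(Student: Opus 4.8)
The plan is to establish the two assertions in turn: existence at fixed $\beta$, and then the behaviour as $\beta\to+\infty$.

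\emph{Existence for fixed $\beta$.} Because the matrix $(a_{ij})$ need not be symmetrizable, \eqref{eq:betasys} is not a gradient system, so I would argue by a fixed point scheme rather than by minimization. Set $M_i=\|\varphi_i\|_{L^\infty(\partial\Omega)}$ and let $h_i\in H^1(\Omega)$ be the harmonic extension of $\varphi_i$, so that $0\le h_i\le M_i$. On the closed convex set $K=\{V=(v_1,\dots,v_k)\in L^2(\Omega)^k:\ 0\le v_i\le M_i\ \text{a.e.}\}$ define $\Phi(V)=U$, where $u_i$ is the unique weak solution of the \emph{linear} problem $-\Delta u_i+\beta\bigl(\sum_{j\neq i}a_{ij}v_j\bigr)u_i=0$ in $\Omega$, $u_i=\varphi_i$ on $\partial\Omega$; this is well posed by Lax--Milgram, the zeroth order coefficient being bounded and nonnegative. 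The weak maximum principle gives $u_i\ge0$, and, $h_i$ being a supersolution with the same trace, $u_i\le h_i\le M_i$; hence $\Phi(K)\subseteq K$. Elliptic estimates make $\Phi(K)$ relatively compact in $L^2(\Omega)^k$ and $\Phi$ continuous, so by Schauder's fixed point theorem there is $U_\beta=(u_{1,\beta},\dots,u_{k,\beta})$ with $\Phi(U_\beta)=U_\beta$, i.e.\ a solution of \eqref{eq:betasys}. The same comparison applies to \emph{any} component-wise nonnegative solution, giving the $\beta$-independent bound $0\le u_{i,\beta}\le h_i\le M_i$.

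\emph{Uniform estimates and the limit.} Let $(U_{\beta_n})$ be any sequence of component-wise nonnegative solutions with $\beta_n\to+\infty$. Since \eqref{eq:betasys} is the case $f_i\equiv0$ of \eqref{eq:generalLV} and the previous bound supplies the $L^\infty_\loc$ hypothesis, Theorem~\ref{thm:intro_locconv} yields, up to a subsequence, $u_{i,\beta_n}\to\bar u_i$ in $H^1_\loc(\Omega)\cap C^{0,\alpha}_\loc(\Omega)$ with $\bar u_i\bar u_j\equiv0$. To reach $\overline\Omega$ I would add two uniform-in-$\beta$ estimates. (i)~The bound $\beta\sum_{i\neq j}a_{ij}\int_\Omega u_{i,\beta}u_{j,\beta}\le C$: the sum $s_\beta=\sum_i u_{i,\beta}$ solves $-\Delta s_\beta=-\beta\sum_{i\neq j}a_{ij}u_{i,\beta}u_{j,\beta}\le 0$ with trace $\varphi$, whence $0\le s_\beta\le h:=\sum_i h_i$; testing against the fixed $\zeta$ with $-\Delta\zeta=1$, $\zeta|_{\partial\Omega}=0$ (positive, and $\simeq\dist(\cdot,\partial\Omega)$) gives $\beta\sum_{i\neq j}a_{ij}\int_\Omega u_{i,\beta}u_{j,\beta}\,\zeta=\int_\Omega(h-s_\beta)\le\int_\Omega h$, which yields the interior bound and, after an upgrade near $\partial\Omega$ using the segregation of the traces, assumption (A\ref{eq:ass_non_deg_trace}) and a boundary barrier, the full bound. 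Feeding (i) into the identity $\int_\Omega|\nabla u_{i,\beta}|^2=\int_\Omega|\nabla h_i|^2+\beta\int_\Omega u_{i,\beta}(h_i-u_{i,\beta})\sum_{j\neq i}a_{ij}u_{j,\beta}$ (test the $i$-th equation against $u_{i,\beta}-h_i\in H^1_0(\Omega)$) together with $0\le u_{i,\beta}\le h_i\le M_i$ produces a uniform $H^1(\Omega)$ bound. (ii)~A uniform $C^{0,\alpha}(\overline\Omega)$ bound: the interior part is Theorem~\ref{thm:intro_locconv}, and near $\partial\Omega$ one uses the two-sided barrier $\widehat h_i\le u_{i,\beta}\le h_i$, where $\widehat h_i$ is the harmonic extension of $\widehat\varphi_i=\varphi_i-\sum_{j\neq i}\tfrac{a_{ij}}{a_{ji}}\varphi_j$; this holds throughout $\Omega$ because $u_{i,\beta}$ is subharmonic and $\widehat u_{i,\beta}$ superharmonic (see the hat identity below), $h_i,\widehat h_i\in C^{0,\alpha}(\overline\Omega)$, and $h_i=\widehat h_i$ on $\partial\Omega$ exactly on $\{\varphi_i>0\}$.

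\emph{Passage to the limit.} By Rellich and Ascoli--Arzelà a further subsequence satisfies $u_{i,\beta_n}\rightharpoonup\bar u_i$ in $H^1(\Omega)$ and $u_{i,\beta_n}\to\bar u_i$ in $C^{0,\alpha}(\overline\Omega)$, so $\bar u_i\ge0$, $\bar u_i=\varphi_i$ on $\partial\Omega$, $\bar u_i\in H^1(\Omega)$ and $\bar u_i\bar u_j\equiv0$. From the equations, $-\Delta u_{i,\beta}\le0$, while a direct manipulation gives the hat identity $-\Delta\widehat u_{i,\beta}=\beta\sum_{j\neq i}\sum_{l\notin\{i,j\}}\tfrac{a_{ij}}{a_{ji}}a_{jl}u_{j,\beta}u_{l,\beta}\ge0$; both inequalities are stable under the weak limit, so $-\Delta\bar u_i\le0$, $-\Delta\widehat{\bar u}_i\ge0$, and therefore $\bar U=(\bar u_1,\dots,\bar u_k)\in\Scal$. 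For strong $H^1(\Omega)$ convergence, test the $i$-th equation against $u_{i,\beta_n}-\bar u_i\in H^1_0(\Omega)$: the right-hand side $\beta_n\int_\Omega u_{i,\beta_n}(\bar u_i-u_{i,\beta_n})\sum_{j\neq i}a_{ij}u_{j,\beta_n}$ tends to $0$, since $\beta_n\sum_{i\neq j}a_{ij}\int u_{i,\beta_n}u_{j,\beta_n}$ is bounded whereas $|u_{i,\beta_n}-\bar u_i|\le\|u_{i,\beta_n}-\bar u_i\|_{L^\infty}$ is small on $\{\bar u_i\ge\delta\}$ and, eventually, $|u_{i,\beta_n}-\bar u_i|\le2\delta$ on $\{\bar u_i<\delta\}$; letting $\delta\to0$ gives $\int_\Omega\nabla u_{i,\beta_n}\cdot\nabla(u_{i,\beta_n}-\bar u_i)\to0$, which with weak convergence forces $\nabla u_{i,\beta_n}\to\nabla\bar u_i$ in $L^2(\Omega)$. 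The step I expect to be the real obstacle is the pair of up-to-the-boundary estimates (i)--(ii): the interior arguments degenerate near $\partial\Omega$, and this is where (A\ref{ass:1}), (A3) and (A\ref{eq:ass_non_deg_trace}) are used, via the regularity of the harmonic barriers and the non-degeneracy of the trace; the uniform interior Hölder bound — normally the delicate point — is here available ready-made from Theorem~\ref{thm:intro_locconv}.
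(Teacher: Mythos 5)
The paper does not prove this statement: it is cited verbatim from \cite{CoTeVe_AM2005}, so there is no in-text argument to compare against. Judged on its own terms, your reconstruction follows the route actually used in that literature, and the computations you make explicit are correct. In particular: the Schauder fixed-point scheme with the frozen-coefficient linear problem and the comparison $0\le u_{i,\beta}\le h_i$ is the standard existence argument for this non-variational system; the hat identity
\[
-\Delta\widehat u_{i,\beta}=\beta\sum_{j\neq i}\sum_{l\notin\{i,j\}}\tfrac{a_{ij}}{a_{ji}}\,a_{jl}\,u_{j,\beta}u_{l,\beta}\ge 0
\]
is what verifies the two differential inequalities defining $\Scal$ in the limit; the energy identity $\int|\nabla u_{i,\beta}|^2=\int|\nabla h_i|^2+\beta\int u_{i,\beta}(h_i-u_{i,\beta})\sum_{j\neq i}a_{ij}u_{j,\beta}$ is correct; and the cancellation argument for strong $H^1$ convergence via testing against $u_{i,\beta_n}-\bar u_i$ is the right mechanism.

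The one place where you wave rather than prove — and you say so yourself — is the upgrade from the weighted inequality $\beta\sum a_{ij}\int u_iu_j\zeta\le C$, with $\zeta\simeq\dist(\cdot,\partial\Omega)$, to the unweighted one $\beta\sum a_{ij}\int u_iu_j\le C$. This upgrade cannot be skipped: the test function $\bar u_i-u_{i,\beta_n}$ in the strong-convergence step vanishes on $\partial\Omega$ only H\"olderly, so multiplying by $\zeta\simeq\dist$ does not absorb it, and your splitting on $\{\bar u_i\ge\delta\}$ versus $\{\bar u_i<\delta\}$ really does require the unweighted bound. The missing piece is a boundary-layer estimate: near any boundary point interior to some $\{\varphi_i>0\}$, the lower barrier $u_{i,\beta}\ge\widehat h_i$ bounds $u_{i,\beta}$ below by a positive constant on a half-ball, so that for $j\neq i$ the component $u_{j,\beta}$ is a nonnegative supersolution of $-\Delta w+\beta a_{ji}c\,w\le 0$ there and decays like $e^{-c'\sqrt\beta\,\dist}$, making $\beta\int u_iu_j$ exponentially small in that half-ball; and near the finitely many isolated zeros of $\varphi$ (guaranteed by (A\ref{eq:ass_non_deg_trace})) a localized version of the same barriers, combined with the non-degeneracy, controls the remaining contribution. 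This is exactly what (A\ref{ass:1}), (A3), (A\ref{eq:ass_non_deg_trace}) are for, and it is the technical core of the cited theorem; your proposal has the right scaffolding but would need these two estimates written out to be a complete proof. The boundary H\"older estimate (ii) via the two-sided barrier $\widehat h_i\le u_{i,\beta}\le h_i$, on the other hand, is fine once one adds the standard interior-plus-boundary interpolation.
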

In order to set up our result about the nodal set in the case of asymmetric interspecific competition rates, we need some more notation. For any $U\in\Scal$, we define the \emph{multiplicity} of a point
$\bx\in\overline{\Omega}$, with respect to $U$, as 
\[
m(\bx)=\sharp\left\{i : \left| \{u_i > 0\} \cap B_r(\bx)\right|>0\text{ for every }r>0\right\}.
\]
Our main purpose is to analyze the structure of the free boundary, i.e. the zero set of a $k$-tuple
$U\in\Scal$:
\[
\zeroset= \{\bx \in {\Omega}: u_i(\bx) =0 \text{ for every } i = 1, \dots, k\}.
\]
Such set naturally splits into the union of the \emph{regular part} $\regset=\zeroset_2:=
\{\bx \in \zeroset : m(\bx) = 2\}$, and of the \emph{singular part}
\[
\singset=\zeroset\setminus\zeroset_2.
\]
We collect in the following lemma some elementary properties about the elements of $\Scal$, which have already been obtained in \cite{CoTeVe_IUMJ2005}.
\begin{lemma}[\cite{CoTeVe_IUMJ2005}]\label{thm:intro_S}
Let $U\in\Scal$. Then:
\begin{enumerate}
\item $U\in C^{0,1}(\overline{\Omega})$;
\item if $m(\bx_0)=1$, then there exist $i$ and $r>0$ such that
\(
\Delta u_i=0
\text{ in }B_r(\bx_0)\cap\Omega
\)
(in particular, $\bx_0\not\in\zeroset$);
\item\label{item:3} if $m(\bx_0)=2$, then there exist $i,j$ and $r>0$ such that
\(
\Delta (a_{ji}u_i-a_{ij}u_j)=0
\text{ in }B_r(\bx_0)\cap\Omega;
\)
\item if $\bx_0\in\singset$ then $\lim_{r\to0}\sup_{B_r(\bx_0)} |\nabla u_i|=0$, for every $i$.
\end{enumerate}
\end{lemma}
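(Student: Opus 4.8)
The plan is to prove the four assertions in the order in which they are stated, treating (1) as the regularity backbone on which everything else rests: once continuity is available (so that $\{u_i>0\}$ is open and the components that do not cluster at a point vanish identically near it) the inequalities defining $\Scal$ give (2) and (3) almost for free, and (4) is reached by a short blow-up argument.

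\emph{Step 1: statement (1).} The $L^\infty$ bound is immediate from the maximum principle: each $u_i\geq0$ is subharmonic in $\Omega$ with $u_i=\varphi_i\in C^{0,1}(\partial\Omega)$ on $\partial\Omega$, hence $0\leq u_i\leq\max_{\partial\Omega}\varphi_i$. For continuity and the Lipschitz estimate I would invoke the regularity theory of \cite{CoTeVe_IUMJ2005}. The structural mechanism behind it is the following: on the set $\{u_i>0\}$ every other component vanishes, so there $\widehat u_i=u_i$; since $u_i$ is subharmonic and $\widehat u_i$ is superharmonic, $u_i$ is in fact harmonic on $\{u_i>0\}$, whence $\Delta u_i$ is a nonnegative measure carried by the free boundary $\partial\{u_i>0\}$. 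Continuity of the $u_i$ and the linear growth $\sup_{B_r(\bx_0)}u_i\leq Cr$ at free boundary points are exactly what is established in \cite{CoTeVe_IUMJ2005}; combined with interior gradient estimates for the harmonic function $u_i|_{\{u_i>0\}}$ they give the uniform Lipschitz bound. I expect this to be the main obstacle: the usual free boundary toolbox (Almgren, Alt--Caffarelli--Friedman) degenerates in the asymmetric case, so the argument must rest only on the comparison structure of $\Scal$.

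\emph{Step 2: statements (2) and (3).} Assuming (1), suppose $m(\bx_0)=1$ with associated index $i$. For each $l\neq i$ there is $r_l>0$ with $|\{u_l>0\}\cap B_{r_l}(\bx_0)|=0$, and since $u_l$ is continuous and $\{u_l>0\}$ is open this forces $u_l\equiv0$ on $B_{r_l}(\bx_0)\cap\Omega$; with $r=\min_{l\neq i}r_l$ we get $\widehat u_i=u_i$ on $B_r(\bx_0)\cap\Omega$, so $u_i$ is there simultaneously subharmonic and superharmonic, hence $\Delta u_i=0$. Moreover $\bx_0\in\overline{\{u_i>0\}}$, so $u_i(\bx_0)=0$ would force, by the strong minimum principle, $u_i\equiv0$ near $\bx_0$, a contradiction; therefore $u_i(\bx_0)>0$ and $\bx_0\notin\zeroset$. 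If instead $m(\bx_0)=2$ with indices $i,j$, the same localization gives $r>0$ with $u_l\equiv0$ on $B_r(\bx_0)\cap\Omega$ for $l\neq i,j$, hence $\widehat u_i=u_i-\tfrac{a_{ij}}{a_{ji}}u_j$ and $\widehat u_j=u_j-\tfrac{a_{ji}}{a_{ij}}u_i$ there; equivalently, $a_{ji}u_i-a_{ij}u_j=a_{ji}\widehat u_i=-a_{ij}\widehat u_j$ on $B_r(\bx_0)\cap\Omega$. The inequality $-\Delta\widehat u_i\geq0$ then reads $\Delta(a_{ji}u_i-a_{ij}u_j)\leq0$ while $-\Delta\widehat u_j\geq0$ reads $\Delta(a_{ji}u_i-a_{ij}u_j)\geq0$, so $\Delta(a_{ji}u_i-a_{ij}u_j)=0$.

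\emph{Step 3: statement (4).} Let $\bx_0\in\singset$. If $m(\bx_0)=0$ all components vanish near $\bx_0$ and there is nothing to prove; by (2) the value $m(\bx_0)=1$ cannot occur, so assume $m(\bx_0)\geq3$. For each $i$ with $\bx_0\in\overline{\{u_i>0\}}$ the function $u_i$ is harmonic on $\{u_i>0\}$, vanishes elsewhere, and satisfies $u_i\,\Delta u_i\equiv0$ as measures; hence the Almgren frequency $N_i(r)=r\int_{B_r(\bx_0)}|\nabla u_i|^2\,/\int_{\partial B_r(\bx_0)}u_i^2$ is well defined and nondecreasing, and the blow-ups of $u_i$ at $\bx_0$ are homogeneous of degree $N_i(0^+)\geq1$. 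Suppose $N_i(0^+)=1$ for some such $i$: then the blow-up is the positive part of a linear function, $\{u_i>0\}$ is asymptotically a half-plane $H$ at $\bx_0$, and $\Delta u_i$ has density bounded below along $\partial H$; since $-\Delta\widehat u_i\geq0$ gives $\Delta u_i\leq\sum_{l\neq i}\tfrac{a_{il}}{a_{li}}\Delta u_l$, the components meeting $u_i$ along $\partial H$ must grow linearly there as well, hence also occupy half-plane cones at $\bx_0$; but at most two half-plane cones through a common point are pairwise disjoint, so exactly one further component meets $\bx_0$, i.e.\ $m(\bx_0)=2$, contradicting $m(\bx_0)\geq3$. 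Therefore $N_i(0^+)>1$ for every such $i$, whence $\sup_{B_r(\bx_0)}u_i=o(r)$ as $r\to0$; combined with interior gradient estimates for $u_i$ on $\{u_i>0\}$ and with statement (3) along the regular part of $\partial\{u_i>0\}$ (using that $\singset$ is locally finite near $\bx_0$), this yields $\sup_{B_r(\bx_0)}|\nabla u_i|\to0$. Extracting the contradiction from the sign of $\Delta\widehat u_i$ along the free boundary, and controlling the geometry of the interfaces near $\bx_0$, are the delicate points; these, like the continuity and linear growth used in Steps 1--2, are carried out in \cite{CoTeVe_IUMJ2005}.
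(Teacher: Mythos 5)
Steps 1 and 2 of your proposal are fine: for (1) you defer to \cite{CoTeVe_IUMJ2005} exactly as the paper itself does, and for (2) and (3) your localization argument (on $B_r(\bx_0)$ with $r=\min r_l$ all non-clustering components vanish identically, whence $\widehat u_i=u_i$ or $a_{ji}\widehat u_i=-a_{ij}\widehat u_j=a_{ji}u_i-a_{ij}u_j$ and the two differential inequalities pinch to an equality) is precisely what the paper means when it remarks that ``properties 2 and 3 are straight consequences of the definitions of $\Scal$ and $m$.'' The strong-minimum-principle step that $u_i(\bx_0)>0$ whenever $m(\bx_0)=1$ is also correct.

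Step 3, however, has a genuine gap: the Almgren frequency for a \emph{single} component $u_i$ is not monotone here. For a nonnegative $u_i$ that is harmonic on $\{u_i>0\}$ and vanishes elsewhere, the Rellich--Pohozaev identity in the plane reads
\[
\frac{d}{dr}\int_{B_r(\bx_0)}|\nabla u_i|^2
=2\int_{\partial B_r(\bx_0)}(\partial_\nu u_i)^2
+\frac1r\int_{\partial\{u_i>0\}\cap B_r(\bx_0)}|\nabla u_i|^2\,(\bx-\bx_0)\cdot\nu\,d\sigma,
\]
and the free--boundary term has no sign; without it, the derivative of $\log N_i$ cannot be shown nonnegative. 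The paper only obtains such an identity \emph{after} summing over all components (cf.\ the proof of Lemma 4.3), which makes the interface contributions cancel because each $\tilde\Gamma_{ij}^m$ bounds two positivity sets with opposite normals — this cancellation is unavailable for an isolated $u_i$, and in the asymmetric case no weighted sum of the components makes it work either. More to the point, the paper explicitly warns that ``the Almgren monotonicity formula can not hold, and the singularities do not admit, in general, homogeneous blow-ups,'' and Theorem 1.6 confirms this: near a multiplicity-$h$ point with $\alpha\neq0$ the profile is $A\,r^\nu\cos\bigl(\tfrac h2\vartheta-\alpha\log r\bigr)+o(r^\nu)$, which is \emph{not} homogeneous. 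So both the monotonicity you invoke and the homogeneity of blow-ups you derive from it are false in this setting, and (4) cannot be reached this way.

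The route actually compatible with the paper (and with \cite{CoTeVe_IUMJ2005}) is a barrier / Hopf-type comparison, as in the proof of Lemma 2.5 (\texttt{lem:halfdisk}) and Corollary 2.6. One argues by contradiction: if $\limsup_{r\to0}\sup_{B_r(\bx_0)}|\nabla u_i|>0$ for some $i$ clustering at $\bx_0$, a logarithmic barrier from below on an annulus inside $\omega_i$ produces a half-disk (opening $\geq\pi-\varepsilon$) contained in $\omega_i$ touching $\bx_0$; the matching condition at a two-phase point then forces the neighboring component to have nondegenerate gradient too, hence another half-disk; but two half-disks at the same point already leave no room for a third nontrivial component near $\bx_0$, contradicting $m(\bx_0)\geq3$. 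This uses only the Lipschitz regularity from (1), the reflection law from (3), and elementary comparison functions — exactly the ``ad-hoc'' toolbox the paper says replaces Almgren. Your geometric conclusion (three pairwise-disjoint near-half-planes cannot share a vertex) is the right punch line, but it must be reached without the frequency function.
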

Notice that properties 2 and 3 in the above lemma are straight consequences of the definitions of
$\Scal$ and $m$. Now we are ready to state  our main result concerning the properties of the segregation boundary.
\begin{theorem}\label{thm topologia sing}
Let (A\ref{ass:1}--\ref{eq:ass_non_deg_trace}) hold, $U\in\Scal$, and $\zeroset=\zeroset_2\cup\singset$. Then:
\begin{enumerate}
 \item $\zeroset_2$ is relatively open in $\zeroset$, and it consists in the finite union of  analytic curves;
 \item $\singset$ is the union of a finite number of isolated points inside $\Omega$;
 \item  for every $\bx_0\in\partial\Omega$, either $m(\bx_0)=1$ or $m(\bx_0)=2$.
\end{enumerate}
Furthermore, if $\bx_0\in\singset$ then
$m(\bx_0)=h\geq 3$, and there exist an explicit constant $\alpha\in\R$
and an explicit bounded function $A=A(\bx)$ such that
\begin{equation}\label{eq:asympt_expans}
\mathcal{U}(r,\vartheta)=A\,r^{\nu}
\cos\left(\frac{h}{2}\vartheta
-\alpha \log r\right)+o(r^{\nu})\qquad\text{as }r\to 0,
\end{equation}
where $(r,\vartheta)$ denotes a (suitably rotated) system of polar coordinates about
$\bx_0$, $\mathcal{U}$ is a suitable weighted sum of the components $u_i$ meeting at $\bx_0$,
\begin{equation}\label{eq:gamma}
\nu = \frac{h}{2} + \frac{2\alpha^2}{h},\qquad\text{ and }\qquad 0< A_0 \leq A(r,\vartheta) \leq A_1.
\end{equation}
In particular, whenever $\alpha\neq0$, the regular part of the free boundary is described asymptotically by
$h$ equi-distributed logarithmic spirals (locally around $\bx_0$).
\end{theorem}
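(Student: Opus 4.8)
The plan is to analyze the free boundary through a blow-up procedure combined with an ODE analysis of the resulting limiting profiles. First I would establish that at any point $\bx_0 \in \zeroset$ only finitely many components can accumulate, so $m(\bx_0) = h$ is well defined and finite; near such a point the relevant object is the weighted sum $\mathcal{U}$ of the $h$ components meeting at $\bx_0$, glued across the interfaces using the reflection identity from Lemma \ref{thm:intro_S}(\ref{item:3}) (the jump of $a_{ji}u_i - a_{ij}u_j$ is harmonic across each regular arc). The key point is that, because the competition is asymmetric, the natural matching conditions at the interfaces are \emph{not} the classical reflection ones: going once around $\bx_0$ through the $h$ sectors composes $h$ elementary reflection-type maps whose product is a nontrivial rotation (by an angle encoded in the ratios $a_{ij}/a_{ji}$), rather than the identity. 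This is exactly what forces the solution to be multivalued in the angular variable and produces the logarithmic term.

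Second, I would set up polar coordinates $(r,\vartheta)$ about $\bx_0$ and write $\mathcal{U}$ as a solution of $\Delta \mathcal{U} = 0$ away from $\bx_0$, but on a multi-sheeted structure, and separate variables. Writing $\mathcal{U} = r^{\lambda} g(\vartheta)$ and tracking the monodromy around $\bx_0$, one finds that $\lambda$ must be complex, $\lambda = \nu + i\alpha$, with the real part $\nu$ and the "spiral rate" $\alpha$ determined by an eigenvalue problem: the angular part must satisfy $g'' + \lambda^2 g = 0$ on each sector with the asymmetric transmission conditions, and the closing-up condition after $h$ sectors gives a characteristic equation relating $\nu$, $\alpha$ and $h$. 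Carrying this out carefully yields the relation $\nu = \frac{h}{2} + \frac{2\alpha^2}{h}$ in \eqref{eq:gamma} and the principal-part expansion $\real\left( A_0 r^{\nu+i\alpha} e^{i(h/2)\vartheta}\right) = A_0 r^\nu \cos\left(\tfrac{h}{2}\vartheta - \alpha \log r\right)$, i.e. \eqref{eq:asympt_expans}. The boundedness $0 < A_0 \le A(r,\vartheta) \le A_1$ would come from a Harnack-type / non-degeneracy argument ensuring the leading coefficient does not vanish and stays controlled, and the non-degeneracy assumption (A\ref{eq:ass_non_deg_trace}) together with simple connectedness of $\Omega$ rules out the same phenomenon on $\partial\Omega$, giving the boundary statement $m(\bx_0)\in\{1,2\}$ for $\bx_0\in\partial\Omega$.

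Third, the structural assertions about $\zeroset_2$ and $\singset$: that $\zeroset_2$ is relatively open and a finite union of analytic curves follows because near a multiplicity-two point $\mathcal{U}$ (or $a_{ji}u_i - a_{ij}u_j$) is harmonic in a full ball, hence analytic, and its nodal set is a union of analytic arcs with isolated branch points; the branch points are precisely where $m \ge 3$. Finiteness of $\singset$ would be obtained from a uniform lower bound on the vanishing order $\nu \ge h/2 \ge 3/2$ at singular points, together with an Almgren-type frequency argument adapted to this setting (this is where the monotonicity-formula obstacle mentioned in the introduction bites): even though the classical Almgren formula is unavailable, one can still extract enough rigidity from the ODE analysis to show singular points are isolated and, by compactness of $\overline\Omega$ and the local structure, finite in number. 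Finally, when $\alpha \ne 0$, the zero set of $\cos\left(\tfrac{h}{2}\vartheta - \alpha\log r\right)$ consists of the $h$ curves $\tfrac{h}{2}\vartheta - \alpha \log r \equiv \tfrac{\pi}{2} \pmod{\pi}$, i.e. $h$ logarithmic spirals equidistributed in angle, which combined with the $o(r^\nu)$ control upgrades to the claimed asymptotic description of $\regset$ near $\bx_0$.

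\textbf{Main obstacle.} The hard part will be the middle step done rigorously: proving that the blow-up limits are exactly these single-mode spiraling profiles with a \emph{unique} exponent $\nu + i\alpha$ and no logarithmic-polynomial corrections, and that the convergence to the blow-up is strong enough to transfer the expansion \eqref{eq:asympt_expans} back to $\mathcal{U}$ with a genuine $o(r^\nu)$ remainder. Without Almgren monotonicity one cannot simply quote almost-monotonicity of the frequency; instead one must exploit the very special one-dimensional (angular) structure after separation of variables, show the frequency takes a discrete set of values forced by the asymmetric transmission conditions, and combine this with a $\epsilon$-regularity / uniqueness-of-blow-up argument tailored to the problem. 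Establishing the boundedness and positivity of $A(\bx)$ uniformly is a comparably delicate non-degeneracy point that I expect to require the full strength of the differential inequalities defining $\Scal$.
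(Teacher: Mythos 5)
Your proposal captures the right qualitative picture — the asymmetric transmission conditions produce a nontrivial monodromy around the vertex, which forces a complex characteristic exponent $\nu+i\alpha$ and the log-spiral in \eqref{eq:asympt_expans}, and the relation $\nu = h/2 + 2\alpha^2/h$ is indeed what one gets from matching the modulus and angle after going around $h$ sectors. However, the route you propose to make this rigorous has a genuine gap, and it is precisely the one the paper flags in its own introduction: you lean on a blow-up procedure and an ``Almgren-type frequency argument adapted to this setting,'' but the paper explicitly observes that in the asymmetric case the Almgren monotonicity formula \emph{cannot} hold and the singularities \emph{do not admit homogeneous blow-ups}. This is not a technical inconvenience that can be patched by ``extracting enough rigidity from the ODE analysis''; the blow-up sequence will not converge to a $\lambda$-homogeneous profile because none exists (the natural candidate is $r^{\nu+i\alpha}$, whose modulus is not a power of $r$). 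So the backbone of your middle step, as stated, would not go through.

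The paper avoids this entirely by working globally rather than infinitesimally. After a preliminary reduction (Section~\ref{sec:reduction}) that cuts $\Omega$ into pieces satisfying the model-case scenario (MCS) and places a single connected singular set near the origin of the unit disk, it passes through the universal covering $\mathcal{T}:\R^2_+\to B\setminus\{\bm{0}\}$. This converts the multivalued weighted sum $\mathcal{U}$ into a genuine harmonic function $v$ on the half-plane with the ``pseudo-periodicity'' $v(x+2\pi,y)=\lambda v(x,y)$ and finite Dirichlet energy on a distinguished strip $S$; the monodromy your proposal identifies heuristically becomes the explicit scalar $\lambda$ in \eqref{eq:lmbd}. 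Separation of variables is then literal Fourier analysis of $w=e^{-\alpha x}v$ (Lemma~\ref{fourier}), and the delicate point — suppressing the growing modes $v_{\mathrm{bad}}$ and isolating a single dominant frequency $\np=h/2$ — is handled by a suite of energy estimates tailored to the unknown geometry of the strip: a one-phase Alt–Caffarelli–Friedman-type monotonicity lemma (Lemma~\ref{lem:ACF}) and an iterated ``bottleneck'' decay estimate (Lemmas~\ref{lem:1strozz}–\ref{lem:kstrozz}). Nothing in your strategy anticipates this mechanism; you flag it as the ``main obstacle'' but do not offer a concrete substitute. Likewise, the finiteness of $\singset$ is not obtained from a frequency bound: the paper either covers the lifted problem with finitely many translates of $S$ and appeals to the known regularity theory for the symmetric class $\mathcal{G}$ (Case~1 of Section~\ref{sec high}), or, in the complementary geometric case, proves $v\in H^1$ and harmonic across the lifted singular set and quotes isolation of higher-order zeros of planar harmonic functions (Case~2). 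Your item (3) (analyticity of $\zeroset_2$, boundary multiplicity $\le 2$) is essentially in agreement with the paper's Lemmas~\ref{lem z2 curves} and~\ref{lem:noWonpartialOmega}, but the isolation/finiteness of $\singset$ and the derivation of the leading-order expansion need a fundamentally different argument than the one you sketch.
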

\begin{remark}\label{rem:alpha3}
The value of $\alpha$ in \eqref{eq:asympt_expans}, \eqref{eq:gamma} is explicit in terms of the coefficients $a_{ij}$, with $i$ and $j$ belonging to the set of
indexes associated to the $h\leq k$ densities which do not identically vanish near $\bx_0$ (see equations \eqref{eq:lmbd}, \eqref{eq:alpha} below). For instance,
when $u_1$, $u_2$ and $u_3$ meet at $\bx_0$, with $m(\bx_0)=3$, then (up to a change of sign)
\[
\alpha=\frac{1}{2\pi}\log\left(\dfrac{a_{12}}{a_{21}}\cdot\dfrac{a_{23}}{a_{32}}\cdot\dfrac{a_{31}}{a_{13}}\right).
\]
Consequently, when $\alpha\neq0$, the vanishing order $\nu$ does not depend only on the number of densities involved (as in the symmetric case), but also on the
competition coefficients; moreover, the vanishing order is not forced to be half-integer, in great contrast with the symmetric case. Finally, the function $\mathcal{U}$
agrees with each density near $\bx_0$, up to a constant multiplicative factor, see equation \eqref{eq:vtilde}.
\end{remark}
\begin{remark}
In case $a_{ij}=a_{ji}$ for every $j\neq i$, then $\alpha=0$, and the spirals reduce to straight lines; in this way we recover the equal-angles-property for
multiple points already obtained in \cite{CoTeVe_IUMJ2005}. On the other hand it is easy to choose the competition coefficients to force $\alpha\neq0$. For instance, in the case
of $k=3$ densities on the ball, one can use the construction suggested by our proof, in order to obtain the existence of an element of $\mathcal{S}$, with a prescribed $\alpha$,
for a set of traces having codimension 2 (see Fig. \ref{fig 3 spir}). In the same spirit, for any given real number $\nu\ge3/2$, one can choose the competing coefficients to obtain elements of
$\mathcal{S}$ having a multiple point with vanishing order $\nu$.
\end{remark}
\begin{figure}\label{fig 3 spir}
	\centering
        \begin{tikzpicture}
            \node[anchor=south west,inner sep=0] (image) at (0,0) {\includegraphics[width= .33\textwidth]{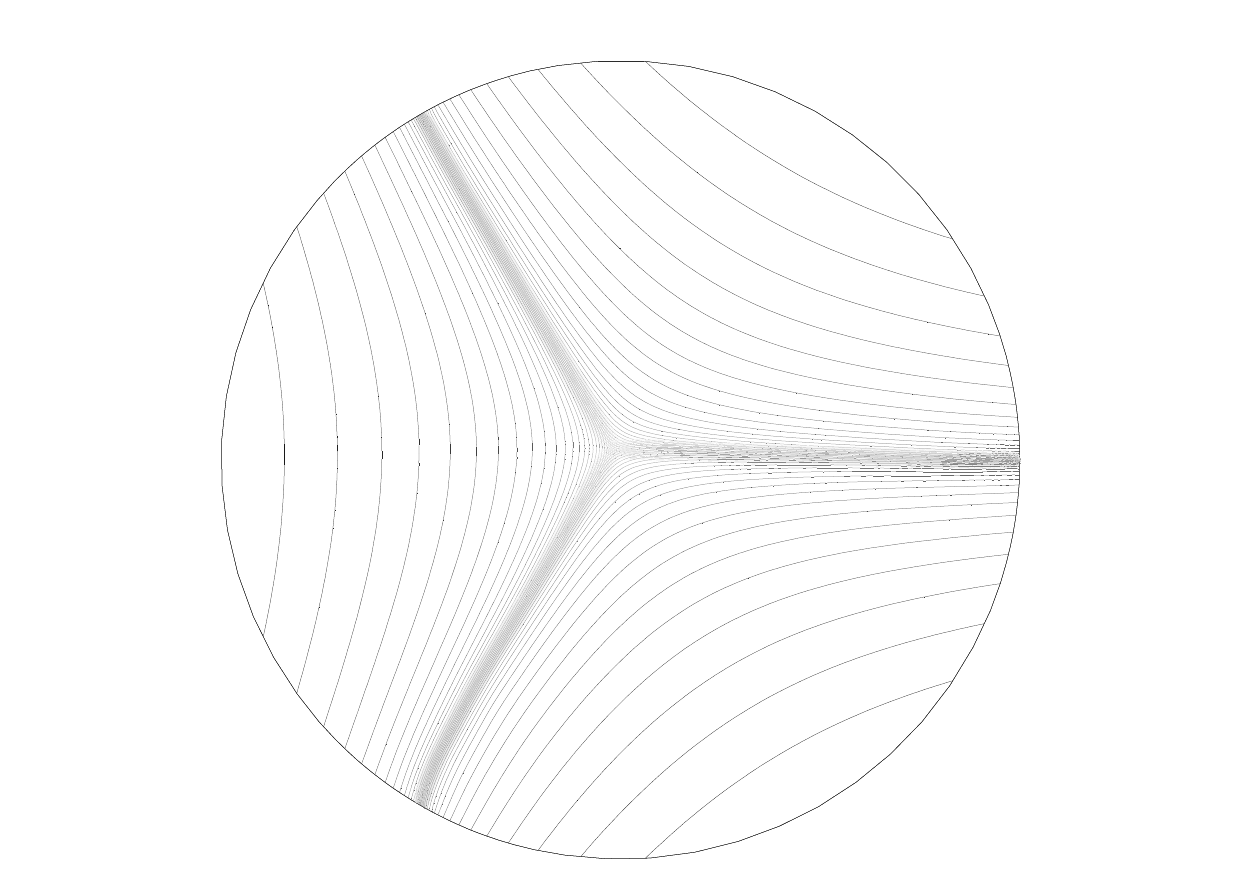}};
            \node[anchor=south west,inner sep=0] (image) at (4,0) {\includegraphics[width= .33\textwidth]{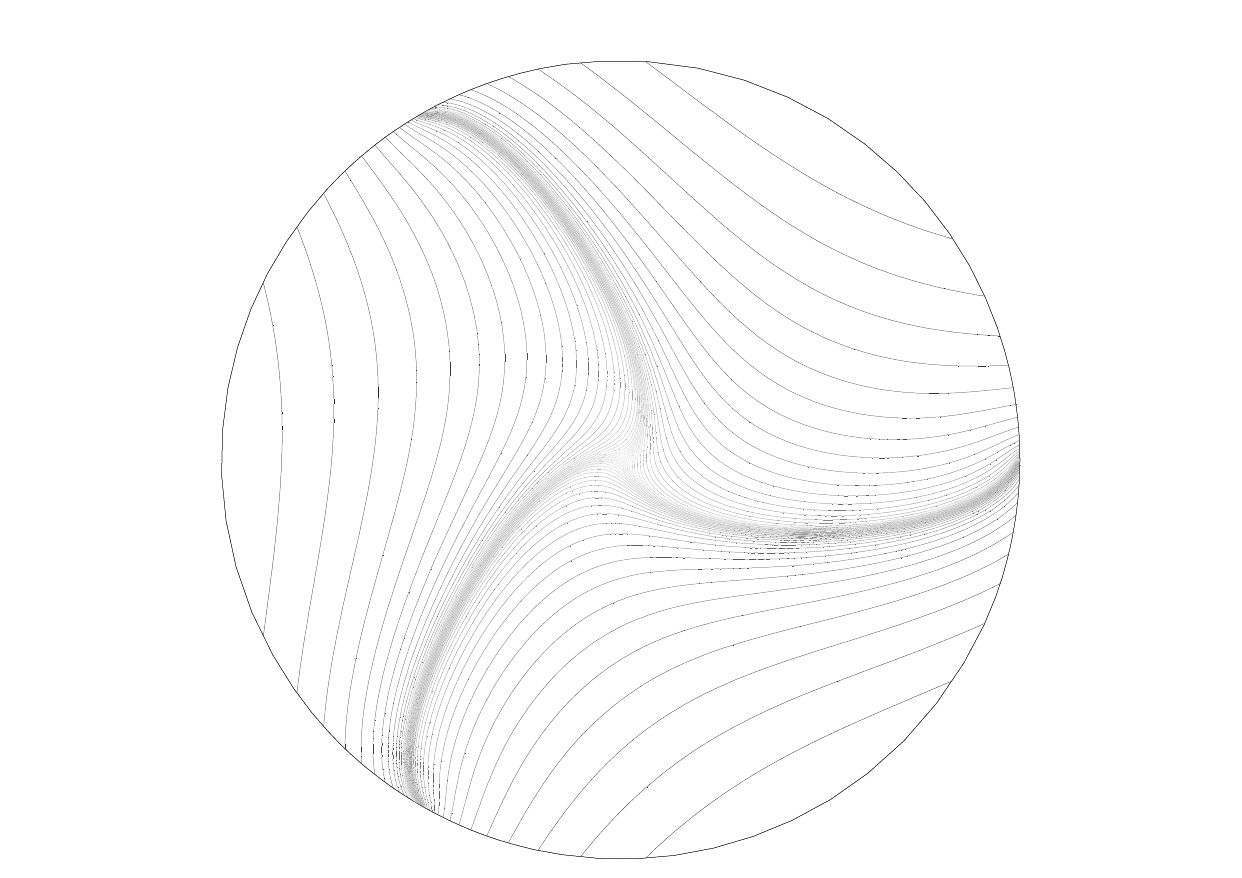}};
            \node[anchor=south west,inner sep=0] (image) at (8,0) {\includegraphics[width= .33\textwidth]{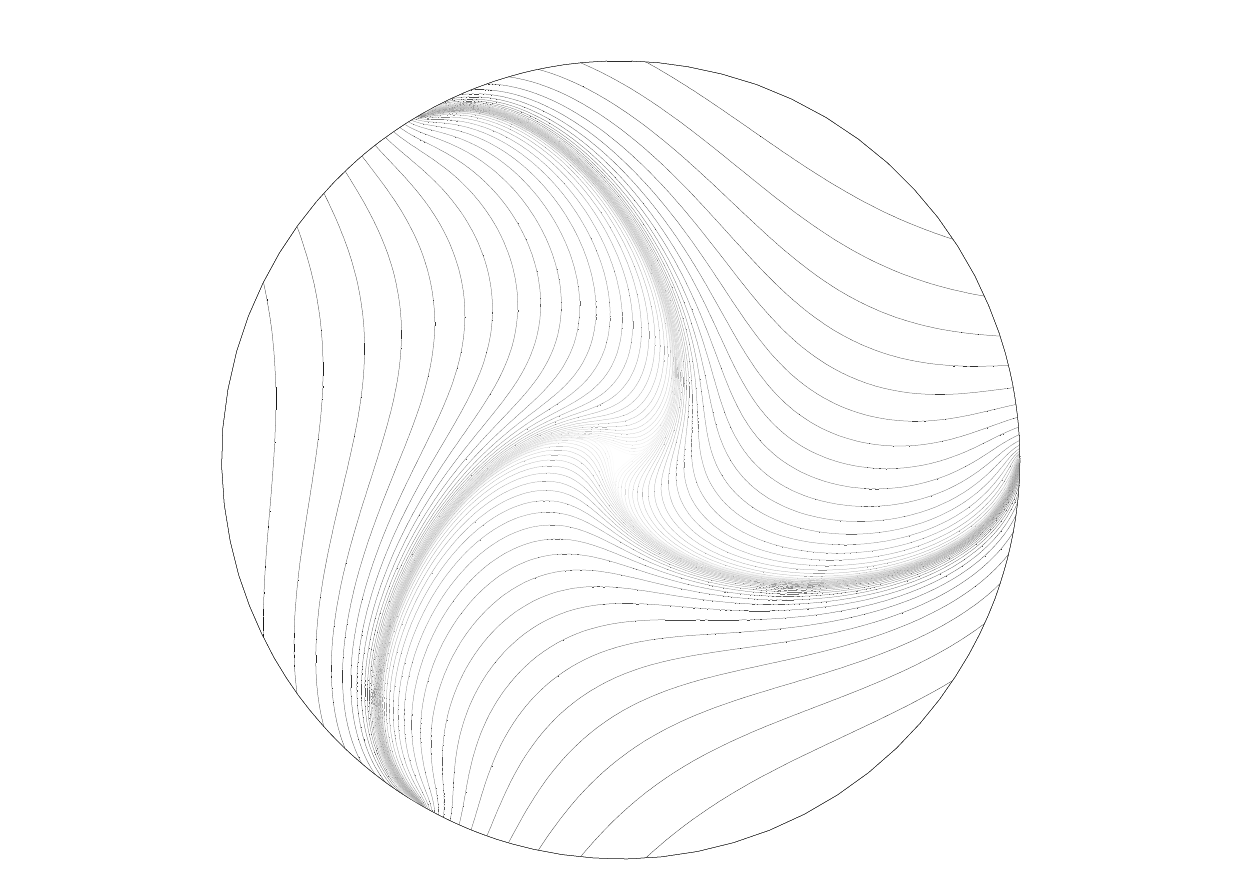}};
            \node[align=center] at (3.5,0) {(a)};
            \node[align=center] at (7.5,0) {(b)};
            \node[align=center] at (11.5,0) {(c)};
        \end{tikzpicture}
\caption{Numerical simulations of functions belonging to the class $\mathcal{S}$ for different values of $\alpha$. In this particular case, we have considered a system of 3 components (labeled in counterclockwise order as $u_1$, $u_2$ and $u_3$) in the unit ball, with boundary conditions given by suitable restrictions of $|\cos(3/2 \, \vartheta)|$. In picture (a), $a_{ij} = 1$ for all $i,j$, which yields $\alpha = 0$ (see Remark \ref{rem:alpha3}). In picture (b), $a_{ij} = 4$ if $j-i = 1 \mod 3$ and $a_{ij} = 1$ otherwise, which yields $\alpha = 3 \log 4 / 2\pi$ ($> 0$, which implies by equation \eqref{eq:asympt_expans} that the free-boundary is described asymptotically by rotations of the clockwise logarithmic spiral $\vartheta =  \log 4/ \pi \, \log r$). In picture (c), $a_{ij} = 10$ if $j-i = 1 \mod 3$ and $a_{ij} = 1$ otherwise, which yields $\alpha = 3\log 10/2 \pi$.}
\end{figure}
\begin{remark}
In the asymmetric case, the nodal partition determined by the supports of the components can neither 
be optimal with respect to any Lagrangian energy, nor be stationary with respect to any domain variation.
Indeed, it is known (cfr. \cite{CoTeVe_AIHPAL2002,MR2393430,MR2483815,MR2984134}) that boundaries of optimal partitions share the same properties
of Theorem \ref{teo:nodal_set_symmetric}. Hence they can not exhibit logarithmic spirals. This fact is in striking contrast with the picture  for symmetric inter-specific competition rates: indeed, in such a case, relatively to Theorem \ref{thm:intro_oldconv}, we know that solutions  to \eqref{eq:betasys} are unique, together with their limit profiles in the class $\Scal$
 (see \cite{CoTeVe_IFB2006,MR2595199, MR2863857}). Hence, though system \eqref{eq:betasys}  does not possess a variational nature, it fulfills a  minimization principle in the segregation limit, while this is impossible in the asymmetric setting.
\end{remark}
\begin{remark}
Nevertheless,  even in the asymmetric case, functions in the class $\Scal$ still share with the solutions of variational problems, including harmonic functions, the following fundamental features:
\begin{itemize}
	\item singular points are isolated and have a finite vanishing order;
	\item the possible vanishing orders are quantized;
	\item the regular part is smooth.
\end{itemize}
It is natural to wonder whether similar analogies still hold dimensions higher than two. As already
remarked, however, new strategies and unconventional techniques have to be designed to treat the
asymmetric case, since a number of standard tools in free boundary problems have to fail in such
situation: for instance, as the planar case shows, the Almgren monotonicity formula can not hold, and
the singularities do not admit, in general, homogeneous blow-ups.
\end{remark}
The proof of Theorem \ref{thm topologia sing} proceeds as follows.  In Section \ref{sec:reduction}, exploiting some topological properties of the zero set of harmonic functions, we show that it suffices to consider the case in which $\Omega = B$, and a unique connected component of $\singset \cap B$ is joined to the boundary by a finite number of smooth curves that describe $\zeroset_2 \cap B$. In Section \ref{sec rep}, assuming that such connected component is given by a point, we give a description of the set $\zeroset_2 \cap B$. To do this, by a conformal mapping, we translate the original problem in the ball $B$ to that of describing the zero set of a harmonic function defined on the half plane. Finally, in Section \ref{sec high} we prove that any connected component of $\singset$ is actually just a point. We achieve this by noticing that in any ball contained in $B$ and whose boundary intersects
$\singset$, the set $\zeroset$ coincides necessarily with that of a harmonic function.

\subsubsection*{Notation} Unless otherwise specified, we adopt the following conventions:
\begin{itemize}
	\item points in $\Omega$ are denoted with $\bx,\by$, and so on; points
	in $\R^2_+$ have coordinates $(x,y)$;
	\item the null set of a $k$-tuple is $\zeroset= \{\bx\in\Omega : u_i(\bx) =0 \; \forall i = 1, \dots, k\}$;
	\item the sets $\omega_i = \{\bx\in\Omega:u_i(\bx) > 0 \}$ (open), $\supp(u_i) =\overline{\omega}_{i}$ (closed);
	\item the multiplicity of a point is defined as the natural number \[m(\bx)=\sharp\left\{i : \left|\omega_i\cap B_r(\bx)\right|>0\text{ for every }r>0\right\};\]
	\item $\zeroset_h = \{\bx \in \zeroset : m(\bx) = h\}$, for $h = 0, 1, \dots, k$;
	\item the singular set is
	$\singset = \zeroset \setminus \zeroset_2 = \zeroset_0 \cup \zeroset_3 \cup \dots \cup \zeroset_k $
	(we will see that no zero of multiplicity $1$ is allowed inside $\Omega$);
	\item $\Gamma_{ij} = \overline{\omega}_{i} \cap \overline{\omega}_{j} \cap \zeroset_2$;
	\item an open curve, or simply a curve when no confusion may arise,
	is a $1$-dimensional manifold (without boundary), i.e. the image of a
	(open) interval through a regular map; in particular, it is locally diffeomorphic to an interval,
	but it may not be rectifiable. In particular, we will show that $\Gamma_{ij}$ is such a curve, as
	far as it is non-empty.
\end{itemize}

\section{Preliminary reduction}\label{sec:reduction}

In this section we show that, without loss of generality, we can reduce to the
model case scenario described in the following assumption (MCS).

\noindent \textbf{Assumption} (MCS). 
Without loss of generality, beyond (A\ref{ass:1}--\ref{eq:ass_non_deg_trace}), we can assume that:
\begin{itemize}
\item $k\geq3$;
\item for each $i = 1, \dots, k$ both $\omega_i$ and $\overline{\omega}_i\cap\partial\Omega$ are connected, simply connected sets;
\item the traces $\varphi_i$ are labelled in counterclockwise sense;
\item $\Gamma_{ij}$ is a non-empty connected open curve whenever $i-j = \pm 1 \mod{k}$, and it is empty otherwise;
\item $\singset$ has a unique connected component, which lies away from $\partial\Omega$
\item $\Omega$ is the unit ball $B=B_1(\bm{0})$, and, for some $|\be|=1$,
\begin{equation}\label{eq:Winthehalfball}
\bm{0}\in \singset \subset \{\bx\in B:\bx\cdot \be\geq0\}.
\end{equation}
\end{itemize}
%
More precisely, we will show the following result.
\begin{proposition}\label{prop:reduction}
$\Omega$ can be decomposed in the finite union of some domains, each of which satisfy (MCS), up to a relabelling of the restricted densities and to some conformal deformations.
\end{proposition}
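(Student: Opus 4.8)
The plan is to peel off, one layer at a time, the structural hypotheses listed in (MCS), showing at each stage that proving Theorem \ref{thm topologia sing} in the simpler situation implies it in the more general one. I would organize the reduction around the following observations. First, since $\zeroset$ is closed and $U\in C^{0,1}(\overline\Omega)$ by Lemma \ref{thm:intro_S}(1), and since away from $\zeroset$ each $u_i$ is (locally) harmonic or differs from a neighbour by a harmonic combination (Lemma \ref{thm:intro_S}(2)--(3)), the singular set $\singset$ is closed in $\Omega$, and its intersection with any compact subset has finitely many connected components — this last point will actually be part of what must be proved, so at this stage I would only claim that it suffices to work near one connected component $\concomp$ of $\singset$ at a time, together with the portion of $\zeroset_2$ emanating from it, and that distinct components (and the associated regular curves) can be separated by small disjoint balls. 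The reduction to $k\ge 3$ is immediate: if $k=1$ there is no free boundary, and if $k=2$ then $a_{21}u_1-a_{12}u_2$ is harmonic across $\Gamma_{12}$ by Lemma \ref{thm:intro_S}(3), so $\zeroset=\zeroset_2$ is the nodal set of a harmonic function and the conclusion is classical; thus we may assume $k\ge 3$.

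Next I would address the connectedness and labelling hypotheses. Fix a connected component $\concomp$ of $\singset$ and a ball $B_\rho(\bx_0)$ with $B_\rho(\bx_0)\cap\singset$ having $\concomp$ as its unique component (possible by the separation above); enlarge and then shrink so that $\partial B_\rho(\bx_0)\cap\singset=\emptyset$ and $\overline{B_\rho(\bx_0)}\subset\Omega$. On $\partial B_\rho(\bx_0)$ only multiplicities $0$ and $1$ occur (no singular points, and $m\ge 2$ forces a point of $\Gamma_{ij}$, which I would argue is a one-dimensional curve crossing the circle transversally — relying on Lemma \ref{thm:intro_S}(3) and the nodal-set structure of harmonic functions). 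Hence $\partial B_\rho(\bx_0)$ is partitioned into finitely many arcs, on each of which a single density is positive; this induces a cyclic order on the (finitely many) densities that do not vanish identically near $\concomp$. Relabel these as $u_1,\dots,u_h$ in counterclockwise order and discard the rest. After this relabelling, $\Gamma_{ij}\cap B_\rho(\bx_0)$ is nonempty only for $j-i\equiv\pm1\ \mathrm{mod}\ h$; that each such $\Gamma_{i,i+1}$ is a single connected open curve, rather than a union of several, is again a consequence of the local structure (a curve of $\Gamma_{ij}$ cannot terminate except at a singular point, and the only component of $\singset$ inside is $\concomp$). The statements that each $\omega_i$ and each $\overline\omega_i\cap\partial\Omega$ are connected and simply connected are then arranged by a further subdivision of the domain: cut $\Omega$ along the finitely many regular curves of $\zeroset_2$ that do not meet $\concomp$, obtaining subdomains in each of which the remaining densities have connected support; simple connectedness of $\Omega$ (A\ref{ass:1}) guarantees that no handles are created.

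Finally I would move everything to the unit ball by a conformal map. By (A\ref{ass:1}) and the Riemann mapping theorem — in the $C^{1,\alpha}$ version up to the boundary, so the regularity of $\partial\Omega$ and the Lipschitz traces are preserved — there is a biholomorphism $\Phi$ from the relevant subdomain onto $B=B_1(\bm 0)$. Conformal maps are harmonic-function-preserving and angle-preserving, so all the properties recorded in Lemma \ref{thm:intro_S}, the class membership $U\circ\Phi^{-1}\in\Scal$ (with transformed, still Lipschitz, traces satisfying (A2)--(A4)), and the entire geometry of $\zeroset$, $\zeroset_2$, $\singset$ are transported faithfully; in particular the asymptotic expansion \eqref{eq:asympt_expans} in polar coordinates about a point is invariant in form under a conformal change of variables, since near a point such a map is, to leading order, a rotation composed with a dilation. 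Composing $\Phi$ with a rotation we may place the image of $\concomp$ so that it contains the origin and lies in the closed half-ball $\{\bx\cdot\be\ge0\}$ for a suitable unit vector $\be$, which is \eqref{eq:Winthehalfball}; this is possible because $\concomp$, being connected and (as will be shown) small, fits in a half-ball after a rigid motion. The traces may then be relabelled in counterclockwise sense as required. I expect the main obstacle to be the bookkeeping in the second paragraph: rigorously justifying that $\Gamma_{ij}$ meets a generic small circle in finitely many transversal points and consists of a single arc, and that the cyclic order is well defined and independent of the radius — this rests on the fact, proved later in Section \ref{sec rep}, that near a point $\zeroset$ coincides with the nodal set of a harmonic function, but here it must be used only qualitatively (a nodal set of a nonconstant harmonic function is locally a finite union of arcs crossing transversally away from its critical points). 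Once this local picture is in hand, the remaining reductions are routine topology of planar domains and standard conformal mapping theory.
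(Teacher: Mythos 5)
Your high-level strategy — localize around a singular component, read off a cyclic order on nearby densities, then uniformize conformally — is reasonable, but the execution has several genuine gaps that the paper carefully avoids by taking an essentially global route.

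First, and most seriously, you propose to \emph{cut $\Omega$ along the regular curves of $\zeroset_2$ that do not meet $\concomp$} in order to make the $\omega_i$ connected. This move breaks assumption (A\ref{eq:ass_non_deg_trace}): if a portion of $\Gamma_{ij}$ becomes part of the boundary of a subdomain, the trace $\varphi=\sum_i\varphi_i$ vanishes identically along it, so every boundary point there is a degenerate zero. The paper never cuts along the free boundary; its cuts (Lemmas \ref{lem only one cc} and \ref{lem:counterclockwise}, cf.\ Figs.\ \ref{fig:split1}--\ref{fig:split2}) are along curves $\gamma$ that run through the interiors of $\omega_i$ (or through $\omega_i\cup\Gamma_{ij}\cup\omega_j$, crossing $\Gamma_{ij}$ transversally exactly once), where the relevant density is strictly positive. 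The paper then makes $\omega_i$ connected not by cutting but by \emph{introducing new formal densities}, one per connected component of $\omega_i$ (Lemma \ref{lem simply conn}). Both devices keep the traces non-degenerate.

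Second, your plan to separate connected components of $\singset$ by small disjoint balls is circular: it presupposes that $\singset$ is a discrete (or at least finite) collection of components, which is not known a priori — $\singset$ is only closed at this stage. The paper does not localize around singular components at all; instead it performs the cuts first, proves $\zeroset$ is connected (Lemma \ref{ref zero conn}), and deduces from this that $\singset$ is connected (Lemma \ref{lem:Wconnected}), so that after the reduction there is a \emph{single} component to analyze. Your route also needs to know that $\singset$ stays away from $\partial\Omega$ before any interior ball can isolate a component; this is the content of Lemmas \ref{lem:halfdisk}--\ref{lem:noWonpartialOmega}, which require a non-trivial barrier argument exploiting (A\ref{eq:ass_non_deg_trace}). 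Your proposal treats this as routine.

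Third, your mechanism for establishing the cyclic order — counting transversal intersections of $\Gamma_{ij}$ with a generic small circle $\partial B_\rho(\bx_0)$ — depends on a local structure theory for $\zeroset$ near $\singset$ that is exactly what Sections~\ref{sec rep} and \ref{sec high} are built to provide; you acknowledge the circularity but propose to bypass it by treating $\zeroset$ locally as a harmonic nodal set, which near a singular point is not a priori justified in the asymmetric case (indeed the whole point of the paper is that the usual monotonicity/blow-up tools fail here). The paper's cyclic order comes out of the boundary trace structure on $\partial\Omega$ (each $\supp u_i\cap\partial\Omega$ made connected by the cuts, ordered counterclockwise) and the global connectedness lemmas, not from any local analysis near $\singset$. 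You would also need the key observation \eqref{eq:Winthehalfball}: the paper obtains the half-ball localization of $\singset$ by a M\"obius transform sending the origin to a boundary point of a disk containing $\singset$; your proposal only asserts that a rigid motion works "because $\concomp$ is small," which is again something that must be proved, not assumed.

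In short, the proposal has the right overall aims but the two key steps — (a) cut the domain, and (b) isolate a connected singular component — are executed in ways that either violate the standing hypotheses or beg the question; the paper's proof is structured precisely to avoid both pitfalls by working with cuts through the positivity sets and by proving, rather than assuming, connectedness of $\singset$.
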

The rest of this section is devoted to the proof of the above proposition.
We refer the reader also to \cite[Section 7]{CoTeVe_JFA2003}, where a similar preliminary analysis was conducted under the assumption that $a_{ij} = 1$ for every
$i$ and $j$.
\begin{remark}
In order to reduce to assumption (MCS), we will perform a number of operations like dividing
$\Omega$ into subsets, adding and/or relabeling densities, and so on. With some abuse of
notation, we will always write $\Omega$ for the domain and $k$ for the number of densities. Notice
that, once the proposition is proved, the proof of Theorem \ref{thm topologia sing}
will be reduced to show that, under (MCS), $\singset$ consists in a single point, around which the
asymptotic expansion \eqref{eq:asympt_expans} holds true.
\end{remark}
%
%
As a first step we use the maximum principle to reduce to connected, simply connected sub-domains.
\begin{lemma}\label{lem simply conn}
Each $u_i$ is positive and harmonic in $\omega_i$, and there are no interior $1$-multiplicity zeroes:
$\zeroset_1 \cap\Omega = \emptyset$.
Furthermore, possibly by introducing a new family of densities, we have that each $\omega_i$ is connected and simply connected, and
\[
\omega_i\cup
{\{x:\varphi_i(x)>0\}}\text{ is pathwise connected}.
\]
\end{lemma}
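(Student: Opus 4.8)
The plan is to combine the differential inequalities defining $\Scal$ with the strong maximum principle, then use a standard "capping off holes" argument to achieve simple connectedness. First I would show that each $u_i$ is harmonic and positive in $\omega_i$. Indeed, on the open set $\omega_i = \{u_i > 0\}$, all the other components vanish (since $u_iu_j = 0$), so the hat operator reduces to $\widehat u_i = u_i$ there; hence $-\Delta u_i \le 0$ and $-\Delta u_i = -\Delta \widehat u_i \ge 0$ simultaneously on $\omega_i$, giving $\Delta u_i = 0$ in $\omega_i$. Positivity on $\omega_i$ is immediate from the definition of $\omega_i$; more useful is that $u_i > 0$ on every connected component of $\omega_i$ whose closure meets $\{\varphi_i > 0\}$, and that $u_i$ cannot have an interior zero inside a component of $\omega_i$ — but this is automatic. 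The point is rather to rule out $\zeroset_1\cap\Omega$: if $m(\bx_0) = 1$ then by Lemma \ref{thm:intro_S}(2) there is a ball $B_r(\bx_0)$ on which some $u_i$ is harmonic, and on which all other $u_j \equiv 0$; if $\bx_0$ were a zero of $u_i$ it would be an interior minimum of the nonnegative harmonic function $u_i$, forcing $u_i \equiv 0$ on $B_r(\bx_0)$ by the strong maximum principle, contradicting $m(\bx_0) = 1$ (which requires $|\omega_i \cap B_\rho(\bx_0)| > 0$ for all $\rho$). Hence $\bx_0 \notin \zeroset$, i.e. $\zeroset_1 \cap \Omega = \emptyset$.

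Next I would address connectedness and simple connectedness of the supports. For connectedness: if $\omega_i$ has two or more connected components, I simply relabel each component as a new density (this increases $k$, which is harmless per the Remark, and the traces are split accordingly; a component whose closure misses $\partial\Omega$ entirely would carry a positive harmonic function with zero boundary data on $\partial\omega_i\subset\Omega$ — impossible by the maximum principle, so every component does touch the boundary and inherits a nonzero piece of some $\varphi_i$). After this relabelling each $\omega_i$ is connected. For simple connectedness: suppose $\omega_i$ is connected but not simply connected, so $\Omega\setminus\overline{\omega_i}$ has a bounded connected component $D$ (a "hole"), with $\partial D \subset \partial\omega_i \subset \zeroset$. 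On $D$ every $u_j$ with $j\ne i$ that is positive somewhere in $D$ is harmonic on its own positive set; but in fact I claim all components vanish on $\partial D$ except possibly... — the cleaner route is: on $D$, consider $w = \sum_{j\ne i} \frac{a_{ij}}{a_{ji}} u_j \ge 0$; since $\bx_0\in\partial D$ lies on $\partial\omega_i$ and $u_i$ vanishes there, and since $\Omega$ is simply connected with $D$ a hole of $\omega_i$, the boundary $\partial D$ lies in $\zeroset$, so $w = 0$ on $\partial D$; inside $D$ one has $-\Delta(-w) = \Delta w = -\Delta u_i + \Delta\widehat u_i$ wherever these make sense, but more simply each $u_j$ restricted to $D$ is subharmonic ($-\Delta u_j \le 0$) and vanishes on $\partial D$, hence $u_j \le 0$ in $D$ by the maximum principle, so $u_j \equiv 0$ in $D$ for all $j\ne i$; but then $D$ is a region where all densities vanish, which we may simply reassign to $\omega_i$ (extending $u_i$ by $0$, which keeps it subharmonic, and keeps $\widehat u_i = u_i$ superharmonic there since the extension is by zero and $u_i = 0$ on $\partial D$). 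Filling all such holes makes $\omega_i$ simply connected without destroying any previously arranged property.

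Finally, the connectedness of $\omega_i \cup \{x : \varphi_i(x) > 0\}$: by assumption (A\ref{eq:ass_non_deg_trace}) each connected arc of $\{\varphi_i > 0\} \subset \partial\Omega$ has, on every neighbourhood, points where $\varphi_i > 0$, hence $u_i > 0$ just inside $\Omega$, so each such boundary arc attaches to $\omega_i$; since (after the relabelling above) $\omega_i$ is connected, the union with the boundary arcs of $\varphi_i$ — each glued to the single component $\omega_i$ — is pathwise connected. The main obstacle I anticipate is the hole-filling step: one must verify carefully that after redefining $u_i$ on a hole $D$ the triple of conditions $u_i \ge 0$, $-\Delta u_i \le 0$, $-\Delta\widehat u_i \ge 0$ survives across the interface $\partial D$ in the variational sense, and that the segregation condition $u_iu_j = 0$ is preserved (it is, since all $u_j \equiv 0$ on $D$). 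This is where the precise variational meaning of the differential inequalities in \eqref{essse*} has to be used, via testing against nonnegative $H^1_0$ functions supported near $\partial D$ and exploiting that $u_i$ and its zero extension match continuously (by Lemma \ref{thm:intro_S}(1)).
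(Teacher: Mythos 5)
The first part of your argument (harmonicity of $u_i$ on $\omega_i$ via $\widehat u_i = u_i$, and $\zeroset_1\cap\Omega = \emptyset$ via the strong maximum principle) is correct and is essentially what the paper does. The relabelling argument for connectedness is also fine, including the observation that a component of $\omega_i$ with closure away from $\partial\Omega$ would carry a positive harmonic function with zero boundary data, which is impossible.

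The simple connectedness step, however, has a genuine gap. After correctly concluding that all $u_j$ with $j\neq i$ vanish identically on a bounded hole $D$ of $\Omega\setminus\overline{\omega_i}$, you observe that $u_i\equiv 0$ on $D$ as well (true by definition of $D$) and propose to ``reassign $D$ to $\omega_i$ by extending $u_i$ by zero.'' But $u_i$ is \emph{already} zero on $D$, so this ``extension'' changes nothing: $\omega_i = \{u_i>0\}$ is unaffected and the hole persists. You cannot enlarge the open set $\{u_i > 0\}$ by leaving $u_i$ unchanged. What is needed instead is to show that the hole cannot exist in the first place, i.e.\ that $u_i$ is in fact strictly positive on $D$, which contradicts $D\cap\omega_i = \emptyset$. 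The paper achieves this directly: take any Jordan curve $\gamma\subset\omega_i$ with enclosed region $\Sigma$. Since $u_j=0$ on $\gamma$ and $-\Delta u_j\le 0$, the maximum principle gives $u_j\equiv 0$ in $\Sigma$ for all $j\neq i$; hence $\widehat u_i = u_i$ in $\Sigma$, so $u_i$ is harmonic there; and since $u_i>0$ on $\partial\Sigma=\gamma$, the minimum principle for harmonic functions forces $u_i>0$ throughout $\Sigma$, so $\Sigma\subset\omega_i$ and $\gamma$ is contractible in $\omega_i$. The crucial ingredient you omitted is that $u_i$ has \emph{strictly positive} Dirichlet data on the bounding Jordan curve; your hole $D$ only has $u_i=0$ on $\partial D$, which gives no leverage, and is precisely why you got stuck trying to ``fill'' rather than rule out the hole.
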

\begin{proof}
The first part of the statement follows by Lemma \ref{thm:intro_S}, and from the strong maximum principle. Let us assume that there exists $\bx_0 \in \zeroset_1\cap\Omega$, that is, let us assume that there exists
$\bx_0\in\Omega$, $r > 0$ and $i$ such that $u_i(\bx_0)=0$ and $|\omega_i \cap B_r(\bx_0)| > 0$, while $|\omega_j \cap B_r(\bx_0)| = 0$ for $j \neq i$. But then $u_i$ is harmonic in $B_{r}(\bx_0)$ and, since its boundary data on $\partial B_{r}$ is non-trivial, by the maximum principle we have that $u_i(\bx_0) > 0$, a contradiction. Finally, since $\partial\omega_i\subset \zeroset\cup\partial\Omega$, again the maximum principle implies that each connected component $\sigma$ of some $\omega_i$ must satisfy $\partial\sigma\cap\partial\Omega\neq\emptyset$. By assumption (A\ref{eq:ass_non_deg_trace}) we know that $\partial\Omega\setminus\{\varphi=0\}$ has
a finite number of connected components; we deduce that the same holds also for each $\omega_i$. Introducing, if necessary, further formal densities, we can then assume that each set $\omega_i$ is connected.
Since (by continuity of the densities $u_i$) these sets are open, they are also path-connected. Next, it is easily shown that each of these components is simply connected. Indeed, let $\gamma\subset\omega_i$
be any Jordan curve, and let $\Sigma$ denote the bounded region of $\R^2$ such that $\partial \Sigma = \gamma$. By the maximum principle, all the other components $u_j$, $j\neq i$, when restricted to $\Sigma$, are trivial. But then the function $u_i$ is harmonic in the mentioned set, and again the maximum principle forces $u_i > 0$ in the interior of this set, implying that the curve $\gamma$ is contractible in $\omega_i$. Finally, the pathwise
connectedness of $\omega_i\cup\{x:\varphi_i(x)>0\}$ easily follows from the fact that $\partial\Omega$ is of class $C^{1,1}$.
\end{proof}
\begin{remark}
In principle it may happen that some point $\bx_0\in\partial\Omega$ is a zero of multiplicity $1$. Nonetheless, in such case, there exists an index $i$ for which $\varphi_i$ is positive on both sides of $\bx_0$, and such point is separated from $\zeroset$.
\end{remark}
\begin{lemma}\label{lem:ZconnessoapartialO}
The set $\zeroset \cup \partial \Omega$ is connected.
\end{lemma}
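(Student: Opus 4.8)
The plan is to exploit the decomposition of the complement. By Lemma~\ref{lem simply conn}, a point of $\Omega$ lies outside $\zeroset$ precisely when one of the $u_i$ is positive there, and the sets $\omega_i$ are pairwise disjoint because $u_iu_j\equiv0$; hence
\[
\Omega\setminus\zeroset=\bigsqcup_i\omega_i ,
\]
a disjoint union of open, connected and \emph{simply connected} sets. Moreover $\zeroset$ is closed in $\Omega$ (the $u_i$ being continuous) and bounded, so $\zeroset\cup\partial\Omega$ is compact. I will argue by contradiction: a disconnection of $\zeroset\cup\partial\Omega$ would produce a ``floating island'' of common zeroes, compactly contained in $\Omega$, which one of the simply connected pieces $\omega_i$ would be forced to engulf---impossible, since $u_i>0$ on $\omega_i$.

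So suppose $\zeroset\cup\partial\Omega=A\sqcup B$ with $A,B$ closed, disjoint and nonempty. Since $\Omega$ is a bounded $C^{1,\alpha}$ domain (assumption (A\ref{ass:1})), $\partial\Omega$ is a Jordan curve, hence connected, and therefore lies in one of the two sets, say $\partial\Omega\subseteq A$; then $B\subseteq\zeroset$ is a nonempty compact subset of $\Omega$, and $\delta:=\dist(A,B)>0$ (note $\delta\le\dist(B,\partial\Omega)$). I would next enclose a point of $B$ by a polygonal Jordan curve avoiding $\zeroset$. Covering $B$ by finitely many open squares of a grid of mesh $s$ with $2\sqrt2\,s<\delta$, each meeting $B$, and letting $P$ be the closure of their union---slightly enlarged, if necessary, to remove degenerate corners so that $\partial P$ is a finite disjoint union of polygonal Jordan curves---one obtains a compact set with $B\subseteq\interior P$ and $P\subseteq\{x:\dist(x,B)<\delta\}$; the latter forces $P\subseteq\Omega$ and $P\cap\zeroset\subseteq B$, and since $B\subseteq\interior P$ this yields $\partial P\cap\zeroset=\emptyset$, i.e.\ $\partial P\subseteq\Omega\setminus\zeroset=\bigsqcup_i\omega_i$. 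Fixing $q\in B$, let $G$ be the connected component of $\R^2\setminus\partial P$ containing $q$ (it is bounded, as $q\in\interior P$), and let $\Gamma$ be its outer boundary component: then $\Gamma$ is a single polygonal Jordan curve, $\Gamma\subseteq\partial P$, and $q$ lies in the bounded Jordan domain $I(\Gamma)$.

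Now $\Gamma$ is connected and $\Gamma\subseteq\bigsqcup_i\omega_i$, so $\Gamma\subseteq\omega_{i_0}$ for a single index $i_0$. Since $\omega_{i_0}$ is simply connected, $\Gamma$ is contractible in $\omega_{i_0}$; on the other hand $\Gamma$ has nonzero winding number about every point of $I(\Gamma)$, so it is not contractible in $\R^2\setminus\{p\}$ for any $p\in I(\Gamma)$. Hence $I(\Gamma)\subseteq\omega_{i_0}$, and in particular $q\in\omega_{i_0}=\{u_{i_0}>0\}$, contradicting $q\in B\subseteq\zeroset$. Therefore $\zeroset\cup\partial\Omega$ is connected.

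All the analytic input is packaged in Lemma~\ref{lem simply conn} (positivity and harmonicity of $u_i$ in $\omega_i$, and simple connectedness of $\omega_i$); what remains is plane topology. The only genuinely delicate point is extracting the single enclosing Jordan curve $\Gamma$ from the polyomino $P$, whose boundary may a priori consist of several curves and whose interior may have several components carrying ``holes''---this is precisely why one works with a polyomino rather than, say, a generic $\delta/2$-neighbourhood of $B$, and it is handled with standard facts about complements of finite families of disjoint Jordan curves together with the Jordan curve theorem.
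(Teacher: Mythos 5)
Your proof is correct and follows essentially the same route as the paper's: a disconnection would isolate a compact piece of $\zeroset$ inside $\Omega$, which is then enclosed by a loop lying in a single $\omega_{i_0}$, contradicting the simple connectedness established in Lemma~\ref{lem simply conn}. The only notable difference is that you replace the open set $U$ arising from the disconnection by an explicit polyomino $P$, so that $\partial P$ is manifestly a finite disjoint union of polygonal Jordan curves; this makes rigorous the step that the paper dispatches tersely with ``each connected component of $\partial U$ must be a subset of some $\omega_i$, and by simple connectedness we find that necessarily $U\cap(\zeroset\cup\partial\Omega)=\emptyset$.''
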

\begin{proof}
Let $O_1, O_2 \subset \R^2$ be two open sets such that $O_1 \cap O_2 = \emptyset$, $(\zeroset \cup \partial \Omega) \subset O_1 \cup O_2$.
Since $\partial\Omega$ is connected, we deduce that one of the sets, say $O_1$, is a subset of $\Omega$ and the other, $O_2$, contains its boundary: $O_1 \subset \Omega$ and $\partial \Omega \subset O_2$. Moreover $\partial O_1 \cap (\zeroset \cup \partial \Omega) = \emptyset$ by definition. We deduce
that each connected component of $\partial O_1$ must be a subset of some $\omega_i$, and by simple connectedness we find that necessarily $O_1 \cap (\zeroset \cup \partial \Omega) = \emptyset$.
\end{proof}
Next we turn to analyze the regular part of the segregation boundary.
\begin{lemma}\label{lem z2 curves}
Each $\Gamma_{ij}$ is either empty or a $C^1$ connected, open curve. In the latter case, $\omega_i \cup \Gamma_{ij} \cup \omega_j$ is an open and simply connected subset of $\Omega$. In particular,
$\zeroset_2 = \cup_{i\neq j} \Gamma_{ij}$ is the disjoint union of a finite number of regular curves.
\end{lemma}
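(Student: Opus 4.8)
The plan is to work locally and patch, using harmonicity of difference-type combinations provided by Lemma~\ref{thm:intro_S}. First I would fix $i\neq j$ and assume $\Gamma_{ij}\neq\emptyset$. Around a point $\bx_0\in\Gamma_{ij}$ we have $m(\bx_0)=2$, so by item~\eqref{item:3} of Lemma~\ref{thm:intro_S} the function $w:=a_{ji}u_i-a_{ij}u_j$ is harmonic in a ball $B_r(\bx_0)$, and (after shrinking $r$) all the other densities vanish there. Hence in $B_r(\bx_0)$ we have $u_i=w^+/a_{ji}$, $u_j=w^-/a_{ij}$, and $\Gamma_{ij}\cap B_r(\bx_0)$ is exactly the nodal set of the harmonic function $w$. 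Since $\bx_0$ is a zero of multiplicity $2$ and not a point where $|\nabla u_i|=|\nabla u_j|=0$ (that would put it in $\singset$, contradicting $\bx_0\in\zeroset_2$; more precisely, both $\omega_i$ and $\omega_j$ accumulate at $\bx_0$, so $w$ changes sign there and $w$ cannot vanish to infinite order), the gradient $\nabla w(\bx_0)$ is nonzero — if it were zero, the nodal set of $w$ near $\bx_0$ would be a union of at least two arcs crossing at $\bx_0$, and this would force a third support (or a higher-multiplicity configuration) to be squeezed in, which is impossible since only $u_i,u_j$ are nontrivial there. Therefore, by the implicit function theorem, $\{w=0\}\cap B_\rho(\bx_0)$ is a single $C^1$ arc for $\rho$ small, and in fact real-analytic since $w$ is harmonic hence analytic. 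This gives that $\Gamma_{ij}$ is locally a $C^1$ (even analytic) embedded arc, so globally it is a $1$-dimensional $C^1$ manifold, i.e.\ a disjoint union of open curves.

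Next I would upgrade ``disjoint union of curves'' to ``connected curve'', and simultaneously get the statement about $\omega_i\cup\Gamma_{ij}\cup\omega_j$. Set $D:=\omega_i\cup\Gamma_{ij}\cup\omega_j$. This set is open: interior points of $\omega_i$ and $\omega_j$ are clear, and a point of $\Gamma_{ij}$ has a neighbourhood $B_\rho(\bx_0)$ on which (by the analysis above) $u_\ell\equiv 0$ for $\ell\neq i,j$, so $B_\rho(\bx_0)\subset\omega_i\cup\Gamma_{ij}\cup\omega_j=D$. By Lemma~\ref{lem simply conn} each of $\omega_i,\omega_j$ is connected; to see $D$ is connected it is enough to see that each point of $\omega_i$ can be joined to a point of $\omega_j$ through $D$: pick $\bx_0\in\Gamma_{ij}$ (nonempty by hypothesis) and a ball $B_\rho(\bx_0)$ as above; since $w$ is harmonic with $\nabla w(\bx_0)\neq0$, both $\{w>0\}\cap B_\rho(\bx_0)=\omega_i\cap B_\rho(\bx_0)$ and $\{w<0\}\cap B_\rho(\bx_0)=\omega_j\cap B_\rho(\bx_0)$ are nonempty, so $D$ meets both $\omega_i$ and $\omega_j$, and connectedness of $D$ follows from connectedness of $\omega_i$, $\omega_j$. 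Once $D$ is connected and open, $\Gamma_{ij}=D\setminus(\omega_i\cup\omega_j)$; its connectedness I would deduce from simple connectedness of $D$ (established next) together with the fact that $\Gamma_{ij}$ locally separates $\omega_i$ from $\omega_j$ inside $D$: a planar Jordan-type separation argument shows a disconnection of $\Gamma_{ij}$ would disconnect $D$ or violate that $\omega_i,\omega_j$ are each connected and lie on opposite ``sides''. This is the step I expect to require the most care, since it is genuinely topological (one must rule out, e.g., two parallel components of $\Gamma_{ij}$ with $\omega_i$ on both far sides and $\omega_j$ sandwiched in between — which is exactly where connectedness of $\omega_j$ is used).

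For simple connectedness of $D$, let $\gamma\subset D$ be a Jordan curve and $\Sigma$ its bounded interior component. On $\Sigma$, every density $u_\ell$ with $\ell\neq i,j$ vanishes identically: indeed $\partial\Sigma=\gamma\subset D\subset\{u_\ell=0\}$, and $u_\ell$ is subharmonic (since $-\Delta u_\ell\le0$), so $u_\ell\le0$ in $\Sigma$, whence $u_\ell\equiv0$ there by $u_\ell\ge0$. Consequently $w=a_{ji}u_i-a_{ij}u_j$ is harmonic in $\Sigma$ (using item~\eqref{item:3} locally, or directly: $-\Delta u_i\le0$ and $-\Delta\widehat u_i\ge0$ collapse to harmonicity of $w$ when the other densities are absent). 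Its nodal set in $\overline{\Sigma}$ is $\Gamma_{ij}\cap\overline\Sigma$, and $w$ does not vanish on $\gamma$ unless $\gamma$ itself meets $\Gamma_{ij}$; arguing as in Lemma~\ref{lem simply conn}, if $w$ had no zero in $\Sigma$ then one of $\omega_i,\omega_j$ would cover $\Sigma$ and the other be empty there, so $\gamma$ would be contractible inside that single $\omega$. If instead $w$ vanishes somewhere in $\Sigma$, the nodal line of the harmonic function $w$ in $\Sigma$ is a $1$-manifold with no closed components inside $\Sigma$ (a harmonic function cannot have a compactly-contained nodal loop in a simply connected piece without also having a critical point enclosed, which again — by the local analysis — is excluded here since only two densities are present); tracing such nodal arcs back to $\gamma$, one contracts $\gamma$ inside $D$. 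Either way $\gamma$ is contractible in $D$, so $D$ is simply connected. Finally, $\zeroset_2=\bigcup_{i\neq j}\Gamma_{ij}$ is a disjoint union (two distinct $\Gamma_{ij},\Gamma_{i'j'}$ cannot share a point, since at a point of $\Gamma_{ij}$ only $u_i,u_j$ are nontrivial), and it is a \emph{finite} union because, by Lemma~\ref{lem simply conn} and assumption~(A\ref{eq:ass_non_deg_trace}), only finitely many densities occur and each ordered pair contributes at most one index pair; hence $\zeroset_2$ is the disjoint union of finitely many regular curves, as claimed.
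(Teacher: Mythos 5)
Your overall plan — local regularity via harmonicity of $w:=a_{ji}u_i-a_{ij}u_j$ followed by a global topological argument — matches the paper's, but two steps have genuine gaps, and the second is the crux of the lemma.

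\textbf{The nondegeneracy of $\nabla w$ at $\bx_0\in\Gamma_{ij}$.} Your justification ("if $\nabla w(\bx_0)=0$, the nodal set would be a union of at least two arcs crossing at $\bx_0$, forcing a third support to be squeezed in") is not correct: if $w$ vanished to order $n\ge 2$ at $\bx_0$, a small disk around $\bx_0$ would simply split into $2n$ alternating sectors of $\omega_i$ and $\omega_j$ — still only the two densities $u_i,u_j$, no third one. The paper's reason is different and does the real work: $\omega_i$ and $\omega_j$ have already been shown in Lemma~\ref{lem simply conn} to be connected and simply connected, which is incompatible with $\omega_i\cap B_\rho(\bx_0)$ (or $\omega_j\cap B_\rho(\bx_0)$) having two or more sectors meeting at $\bx_0$: connecting two such sectors of $\omega_i$ by a path inside $\omega_i$ produces a Jordan curve (closing through $\bx_0$) that traps a sector of $\omega_j$, contradicting that $\omega_j$ is connected and reaches $\partial\Omega$. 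The paper's laconic phrase ``since $\omega_i$ and $\omega_j$ are path-connected, $\bx_0$ is a simple zero'' is pointing at exactly this topological argument, not at a density count.

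\textbf{Connectedness of $\Gamma_{ij}$.} You yourself flag this as ``the step I expect to require the most care,'' and indeed your proposal leaves it unproved: ``a planar Jordan-type separation argument shows a disconnection of $\Gamma_{ij}$ would disconnect $D$ \ldots'' is a plan, not a proof, and simple connectedness of $D$ alone does not readily give connectedness of a closed subset separating it. The paper's resolution is a concrete construction you should adopt: given $\bx_0\ne\bx_1\in\Gamma_{ij}$, join them by a simple arc $\gamma_i$ running through $\omega_i$ and a simple arc $\gamma_j$ through $\omega_j$ (possible because $\omega_i,\omega_j$ are open, path-connected, and $\Gamma_{ij}$ is locally a smooth arc with $\omega_i,\omega_j$ on either side). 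This yields a Jordan curve on which only $u_i,u_j$ are nontrivial; the maximum principle kills the other densities in the enclosed region $\Sigma$, so $w$ is harmonic in $\Sigma$ and vanishes on $\partial\Sigma$ precisely at $\bx_0$ and $\bx_1$. Since the zeros of $w$ in $\Sigma$ are all simple and there can be no closed nodal loop (maximum principle again), each nodal arc of $w$ in $\Sigma$ must run from $\bx_0$ to $\bx_1$, and if there were two such arcs they would enclose a region with $w\equiv 0$ on its boundary, a contradiction. Hence $\Gamma_{ij}\cap\Sigma$ is a single arc, giving connectedness of $\Gamma_{ij}$ and, as a byproduct, the openness and simple connectedness of $\omega_i\cup\Gamma_{ij}\cup\omega_j$. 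Your separate attempt at simple connectedness of $D$ via subharmonicity of the $u_\ell$ is fine in spirit (and can be streamlined: once the other densities vanish in $\Sigma$, every point of $\Sigma$ has multiplicity $2$ and $\Sigma\subset D$ directly), but without the explicit Jordan-curve construction the connectedness of $\Gamma_{ij}$ remains unestablished.
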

\begin{proof}
We first show that if $\bx_0 \in \zeroset_2$, then $\bx_0 \in \Gamma_{ij}$, which is locally defined by a smooth curve near $\bx_0$. Indeed, let $\bx_0 \in \zeroset_2$ and, by definition, let $i \neq j$ and $r > 0$ be such that $\omega_h \cap B_r(\bx_0)$ is not empty if and only if $h = i,j$. It follows that the function $a_{ji}u_i-a_{ij}u_j$, restricted to $B_r(\bx_0)$, is harmonic and vanishes in $\bx_0$: since $\omega_i$ and $\omega_j$ are path-connected, $\bx_0$ is a simple zero, and the implicit function theorem implies that the set $\Gamma_{ij}$ is represented by a smooth curve, locally near
$\bx_0 \in \Gamma_{ij}$.

We now show the connectedness of $\Gamma_{ij}$: let us consider $\bx_0 \neq \bx_ 1 \in \Gamma_{ij}$. Locally at both $\bx_0$ and $\bx_1$, $\Gamma_{ij}$ is a smooth curve, and since $\omega_i$ and $\omega_j$ are open and (path-)connected, we can easily construct two non self-intersecting curves $\gamma_i:[0,1] \to \supp(u_i)$ and $\gamma_j:[0,1] \to \supp(u_j)$ such that $\gamma_i(0,1) \subset \omega_i$ and $\gamma_j(0,1) \subset \omega_j$ and, moreover, $\gamma_i(0) = \gamma_j(1)= \bx_0$ and $\gamma_i(1) = \gamma_j(0)= \bx_1$. It follows by construction that on the Jordan's curve $\gamma := \{\bx_0\} \cup \gamma_i \cup \{\bx_1\} \cup \gamma_j $ only $u_i$ and $u_j$ are non trivial. Calling $\Sigma$ the open, simply connected region enclosed by $\gamma$, then, by the maximum principle, all the other functions are trivial when restricted to $\Sigma$. It follows in particular that $\Delta (a_{ji} u_i - a_{ij} u_j) = 0$ in $\Sigma$, and it vanishes on $\partial\Sigma$ exactly at $\bx_0$ and $\bx_1$.
Standard properties of harmonic functions imply that $\Gamma_{ij}\cap\Sigma$ is connected.

We are left to show that if $\Gamma_{ij} \neq \emptyset$, then $\omega_i \cup \Gamma_{ij} \cup \omega_j$ is an open and simply connected subset of $\Omega$, but this is an immediate consequence of the above construction.
\end{proof}
The Hopf lemma implies that, for any $i$, some $\Gamma_{ij}$ must be non-empty.
\begin{lemma}\label{lem:Hopf}
Let $B_r(\bx_0)\subset\omega_i$ and $\bp\in\partial B_r(\bx_0)\cap \partial\omega_i$. Then
$m(\bp)=2$.
\end{lemma}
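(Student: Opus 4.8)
The plan is to argue by contradiction via the Hopf boundary point lemma. Suppose $m(\bp) \le 1$; since $\bp \in \partial\omega_i$ we cannot have $m(\bp) = 0$ without some neighborhood of $\bp$ being entirely outside every $\overline\omega_j$, but $\bp$ is a limit of points of $\omega_i$, so in fact $m(\bp) = 1$ is the only alternative to $m(\bp) = 2$. We have already shown in Lemma \ref{lem simply conn} that $\zeroset_1 \cap \Omega = \emptyset$, hence $\bp \notin \Omega$, i.e. $\bp \in \partial\Omega$. But the hypothesis places $B_r(\bx_0)$ well inside $\omega_i \subset \Omega$ with $\bp$ on its boundary sphere, and one can take $r$ small enough (or note that $\bx_0 \in \omega_i \subset \Omega$ and $\dist(\bx_0,\partial\Omega) > r$ may fail) — so the genuinely useful reduction is: if $m(\bp) = 1$ then there is $\rho > 0$ such that $u_i$ is harmonic and positive in $B_\rho(\bp) \cap \Omega$ while all other densities vanish there.

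Granting that, the heart of the argument is the classical interior-ball Hopf lemma. The point $\bp$ lies on $\partial B_r(\bx_0)$, the ball $B_r(\bx_0)$ is contained in $\omega_i$ where $u_i > 0$ and is harmonic, and $u_i(\bp) = 0$ (since $\bp \in \partial\omega_i$ and $u_i$ is continuous on $\overline\Omega$ by Lemma \ref{thm:intro_S}, and $u_i = 0$ on $\partial\omega_i \cap \Omega$). The Hopf lemma then yields that the outward (from $B_r(\bx_0)$) normal derivative of $u_i$ at $\bp$ is strictly negative, i.e. $|\nabla u_i(\bp)| > 0$, and moreover this derivative exists because $\bp$ is not a singular point. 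Now I distinguish the two remaining cases. If $\bp \in \Omega$, then $m(\bp) = 1$ is impossible by Lemma \ref{lem simply conn}, and $m(\bp) \ge 3$ would force $\bp \in \singset$, hence $\lim_{s\to 0}\sup_{B_s(\bp)}|\nabla u_i| = 0$ by item 4 of Lemma \ref{thm:intro_S}, contradicting $|\nabla u_i(\bp)| > 0$; so $m(\bp) = 2$. If instead $\bp \in \partial\Omega$, then $B_r(\bx_0) \subset \omega_i \subset \Omega$ tangent internally to $\partial\Omega$ at $\bp$ forces, together with assumption (A\ref{eq:ass_non_deg_trace}) and the $C^{1,\alpha}$ regularity of $\partial\Omega$, that $\varphi_i(\bp) = u_i(\bp) = 0$; but the nondegeneracy hypothesis (A\ref{eq:ass_non_deg_trace}) says $\varphi$ vanishes nondegenerately, so near $\bp$ on $\partial\Omega$ some $\varphi_j$ (possibly $j = i$) is positive — a more careful local analysis shows this again pins down $m(\bp)$, but in the present reduction we may simply exclude $\bp \in \partial\Omega$ by shrinking $r$ if needed, since the statement is used for interior balls.

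Concretely, the steps are: (i) record that $u_i \in C^{0,1}(\overline\Omega)$ and $u_i = 0$ on $\zeroset$, so $u_i(\bp) = 0$; (ii) apply the interior Hopf lemma on $B_r(\bx_0)$ to get a nonzero normal derivative, hence $\nabla u_i(\bp) \neq 0$; (iii) observe this forces $\bp \notin \singset$, i.e. $m(\bp) \neq h$ for $h \ge 3$ and $m(\bp) \neq 0$; (iv) observe $\bp \notin \zeroset_1$ if $\bp \in \Omega$ (Lemma \ref{lem simply conn}), and handle the boundary case; (v) conclude $m(\bp) = 2$. The main obstacle, and the only place needing genuine care, is verifying that the normal derivative at $\bp$ is actually the full gradient of $u_i$ there — that is, that $u_i$ is differentiable at $\bp$ with $\nabla u_i(\bp) \neq 0$ — which requires knowing a priori that $\bp$ is not a singular point; this is circular unless one instead argues: the Hopf lemma gives $\liminf_{s\to0}\sup_{B_s(\bp)}|\nabla u_i| > 0$ directly (the difference quotients of $u_i$ along the inward radial direction at $\bp$ are bounded below), and that suffices to contradict item 4 of Lemma \ref{thm:intro_S} without ever asserting differentiability at $\bp$. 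That reformulation is what makes the proof go through cleanly.
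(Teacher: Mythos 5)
Your proof follows the same route as the paper's (which is a one-liner: Hopf gives $\nabla u_i(\bp)\neq 0$, and Lemma \ref{thm:intro_S} then places $\bp$ in $\zeroset_2$), just spelled out with more care; your observation that one should read the Hopf conclusion as a lower bound on $\limsup_{s\to 0}\sup_{B_s(\bp)}|\nabla u_i|$ rather than as differentiability of $u_i$ at $\bp$ is exactly the right way to make the terse argument rigorous against item 4 of Lemma \ref{thm:intro_S}. The case $\bp\in\partial\Omega$ you worry about is indeed tacitly excluded (and handled separately in Lemma \ref{lem:noWonpartialOmega}); in the paper's applications of this lemma $\bp$ is always interior, so the reduction you sketch is fine.
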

\begin{proof}
Hopf's lemma forces $\nabla u_i(\bp) \neq 0$, and Lemma \ref{thm:intro_S} yields
$\bp \in \zeroset_2 $.
\end{proof}
Turning to the analysis of higher multiplicity points, we first show that the non-degeneracy assumption (A\ref{eq:ass_non_deg_trace}) implies that the singular
set $\singset$ lies in the interior of $\Omega$. We need a preliminary result.
\begin{lemma}\label{lem:halfdisk}
Let $\bx_0\in\partial\Omega$ be such that, for some $i\neq j$, $\bx_0\in\overline{\{x:\varphi_i(x)>0\}}\cap\overline{\{x:\varphi_j(x)>0\}}$,
and let $\bnu$ denote the exterior normal unit vector to $\partial\Omega$ at $\bx_0$. Then:
\begin{enumerate}
 \item  there exist a unit vector $\be$, with $-1/2<\be\cdot\bnu < 0$, and positive constants $\alpha$, $L$,
 such that $\alpha t \leq u_i(\bx_0 + t \be) \leq Lt$, for $t>0$ sufficiently small;
 \item there exist constants $M>0$, $\gamma>1$ such that $|u_h(\bx)|\leq M|\bx-\bx_0|^\gamma$ for every $h\neq i,j$;
 \item if $\Gamma_{ij}=\emptyset$ then there exists $\rho>0$ sufficiently small such that $(B_\rho(\bx_0 + \rho \be) \cap \{\bx:(\bx-\bx_0)\cdot \be^\perp>0\})\subset\omega_i$, where
 the unit vector $\be^\perp$, orthogonal to $\be$, is chosen such that $\be^\perp\cdot\bnu < 0$ (see Fig. \ref{fig:halfdisk}).
\end{enumerate}
\end{lemma}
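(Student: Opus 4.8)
The plan is to establish the three properties listed in Lemma~\ref{lem:halfdisk} by a local analysis near $\bx_0\in\partial\Omega$, combining the non-degeneracy of the trace (A\ref{eq:ass_non_deg_trace}) with the structure results of Lemma~\ref{thm:intro_S} and the Lipschitz bound on $U$. Since $\partial\Omega$ is $C^{1,\alpha}$, after a rotation and a dilation I may work in a half-disk model: locally $\Omega$ sits on one side of a $C^{1,\alpha}$ graph, with $\bx_0$ at the origin and $\bnu$ the exterior normal. The key geometric input is that both $\{\varphi_i>0\}$ and $\{\varphi_j>0\}$ accumulate at $\bx_0$ on $\partial\Omega$, so in every neighborhood of $\bx_0$ there are boundary points carrying positive Dirichlet data for $u_i$ and for $u_j$.

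First I would prove (2). Fix $h\neq i,j$. Because $u_h\ge 0$ and $-\Delta u_h\le 0$ (so $u_h$ is subharmonic), $u_h$ is controlled from above by the harmonic function $w$ on a small half-disk $D_\rho=B_\rho(\bx_0)\cap\Omega$ with boundary data $u_h$ on the circular part and $u_h=\varphi_h$ on the flat part. Now $\varphi_h$ is Lipschitz and, since $\varphi_i$ (hence near $\bx_0$ also $\varphi_j$) is positive on a sequence of boundary points converging to $\bx_0$ and $\varphi_i\varphi_h\equiv 0$, the trace $\varphi_h$ vanishes on a relatively large portion of $\partial\Omega$ near $\bx_0$. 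Using a barrier that vanishes to order $\gamma>1$ at $\bx_0$ along the boundary (built from $\mathrm{Im}(z^\gamma)$ in a suitably flattened half-disk, with the flattening using the $C^{1,\alpha}$ regularity of $\partial\Omega$), one gets $w(\bx)\le M|\bx-\bx_0|^\gamma$, and hence the same bound for $u_h$. The exponent $\gamma$ depends only on the opening of the sector of $\partial\Omega$ on which $\varphi_h$ may be positive; since $\varphi_i,\varphi_j$ occupy boundary arcs on both sides (because the traces are labelled counterclockwise and have disjoint supports), that opening is strictly less than $\pi$, forcing $\gamma>1$.

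Next, (1): I would apply the same comparison philosophy to $u_i$. From below, $u_i$ is superharmonic ($-\Delta u_i\le 0$ is the wrong sign — rather, $u_i\ge 0$ and $u_i$ is subharmonic, but $\widehat u_i$ is superharmonic), so I instead argue via $\widehat u_i=u_i-\sum_{j\neq i}\frac{a_{ij}}{a_{ji}}u_j$, which is superharmonic and, near $\bx_0$, has boundary trace $\varphi_i-\sum_{j\neq i}\frac{a_{ij}}{a_{ji}}\varphi_j$. By (2) the contributions of the densities $h\neq i,j$ are negligible of order $|\bx-\bx_0|^\gamma$, so along a ray $\bx_0+t\be$ entering $\Omega$ transversally (for a suitable direction $\be$ with $-1/2<\be\cdot\bnu<0$, chosen so that the ray stays inside $\Omega$ and inside $\{\varphi_i>0\}$'s side of the interface when $\Gamma_{ij}=\emptyset$), the non-degeneracy (A\ref{eq:ass_non_deg_trace}) gives a linear lower bound $\varphi(\bx)\ge c|\bx-\bx_0|$ on $\partial\Omega$, which propagates via Harnack/boundary comparison to $u_i(\bx_0+t\be)\ge \alpha t$; the upper bound $u_i(\bx_0+t\be)\le Lt$ is just the global Lipschitz estimate of Lemma~\ref{thm:intro_S}(1). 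The condition $\be\cdot\bnu\in(-1/2,0)$ is a quantitative way of saying $\be$ points into $\Omega$ but is sufficiently close to tangential — this is exactly the regime in which the non-degenerate portion of the trace is ``seen'' by the ray.

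Finally, (3): assume $\Gamma_{ij}=\emptyset$. Then near $\bx_0$ no other density is present on a whole half-disk except $u_i$ (and $u_j$ only as a lower-order perturbation by (2)), so $\widehat u_i$ is a superharmonic function, positive on the circular boundary by (1) and with boundary trace bounded below on the flat part by $c|\bx-\bx_0|-M|\bx-\bx_0|^\gamma>0$ on the arc where $\be^\perp\cdot(\bx-\bx_0)>0$; constructing an explicit subsolution (a rotated, rescaled version of the linear function $(\bx-\bx_0)\cdot\be^\perp$ corrected by a higher-order term absorbing the curvature of $\partial\Omega$) one shows $\widehat u_i>0$, hence $u_i>0$, on a small half-disk $B_\rho(\bx_0+\rho\be)\cap\{(\bx-\bx_0)\cdot\be^\perp>0\}$, which gives the claim. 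The main obstacle I anticipate is the bookkeeping for the barriers on a $C^{1,\alpha}$ (not $C^\infty$) boundary: one must flatten $\partial\Omega$ near $\bx_0$ and verify that the curvature-type error terms are genuinely of higher order than the $|\bx-\bx_0|^\gamma$ and $|\bx-\bx_0|$ scales used in (2) and (1) respectively, so that the comparison arguments close. A secondary subtlety is choosing $\be$ and $\be^\perp$ consistently so that all three statements use compatible geometry, and checking that the sector opening for the ``foreign'' densities is strictly sub-$\pi$ using only the counterclockwise labelling and disjointness of the $\varphi_i$.
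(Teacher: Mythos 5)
Your proposal inverts the logical order of the paper and, in doing so, opens genuine gaps in both~(2) and~(3).

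\textbf{Order of~(1) and~(2).} You prove~(2) first and then feed it into~(1). The paper goes the other way, and this is not a cosmetic difference. Its proof of~(1) is completely elementary: working in coordinates where $\partial\Omega=\{y=f(x)\}$ with $f(0)=f'(0)=0$, one writes $u_i(x,mx)\geq u_i(x,f(x))-L|mx-f(x)|\geq (C-Lm)x+o(x)\geq\alpha x$, using only the global Lipschitz bound and~(A\ref{eq:ass_non_deg_trace}); no superharmonicity of $\widehat u_i$, no Harnack, no boundary comparison, and crucially no input from~(2). Your route to~(1) through $\widehat u_i$ and ``Harnack/boundary comparison'' is unnecessarily heavy and, because it invokes~(2), becomes circular once one notices that your argument for~(2) in fact needs~(1).

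\textbf{Gap in~(2).} You argue that $\varphi_h$ vanishes on a large boundary arc near $\bx_0$ and that a barrier of the form $\mathrm{Im}(z^\gamma)$ then gives $u_h\leq M|\bx-\bx_0|^\gamma$ with $\gamma>1$. But the vanishing of the \emph{trace} on both sides of $\bx_0$ only controls $u_h$ up to linear decay (the relevant barrier is $\mathrm{Im}\,z$, giving $u_h\lesssim\mathrm{dist}(\bx,\partial\Omega)\sim|\bx-\bx_0|$). To push $\gamma$ above $1$ you must know that the \emph{support} $\omega_h$ is confined to a cone of opening $\pi-2\theta<\pi$ at $\bx_0$; $\mathrm{Im}(z^\gamma)$ is positive only on such a sector, not on the whole half-disk. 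The paper obtains precisely this confinement as a consequence of~(1): $u_i>0$ (and symmetrically $u_j>0$) along rays making a small angle with $\partial\Omega$, so $\omega_h\subset\{y\geq m|x|\}$ near $\bx_0$. Your ``opening of the sector of $\partial\Omega$ on which $\varphi_h$ may be positive'' has opening zero, which is not the quantity that controls $\gamma$; what matters is the opening of the cone containing $\omega_h$, and you never establish it without~(1).

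\textbf{Gap in~(3).} You propose a direct barrier: build a subharmonic $\psi>0$ on the half-disk $B_\rho(\bx_0+\rho\be)\cap\{(\bx-\bx_0)\cdot\be^\perp>0\}$ with $\psi\leq\widehat u_i$ on its boundary, and conclude $u_i>0$. The obstacle is boundary control: the lower bound from~(1) holds only along the single ray $\bx_0+t\be$, and Lipschitz propagation from that ray gives $u_i>0$ only on a disk of radius $\sim\alpha\rho/L\ll\rho$ around $\bx_0+\rho\be$, not on the full circular arc of the half-disk; and on the flat chord $\{(\bx-\bx_0)\cdot\be^\perp=0\}$ you have no sign information for $\widehat u_i$ at all. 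The paper's proof of~(3) is structurally different and cannot be replaced by a one-shot barrier: it sets $\bar r=\sup\{r:(B_r(\rho\be)\cap\{y>mx\})\subset\omega_i\}$, assumes $\bar r<\rho$ for contradiction, picks the touching point $\bp\in\partial\omega_i\cap\partial\omega_h$ (with $h\neq j$ because $\Gamma_{ij}=\emptyset$), and then plays the gradient-matching relation of Lemma~\ref{thm:intro_S}(\ref{item:3}) against two logarithmic barriers: a lower barrier in the annulus $A_1$ giving $|\nabla u_i(\bp)|\gtrsim1$ uniformly in $\rho$, and an upper barrier in $A_2$ giving $|\nabla u_h(\bp)|\lesssim\rho^{\gamma-1}\to0$ (here~(2) enters). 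The contradiction as $\rho\to0$ is exactly where the superlinear decay of~(2) is needed and where the competition coefficients enter through the gradient-matching condition; your sketch uses neither ingredient, and without them~(3) does not close.
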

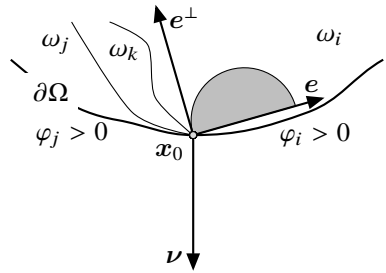
\begin{figure}[ht]
\begin{center}
\begin{tikzpicture}
\draw[thick] (-2.4,1) to[out=-40,in=170] node[pos=.4,fill=white] {$\partial\Omega$} (-1,.2) to[out=-10,in=180] (0,0) to[out=0,in=200] (1.5,.3) to[out=20,in=210] (2.5,1);
\draw[draw=black,fill=gray!50] (0,0) arc (196:16:.7) -- cycle;
\draw[thick,->] (0,0) -- node[pos=.9,left] {$\bnu$} (-90:1.8);
\draw[thick,->] (0,0) -- node[pos=.9,above] {$\be$} (16:1.8);
\draw[thick,->] (0,0) -- node[pos=.9,right] {$\be^\perp$} (106:1.8);
\draw plot[smooth]
  coordinates{
   (-1.6,1.5) (-.8,.3) (0,0)
  };
\draw plot[smooth]
  coordinates{
   (-1.1,1.5) (-.6,1.2) (-.5,.5) (0,0)
  };
\draw[mark options={fill=gray!30!white}, only marks, mark size=1.3, mark=*]
plot (0,0) node[below left] {$\bx_0$};
\draw (1.8,1.3) node {$\omega_i$};
\draw (-1.8,1.2) node {$\omega_j$};
\draw (-0.9,1.1) node {$\omega_k$};
\draw (1.6,-.0) node {$\varphi_i>0$};
\draw (-1.6,-.0) node {$\varphi_j>0$};
\end{tikzpicture}
\caption{The half-disk $B_\rho(\bx_0 + \rho \be) \cap \{\bx:(\bx-\bx_0)\cdot \be^\perp>0\}$,
contained in $\omega_i$ when $\Gamma_{ij}$ is empty and $\rho$ is small (Lemma \ref{lem:halfdisk}).\label{fig:halfdisk}}
\end{center}
\end{figure}
\begin{proof}
We assume w.l.o.g. $\bx_0=(0,0)$. All the following arguments are understood as local (near $0$).

We start by proving 1. Recall that, by Lemma \ref{thm:intro_S}, $U$ is Lipschitz with constant, say, $L>0$. Since $U(\bx_0)=0$, this immediately yields $u_h(\bx)\leq L |\bx-\bx_0|$,
for every $h$ and $\bx$. On the other hand, since $\partial\Omega$ is of class $C^{1,1}$, we can assume that $\Omega=\{(x,y)\in\R^2 : y>f(x)\}$, with $f\in C^1$ and
$f(0)=f'(0)=0$. Assumption (A\ref{eq:ass_non_deg_trace}) implies, for instance, $\varphi_i(x,f(x))>C x$ (resp. $\varphi_j(x,f(x))>-C x$) for $x>0$ (resp. $x<0$). Then, for any $m=\tan\theta>0$ sufficiently
small, there exists $\alpha>0$ such that
\begin{equation}\label{eq:nondegtr}
u_i(x,mx) \geq u_i(x,f(x))  - L |mx - f(x)| \geq (C-Lm)x + o(x) \geq \alpha x,
\end{equation}
for $x>0$ small, and 1 follows, with $\be = (\cos\theta,\sin\theta)$. Note that, at least for a smaller $m$, a similar estimate holds true also for $u_j$:
\[
u_j(x,-mx) \geq  -\alpha x,\qquad \text{for $x<0$ small}.
\]

From the previous point we have that (locally near $0$) $\omega_h$ is contained in the angle
$\{(x,y):y\geq m|x|\}$ whenever $h\neq i,j$. Standard comparison arguments with the positive harmonic function of a cone yield point 2, with $C= \|U\|_{L^\infty}$ and $\gamma = \pi /(\pi - 2\theta)$.

Finally, let $\rho>0$ small to be fixed. Notice that, choosing $\delta = \alpha\cos\theta/(2L)$, we have
\begin{equation}\label{eq:barrier1}
u_i|_{\partial B_{\delta \rho}(\rho \be)\cap\Omega}\geq u_i(\rho\be) - L\delta\rho\geq \frac{\alpha}{2}\rho\cos\theta
\end{equation}
(in particular, $0<\delta<1$). Let us consider the number
\[
\bar r := \sup\{ r>0 : (B_r(\rho\be) \cap \{(x,y):y>mx\})\subset\omega_i \}.
\]
On the one hand, by \eqref{eq:barrier1}, we have that $\bar r > \delta\rho$. On the other hand, since $\bx_0=0\not\in\omega_i$,
it holds $\bar r\leq \rho$. Since 3 is equivalent to $\bar r = \rho$, to conclude we assume by contradiction that $\bar r < \rho$.
This implies the existence of an index $h\neq i$ and of a point $\bp\in\overline{\omega}_i\cap\overline {\omega}_h$, with $\bp\in\Omega$ and $|\rho\be -\bp|=\bar r$.
We have that $B_{\bar r}(\rho\be)$ is both an interior (half-)ball touching at $\bp$ for $\omega_i$,
and an exterior one for $\omega_h$. In particular, by Lemma \ref{lem:Hopf} we infer that $m(\bp)=2$;
then Lemma \ref{thm:intro_S} implies
\begin{equation}\label{eq:match_grad}
\lim_{\substack{\bx\to \bx_0\\ \bx\in\{u_i>0\}}}\nabla u_i(x)=
-\frac{a_{ih}}{a_{hi}}\lim_{\substack{\bx\to \bx_0\\ \bx\in\{u_h>0\}}}\nabla u_h(x),
\end{equation}
where, since $\Gamma_{ij}=\emptyset$, $h\neq j$.

Now, by construction
\[
A_1 := ((B_{\bar r}(\rho\be)\setminus B_{\delta\rho}(\rho\be))\cap \{(x,y):y>mx\})\subset\omega_i;
\]
recalling \eqref{eq:nondegtr} and \eqref{eq:barrier1}, the function
\[
\eta_1(\bx) = \frac{\log \bar r - \log|\bx - \rho\be|}{\log \bar r - \log\delta \rho}\,\frac{\alpha}{2}\rho\cos\theta
\qquad\text{satisfies}\qquad
\begin{cases}
-\Delta \eta_1 = 0 & \text{in }A_1\\
\eta_1 \leq u_i & \text{on }\partial A_1,
\end{cases}
\]
so that
\begin{equation}\label{eq:palla_int}
\lim_{\substack{\bx\to \bp\\ \bx\in\omega_i}}|\nabla u_i(x)| \geq |\nabla\eta_1(\bp)|
= \frac{1}{\log \bar r /(\delta\rho)} \cdot \frac{1}{\bar r} \cdot \frac{\alpha}{2}\rho\cos\theta
\geq \frac{\alpha\cos \theta}{-2\log\delta} >0,
\end{equation}
independently of $\rho$. On the other hand, recalling point 2, we can easily construct a barrier from above for $u_h$. Indeed, let
\[
A_2 := (B_{2\bar r}(\rho\be)\setminus B_{\bar r}(\rho\be))\cap\Omega,
\qquad
\eta_2(\bx) = \frac{\log 2\bar r - \log|\bx - \rho\be|}{\log 2}\,M(2\rho)^\gamma;
\]
then
\[
\begin{cases}
-\Delta u_h \leq 0
= -\Delta \eta_2 & \text{in }A_2\\
u_h \leq \eta_2 & \text{on }\partial A_2.
\end{cases}
\]
We deduce that
\[
\lim_{\substack{\bx\to \bp\\ \bx\in\omega_h}}|\nabla u_h(x)| \leq |\nabla\eta_2(\bp)| = \frac{1}{\log 2} \cdot \frac{1}{\bar r} \cdot M(2\rho)^\gamma\leq \frac{M2^\gamma}{\delta\log2}
\rho^{\gamma-1},
\]
which is in contradiction, when $\rho$ is sufficiently small, with \eqref{eq:palla_int}
and \eqref{eq:match_grad}.
\end{proof}
\begin{corollary}\label{coro:noWboundary}
Let $\bx_0\in\overline{\{x:\varphi_i(x)>0\}}\cap\overline{\{x:\varphi_j(x)>0\}}$. Then $\Gamma_{ij}\neq\emptyset$.
\end{corollary}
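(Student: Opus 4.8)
The plan is to argue by contradiction, assuming $\Gamma_{ij} = \emptyset$ at a boundary point $\bx_0$ where the traces $\varphi_i$ and $\varphi_j$ touch. The point of Lemma~\ref{lem:halfdisk}, item~3, is precisely that under this hypothesis the half-disk $D := B_\rho(\bx_0 + \rho\be) \cap \{\bx : (\bx-\bx_0)\cdot\be^\perp > 0\}$ is contained in $\omega_i$ for $\rho$ small; by the symmetric version of the lemma (swapping the roles of $i$ and $j$, and replacing $\be^\perp$ with $-\be^\perp$) we obtain an analogous half-disk on the other side contained in $\omega_j$. So near $\bx_0$ the set $\omega_i$ occupies one side of $\partial\Omega$ and $\omega_j$ the other, with no interface separating them along the boundary --- yet $u_i u_j \equiv 0$, so they cannot overlap. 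The idea is that $\overline{\omega}_i$ and $\overline{\omega}_j$ must then meet at an interior point, which must have multiplicity exactly $2$ (by Lemma~\ref{lem:Hopf}, since each half-disk provides an interior ball for one component and an exterior ball for the other), hence lies in $\Gamma_{ij}$, contradicting $\Gamma_{ij} = \emptyset$.

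More concretely, I would proceed as follows. First, apply item~1 of Lemma~\ref{lem:halfdisk} and its mirror image to fix a vector $\be$ with $-1/2 < \be\cdot\bnu < 0$ such that $u_i$ grows linearly along $\bx_0 + t\be$ into $\{(\bx-\bx_0)\cdot\be^\perp > 0\}$ and $u_j$ grows linearly along the reflected ray into $\{(\bx-\bx_0)\cdot\be^\perp < 0\}$. Second, invoke item~3 (and its mirror) to get $\rho > 0$ with $B_\rho(\bx_0 + \rho\be) \cap \{(\bx-\bx_0)\cdot\be^\perp > 0\} \subset \omega_i$ and the reflected half-disk contained in $\omega_j$. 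Third, consider the full disk $B := B_\rho(\bx_0 + \rho\be) \subset \Omega$ (shrinking $\rho$ if needed so that $\overline{B} \subset \Omega$, which is possible since $\be\cdot\bnu < 0$ pushes the center into $\Omega$). The diameter of $B$ through $\bx_0 + \rho\be$ in the $\be^\perp$ direction splits $B$ into the two half-disks, one in $\omega_i$ and one in $\omega_j$; since $\omega_i \cap \omega_j = \emptyset$ and both are open, the point $\bx_0 + \rho\be$ itself --- and indeed the whole open segment --- cannot lie in $\omega_i$ or $\omega_j$. Thus this segment is contained in $\zeroset$, and any of its interior points $\bp$ has an interior ball in $\omega_i$ (one half-disk) and an exterior ball in $\omega_j$; by Lemma~\ref{lem:Hopf}, $m(\bp) = 2$, so $\bp \in \Gamma_{ij}$, contradiction.

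The main obstacle I anticipate is the bookkeeping with the reflected statement of Lemma~\ref{lem:halfdisk}: the lemma is stated with a fixed choice of $\be$ and $\be^\perp$ adapted to component $i$, and one must check that the same $\be$ (or a suitably chosen one) simultaneously works for the linear growth of $u_i$ on one side and $u_j$ on the other, so that the two half-disks fit together into one genuine disk. The proof of item~1 already notes that ``a similar estimate holds true also for $u_j$'' for a possibly smaller slope $m$, so taking the smaller of the two slopes gives a common $\be$; the geometry then matches up. A secondary point is to make sure that the center $\bx_0 + \rho\be$ stays inside $\Omega$ and the closed disk does not touch $\partial\Omega$ --- this follows from $\be\cdot\bnu < 0$ together with the $C^{1,1}$ regularity of $\partial\Omega$, for $\rho$ small enough. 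Once these are in hand, the contradiction with $\Gamma_{ij} = \emptyset$ is immediate, and the corollary follows.
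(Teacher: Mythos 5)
Your plan (apply Lemma~\ref{lem:halfdisk} once for $i$ and once with the roles of $i$ and $j$ exchanged, and extract a contradiction from the two resulting half-disks) is indeed the paper's argument. However, the concrete geometry you set up is wrong in two places, and the argument as written does not go through.

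First, the two half-disks produced by Lemma~\ref{lem:halfdisk} and its $i\leftrightarrow j$ version are \emph{not} complementary halves of a single disk. The lemma's direction $\be$ is adapted to $u_i$: it points to the side of $\bnu^\perp$ where $\varphi_i>0$. Running the lemma with $i$ and $j$ swapped produces a \emph{different} unit vector $\be'$ (roughly the mirror image of $\be$ across $\bnu$, so $-1<\be\cdot\be'<0$), a different radius $\rho'$, and a different half-plane $\{(\bx-\bx_0)\cdot(\be')^\perp>0\}$; it is not the same $\be$ with $\be^\perp$ replaced by $-\be^\perp$, as you suggest. In particular the region $B_\rho(\bx_0+\rho\be)\cap\{(\bx-\bx_0)\cdot\be^\perp<0\}$ that you want to identify with the $j$-half-disk is not what the lemma gives you, and it straddles $\partial\Omega$. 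Second, the claim that one can shrink $\rho$ so that $\overline{B_\rho(\bx_0+\rho\be)}\subset\Omega$ is false for \emph{every} $\rho>0$: since $|\bx_0-(\bx_0+\rho\be)|=\rho$, the point $\bx_0\in\partial\Omega$ always lies on $\partial B_\rho(\bx_0+\rho\be)$. So the ``interior diameter contained in $\zeroset$'' never materializes, and the subsequent invocation of Lemma~\ref{lem:Hopf} has nothing to apply to.

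The fix is simpler than what you attempted, and is what the paper does: the two half-disks
\[
H_i = B_\rho(\bx_0+\rho\be)\cap\{(\bx-\bx_0)\cdot\be^\perp>0\}\subset\omega_i,
\qquad
H_j = B_{\rho'}(\bx_0+\rho'\be')\cap\{(\bx-\bx_0)\cdot(\be')^\perp>0\}\subset\omega_j
\]
already overlap. Indeed both contain the points $\bx_0+t\bm{w}$ for $t>0$ small, where $\bm{w}$ is any unit vector strictly interior to both wedges $\{\be\cdot\bx>0,\ \be^\perp\cdot\bx>0\}$ and $\{\be'\cdot\bx>0,\ (\be')^\perp\cdot\bx>0\}$ (for instance $\bm{w}=-\bnu$ works when $\theta,\theta'$ are small). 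But $\omega_i\cap\omega_j=\emptyset$ by definition, so $H_i\cap H_j\neq\emptyset$ is already a contradiction; there is no need to produce a full disk or to appeal to Lemma~\ref{lem:Hopf}.
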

\begin{proof}
Applying the above lemma twice (the second time exchanging the role of $i$ and $j$), we obtain
the existence of $\be$, $\be'$, $\rho$, $\rho'$ such that $-1<\be\cdot\be'<0$ and 
\[\begin{split}
(B_\rho(\bx_0 + \rho \be) \cap \{\bx:(\bx-\bx_0)\cdot \be^\perp>0\})\subset\omega_i,\\
(B_{\rho'}(\bx_0 + \rho' \be') \cap \{\bx:(\bx-\bx_0)\cdot (\be')^\perp>0\})\subset\omega_j,
\end{split}\]
a contradiction since
\[\begin{split}
(B_\rho(\bx_0 + \rho \be) \cap &\{\bx:(\bx-\bx_0)\cdot \be^\perp>0\})\\ &\cap
(B_{\rho'}(\bx_0 + \rho' \be') \cap \{\bx:(\bx-\bx_0)\cdot (\be')^\perp>0\})\neq
\emptyset.\qedhere
\end{split}\]
\end{proof}
\begin{lemma}\label{lem:noWonpartialOmega}
Let $\bx_0\in\partial\Omega$. Then either $m(\bx_0)=1$ or $m(\bx_0)=2$. In particular,
$\overline{\singset}\subset\Omega$.
\end{lemma}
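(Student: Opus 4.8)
The plan is to rule out $m(\bx_0)\ge 3$ at a boundary point by a blow‑up argument that plays the linear lower bound coming from the non‑degeneracy of the trace against the super‑linear upper bound of Lemma~\ref{lem:halfdisk}, the two being incompatible with the gradient matching~\eqref{eq:match_grad}; the closure statement then follows by semicontinuity of the multiplicity.

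\smallskip
\noindent\emph{Reduction.} If $\varphi(\bx_0)>0$ then, since the traces have pairwise product $0$, exactly one $\varphi_i(\bx_0)>0$; by continuity $u_i>0$ on $\overline\Omega\cap B_r(\bx_0)$ for small $r$, so $u_iu_j\equiv 0$ forces $|\omega_j\cap B_r(\bx_0)|=0$ for $j\neq i$, i.e. $m(\bx_0)=1$. So we may assume $\varphi(\bx_0)=0$. By $(A\ref{eq:ass_non_deg_trace})$, $\varphi$ vanishes linearly at $\bx_0$, hence is strictly positive on a punctured arc of $\partial\Omega$ around $\bx_0$; shrinking the arc, each of its two sub‑arcs is connected and positive, hence contained in a single $\{\varphi_\ell>0\}$, say $\{\varphi_i>0\}$ on one side and $\{\varphi_j>0\}$ on the other (possibly $i=j$). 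In particular $\omega_i$ (and $\omega_j$) accumulate at $\bx_0$, so $m(\bx_0)\ge 1$ always; it remains to show that no $\omega_h$ with $h\notin\{i,j\}$ accumulates at $\bx_0$.

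\smallskip
\noindent\emph{The contradiction via gradient matching.} Suppose such an $\omega_h$ accumulates at $\bx_0$. Applying Lemma~\ref{lem:halfdisk} to the pair $(i,j)$ we obtain: (i) $u_i(\bx_0+t\be)\ge \alpha t$ for small $t$ and every $\be$ in a fixed open cone of inward directions on the $\varphi_i$‑side (of opening bounded below in terms of the non‑degeneracy and Lipschitz constants of the data), so that $\omega_i$ contains a fixed solid cone $\mathcal C_i$ with vertex $\bx_0$, and symmetrically a solid cone $\mathcal C_j\subset\omega_j$; and (ii) $0\le u_h(\bx)\le M|\bx-\bx_0|^{\gamma}$ near $\bx_0$ with a fixed $\gamma>1$. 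Since $\omega_h$ avoids $\mathcal C_i\cup\mathcal C_j$ but accumulates at $\bx_0$, the set $\overline\omega_i\cap\overline\omega_h$ accumulates at $\bx_0$ as well, and (once the closure statement below is available, so that it contains no sequence of interior singular points clustering at $\bx_0$) we may choose $\bp_n\in\Gamma_{ih}$ with $\bp_n\to\bx_0$. At each $\bp_n$ the matching~\eqref{eq:match_grad} gives $|\nabla u_i(\bp_n)|=\tfrac{a_{ih}}{a_{hi}}|\nabla u_h(\bp_n)|$, the gradients being one‑sided limits from $\omega_i$, resp.\ $\omega_h$. On the $\omega_h$‑side, $u_h$ is nonnegative and harmonic in $\omega_h$ with $u_h\le M|\bx-\bx_0|^{\gamma}$, $\gamma>1$; combining the interior gradient estimate for positive harmonic functions with this growth — and, in the degenerate subcase where $\omega_h$ narrows to a cusp at $\bx_0$, with the super‑polynomial decay of harmonic functions in thin domains — yields $|\nabla u_h(\bp_n)|\to 0$. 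On the $\omega_i$‑side, $u_i$ is harmonic in $\omega_i$, which contains the fixed cone $\mathcal C_i$ and stays away from the $\varphi_j$‑side; blowing up $u_i$ at $\bx_0$ (the rescalings $u_i(\bx_0+r\,\cdot\,)/r$ are precompact, $u_i$ being Lipschitz and vanishing at $\bx_0$), every blow‑up limit is a nontrivial degree‑one positive harmonic function on a cone $\{0<\vartheta<\theta^{*}\}$ with $0<\theta^{*}<\pi$, with linear Dirichlet datum $c_i\rho$, $c_i>0$, on $\{\vartheta=0\}$ and datum $0$ on $\{\vartheta=\theta^{*}\}$; the unique such function is $\tfrac{c_i}{\sin\theta^{*}}\,\rho\sin(\theta^{*}-\vartheta)$, whose gradient has constant modulus $c_i/\sin\theta^{*}>0$, in particular along the interface ray $\{\vartheta=\theta^{*}\}$. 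Hence $|\nabla u_i(\bp_n)|$ is bounded below, a contradiction. Therefore $m(\bx_0)\le 2$, and with $m(\bx_0)\ge1$ the first three assertions follow; if $\overline\omega_i\cap\overline\omega_h$ reached $\bx_0$ only through interior singular points (so that $\Gamma_{ih}$ did not accumulate there), these would themselves cluster at $\bx_0$, and the same blow‑up analysis along that sequence — whose limit profile is a nontrivial element of $\Scal$ on a half‑plane with $u_h\equiv0$ but $\omega_i$ still a degree‑one cone — again contradicts \eqref{eq:match_grad}; thus the two parts are proved together.

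\smallskip
\noindent\emph{Closure of the singular set, and the main obstacle.} If $\bx_n\in\singset$ with $\bx_n\to\bx_0\in\partial\Omega$, pass to a subsequence along which either $m(\bx_n)\ge3$ for all $n$, or $m(\bx_n)=0$ for all $n$. In the first case a further subsequence shares a common triple of indices $\{i_0,j_0,\ell_0\}$ with $|\omega_{i_0}\cap B_r(\bx_n)|>0$ for every $r$ (and likewise for $j_0,\ell_0$); since $B_{r/2}(\bx_n)\subset B_r(\bx_0)$ for $n$ large, the same holds with $\bx_0$ in place of $\bx_n$, so $m(\bx_0)\ge3$ — excluded above. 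In the second case, the connected component of the open set $\{U\equiv 0\text{ on a neighborhood}\}$ containing $\bx_n$ cannot approach $\partial\Omega$: its boundary avoids $\zeroset_1$ (Lemma~\ref{lem simply conn}), avoids $\zeroset_2$ (where two positive phases are adjacent), avoids $\zeroset_0$, and cannot reach $\partial\Omega$ (near any boundary point some $\omega_\ell$ is positive, by the Reduction step), so it would be contained in $\zeroset_3\cup\dots$, which is impossible for the boundary of a planar open set. Hence $\overline\singset\cap\partial\Omega=\emptyset$. The delicate point of the whole argument is the lower bound $|\nabla u_i(\bp_n)|\ge\mu>0$: it rests on controlling the shape of $\omega_i$ at $\bx_0$ — that $\partial\omega_i$ has a conical tangent there and does not oscillate, let alone spiral — which is exactly where $(A\ref{eq:ass_non_deg_trace})$ is indispensable and which must be extracted from the fixed cone $\mathcal C_i\subset\omega_i$ together with the linear lower bound on the trace; the companion estimate $|\nabla u_h(\bp_n)|\to0$, including the cusp subcase, is the secondary technical hurdle.
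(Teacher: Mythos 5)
Your plan correctly identifies gradient matching \eqref{eq:match_grad} together with the linear lower bound from (A\ref{eq:ass_non_deg_trace}) and the super-linear upper bound from Lemma~\ref{lem:halfdisk} as the engine of the proof, but the execution has gaps that are not filled, and the route differs substantially from the paper's. The paper does \emph{not} run a blow-up at $\bx_0$ at all: it first proves Corollary~\ref{coro:noWboundary} ($\Gamma_{ij}\neq\emptyset$ whenever $\supp\varphi_i$ and $\supp\varphi_j$ meet at $\bx_0$) by applying the half-disk inclusion of Lemma~\ref{lem:halfdisk}(3) from both sides — and that half-disk inclusion is itself established by explicit annular barrier comparisons, which sidestep exactly the difficulty you flag. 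Then Lemma~\ref{lem:noWonpartialOmega} follows by a soft topological argument: build a Jordan curve through $\omega_i$ (Case~1) or through $\omega_i\cup\Gamma_{ij}\cup\omega_j$ (Case~2, using Lemma~\ref{lem z2 curves}) enclosing a neighborhood $\Sigma$ of $\bx_0$, and apply the maximum principle to kill all the other components inside $\Sigma$.

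The main gap in your argument is the lower bound $|\nabla u_i(\bp_n)|\ge\mu>0$. You assert that every blow-up of $u_i$ at $\bx_0$ is the unique degree-one positive harmonic function on a sector with a conical interface $\{\vartheta=\theta^*\}$; but you have only shown that $\omega_i$ \emph{contains} a fixed cone $\mathcal{C}_i$, not that $\partial\omega_i$ has a tangent direction at $\bx_0$, nor that $\theta^*$ exists or is $<\pi$. Absent that, the blow-up limits need not be unique, need not be defined on a sector, and the gradient along a sequence $\bp_n\to\bx_0$ on $\partial\omega_i$ could very well degenerate — indeed, controlling such oscillation is precisely what the barrier construction in Lemma~\ref{lem:halfdisk} accomplishes, via $\eta_1$ on $A_1$ giving $|\nabla u_i(\bp)|\ge\alpha\cos\theta/(-2\log\delta)$ independently of $\rho$, and $\eta_2$ on $A_2$ giving $|\nabla u_h(\bp)|\lesssim\rho^{\gamma-1}$. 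You acknowledge this is ``the delicate point'', but acknowledging a gap is not the same as closing it. A second, independent problem: when $i=j$ (one arc of $\{\varphi>0\}$ on each side of $\bx_0$ carries the same density), Lemma~\ref{lem:halfdisk} does not apply as stated, and the gradient-matching contradiction against a hypothetical $\omega_h$ is not set up; the paper treats this as a separate case by a pure Jordan-curve/maximum-principle argument. Finally, the closure step: you say the first part relies on the closure statement (to get $\bp_n\in\Gamma_{ih}$ rather than singular points), and the closure step relies on the first part; you assert ``the two parts are proved together'' without actually resolving the circularity, and the final impossibility claim (``the boundary of a planar open set cannot lie in $\zeroset_3\cup\dots$'') is unjustified, since at this stage nothing is known about the size of the high-multiplicity set. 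The paper avoids the circularity entirely: the Jordan-curve construction directly yields a full neighborhood $\Sigma\ni\bx_0$ in $\Omega$ on which only one or two components survive, which automatically excludes $\singset$ from $\Sigma$.
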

\begin{proof}
If $\varphi_i(\bx_0)>0$ for some $i$, then $m(\bx_0)=1$. On the other hand, let $\varphi_i(\bx_0)=0$ for every $i$. Note that assumption (A\ref{eq:ass_non_deg_trace}) implies that $\bx_0$ is an isolated
zero of the trace function $\varphi$. We deduce the existence of two points $\bx_\pm\in\partial\Omega$ with the properties that $\varphi$ is strictly positive in $\bx_\pm$
and also on the curve $\wideparen{\bx_-\bx_0}$, $\wideparen{\bx_0\bx_+}$
(here we denote with $\wideparen{\bx_1\bx_2}$  the relatively open portion of $\partial\Omega$ having counterclockwise ordered endpoints $\bx_1$ and
$\bx_2$, respectively). Two cases may occur.

\textbf{Case 1: both $\bx_\pm\in\{\varphi_i>0\}$, for some $i$.} For $r>0$ sufficiently small we have $B_r(\bx_-)\cap\Omega\subset \omega_i$, $B_r(\bx_+)\cap\Omega\subset \omega_i$.
Recalling that $\omega_i$ is pathwise connected (Lemma \ref{lem simply conn}), we can find a path $\gamma$ such that $\gamma(0)=\bx_+$, $\gamma(1)=\bx_-$ and $\gamma(0,1)\subset\omega_i$.
Let us denote with $\Sigma$ the bounded connected component of $\R^2 \setminus\left(\gamma([0,1])\cup\wideparen{\bx_-\bx_+}\right)$. Then we have that the only nontrivial density on $\partial\Sigma$,
and hence on $\Sigma$ is $u_i$, so that $m(\bx_0)=1$.

\textbf{Case 2: $\bx_-\in\{\varphi_i>0\}$, $\bx_+\in\{\varphi_j>0\}$, with $j\neq i$.} Then Corollary \ref{coro:noWboundary} applies, and $\Gamma_{ij}\neq\emptyset$. Consequently, this case can be treated in a similar way than the previous one, with the only difference
that now $\gamma$ can be constructed in such a way that $\gamma(0,1)\subset\omega_i \cup \Gamma_{ij} \cup \omega_j$ (which is pathwise connected by Lemma \ref{lem z2 curves}). Then $u_h$ vanishes in $\overline{\Sigma}$, for every $h\neq i,j$, and $m(\bx_0)=2$.
\end{proof}
Notice that, at this level, we can not exclude that $\zeroset_0$ is not empty (actually, this will be ruled out in Section \ref{sec high}). Anyhow, by definition, such set coincides with the interior of $\singset$, 
while $\partial\singset$ consists of the points of higher multiplicity. We analyze these points in the next two lemmas.
\begin{lemma}\label{lem bound w and z2}
The boundary of $\singset$ is the accumulation set of $\zeroset_2$:
\[
	\partial \singset \subset \bigcup_{i \neq j} \overline{\Gamma}_{ij}.
\]
\end{lemma}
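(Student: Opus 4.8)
The plan is to show that every boundary point $\bx_0 \in \partial\singset$ is an accumulation point of some $\Gamma_{ij}$, by ruling out the alternative. Recall $\singset = \zeroset \setminus \zeroset_2$, so $\partial\singset \subset \zeroset$ (as $\zeroset$ is closed and $\zeroset_2$ is relatively open in $\zeroset$ by Lemma \ref{lem z2 curves}, every point of $\partial\singset$ still lies in $\zeroset$). Let $\bx_0 \in \partial\singset$. Since $\bx_0 \notin \Omega \cap \zeroset_1$ (which is empty by Lemma \ref{lem simply conn}) and $\bx_0 \in \Omega$ by Lemma \ref{lem:noWonpartialOmega}, we have $m(\bx_0) = 0$ or $m(\bx_0) \geq 2$; in the latter case, if $m(\bx_0) = 2$ then $\bx_0 \in \zeroset_2 \subset \bigcup \overline{\Gamma}_{ij}$ trivially. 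So the only case requiring work is $m(\bx_0) \geq 3$, or $m(\bx_0) = 0$ with $\bx_0$ on the topological boundary of the interior region $\zeroset_0$.

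The key step is this: suppose, for contradiction, that some ball $B_r(\bx_0)$ meets no $\Gamma_{ij}$, i.e. $B_r(\bx_0) \cap \zeroset_2 = \emptyset$. Then $B_r(\bx_0) \cap \zeroset = B_r(\bx_0) \cap (\singset \cup \zeroset_2) = B_r(\bx_0) \cap \singset$, so $\zeroset$ restricted to this ball consists only of points of multiplicity $0$ or $\geq 3$. I would first argue that points of multiplicity $\geq 3$ cannot be interior to $\zeroset$ (they lie on the closure of $\zeroset_2$ by the Hopf-type argument of Lemma \ref{lem:Hopf}: near a multiplicity-$h$ point with $h \geq 3$, some $\omega_i$ has nonempty intersection with every ball, and one can slide an interior ball of $\omega_i$ until it touches $\partial\omega_i$, producing a point of $\Gamma_{ij}$ arbitrarily close). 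Hence, after possibly shrinking $r$, we may assume $B_r(\bx_0) \cap \zeroset = B_r(\bx_0) \cap \zeroset_0$, i.e. $\zeroset \cap B_r(\bx_0)$ is a relatively closed set with nonempty interior and no multiplicity-$2$ or $\geq 3$ points. But then for each $i$, the open set $\omega_i \cap B_r(\bx_0)$ has its boundary (inside $B_r(\bx_0)$) contained in $\zeroset_0 = \interior(\singset)$, a set on which $\nabla u_i \equiv 0$ by Lemma \ref{thm:intro_S}(4). Since $u_i$ is harmonic and nonnegative in $\omega_i$, vanishes on $\partial\omega_i \cap B_r(\bx_0)$ together with its gradient, the Cauchy--Kovalevskaya / unique continuation argument (reflecting across the zero set, or simply observing that $u_i$ together with $\nabla u_i$ vanishes on an open portion of $\partial\omega_i$) forces $u_i \equiv 0$ in the component of $\omega_i$ touching $\bx_0$ — contradicting that $\bx_0 \in \partial\singset$, which requires arbitrarily close points not in $\singset$, hence in some $\omega_i$.

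The main obstacle is the step ruling out that $\zeroset$ has nonempty interior clustering at $\bx_0$ without ever meeting $\zeroset_2$: one must combine (i) the observation that a multiplicity-$h$, $h\geq 3$, point is a limit of $\zeroset_2$ (sliding-ball argument as in Lemma \ref{lem:Hopf}) to reduce to the purely "$\zeroset_0$" scenario, and then (ii) a unique-continuation/Hopf argument showing that if $\partial\{u_i>0\}$ contains a relatively open arc on which both $u_i$ and $\nabla u_i$ vanish, then $u_i$ vanishes identically nearby. Point (ii) is delicate because, a priori, the free boundary $\partial\omega_i \cap B_r(\bx_0)$ need not be smooth; but since by assumption it lies in $\zeroset_0$ (an open set disjoint from $\omega_i$), it is in fact the boundary of an open set $\omega_i$ relative to which $u_i$ is a positive harmonic function, and the vanishing of $\nabla u_i$ there contradicts the Hopf boundary lemma applied to any interior ball of $\omega_i$ tangent to this portion of the boundary — unless $\omega_i$ is empty near $\bx_0$, which after collecting all indices $i$ gives $\bx_0 \in \interior(\singset)$, contradicting $\bx_0 \in \partial\singset$. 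Thus the contradiction closes, proving $\partial\singset \subset \bigcup_{i\neq j}\overline{\Gamma}_{ij}$.
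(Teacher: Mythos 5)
Your proposal contains the key idea of the paper's proof — the sliding-ball Hopf argument of Lemma~\ref{lem:Hopf} — but embeds it in a long detour whose concluding step is both vacuous and unneeded. The paper's argument is short and direct: for any $\bx_0\in\partial\singset$ and $r>0$, pick $\bx'\in B_r(\bx_0)\setminus\singset$; either $\bx'\in\zeroset_2$ (done), or $\bx'\in\omega_i$ for some $i$, in which case one inflates a ball $B_R(\bx')\subset\omega_i\cap B_r(\bx_0)$ and slides it along the segment toward $\bx_0$ until it first touches $\partial\omega_i$; the touching point lies in $B_r(\bx_0)$ and, by Lemma~\ref{lem:Hopf}, is a multiplicity-2 point. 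No contradiction, no multiplicity casework, no unique continuation.

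The place where your write-up goes astray is the final ``$\zeroset_0$'' scenario. Once you have (correctly, via the sliding-ball argument) excluded multiplicity-$\geq 3$ and multiplicity-$2$ points from a small ball $B_{r'}(\bx_0)$, the only remaining possibility is that $\bx_0$ itself has multiplicity $0$. But by the very definition of multiplicity, $m(\bx_0)=0$ means that for every $i$ there is $r_i>0$ with $\omega_i\cap B_{r_i}(\bx_0)=\emptyset$; taking $r^*=\min_i r_i$ gives $B_{r^*}(\bx_0)\subset\zeroset_0=\interior\singset$, which contradicts $\bx_0\in\partial\singset$ immediately. There is thus no configuration left to analyze, and the entire Cauchy--Kovalevskaya/reflection/unique-continuation discussion — which you yourself flag as delicate because $\partial\omega_i$ need not be smooth — is never invoked. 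Trimming that portion and applying the sliding ball once, directly toward $\bx_0$ as the paper does, yields a clean proof.
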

\begin{proof}
Let $\bx_0 \in \partial \singset$, we need to show that for all $r > 0$ there exists $\bp \in B_r(\bx_0) \cap \zeroset_2$. Fixing $r>0$, since $\bx_0 \in \partial W$ there exists $\bx' \in B_r(\bx_0) \setminus \singset$. Then either $\bx' \in \zeroset_2$ and we are done, or $u_i(\bx') > 0$ for some index $i$. In the latter case, let $R > 0$ be such that $B_R(\bx') \subset \{u_i > 0\} \cap B_r(\bx_0)$ and let us consider the segment $t \in [0,1] \mapsto (1-t)\bx' + t \bx_0$. Since $ \{u_i > 0\} \cap B_r(\bx_0)$ is an open set, there exists a maximum value $\bar t \in [0,1]$ such that
\[
	B_R( (1-t)\bx' + t \bx ) \subset \{u_i > 0\} \qquad \text{for all $t \in [0,\bar t]$}.
\]
Consequently $B_R( (1-\bar t)\bx' + \bar t \bx )$ satisfies the assumptions of Lemma \ref{lem:Hopf}.
\end{proof}
\begin{lemma}\label{lem connect omega lim}
For any non empty $\Gamma_{ij}$, each of its limit sets is either a point of $\partial\Omega$ or a connected subset of $\singset$.
\end{lemma}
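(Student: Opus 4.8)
The plan is to analyze the structure of a limit set $L$ of a non-empty curve $\Gamma_{ij}$, i.e.\ the set of points $\bx_0$ such that there exists a sequence $\bp_n\in\Gamma_{ij}$ with $\bp_n\to\bx_0$ and $\bp_n$ leaving every compact subset of $\Gamma_{ij}$ (equivalently, $\Gamma_{ij}$ being a one-dimensional manifold, $L=\overline{\Gamma}_{ij}\setminus\Gamma_{ij}$). First I would observe that such an $\bx_0$ lies in $\overline{\omega}_i\cap\overline{\omega}_j$, and moreover it cannot be an interior point of multiplicity $1$ (by Lemma \ref{lem simply conn}, $\zeroset_1\cap\Omega=\emptyset$) nor an interior point of multiplicity $2$: indeed, if $\bx_0\in\zeroset_2\cap\Omega$, then by Lemma \ref{lem z2 curves} (its proof) $\Gamma_{ij}$ is \emph{locally} a single smooth curve through $\bx_0$, so $\bx_0$ would actually be an interior point of the manifold $\Gamma_{ij}$, contradicting that it is a limit point. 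Hence if $\bx_0\in\Omega$, then $\bx_0\in\singset$. This already shows $L\subset\singset\cup\partial\Omega$; it remains to prove that $L$ cannot meet both $\partial\Omega$ and $\singset$, and that the part of $L$ inside $\Omega$ is connected.

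The key idea is to use the local picture near each type of point. Near a point $\bx_0\in\partial\Omega$, the curve $\Gamma_{ij}$ accumulates at $\bx_0$ only if $\bx_0\in\overline{\{\varphi_i>0\}}\cap\overline{\{\varphi_j>0\}}$ (since away from this set the densities $u_i,u_j$ cannot both be nontrivial, by the maximum principle and the boundary data); by assumption (A\ref{eq:ass_non_deg_trace}) the set of such boundary points is finite, and by Corollary \ref{coro:noWboundary} each is an endpoint of some $\Gamma_{ij}$. A continuity/connectedness argument shows each end of $\Gamma_{ij}$ that accumulates on $\partial\Omega$ converges to a single such point, not to a larger subset. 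The real content is to show connectedness of $L\cap\singset$: I would argue by contradiction. Suppose a fixed end of $\Gamma_{ij}$ has a limit set $L$ meeting two distinct connected components of $\overline{\singset}$, or meeting $\singset$ and $\partial\Omega$. Parametrize that end of $\Gamma_{ij}$ as a curve $\gamma(t)$, $t\to t^*$; since it is connected and $\overline{\omega}_i,\overline{\omega}_j$ are closed, the closure $\overline{\Gamma}_{ij}$ is a compact connected set contained in $\overline{\omega}_i\cap\overline{\omega}_j$, and its limit set $L=\overline{\Gamma}_{ij}\setminus\Gamma_{ij}$ is also compact and connected (the limit set of a connected set along an end is connected, a standard topological fact). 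Since $L$ is connected and $L\subset\singset\cup(\partial\Omega\cap\overline{\{\varphi_i>0\}}\cap\overline{\{\varphi_j>0\}})$, and since by Lemma \ref{lem:noWonpartialOmega} $\overline{\singset}\subset\Omega$ is disjoint from $\partial\Omega$, connectedness forces $L$ to lie entirely in $\singset$ or entirely in $\partial\Omega$; in the latter case, $L$ being connected and contained in the finite (hence totally disconnected) set $\partial\Omega\cap\overline{\{\varphi_i>0\}}\cap\overline{\{\varphi_j>0\}}$, it reduces to a single point. This gives exactly the dichotomy claimed.

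The main obstacle, and the step requiring the most care, is establishing that the limit set $L$ of an \emph{end} of $\Gamma_{ij}$ is connected. Since $\Gamma_{ij}$ may fail to be rectifiable (as warned in the Notation), one cannot simply invoke arc-length; instead one uses that $\Gamma_{ij}$ is a connected one-manifold, hence homeomorphic to an open interval, and that the closure of the image of $(0,1)$ under a continuous map into a compact space has a connected ``impression'' at each end — this is the classical prime-end / cluster-set argument: if $L$ were disconnected, $L=L_1\sqcup L_2$ with $L_1,L_2$ closed, nonempty, disjoint, then for small $t^*-t$ the point $\gamma(t)$ would have to pass from a neighborhood of $L_1$ to one of $L_2$ infinitely often, hence would cluster at points in the ``gap'' between $L_1$ and $L_2$, which by the previous analysis would have to lie in $\singset\cup\partial\Omega$, contradicting the maximality of $L$ (or the separation). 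One should also double-check that $\overline{\Gamma}_{ij}\cap\Omega$ contains no interior multiplicity-$2$ point other than those already in $\Gamma_{ij}$: this is where the local smooth-curve representation from the proof of Lemma \ref{lem z2 curves} is used, to rule out that an end of $\Gamma_{ij}$ spirals back and accumulates on a regular part of itself. Once these topological points are secured, the rest is a routine combination of the maximum principle and the boundary lemmas already proved.
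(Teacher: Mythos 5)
Your proposal is correct and follows essentially the same strategy as the paper: identify that any limit-set point in $\Omega$ must lie in $\singset$, show the limit set of each end of $\Gamma_{ij}$ is connected, and then use $\overline{\singset}\subset\Omega$ (Lemma \ref{lem:noWonpartialOmega}) together with connectedness to obtain the dichotomy. The only material difference is how the parametrization and connectedness of the limit set are produced: the paper builds a concrete global $C^1$ parametrization as an integral curve of $J\nabla(a_{ji}u_i-a_{ij}u_j)$, so that $\omega(\phi)=\bigcap_{t}\overline{\phi(t,b)}$ is manifestly a nested intersection of compact connected sets, whereas you appeal to $\Gamma_{ij}$ being (by definition) an open curve and use a cluster-set argument; both yield the same conclusion.
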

\begin{proof}
Being $\Gamma_{ij}$ a locally smooth (non self-intersecting) curve contained in $\Omega$, we immediately obtain that it admits a global one-to-one parametrization $\phi \in C^1(I; \Omega)$, for
some interval $I\subset\R$: for instance, we can take $\phi$ as a solution to the system
\[
	\begin{cases}
		\dot\phi(t) = J \nabla (a_{ji} u_i- a_{ij} u_j)(\phi(t))	\\
		\phi(0) = \bx_0
	\end{cases}
\]
where $J$ is the symplectic matrix and $\bx_0 \in \Gamma_{ij}$ is any point. Since in the set
$\omega_i \cup \Gamma_{ij} \cup \omega_j$ the function $a_{ji} u_i- a_{ij} u_j$ is locally smooth
and has non vanishing gradient, we obtain the existence of $-\infty\leq a < 0 < b \leq + \infty$,
the maximal times of definition of $\phi$, and of the $\alpha$- and $\omega$-limit sets of $\phi$:
\[
	\alpha(\phi) = \bigcap_{a < t < 0} \overline{\phi((a,t))}, \qquad \omega(\phi) = \bigcap_{b > t > 0} \overline{\phi((t,b))}.
\]
It is easy to check that both limits are non-empty, closed, connected subsets of
$\overline{\Omega}$. We consider the set $\omega(\phi)$, the other is analogous. Let
$\bx_0\in\omega(\phi)$. By construction we have that $m(\bx_0)\ge2$. Hence, either
$\omega(\phi)\subset\singset$, or $m(\bx_0)=2$. In the latter case, we see that
$\bx_0\not\in\Omega$, otherwise we may solve the above Cauchy problem in $B_r(\bx_0)$,
contradicting the maximality of $b$. Therefore $\bx_0\in\partial\Omega$, and Lemma
\ref{lem:noWonpartialOmega} forces $\omega(\phi)\equiv\bx_0$.
\end{proof}
Finally, we are in a position to introduce the cut procedures which will yield Proposition \ref{prop:reduction}.
\begin{lemma}\label{lem only one cc}
We can reduce the problem to the case in which, for each $i =1, \dots, k$, $\supp(u_i) \cap \partial \Omega$ is a connected curve. Furthermore, we can assume that any curve $\Gamma_{ij}$ reaches the boundary at most once, there are at least three non trivial densities in $\Omega$ and $\singset$ is not empty.
\end{lemma}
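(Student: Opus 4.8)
The plan is to obtain the reduction by a finite sequence of \emph{cuts}: each cut divides the current domain, still called $\Omega$ and assumed simply connected of class $C^{1,\alpha}$, along a simple arc $\gamma\subset\overline\Omega$ with interior in $\Omega$ and endpoints on $\partial\Omega$, into two simply connected subdomains; on each of these I restrict $U$, relabel its components — splitting into connected pieces any density whose support became disconnected, discarding the identically-vanishing ones — and note that, the differential inequalities of \eqref{essse*} being local and the disjointness and sign conditions being inherited, the restriction lies in the class $\Scal$ attached to the subdomain, with traces inherited from $U$. The finitely many corners thus created sit where the inherited trace is strictly positive, hence away from its zeros, so a conformal change of variables removes them while preserving harmonicity and the sub/super-harmonicity conditions (conformally invariant in the plane) and keeping (A\ref{ass:1}--\ref{eq:ass_non_deg_trace}). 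A subdomain with empty singular set needs no further work, since then $\zeroset=\zeroset_2$ is, by Lemma \ref{lem z2 curves}, a finite union of analytic curves and Theorem \ref{thm topologia sing} holds there with no singular point; so the aim is to reach, in finitely many cuts, subdomains that are of this kind or satisfy the three listed properties. After discarding identically-vanishing densities we may assume $\singset\neq\emptyset$; moreover at most two surviving densities is impossible, for then — using the constitutive inequalities of $\Scal$ together with the identity $\widehat u_2=-\frac{a_{21}}{a_{12}}\widehat u_1$ when two densities are present — the function $\widehat u_1$ would be both sub- and super-harmonic, hence harmonic, and, vanishing on the non-empty open set $\{u_1=u_2=0\}\supseteq\singset$, would vanish identically, forcing $u_1\equiv u_2\equiv 0$ against non-triviality. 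So at least three non-trivial densities are present, and it remains to secure the two geometric properties.

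To remove interfaces reaching $\partial\Omega$ twice: if some non-empty $\Gamma_{ij}$ has both ends on $\partial\Omega$ (in the sense of Lemma \ref{lem connect omega lim}), then $\overline\Gamma_{ij}$ is a crosscut separating $\Omega$ into $\Omega_1\supset\omega_i$ and $\Omega_2\supset\omega_j$ — the $\omega$'s being connected, by Lemma \ref{lem simply conn}, and lying on opposite sides of $\Gamma_{ij}$ — and I cut along it. Every other non-empty $\Gamma_{hl}$ is disjoint from $\overline\Gamma_{ij}$, since distinct interfaces can meet only inside $\singset$ and a given boundary point receives only one interface, by Lemma \ref{lem:halfdisk}; hence it lies wholly on one side with its ends unchanged, and no boundary support gains a component. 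So the number of pairs (interface, subdomain) in which the interface reaches the subdomain's boundary twice strictly decreases, and after finitely many cuts it is $0$.

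To make the boundary supports connected: the cut now used runs through an $\omega_i$-facing arc of $\zeroset$ and so crosses no interface, whence no density is split and no interface newly reaches a boundary twice — the previous reduction survives. Fix a subdomain and a density $u_i$ with $\supp(u_i)\cap\partial\Omega=\overline\omega_i\cap\partial\Omega$ disconnected; by (A\ref{eq:ass_non_deg_trace}) this set has finitely many components $J_1,\dots,J_m$ ($m\ge2$), and $\partial\omega_i$, traversed once, alternates between them and finitely many arcs of $\zeroset$. Take $S$ to be the $\zeroset$-arc of $\partial\omega_i$ joining an endpoint of $J_1$ to an endpoint of the cyclically adjacent $J_2$; then $\overline S$ is a crosscut separating $\Omega$ into $\Omega_1\supset\omega_i$ and a region on which $u_i\equiv0$. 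Since $S\subset\zeroset$ splits no other density, $\overline\omega_i\cap\partial\Omega_1=S\cup J_1\cup\dots\cup J_m$ has $m-1$ components (the arc $S$ links $J_1$ to $J_2$), $u_i$ disappears from the other subdomain, and no other boundary support gains a component; hence $\sum_h\#\{\text{components of }\supp(u_h)\cap\partial(\text{subdomain})\}$ strictly decreases, and iterating makes every boundary support connected. As the three kinds of cut are available simultaneously, each governed by its own strictly decreasing non-negative integer and the later ones not reviving the earlier, the procedure terminates, which is the claim.

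The step I expect to be the real obstacle is the last one, because the arc $S$ may pass through finitely many points of $\singset$, and these cannot be allowed to end up on the newly created boundary: there the densities vanish to super-linear order (Lemma \ref{thm:intro_S}), violating (A\ref{eq:ass_non_deg_trace}), so that Lemma \ref{lem:noWonpartialOmega} — and with it the conclusion that $\singset$ remains interior — would fail on the new subdomain. Near each such point, $S$ must instead be detoured by a short arc inside $\{u_i>0\}$, keeping the singular point, together with a small sector of every density meeting it, inside $\Omega_1$; the genuine content is then the bookkeeping — a nested induction on the number of connected components of $\singset$ and on $\sum_h\#\{\text{components of }\supp(u_h)\cap\partial\Omega\}$ — ensuring that these detours can be arranged so that some complexity still strictly drops at every stage and no subdomain ever inherits a singular point on its boundary.
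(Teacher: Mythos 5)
Your cut construction has a genuine gap: both of your cuts run along arcs contained in $\zeroset$. After such a cut, the new boundary arc becomes part of $\partial\Omega_1$ (and of $\partial\Omega_2$), and the inherited trace $\varphi=\sum_h u_h$ vanishes \emph{identically} on that arc, since every density vanishes on $\zeroset$. This destroys the nondegeneracy assumption (A\ref{eq:ass_non_deg_trace}) on every subdomain adjacent to the cut: for $\bx_0$ interior to the new arc, $\liminf_{\bx\to\bx_0}\varphi(\bx)/|\bx-\bx_0|=0$. Assumption (A\ref{eq:ass_non_deg_trace}) is load-bearing throughout Section \ref{sec:reduction} — it is what makes $\partial\Omega\setminus\{\varphi=0\}$ have finitely many components (used in Lemma \ref{lem simply conn}), what drives Lemma \ref{lem:halfdisk}, and in particular what proves Lemma \ref{lem:noWonpartialOmega} keeping $\singset$ interior — so a subdomain on which it fails along a whole arc does not inherit the structure the rest of the argument needs. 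This is more severe than the singular-point issue you flag at the end: there the trace vanishes to second order at isolated points, but here it vanishes on a set of positive length, and no local detour can repair it because the problem is the entire arc $S$ (resp.\ $\Gamma_{ij}$), not the finitely many singular points on it.

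The paper's proof avoids this precisely by never cutting through $\zeroset$. To connect the boundary supports it chooses $\gamma\colon[0,1]\to\supp(u_1)$ with $\gamma(0,1)\subset\omega_1$ and $u_1(\gamma(0)),u_1(\gamma(1))>0$, i.e.\ a crosscut through the \emph{positivity} set; the inherited trace $u_1|_\gamma$ is then strictly positive on the whole new boundary arc, so (A\ref{eq:ass_non_deg_trace}) holds there trivially, and no other density is split since $\gamma$ stays inside $\omega_1$. For the ``interface reaching $\partial\Omega$ twice'' statement, the paper does not cut at all: once all boundary supports are connected, if $\overline\Gamma_{12}\cap\partial\Omega=\{\by_0,\by_1\}$ then the two arcs of $\partial\Omega\setminus\{\by_0,\by_1\}$ must be exactly $\supp(u_1)\cap\partial\Omega$ and $\supp(u_2)\cap\partial\Omega$, forcing $k=2$ and $\zeroset=\Gamma_{12}$, hence $\singset=\emptyset$ — a case already handled. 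So after connecting the boundary supports, either $\singset=\emptyset$ (nothing to prove), or $k\ge3$, $\singset\neq\emptyset$, and every $\Gamma_{ij}$ has a limit in $\singset$, with no second cut needed. Your harmonicity argument showing that $k\le2$ together with $\singset\neq\emptyset$ forces $\widehat u_1\equiv0$ (using that $\singset=\zeroset_0$ is open when $k\le2$) is a correct and rather elegant alternative to the paper's topological count, but it cannot rescue the cut construction itself.
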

\begin{figure}[th]
\begin{center}
\begin{tikzpicture}
\draw[thick] (0,0) to[out=180,in=-180+165] +(-1,1) node[below left] {$\Omega$} to[out=165,in=-90] +(-.5,1) to[out=90,in=180]
+(1.5,1);
\draw[thick] (0,0) node {$|$} -- node[sloped,pos=.5,below] {$\varphi_1>0$} (2,0) node {$|$};
\draw[thick] (0,3) node {$|$} -- node[sloped,pos=.5,above] {$\varphi_1>0$} (2,3) node {$|$};
\draw[thick] (2,3) to[out=0,in=100] +(1.5,-1.5) to[out=-80,in=0] +(-1.5,-1.5);
\draw plot[smooth]
  coordinates{
   (0,0) (-.1,.6) (-.4,1.2) (-0.5,2) (0,3)
  };
\draw plot[smooth]
  coordinates{
   (2,0) (1.8,1.3) (2,3)
  };
\draw[dashed]
(1,0) -- node[pos=.6,fill=white] {$\gamma$} (1,3);
\draw[mark options={fill=gray!30!white}, only marks, mark size=1.3, mark=*]
plot (1,0) node[above left] {$\bx_0$} plot (1,3) node[below right] {$\bx_1$} ;
\draw (0.4,1) node {$\omega_1$};
\end{tikzpicture}
\qquad
\begin{tikzpicture}
\draw[thick] (0,0) to[out=180,in=-180+165] +(-1,1) node[below left] {$\Omega'$} to[out=165,in=-90] +(-.5,1)  to[out=90,in=180]
+(1.5,1);
\draw[thick] (0,0) node {$|$} -- node[sloped,pos=.7,below] {$\varphi_1'>0$} (.8,0)
arc (-90:0:.2) -- (1,2.8) arc (0:90:.2) -- (0,3) node {$|$};
\draw[thick] (3,0) node {$|$} -- (2.2,0) arc (270:180:.2) -- (2,2.8) arc (180:90:.2)
 -- node[sloped,pos=.3,above] {$\varphi_1''>0$} (3,3) node {$|$};
\draw[thick] (3,3) to[out=0,in=100] +(1.5,-1.5) to[out=-80,in=0] +(-1.5,-1.5);
\draw plot[smooth]
  coordinates{
   (0,0) (-.1,.6) (-.4,1.2) (-0.5,2) (0,3)
  };
\draw plot[smooth]
  coordinates{
   (3,0) (2.8,1.3) (3,3)
  };
\draw (0.4,1) node {$\omega_1'$};
\draw (2.5,2) node {$\omega_1''$};
\draw (4.3,2.8) node {$\Omega''$};
\end{tikzpicture}
\caption{Splitting for Lemma \ref{lem only one cc}: on the left, the original domain; on the right, the split components.\label{fig:split1}}
\end{center}
\end{figure}
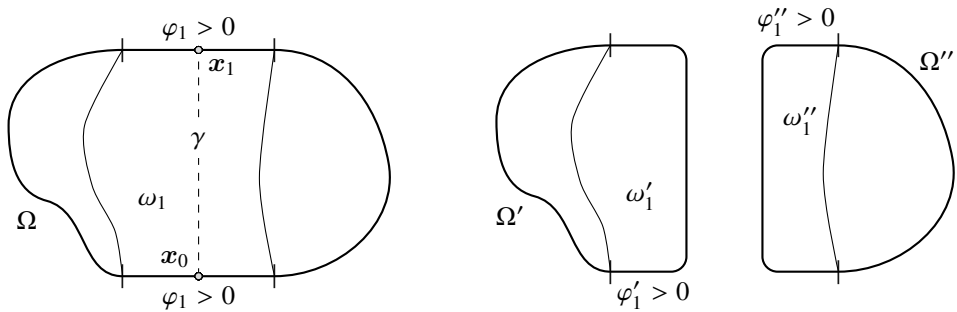
\begin{proof}
By assumption (A\ref{eq:ass_non_deg_trace}) and Lemma \ref{lem:noWonpartialOmega}, $\supp(u_i) \cap
\partial \Omega$ has a finite number of connected components, on each of which $u_i$ does not
identically vanish. If, say, $\supp(u_1) \cap \partial \Omega$ contains more than one connected
component, let $\bx_0$ and $\bx_1$ be any two points that belong to different connected components,
with $u_1(x_i)>0$. Let $\gamma:[0,1]\to\supp(u_1)$ be a smooth simple curve such that
$\gamma(0,1) \subset \omega_1$, $\gamma(0) = \bx_0$ and $\gamma(1) = \bx_1$: $\gamma$ cuts the
domain $\Omega$ in two subdomains, on which the number of connected components of $\supp(u_1) \cap
\partial \Omega$ has reduced by at least one. Moreover, the two subdomains are regular except for
two corner points, in $\bx_1$ and $\bx_2$. Since $u_1(x_i)>0$, one can easily cut neighborhoods of
$\bx_i$ in such a way that the new subdomains are smooth, and all the assumptions hold, in
particular assumption (A\ref{eq:ass_non_deg_trace}) because $U$ is Lipschitz (see Fig. \ref{fig:split1}).
Iterating the previous construction a finite number of times we can assume that each $\supp(u_i)
\cap \partial\Omega$ is a connected curve.

Next, let us assume that $\overline{\Gamma}_{{12}} \cap \partial \Omega = \{\by_0, \by_1\}$.
Note that $\by_0\neq \by_1$. By construction, each of the two connected components of $\partial
\Omega \setminus \{\by_0, \by_1\}$ must coincide with $\supp(u_i) \cap \partial \Omega$ for either $i=1$ or $i=2$. We deduce that $k=2$, $\singset=\emptyset$ and $\zeroset \cap \Omega = \zeroset_2 \cap \Omega = \Gamma_{12} \cap \Omega$, thus the regularity of the free boundary follows from the previous discussion (actually, it is the zero set of the harmonic function $a_{ji} u_i- a_{ij} u_j$). Therefore we are left to deal with the case $k\geq3$, in which each $\Gamma_{ij}$ has at least one limit in $\singset$.
\end{proof}
\begin{lemma}\label{lem:counterclockwise}
Up to a further reduction, we can label the densities $u_i$ in a counterclockwise sense,
in such a way that $\Gamma_{ij} \neq \emptyset$ if and only if $j - i = \pm 1 \mod{k}$, and
both $\overline{\Gamma}_{{ij}} \cap \partial \Omega$ and $\overline{\Gamma}_{{ij}} \cap \singset$
are non-empty.
\end{lemma}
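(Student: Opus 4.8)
\emph{Plan.} The idea is to read off the cyclic structure of the interfaces from the boundary arcs $\supp(u_i)\cap\partial\Omega$, verify the two implications of the \emph{iff}, and kill the last possible anomaly — a non‑adjacent interface — by one further cut.

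\emph{Cyclic order.} After Lemma~\ref{lem only one cc} each $\mathcal I_i:=\supp(u_i)\cap\partial\Omega=\overline\omega_i\cap\partial\Omega$ is a connected sub‑arc of $\partial\Omega$; by Lemma~\ref{lem:noWonpartialOmega} no point of $\partial\Omega$ has multiplicity $\ge3$, so no three of these arcs share a point, and by (A\ref{eq:ass_non_deg_trace}) they cover $\partial\Omega$ up to the finitely many non‑degenerate zeros of $\varphi$. Hence the $k\ge3$ arcs inherit the cyclic order of $\partial\Omega$; relabel so that $\mathcal I_1,\dots,\mathcal I_k$ occur counterclockwise, with $\mathcal I_i\cap\mathcal I_{i+1}=\{\bx_0^{(i)}\}$ (indices mod $k$) and $\mathcal I_i\cap\mathcal I_j=\emptyset$ if $j-i\ne\pm1$. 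Since $\bx_0^{(i)}\in\overline{\{\varphi_i>0\}}\cap\overline{\{\varphi_{i+1}>0\}}$, Corollary~\ref{coro:noWboundary} gives $\Gamma_{i,i+1}\ne\emptyset$ — the ``$\Leftarrow$'' implication.

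\emph{Neighbouring interfaces reach $\partial\Omega$ and $\singset$.} No density other than $u_i,u_{i+1}$ accumulates at $\bx_0^{(i)}$: otherwise $\bx_0^{(i)}\in\mathcal I_h$ for some $h\ne i,i+1$, a third arc through $\bx_0^{(i)}$. Thus, for small $r$, $g:=a_{i+1,i}u_i-a_{i,i+1}u_{i+1}$ is harmonic in $B_r(\bx_0^{(i)})\cap\Omega$ (see Lemma~\ref{thm:intro_S}), and since $u_iu_{i+1}\equiv0$ its zero set there equals $\zeroset\cap B_r(\bx_0^{(i)})\cap\Omega$. On $\partial\Omega\cap B_r(\bx_0^{(i)})$ the trace of $g$ equals $a_{i+1,i}\varphi_i-a_{i,i+1}\varphi_{i+1}$, which by (A\ref{eq:ass_non_deg_trace}) changes sign at $\bx_0^{(i)}$ with a linear lower bound on each side; as $\partial\Omega$ is $C^{1,\alpha}$, Schwarz reflection across the two regular boundary pieces forces $g$ to vanish to first order at $\bx_0^{(i)}$, so its nodal line is a single $C^1$ arc issuing from $\bx_0^{(i)}$ into $\Omega$. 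Near $\bx_0^{(i)}$ this arc coincides with $\Gamma_{i,i+1}$, whence $\bx_0^{(i)}\in\overline\Gamma_{i,i+1}\cap\partial\Omega$; together with the fact, established at the end of the proof of Lemma~\ref{lem only one cc}, that every non‑empty interface has a limit set in $\singset$, this yields the stated property of $\overline\Gamma_{i,i+1}$.

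\emph{``$\Rightarrow$'' and the final cut.} Let $\Gamma_{ij}\ne\emptyset$ with $j-i\ne\pm1\bmod k$. If $\overline\Gamma_{ij}$ met $\partial\Omega$, such a point would lie in $\overline\omega_i\cap\overline\omega_j\cap\partial\Omega=\mathcal I_i\cap\mathcal I_j=\emptyset$; so $\overline\Gamma_{ij}\subset\Omega$ and, by Lemma~\ref{lem connect omega lim}, both limit sets $Z_0,Z_1$ of $\Gamma_{ij}$ are connected subsets of $\singset$. Fix $\bp\in\Gamma_{ij}$ and a simple curve $\tau$ running from an interior point of $\mathcal I_i$ through $\omega_i$, crossing $\Gamma_{ij}$ transversally at $\bp$, through $\omega_j$ to an interior point of $\mathcal I_j$, with $\tau\setminus\{\bp\}\subset\omega_i\cup\omega_j$; then $\tau$ avoids $\zeroset\cup\singset$ except at $\bp$. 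The two sub‑arcs of $\Gamma_{ij}\setminus\{\bp\}$ lie near $\bp$ on opposite sides of $\tau$, each stays off $\tau$, and its limit set $Z_0$ resp.\ $Z_1$ — connected and disjoint from $\tau$ — lies on that side; hence $Z_0$ and $Z_1$ lie on opposite sides of $\tau$ (in particular $Z_0\ne Z_1$). Cutting $\Omega$ along $\tau$ into two subdomains therefore splits $\Gamma_{ij}$ into two non‑empty arcs, one per subdomain, each reaching the new boundary at $\bp$ and each keeping a limit in $\singset$, i.e.\ each becomes a \emph{neighbouring} interface there; $\omega_i,\omega_j$ are each cut into two simply connected halves with connected boundary arcs, no other density is touched, the cyclic order of the surviving interfaces is preserved, and the total number of non‑neighbouring interfaces drops by one. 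Iterating this (together with analogous cuts separating distinct connected components of $\singset$), the quantity $\sum_{\text{pieces}}\bigl[\binom{\#\{\text{components of }\singset\}}{2}+\#\{\text{non‑neighbouring interfaces}\}\bigr]$ strictly decreases, so after finitely many steps every interface in every remaining subdomain is neighbouring; subdomains with $\singset=\emptyset$ are trivial ($\zeroset=\zeroset_2$ is a finite union of analytic curves) and are discarded, and in the others each density is adjacent to each of its two cyclic neighbours (Lemma~\ref{lem:Hopf} and the remark following it), so the adjacency graph is exactly the $k$‑cycle, which is the claim.

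\emph{Main obstacle.} The crux is the cutting step: one must check that every cut preserves (A\ref{ass:1})--(A\ref{eq:ass_non_deg_trace}) for the restricted densities — above all the non‑degeneracy of the new boundary traces at the fresh corner $\bp$, which holds because $\bp$ is a regular point of $\zeroset_2$, so $\nabla u_i(\bp),\nabla u_j(\bp)\ne0$ and the nodal line of $a_{ji}u_i-a_{ij}u_j$ is transverse to $\tau$ — and that the resulting iteration (interleaved with the reduction of Lemma~\ref{lem only one cc}) terminates and reassembles to the stated configuration. A secondary point is the boundary‑regularity argument (reflection across the $C^{1,\alpha}$ arcs) used in the second step.
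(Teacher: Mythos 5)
Your proof takes essentially the same route as the paper's: after Lemma~\ref{lem only one cc} establishes that each $\supp(u_i)\cap\partial\Omega$ is a connected arc, identify a non-adjacent interface $\Gamma_{ij}$, cut $\Omega$ along a transverse curve $\tau\subset\supp(u_i)\cup\supp(u_j)$ crossing $\Gamma_{ij}$ exactly once, observe that in each resulting subdomain $\Gamma_{ij}$ becomes an adjacent (``neighbouring'') interface, and iterate. The paper states the invariant as ``$\Gamma_{ij}\neq\emptyset$ iff $\supp\varphi_i\cap\supp\varphi_j\neq\emptyset$'' while you phrase it in terms of cyclic adjacency $j-i=\pm1\bmod k$, but these are the same after the relabelling that is part of the statement.

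Two things you do add, which are both correct and useful clarifications. First, you make explicit why assumption (A\ref{eq:ass_non_deg_trace}) survives the cut at the new interior boundary point $\bp\in\Gamma_{ij}$: since $\bp\in\zeroset_2$ is a regular zero of the harmonic function $a_{ji}u_i-a_{ij}u_j$ and $\tau$ crosses its nodal line transversally, the restricted trace along $\tau$ vanishes non-degenerately at $\bp$. The paper only addresses the corner points $\gamma(0),\gamma(1)$ on $\partial\Omega$ (cut off by small neighborhoods) and leaves the non-degeneracy at $\bp$ implicit, so this is a genuine gap you fill. Second, you give an explicit decreasing integer-valued quantity to justify termination, where the paper simply asserts ``iterating the procedure a finite number of times''; your combinatorial accounting (previously-adjacent interfaces remain adjacent in each subdomain, while the cut interface becomes adjacent, so the count of non-adjacent interfaces drops) is sound. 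One minor remark: the digression about $Z_0\ne Z_1$ and the $\binom{\#\text{components of }\singset}{2}$ term is not needed for this lemma -- connectedness of $\singset$ is only established later (Lemma~\ref{lem:Wconnected}) and is not part of the present claim -- but it does not invalidate the argument.
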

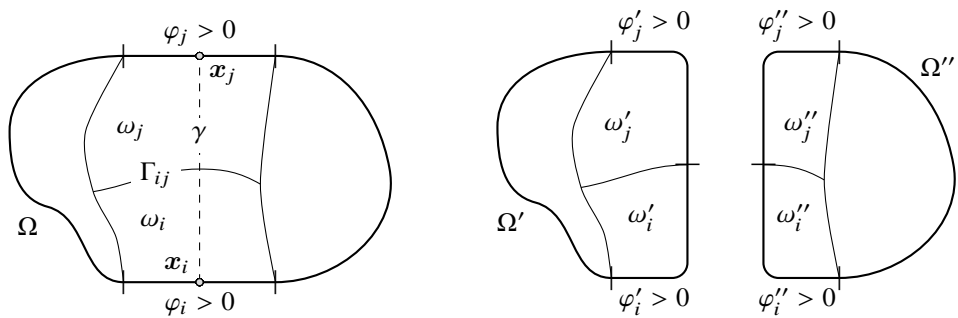
\begin{figure}[th]
\begin{center}
\begin{tikzpicture}
\draw[thick] (0,0) to[out=180,in=-180+165] +(-1,1) node[below left] {$\Omega$} to[out=165,in=-90] +(-.5,1) to[out=90,in=180]
+(1.5,1);
\draw[thick] (0,0) node {$|$} -- node[sloped,pos=.5,below] {$\varphi_i>0$} (2,0) node {$|$};
\draw[thick] (0,3) node {$|$} -- node[sloped,pos=.5,above] {$\varphi_j>0$} (2,3) node {$|$};
\draw[thick] (2,3) to[out=0,in=100] +(1.5,-1.5) to[out=-80,in=0] +(-1.5,-1.5);
\draw plot[smooth]
  coordinates{
   (0,0) (-.1,.6) (-.4,1.2) (-0.5,2) (0,3)
  };
\draw plot[smooth]
  coordinates{
   (2,0) (1.8,1.3) (2,3)
  };
\draw (-.4,1.2) to[out=10,in=180] node[pos=.6,fill=white] {$\Gamma_{ij}$} (1,1.5) to[out=0,in=150] (1.8,1.3);
\draw[dashed]
(1,0) -- node[pos=.65,fill=white] {$\gamma$} (1,3);
\draw[mark options={fill=gray!30!white}, only marks, mark size=1.3, mark=*]
plot (1,0) node[above left] {$\bx_i$} plot (1,3) node[below right] {$\bx_j$} ;
\draw (0.4,.8) node {$\omega_i$};
\draw (0.1,2) node {$\omega_j$};
\end{tikzpicture}
\qquad
\begin{tikzpicture}
\draw[thick] (0,0) to[out=180,in=-180+165] +(-1,1) node[below left] {$\Omega'$} to[out=165,in=-90] +(-.5,1)  to[out=90,in=180]
+(1.5,1);
\draw[thick] (0,0) node {$|$} -- node[sloped,pos=.7,below] {$\varphi_i'>0$} (.8,0)
arc (-90:0:.2) -- node[pos=.5,sloped] {$|$} (1,2.8) arc (0:90:.2) -- node[sloped,pos=.3,above] {$\varphi_j'>0$} (0,3) node {$|$};
\draw[thick] (3,0) node {$|$} -- node[sloped,pos=.7,below] {$\varphi_i''>0$} (2.2,0) arc (270:180:.2) -- node[pos=.5,sloped] {$|$} (2,2.8) arc (180:90:.2)
 -- node[sloped,pos=.3,above] {$\varphi_j''>0$} (3,3) node {$|$};
\draw[thick] (3,3) to[out=0,in=100] +(1.5,-1.5) to[out=-80,in=0] +(-1.5,-1.5);
\draw plot[smooth]
  coordinates{
   (0,0) (-.1,.6) (-.4,1.2) (-0.5,2) (0,3)
  };
\draw plot[smooth]
  coordinates{
   (3,0) (2.8,1.3) (3,3)
  };
\draw (-.4,1.2) to[out=10,in=180] (1,1.5);
\draw (2,1.5) to[out=0,in=150] (2.8,1.3);
\draw (0.4,.8) node {$\omega_i'$};
\draw (2.4,.8) node {$\omega_i''$};
\draw (0.1,2) node {$\omega_j'$};
\draw (2.5,2) node {$\omega_j''$};
\draw (4.3,2.8) node {$\Omega''$};
\end{tikzpicture}
\caption{Splitting for Lemma \ref{lem:counterclockwise}: on the left, the original domain; on the right, the split components.\label{fig:split2}}
\end{center}
\end{figure}
\begin{proof}
Taking into account Lemma \ref{lem only one cc}, the lemma will follow once we show that $\Omega$ can decomposed in such
a way that (in each subdomain) $\Gamma_{ij} \neq \emptyset$ if and only if $\supp{\varphi_i}\cap\supp{\varphi_j}\neq\emptyset$. Let $\Gamma_{ij}\neq\emptyset$ and $\supp{\varphi_i}\cap\supp{\varphi_j}=\emptyset$;
Lemmas \ref{lem simply conn} and \ref{lem z2 curves} imply that $\{\varphi_i>0\}\cap\omega_i\cap\Gamma_{ij}\cap\omega_j\cap\{\varphi_j>0\}$ is pathwise connected. As a consequence, reasoning as in the previous lemma, we
can construct a smooth simple curve $\gamma:[0,1]\to\supp(u_i)\cup\supp(u_j)$ with
$\varphi_i(\gamma(0)) >0$, $\varphi_j(\gamma(1))>0$, and which intersects $\Gamma_{ij}$ just once (and transversally). Again, $\gamma$ cuts the
domain $\Omega$ in two subdomains, on both of which $\supp{u_i}$ intersects $\supp{u_j}$ also on the boundary. As before, the two subdomains are regular except for
two corner points, which can be treated cutting some neighborhoods (see Fig. \ref{fig:split2}). Iterating
the procedure a finite number of times, and relabelling the components, the lemma follows.
\end{proof}
\begin{lemma}\label{ref zero conn}
Under the previous reductions, the set $\zeroset$ is connected.
\end{lemma}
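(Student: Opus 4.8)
The plan is to show that $\zeroset$ (which, after the reductions of Lemmas \ref{lem simply conn}--\ref{lem:counterclockwise}, sits inside $\Omega$ and is the union of the curves $\Gamma_{i,i+1}$, indices mod $k$, together with the singular set $\singset$) is connected. First I would recall what the reductions already give: each $\Gamma_{i,i+1}$ is a non-empty connected open curve whose closure meets both $\partial\Omega$ and $\singset$ (Lemma \ref{lem:counterclockwise}), and each $\omega_i$ together with its boundary trace set is simply connected. The natural strategy is to argue by contradiction from the connectedness of $\zeroset\cup\partial\Omega$ established in Lemma \ref{lem:ZconnessoapartialO}: if $\zeroset$ were disconnected, write $\zeroset = Z' \sqcup Z''$ as a separation into two relatively clopen pieces. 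Since by Lemma \ref{lem connect omega lim} each limit set of each $\Gamma_{i,i+1}$ is connected and lies either in $\partial\Omega$ or in $\singset$, and $\overline{\Gamma}_{i,i+1}\cap\singset\neq\emptyset$, each closed curve $\overline{\Gamma}_{i,i+1}$ — being itself connected — must lie entirely in one of $Z'$ or $Z''$ together with whatever it meets; in particular each connected component of $\singset$ lies in one piece, and each $\overline{\Gamma}_{i,i+1}$ lies in the same piece as the component(s) of $\singset$ it touches.

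Next I would exploit the cyclic adjacency structure. The curves $\overline{\Gamma}_{12},\overline{\Gamma}_{23},\dots,\overline{\Gamma}_{k1}$ form a cyclic chain in the sense that consecutive ones have overlapping ``endpoints'' on $\partial\Omega$: indeed, since $\supp(u_i)\cap\partial\Omega$ is a single connected arc (Lemma \ref{lem only one cc}) and the traces are labelled counterclockwise, the boundary point where $\overline{\Gamma}_{i-1,i}$ meets $\partial\Omega$ and the point where $\overline{\Gamma}_{i,i+1}$ meets $\partial\Omega$ are the two endpoints of the arc $\supp(\varphi_i)\cap\partial\Omega$ — hence both are zeros of the trace (by assumption (A\ref{eq:ass_non_deg_trace}) and Lemma \ref{lem:noWonpartialOmega} they are isolated zeros of multiplicity $2$), and consecutive curves $\overline{\Gamma}_{i-1,i}$, $\overline{\Gamma}_{i,i+1}$ cannot be separated from one another since they are chained through the closed region $\overline{\omega}_i$, on whose boundary they both lie. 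More directly: $\omega_i\cup\Gamma_{i-1,i}\cup\omega_{i-1}$ and $\omega_i\cup\Gamma_{i,i+1}\cup\omega_{i+1}$ are connected open sets, so $\zeroset\cap\overline{\omega}_i$ is ``seen from both curves'' and any separation of $\zeroset$ must keep $\overline{\Gamma}_{i-1,i}$ and $\overline{\Gamma}_{i,i+1}$ on the same side. Running around the cycle $i=1,\dots,k$ this forces all the $\overline{\Gamma}_{i,i+1}$, and hence all of $\zeroset$, into a single piece, contradicting the assumed separation.

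I expect the main obstacle to be the bookkeeping at the boundary: making rigorous the claim that two consecutive curves $\overline{\Gamma}_{i-1,i}$ and $\overline{\Gamma}_{i,i+1}$ really cannot be separated in $\zeroset$. The cleanest way is probably to avoid $\partial\Omega$ altogether and use the interior structure: pick points $\bx_0\in\Gamma_{i-1,i}$ and $\bx_1\in\Gamma_{i,i+1}$, and in the simply connected open set $\omega_{i}\cup\Gamma_{i-1,i}\cup\Gamma_{i,i+1}\cup\omega_{i-1}\cup\omega_{i+1}$ (or, more carefully, using that $\partial\omega_i\subset\zeroset\cup\partial\Omega$ consists only of pieces of $\Gamma_{i-1,i}$, $\Gamma_{i,i+1}$, the two boundary zeros, and higher-multiplicity points) observe that $\partial\omega_i$ minus its intersection with $\partial\Omega$ is a connected subset of $\zeroset$ joining $\bx_0$ to $\bx_1$ — this is where the simple connectedness of $\omega_i$ from Lemma \ref{lem simply conn} does the work, since $\partial\omega_i$ is then a single Jordan-type loop. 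Once $\partial\omega_i\cap\Omega$ is known to be a connected subset of $\zeroset$ through which $\Gamma_{i-1,i}$ and $\Gamma_{i,i+1}$ communicate, the cyclic argument closes immediately. The only real subtlety is that $\partial\omega_i$ might not be rectifiable, but connectedness (not rectifiability) is all that is needed, and that follows from simple connectedness of $\omega_i$ as a planar domain.
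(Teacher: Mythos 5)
The paper's proof is a direct Jordan curve argument: if $\zeroset$ were disconnected there is a separating Jordan curve $\gamma$ with $\gamma\cap\zeroset=\emptyset$; by Lemma \ref{lem:ZconnessoapartialO} it must cross $\partial\Omega$, and the arc of $\gamma$ in $\Omega$ between consecutive crossings lies in one $\supp(u_i)$; using Lemma \ref{lem only one cc} one replaces that arc by an arc of $\partial\Omega$, homotopically in $\R^2\setminus\zeroset$, and iterating one pushes $\gamma$ entirely outside $\Omega$, so that it cannot separate $\zeroset$. Your strategy is different: you try to chain the interfaces cyclically, arguing that $\overline{\Gamma}_{i-1,i}$ and $\overline{\Gamma}_{i,i+1}$ cannot be separated in $\zeroset$ because they both lie on $\partial\omega_i$.

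The crux of your argument is the claim that $\partial\omega_i\cap\Omega$ is a connected subset of $\zeroset$ joining a point of $\Gamma_{i-1,i}$ to a point of $\Gamma_{i,i+1}$, and you attribute this to the simple connectedness of $\omega_i$, ``since $\partial\omega_i$ is then a single Jordan-type loop.'' This is a genuine gap. Simple connectedness of a planar domain gives that $\partial\omega_i$ is connected (unicoherence of $\sphere^2$), but \emph{not} that it is a Jordan curve, and in particular \emph{not} that removing the closed boundary arc $\overline{\omega}_i\cap\partial\Omega$ leaves a connected set. Indeed, after the reductions, $\partial\omega_i\cap\Omega\subset\Gamma_{i-1,i}\cup\Gamma_{i,i+1}\cup\singset$, and $\Gamma_{i-1,i}$ and $\Gamma_{i,i+1}$ accumulate respectively onto the two endpoints of the boundary arc; if they were to limit onto two \emph{different} connected components of $\singset$, then the only path in $\partial\omega_i$ linking them would run through the boundary arc, and $\partial\omega_i\cap\Omega$ would in fact be disconnected. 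In other words, connectedness of $\partial\omega_i\cap\Omega$ is essentially equivalent to the statement you are trying to prove (via Lemma \ref{lem:Wconnected}, which the paper derives \emph{from} the present lemma, $\zeroset$ connected $\iff$ $\singset$ connected), so the argument is circular as written. You also invoke Lemma \ref{lem:ZconnessoapartialO} in the setup but never actually exploit it; that lemma is precisely what makes the paper's Jordan-curve argument go through (it guarantees the separating curve must cross $\partial\Omega$, where the reduction Lemma \ref{lem only one cc} takes over), and it is the missing ingredient here as well.
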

\begin{proof}
Let us assume that $\zeroset$ is not connected. Then there exists a Jordan curve $\gamma$ that separates $\zeroset$ in two components, and in particular $\gamma \cap \zeroset = \emptyset$. Since $\zeroset \cup \partial \Omega$ is connected (Lemma \ref{lem:ZconnessoapartialO}), $\gamma$ must cross $\partial \Omega$ in at least two points, and we can assume that $\gamma$ has a finite (even) number of transverse intersections with  $\partial \Omega$.
Let $t_0 <  t_1 \in [0,1)$ be such that
\[
	\gamma(t_0), \gamma(t_1) \in \partial \Omega,\qquad  \gamma(t) \in \Omega \quad \text{for $t \in (t_0, t_1)$;}
\]
from the discussion above, there exists $i$ such that $\gamma([t_0, t_1]) \in \supp(u_i)$ (otherwise $\gamma$ would intersect $\zeroset$). Let $\sigma$ be the component of $\partial \Omega \setminus \{\gamma(t_0), \gamma(t_1)\}$ that is contained in $\supp(u_i)$ (which exists by Lemma \ref{lem only one cc}): the simple closed curve
$\gamma([t_0, t_1]) \cup \sigma$ does not intersect $\zeroset$, and thus we can deform continuously $\gamma$ into $\gamma([0,t_0]) \cup \sigma \cup \gamma([t_1,1])$ without ever crossing $\zeroset$. Iterating these steps a finite number of times, we end up with a new Jordan curve
$\gamma'$ that is by construction homotopic to $\gamma$ in $\R^2 \setminus \zeroset$ and such that $\gamma'\cap\Omega=\emptyset$. In particular, no point of $\zeroset$ is contained in the unbounded portion of $\R^2 \setminus \gamma'$, a contradiction.
\end{proof}
\begin{lemma}\label{lem:Wconnected}
Under the previous reductions, the set $\singset$ is connected.
\end{lemma}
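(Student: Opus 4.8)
The plan is to deduce this from the connectedness of $\zeroset$ (Lemma \ref{ref zero conn}) together with the rigid combinatorics already established. First I would record the relevant structure: under the reductions in force the nonempty interfaces are exactly the $k\ge3$ pairwise disjoint curves $\Gamma_i:=\Gamma_{i,i+1}$ (indices mod $k$), so $\zeroset_2=\bigsqcup_{i=1}^{k}\Gamma_i$; since around any point of $\zeroset_2$ the set $\zeroset$ coincides with a smooth arc (proof of Lemma \ref{lem z2 curves}), $\zeroset_2$ is relatively open in $\zeroset$ and hence $\singset=\zeroset\setminus\zeroset_2$ is relatively closed in $\zeroset$. The one genuinely geometric input, which I would isolate as a preliminary claim, is that each $\Gamma_i$ attaches to $\singset$ along a \emph{connected} piece: parametrizing $\Gamma_i$ by $\phi$ as in Lemma \ref{lem connect omega lim}, its closure is $\Gamma_i$ together with the two limit sets $\alpha(\phi),\omega(\phi)$, each of which is either a single point of $\partial\Omega$ or a connected subset of $\singset$; since $\overline{\Gamma_i}\cap\partial\Omega\neq\emptyset$ and $\overline{\Gamma_i}\cap\singset\neq\emptyset$ (Lemma \ref{lem:counterclockwise}, together with the fact that $\Gamma_i$ reaches $\partial\Omega$ at most once, Lemma \ref{lem only one cc}), exactly one limit set sits on $\partial\Omega$ and the other one, call it $K_i$, is a nonempty connected subset of $\singset$. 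As $\partial\Omega\cap\zeroset=\emptyset$, this yields the relative closure $\overline{\Gamma_i}^{\,\zeroset}=\Gamma_i\cup K_i$.

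With this in hand I would argue by contradiction. Suppose $\singset$ is disconnected and write $\singset=A\sqcup B$ with $A,B$ nonempty and relatively clopen in $\singset$; since $\singset$ is relatively closed in $\zeroset$, both $A$ and $B$ are relatively closed in $\zeroset$. Each $K_i$, being connected and contained in $A\sqcup B$, lies entirely in one of them, so setting $I_A=\{i:K_i\subset A\}$, $I_B=\{i:K_i\subset B\}$ we get $I_A\sqcup I_B=\{1,\dots,k\}$. Now consider
\[
\widetilde A:=A\cup\bigcup_{i\in I_A}\Gamma_i,\qquad \widetilde B:=B\cup\bigcup_{i\in I_B}\Gamma_i .
\]
These cover $\zeroset$, they are disjoint (because $A$, $B$ and the $\Gamma_i$ are pairwise disjoint), and each is relatively closed in $\zeroset$: taking relative closures, $\overline{\widetilde A}^{\,\zeroset}=A\cup\bigcup_{i\in I_A}(\Gamma_i\cup K_i)=\widetilde A$ since $K_i\subset A$ when $i\in I_A$, and symmetrically for $\widetilde B$. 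Both are nonempty (they contain $A$, resp.\ $B$), so $\zeroset$ is disconnected, contradicting Lemma \ref{ref zero conn}. Hence $\singset$ is connected.

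I expect the only delicate point to be the preliminary claim — that each interface meets $\singset$ in a single connected limit set, rather than possibly accumulating on several pieces of $\singset$ or from both of its ends; this is exactly what forces every curve to ``commit'' to one side of any clopen splitting of $\singset$, and it is where Lemmas \ref{lem connect omega lim}, \ref{lem only one cc} and \ref{lem:counterclockwise} are essential. A pleasant feature of this formulation is that it never requires knowing a priori that $\singset$ has finitely many connected components: one works with an arbitrary clopen decomposition, not with components.
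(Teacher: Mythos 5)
Your proof is correct and takes essentially the same route as the paper: by contradiction, split $\singset$ into two relatively clopen pieces, use Lemmas \ref{lem connect omega lim}, \ref{lem only one cc} and \ref{lem:counterclockwise} to force each $\Gamma_{ij}$ to attach to exactly one piece, and deduce a clopen partition of $\zeroset$ contradicting Lemma \ref{ref zero conn}. Your write-up is a more explicit, step-by-step version of the paper's compressed argument (the paper does invoke Lemma \ref{lem:noWonpartialOmega} as well, which is just to rule out the boundary endpoint lying in $\singset$), but no genuinely new idea is introduced.
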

\begin{proof}
Assume by contradiction that $\singset$ is the disjoint union of the two closed sets $\singset_1$ and $\singset_2$. By Lemmas  \ref{lem:counterclockwise}, \ref{lem connect omega lim} and \ref{lem:noWonpartialOmega}
we have that, for each $i$, either $\overline{\Gamma}_{(i-1)i}\cap  \singset_1 =\emptyset$ or $\overline{\Gamma}_{(i-1)i}\cap  \singset_2 =\emptyset$ (here $\Gamma_{01}$ stands for $\Gamma_{1k}$).
We deduce that $\zeroset$ is the disjoint union of
\[
\singset_1\cup\left\{\bigcup_i\Gamma_{(i-1)i}:\overline{\Gamma}_{(i-1)i}\cap  \singset_2 =\emptyset\right\}
\text{ and }
\singset_2\cup\left\{\bigcup_i\Gamma_{(i-1)i}:\overline{\Gamma}_{(i-1)i}\cap  \singset_1 =\emptyset\right\},
\]
in contradiction with Lemma \ref{ref zero conn}.
\end{proof}
\begin{proof}[Proof of Proposition \ref{prop:reduction}]
Using Lemmas \ref{lem simply conn}, \ref{lem:noWonpartialOmega}, \ref{lem only one cc},
\ref{lem:counterclockwise} and \ref{lem:Wconnected} we obtain that $\Omega$ splits, up to some
residual set of $\Omega\setminus\zeroset$ (recall Figs. \ref{fig:split1}, \ref{fig:split2}) into
the finite union of domains
satisfying all the required properties in (MCS), but the last. By the Riemann Mapping
Theorem we can assume that, up to a conformal change of coordinates, the domain is the unit ball
$B=B_1(\bm{0})\subset\R^2$. With an abuse of notation, we keep writing $u_i$, $\omega_i$,
$\zeroset$, $\singset$, and so on, for the corresponding transformed objects, defined in $B$ instead
of $\Omega$. Since $\Omega$ is of class $C^{1,\alpha}$, we have that the conformal transformation
can be extended to a $C^{1,\alpha}(\overline{B})$ map onto $\overline{\Omega}$ (see, e.g.,
\cite[Thm. 3.6]{Po_1992}). We deduce that assumption (A\ref{eq:ass_non_deg_trace}) holds true also
for the transformed densities. Now, if $\singset\equiv\{\bm{0}\}$ then the proposition follows;
in the other case, let $r>0$ be such that
\[
\singset\subset \overline{B}_r(\bm{0}),\qquad \singset\cap\partial B_r(\bm{0})\ni\bx_0,
\]
where $ r < 1$ by Lemma \ref{lem:noWonpartialOmega}. Using as a second conformal change of
variables the M\"obius transform which sends $\bm{0}$ to $\bx_0$ (and $B$ onto itself) we have
that the pre-image of  $B_r(\bm{0})$ is contained in a ball touching the origin, concluding the proof.
\end{proof}
\begin{remark}\label{rem:asy1}
We stress that, under the conformal changes of variables we introduced in the proof above,
the local expansion of any transformed function near the origin is the same as that of the
original one near $\bx_0$, up to a similarity transformation (this will provide the asymptotic expansion \eqref{eq:asympt_expans}).
\end{remark}

Once we reduced to work on a (finite number of domains) $\Omega$ satisfying (MCS), we conclude this
section by introducing a suitable conformal map, which provides an equivalent formulation of our
problem for a function defined in the half-plane. By construction, we have that $\omega_i
\cap\partial B$ is a connected curve on $\partial B$ for every index $i$, while each non-empty $\Gamma_{ij}$ consists of a smooth curve going from $\partial B$ to $\singset$. Furthermore,
the densities $u_1,\dots,u_k$  are ordered in counterclockwise order around the origin.

Let
\begin{equation}\label{eq:vtilde}
\mathcal{U}(\bx)=
\begin{cases}
u_1(\bx) &\text{if }\bx\in
\supp(u_1),\\
\displaystyle (-1)^{i-1}\left(\prod_{j=2}^{i} \frac{a_{(j-1)
j}}{a_{j(j-1)}}\right) u_i(\bx) &\text{if }\bx\in
\supp(u_i),\,i=2,\dots,k.
\end{cases}
\end{equation}
Using Lemma \ref{thm:intro_S}, we obtain that
\[
\mathcal{U}\in C^1(B\setminus(\Gamma_{1k}\cup\singset)),\qquad \Delta \mathcal{U}=0\text{ in }B\setminus(\Gamma_{1k}\cup\singset).
\]
For concreteness, from now on we assume that $k$ is  even, the odd case following
with minor changes, see Remark \ref{rem:oddk} below. Let
\begin{equation}\label{eq:lmbd}
\lambda:=\dfrac{a_{k1}}{a_{1k}}\cdot\prod_{j=2}^{k}
\frac{a_{(j-1)j}}{a_{j(j-1)}}>0.
\end{equation}
Using again Lemma \ref{thm:intro_S} we infer that, for every $\bx\in\Gamma_{1k}$,
\begin{equation}\label{vi}
\lim_{\substack{\by\to \bx\\ \by\in \omega_k}}\nabla \mathcal{U}(\by)=
\prod_{j=1}^{k} \frac{a_{(j-1)j}}{a_{j(j-1)}}\lim_{\substack{\by\to \bx\\ \by\in \omega_k}}
\nabla u_k(\by)=\lambda \lim_{\substack{\by\to \bx\\ \by\in \omega_1}}\nabla u_1(\by)=\lambda
\lim_{\substack{\by\to \bx\\ \by\in \omega_1}}\nabla \mathcal{U}(\by).
\end{equation}
Now, if $\lambda\neq1$ then $\mathcal{U}$ is not harmonic on
$B\setminus\singset$. On the other hand, for any $\lambda$,
we can construct a harmonic function by lifting $\mathcal{U}$ to the universal covering
of $B\setminus\singset$. More precisely, we consider the conformal map
\begin{equation}\label{eq:defT}
\begin{split}\mathcal{T} &\from \R^2_+  :=\{(x,y)\in\R^2 : y> 0\} \to  B\setminus\{\bm{0}\},\\
 \mathcal{T}&\from (x,y) \mapsto \bx=(e^{-y}\cos x,e^{-y}\sin x).
\end{split}
\end{equation}
Notice that $\mathcal{T}$ is a universal covering. Up to a rotation, let us assume that
the endpoint on $\partial B$ of $\Gamma_{1k}$ is the point of coordinates $(1,0)$.
The Lifting Theorem provides the existence of a
unique regular curve $\Gamma \subset \overline{\R^2_+} $, containing $(0,0)$,
which lifts $\Gamma_{1k}$. Also $\Gamma+(2\pi,0)$ (the
horizontal translated of $\Gamma$) is a regular curve, it
starts from $(2\pi,0)$, and it has empty intersection with $\Gamma$. Noticing that
$B\setminus(\Gamma_{1k}\cup\singset)$ is simply connected (this can be shown as in Lemma \ref{lem z2 curves}),
we can define the set $S$ as the unique lifting of $B\setminus(\Gamma_{1k}\cup\singset)$ such that
\begin{equation}\label{eq:defS}
\partial S\supset\left[\Gamma\cup(\Gamma+(2\pi,0))\cup([0,2\pi]\times\{0\})\right].
\end{equation}
Since $\mathcal{T}$ is conformal, the function
\begin{equation}\label{eq:defvi}
v(x,y):=\mathcal{U}(\mathcal{T}(x,y))
\end{equation}
is harmonic on $S$, and
\[
\int_{S}|\nabla v|^2\,dxdy<+\infty.
\]
Using \eqref{vi}, we can extend $v$ in such a way that
\[
v(x+2\pi,y)=\lambda v(x,y),
\]
so that $v$ is harmonic in the whole $\R^2_+\setminus\mathcal{T}^{-1}(\singset\setminus\{\bm{0}\})$, and continuous
in $\overline{\R^2_+}$. Resuming, $v$ is a solution of the problem
\begin{equation}\label{eq:main_on_the_halfplane}
\begin{cases}
\Delta v =0 & \text{in }\R^2_+\setminus\mathcal{T}^{-1}(\singset\setminus\{\bm{0}\})\\
v\equiv0 & \text{in }\mathcal{T}^{-1}(\singset\setminus\{\bm{0}\})\\
v(x,0)=v_0(x) & x\in\R \\
v(x+2\pi,y)=\lambda v(x,y) \\
\int_{S}|\nabla v|^2\,dxdy<+\infty,
\end{cases}
\end{equation}
where $v_0$ is a suitable combination of the trace functions $\varphi_i$.
\begin{remark}\label{rem:oddk}
Notice that when $k$ is odd one can double the angle, using the map
\[
\mathcal{T}_2 \from (x,y) \mapsto \bx=(e^{-2y}\cos 2x,e^{-2y}\sin 2x).
\]
In such a way, we can reduce to \eqref{eq:main_on_the_halfplane}, with $\lambda^2$ instead of $\lambda$.
\end{remark}
\begin{remark}\label{rem:Tallamenouno}
The (multi-valued) inverse $\mathcal{T}^{-1}$ satisfies
\[
 \mathcal{T}^{-1}(r\cos\vartheta,r\sin\vartheta) = (\vartheta,-\log{r}),
\]
where we use the polar system around $\bm{0}$, writing $\bx=(r\cos\vartheta,r\sin\vartheta)$. Therefore
\begin{equation}\label{eq:Tallamenouno}
\mathcal{U}(\bx) = v(\vartheta,-\log{r})
\end{equation}
(this will provide the asymptotic expansion \eqref{eq:asympt_expans}).
\end{remark}

\section{A representation formula in $\R^2_+$}\label{sec rep}

In this section we deal with problem \eqref{eq:main_on_the_halfplane}, in the particular case in
which $\singset = \{\bm{0}\}$ and the number of densities is even, say $2\np$
(recall Remark \ref{rem:oddk}).
In such a case, we can assume that the curve $\Gamma$ (the lifting of $\Gamma_{1k}$ to the half plane $\R^2_+:=\{(x,y)\in\R^2:y>0\}$) has $C^{1,\alpha}$, regular parametrization
\[
\Gamma:=\{(x(t),y(t)):t\in[0,+\infty)\},
\quad\text{ satisfying }\;
\begin{cases}
t\mapsto(x(t),y(t))\text{ is injective}\\
(x(0),y(0))=(0,0),\\
y(t)>0 \text{ for }t>0,\\
\lim_{t\to+\infty}y(t)=+\infty,\\
\Gamma \cap (\Gamma + (2\pi,0))=\emptyset
\end{cases}
\]
By connectedness, we have that $\Gamma \cap (\Gamma + (2h\pi,0))=\emptyset$ too, as long
as $h\in\Z\setminus\{0\}$. Under this perspective, the set $S\subset\R^2_+$ is a ``strip'',
i.e. the unique unbounded connected open set having boundary
\[
\partial S:=\Gamma\cup(\Gamma+(2\pi,0))\cup([0,2\pi]\times\{0\}).
\]
\begin{definition}
We denote by
\[
S_y:=\{(x,y)\in S\}
\]
the horizontal sections of $S$, having endpoints
\[\begin{split}
\Xmin(y) &:= \inf\{x:(x,y)\in S\}>-\infty,\\
\Xmax(y) &:= \sup\{x:(x,y)\in S\}<+\infty,
\end{split}\]
and diameter
\[
\diam S_y = \Xmax(y) - \Xmin(y),
\]
respectively (see Figure \ref{fig:strip}). Of course, $(\Xmin(y),y)\in\Gamma$ and
$(\Xmax(y),y)\in\Gamma+(2\pi,0)$.
\end{definition}
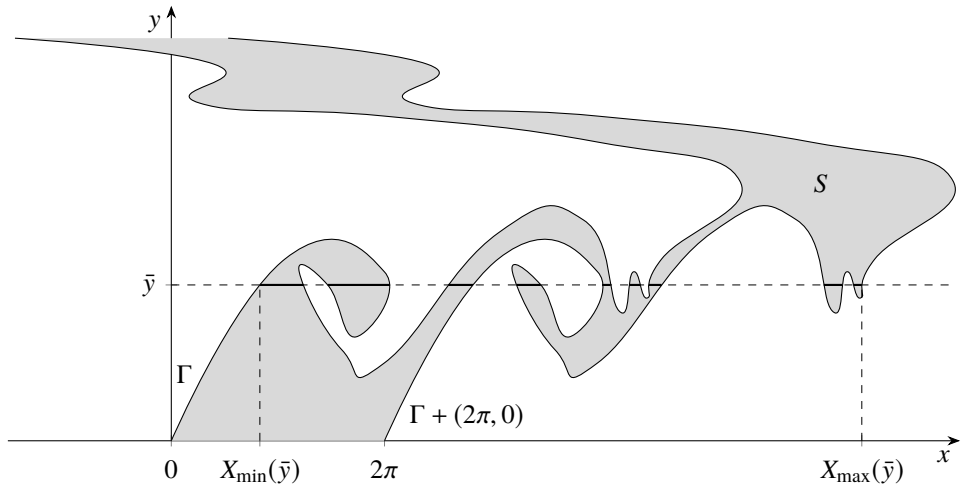
\begin{figure}[htbp]
\centering
\begin{tikzpicture}
\begin{axis}[
    every axis x label/.style={at={(current axis.right of origin)},anchor=north west},
    width=.96\linewidth,
    height=.5\linewidth,
    xmin=-1.15, xmax=5.55,
    ymin=0, ymax=3.35,
    axis y line=middle, axis x line=bottom,
    typeset ticklabels with strut,
    axis line style={-Stealth},
    xlabel=$x$, ylabel=$y$,
    x label style={below left},
    y label style={below left},
    xtick={0,.623,1.5,4.858},
    xticklabels={0,$\Xmin(\bar y)$,$2\pi$,$\Xmax(\bar y)$},
    ytick={1.2},
    yticklabels={$\bar y$},
   ]
\addplot[name path=A, smooth, tension=0.9] coordinates
        {(0,0) (0.8,1.4) (1.5,1.3) (1.3,.8) (1.1,1.2) (.9,1.3)
         (1.2,.7) (1.5,.6) (2.3,1.6)
         (2.9,1.7) (3.15, 1) (3.25, 1.3) (3.35, 1.1) (3.45, 1.4) (4,2)
         (3,2.3) (1.5,2.5) (.2,2.6)
         (.3,2.9) (-1.1,3.1)}
         node[right=25, pos = .61] {$S$}
         node[above left, pos = .05] {$\Gamma$};
\addplot[name path=B, smooth, tension=0.9] coordinates
        {(1.5,0) (2.3,1.4) (3,1.3) (2.8,.8) (2.6,1.2) (2.4,1.3)
         (2.7,.7) (3,.6) (3.8,1.6)
         (4.4,1.7) (4.65, 1) (4.75, 1.3) (4.85, 1.1) (4.95, 1.4) (5.5,2)
         (4.5,2.3) (3,2.5) (1.7,2.6)
         (1.8,2.9) (.4,3.1)}
         node[right, pos = .015] {$\Gamma + (2\pi,0)$};
\addplot[gray!30] fill between[of=A and B];
\addplot[very thin, dashed, domain=0:5.5] {1.2};
\addplot[very thin, dashed, domain=0:1.2] ({.623},x);
\addplot[very thin, dashed, domain=0:1.2] ({4.858},x);
\addplot[thick, domain=.623:.93] {1.2};
\addplot[thick, domain=1.1:1.54] {1.2};
\addplot[thick, domain=1.95:2.123] {1.2};
\addplot[thick, domain=2.43:2.599] {1.2};
\addplot[thick, domain=3.031:3.09] {1.2};
\addplot[thick, domain=3.22:3.3] {1.2};
\addplot[thick, domain=3.36:3.45] {1.2};
\addplot[thick, domain=4.59:4.72] {1.2};
\addplot[thick, domain=4.8:4.858] {1.2};
\end{axis}
\end{tikzpicture}
\caption{Possible behavior of $S$. The thick horizontal
segments correspond to the connected components of $S_{\bar y}$.
\label{fig:strip}}
\end{figure}
We observe that, though in general the diameter of $S_y$ may be arbitrarily large, as well as its
number of connected components, in any case its length is bounded by $2\pi$. This can be easily deduced
recalling that, according to Section \ref{sec:reduction}, $S$ is a covering of the disk minus a
simple curve connecting the boundary to the center. We provide a short, self-contained
proof, which will be useful in the following.
\begin{lemma}\label{lem:S_first_prop}
For every $y$, $|S_y|\leq 2\pi$ and $|\{x:(x,y)\in \Gamma\cup S\}| = 2\pi$.
\end{lemma}
\begin{proof}
We prove the first part of the statement, the second one follows in a similar way. For any $y>0$, we have that $S_y$ is open in the topology of the horizontal line
having height $y$, and therefore it is the disjoint union of at most countable open
intervals:
\begin{equation}\label{eq:propertiesofS}
S_{y} = \bigcup_{i=1}^{k} (s_{2i-1},s_{2i})\times\{\bar y\},\qquad k\leq+\infty.
\end{equation}

Note that if $x_1,x_2\in\overline{S}_{y}$ and
$x_2-x_1=2h\pi$, with $h\in\Z$, then necessarily either $h=0$, or both $(x_i,y)\in\partial S$,
$h=\pm1$; indeed, by translation, we can assume w.l.o.g. that, say, $(x_1,y)\in\partial S$,
$(x_2,y)\in\overline{S}$, so that $(x_1,y)\in\Gamma + (2j\pi,0)$, with $j\in\{0,1\}$,
and
\[
(x_2,y) = (x_1+2h\pi,y)\in \Gamma + (2(j+h)\pi,0),
\]
and this curve does not intersect $\overline{S}$ if $j+h\not\in\{0,1\}$.

We deduce that, for every $i$, there exists $h(i)\in\N$ such that
\[
(\hat s_{2i-1},\hat s_{2i}):=(s_{2i-1}+2h(i)\pi,s_{2i}+2h(i)\pi)\subset(\Xmin(y),\Xmin(y)+2\pi),
\]
and $(\hat s_{2i-1},\hat s_{2i})\cap(\hat s_{2j-1},\hat s_{2j})=\emptyset$ for $i\neq j$.
Thus
\[
\left|S_{y}\right| = \left|\bigcup_{i=1}^{k} (\hat s_{2i-1},\hat s_{2i})\right|
\leq \left|(\Xmin(y),\Xmin(y)+2\pi)\right|=2\pi. \qedhere
\]
\end{proof}
As we mentioned, since $\singset\setminus\{\bm{0}\}$ is empty, \eqref{eq:main_on_the_halfplane} reduces to
\begin{equation}\label{eq:sys_v}
\begin{cases}
\Delta v =0 \quad\text{ in } \R^2_+\\
v=0 \quad\text{ on } \Gamma\\
v(x+2\pi,y)=\lambda v(x,y)\smallskip\\
\int_{S}|\nabla v|^2\,dxdy<+\infty,
\end{cases}
\end{equation}
where $\lambda>0$ is  fixed constant, and we can assume w.l.o.g. that $v$ is of class $C^2$ up to $\{y=0\}$ (possibly replacing it with the restriction
on $\R^2_+$ of $v(x,y+\eps)$, $\eps>0$ small).
\begin{remark}\label{rem:poicare_on_S}
Since $|S_y|\leq2\pi$, and $v$ vanishes at the endpoints of any connected component of $|S_y|$, we
readily infer the validity of a Poincar\'e inequality for $v$ in $S$:
\[
\int_{S_y}v^2\,dx\leq 4 \int_{S_y}|\nabla v|^2\,dx\text{ for every }y,\quad
\int_{S}v^2\,dxdy\leq 4 \int_{S}|\nabla v|^2\,dxdy<+\infty.
\]
Furthermore, we can apply the standard trace inequality on the half plane $\{y\geq\bar y\}$
to the function $v|_{S}$ (with null extension outside $S$) in order to obtain
\[
\int_{S_{\bar y}}v^2\,dx \leq 2\|v|_{S}\|^2_{H^1(\R\times(\bar y,+\infty))}
\leq 10\int_{S\cap\{y>\bar y\}} |\nabla v|^2\,dxdy.
\]
\end{remark}
The aim of this section is to prove the following result.
\begin{proposition}\label{prop:target}
Under the above notation, assume that $v|_S$ has
exactly $2\np$ nodal regions (as well as $v(x,0)|_{(0,2\pi)}$), and let
us define
\begin{equation}\label{eq:alpha}
\alpha:=\frac{\log\lambda}{2\pi}.
\end{equation}
Then there exist constants $q$, $a$, $b$ such that
\[
S_y \subset \left\{(x,y): -\frac{\alpha}{\np}y+q
\leq x \leq -\frac{\alpha}{\np}y+q+2\pi+o(1)\right\},
\]
and
\[
v(x,y) = \left[a \cos{\left(\np x+\alpha y\right)}+
b \sin{\left(\np x+\alpha y\right)} + o(1)\right]
\exp\left(\alpha x-\np y\right)
\]
as $y\to+\infty$, uniformly in $S$.
\end{proposition}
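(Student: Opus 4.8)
\emph{Proof plan.} The strategy is to write $v$ as the real part of a holomorphic function on a punctured disc, and then to read off the asymptotics from the Laurent expansion.

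\emph{Step 1 (representation formula).} Since here $\singset=\{\bm 0\}$, the function $v$ is harmonic on the simply connected set $\R^2_+$, so it has a holomorphic potential: $v=\real F$ with $F$ holomorphic in $\R^2_+$. The quasi-periodicity of $v$ gives $\real\bigl(F(z+2\pi)-\lambda F(z)\bigr)\equiv 0$, hence $F(z+2\pi)=\lambda F(z)+i\gamma$ for some $\gamma\in\R$; using, when $\lambda=1$, that $v$ is bounded on $S$ (recall that $v$ is the pull-back via $\mathcal T$ of the bounded function $\mathcal U$), one may arrange, after subtracting an affine holomorphic term from $F$, that $F(z+2\pi)=\lambda F(z)$. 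With $\alpha=\tfrac{\log\lambda}{2\pi}$ as in \eqref{eq:alpha}, the function $G(z):=e^{-\alpha z}F(z)$ is then $2\pi$-periodic and holomorphic, so it descends through the covering $w=e^{iz}$ to a function $g$ holomorphic on $\{0<|w|<1\}$. Thus, writing $z=x+iy$ and $w=e^{iz}$ (so $|w|=e^{-y}$ and $y\to+\infty$ corresponds to $w\to\bm 0$),
\[
v(x,y)=\real\!\bigl(e^{\alpha z}\,g(e^{iz})\bigr).
\]

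\emph{Step 2 (the core: $g$ is holomorphic at $\bm 0$, with a zero of order $n_0\ge 1$).} This is the delicate point, and I would argue by excluding the three alternatives. If $g$ had an \emph{essential singularity} at $\bm 0$, an argument-principle/winding bookkeeping would show that $\arg g$ oscillates without bound on the circles $|w|=\rho$ as $\rho\to 0$; since on such a circle $v$ equals a positive factor times $\cos(\alpha y+\arg g)$, this would force infinitely many nodal curves of $v$ to accumulate at the puncture, hence infinitely many nodal regions of $v|_S$, contradicting the hypothesis. If $g$ had a \emph{pole of order $m\ge 1$}, then near $\bm 0$ one has $g(w)=c_{-m}w^{-m}(1+O(w))$, so $v=\real\!\bigl(c_{-m}e^{(\alpha-im)z}(1+o(1))\bigr)$ uniformly as $y\to+\infty$; its nodal set is asymptotic to the lines $-mx+\alpha y\equiv\text{const}\ (\mathrm{mod}\ \pi)$, and the nodal curve $\Gamma$ must track one of them, so $\Xmin(y)=\tfrac{\alpha}{m}y+O(1)$. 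On the other hand the covering description of $S$ in Section \ref{sec:reduction} gives $|S_y|=2\pi$ for a.e.\ $y$ (so $S$ does not pinch), whence, using $|S_y|=2\pi$ and $\Xmin(y)=\tfrac{\alpha}{m}y+O(1)$,
\[
\int_S v^2\,dx\,dy\ \gtrsim\ \int^{+\infty}\!\!\Bigl(\int_{S_y}e^{2\alpha x}\,dx\Bigr)e^{2my}\,dy
\ \gtrsim\ \int^{+\infty}e^{2(m+\alpha^2/m)y}\,dy=+\infty,
\]
contradicting $\int_S v^2\le 4\int_S|\nabla v|^2<+\infty$ from Remark \ref{rem:poicare_on_S}. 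Finally, if $g(\bm 0)\ne 0$, then $v=\real(c_0e^{\alpha z})+o(e^{\alpha x})$, i.e.\ (up to a positive factor) $\cos(\alpha y+\text{const})+o(1)$, whose nodal set is asymptotic to horizontal lines — so $\Gamma$ cannot escape to $y=+\infty$ along it (equivalently $\int_{S_y}v^2\,dx\not\to 0$, contradicting Remark \ref{rem:poicare_on_S}). Hence $g$ extends holomorphically across $\bm 0$ and, since $v\not\equiv 0$, vanishes there to a finite order $n_0\ge 1$.

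\emph{Step 3 (asymptotics and the identification $n_0=\np$).} Writing $g(w)=c_{n_0}w^{n_0}(1+O(w))$ with $c_{n_0}\ne 0$, one gets $F(z)=c_{n_0}e^{(\alpha+in_0)z}\bigl(1+O(e^{iz})\bigr)$ uniformly for $y$ large; since $(\alpha+in_0)z=(\alpha x-n_0y)+i(n_0x+\alpha y)$, taking real parts and setting $a:=\real c_{n_0}$, $b:=-\immaginary c_{n_0}$ yields
\[
v(x,y)=\bigl[a\cos(n_0x+\alpha y)+b\sin(n_0x+\alpha y)+o(1)\bigr]\,e^{\alpha x-n_0y}\qquad\text{as }y\to+\infty,
\]
uniformly, with $(a,b)\ne(0,0)$. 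The bracketed leading term has exactly $2n_0$ nodal intervals over a period $x\in[\Xmin(y),\Xmin(y)+2\pi]$. Moreover every nodal region of $v|_S$ is unbounded (a compactly contained one would be a bounded region on whose boundary $v$ vanishes, forcing $v\equiv 0$ there), hence meets every sufficiently high slice $S_y$; comparing the $2n_0$ regions visible at large $y$ with the assumed number $2\np$ of nodal regions of $v|_S$ (and of $v(\cdot,0)|_{(0,2\pi)}$) gives $n_0=\np$. Since $v\equiv 0$ on $\Gamma$, the displayed asymptotics force $\np\,\Xmin(y)+\alpha y$ to lie, eventually (by continuity along $\Gamma$), in a fixed residue class modulo $\pi$ up to an $o(1)$ error, i.e.\ $\Xmin(y)=-\tfrac{\alpha}{\np}y+q+o(1)$ for some constant $q$; together with $|S_y|=2\pi+o(1)$ this gives $S_y\subset\{\,-\tfrac{\alpha}{\np}y+q\le x\le-\tfrac{\alpha}{\np}y+q+2\pi+o(1)\,\}$, which is the claim. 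The main obstacle is Step 2: both the exclusion of an essential singularity (the winding bookkeeping, and passing from unbounded oscillation of $\arg g$ to infinitely many nodal regions) and the exclusion of a pole (which needs the non-pinching $|S_y|=2\pi$ a.e.\ from the covering picture, together with the self-consistent computation of the drift of $\Gamma$) require care; the uniformity of the remainder in Step 3 is what makes the drift of $\Gamma$, and hence the geometry of $S_y$, come out.
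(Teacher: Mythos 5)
Your reformulation --- write $v=\real F$ with $F$ holomorphic on $\R^2_+$, pass to $g(w)$ on the punctured disc via $w=e^{iz}$, and read the asymptotics off the Laurent series of $g$ --- is a genuinely different and conceptually cleaner packaging of the paper's real Fourier expansion of $w=e^{-\alpha x}v$ (Lemma \ref{fourier}); your Laurent coefficients are the paper's $a_k+ib_k$. Steps 1 and 3 are fine, and your exclusion of a pole (via $\int_S v^2<\infty$ and the drift $\Xmin(y)\sim\frac{\alpha}{m}y$) is correct and morally identical to the first half of Lemma \ref{lem:infiniti_cattivi}. The trouble is the exclusion of an essential singularity, which in the paper's language is the statement ``$\bar k>-\infty$'' and is the actual content of Lemmas \ref{lem:se decade no cattivi}--\ref{lem:kstrozz}.

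The implication you propose there --- ``unbounded oscillation of $\arg g$ on small circles $\Rightarrow$ infinitely many nodal regions of $v|_S$'' --- is false. Unbounded oscillation of $\arg g$ on $|w|=\rho$ does produce unboundedly many sign changes of $v$ on the horizontal slice $S_{-\log\rho}$, but many sign changes on a slice are entirely compatible with only $2\np$ nodal regions of $v|_S$: the $2\np+1$ nodal curves bounding those regions can meander in $y$, each crossing a fixed horizontal line arbitrarily many times while still delimiting only $2\np$ regions. This is exactly the ``bottleneck'' geometry that is the real difficulty, and it cannot be dismissed by topology alone. The paper rules it out quantitatively, by combining the finite Dirichlet energy $\int_S|\nabla v|^2<\infty$ with the one-phase Alt--Caffarelli--Friedman monotonicity of Lemma \ref{lem:ACF} and the iterated bottleneck-decay of Lemmas \ref{lem:1strozz}--\ref{lem:kstrozz}, to show that $\int_0^{2\pi}w^2(x,y_n)\,dx$ grows at most exponentially along a sequence $y_n\to\infty$; only then does the coefficient estimate of Lemma \ref{lem:coeff_to_0} (via Lemma \ref{lem:se decade no cattivi}) kill the negative Laurent tail. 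Your Step~2 invokes none of this, so the essential-singularity alternative is not excluded and the argument is incomplete precisely at the hardest point. (A smaller remark: the identification $n_0=\np$ in Step~3 needs ``each of the $2n_0$ asymptotic spiral strips at large $y$ lies in a \emph{different} nodal region''; unboundedness of each nodal region alone does not rule out one region absorbing two strips --- one has to use that the regions are the lifts of the connected, cyclically ordered $\omega_i$'s from (MCS).)
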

To prove the proposition, the basic idea is to reduce the third condition in
\eqref{eq:sys_v} to a periodic one, and
then to use separation of variables to write the solution in Fourier series. To this aim, for concreteness from now on we assume $\lambda>1$, so that
\[
\alpha>0.
\]
The case $0<\lambda<1$ can be treated in the same way, while the case $\lambda=1$ is actually easier (indeed, before lifting to $\R^2_+$, the function $\tilde v$ defined in \eqref{eq:vtilde}
is already harmonic and bounded in $B_1\setminus\{\bm{0}\}$).
\begin{lemma}\label{fourier}
Let
\[
w(x,y):=e^{-\alpha x}v(x,y).
\]
Then $w$ is $C^2$ in $\overline{\R^2_+}$,  $2\pi$--periodic with respect to the
$x$ variable, and there exist (real) numbers $a_k$, $b_k$, $k\in\Z$, such that
\[
w(x,y)=\sum_{k\in\Z} \left[a_k \cos{(kx+\alpha y)}+b_k
\sin{(kx+\alpha y)}\right] e^{-ky}.
\]
\end{lemma}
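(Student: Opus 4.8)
The plan is to prove Lemma~\ref{fourier} in two stages: first, establish that $w$ is well-behaved (regular and genuinely $2\pi$-periodic in $x$), and second, expand it in a Fourier series in $x$ and identify the $y$-dependence of each coefficient by solving the resulting ODEs subject to the finite-energy constraint.

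\textbf{Step 1: periodicity and regularity of $w$.} Starting from $v(x+2\pi,y)=\lambda v(x,y)$ and $\lambda = e^{2\pi\alpha}$, one computes directly that
\[
w(x+2\pi,y) = e^{-\alpha(x+2\pi)}v(x+2\pi,y) = e^{-\alpha x}e^{-2\pi\alpha}\lambda\, v(x,y) = e^{-\alpha x}v(x,y) = w(x,y),
\]
so $w$ is exactly $2\pi$-periodic in $x$. Since $v\in C^2(\overline{\R^2_+})$ (as arranged just before the lemma by the translation trick) and $x\mapsto e^{-\alpha x}$ is smooth, $w$ is $C^2$ in $\overline{\R^2_+}$ as well. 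Thus for each fixed $y\geq0$, $w(\cdot,y)$ is a $C^2$ $2\pi$-periodic function and admits a uniformly convergent Fourier series $w(x,y)=\sum_{k\in\Z}c_k(y)e^{ikx}$ with $c_{-k}=\overline{c_k}$ (since $w$ is real), and the coefficients $c_k(y)$ inherit $C^2$ regularity in $y$ by differentiation under the integral sign.

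\textbf{Step 2: the ODE for the Fourier coefficients.} The key point is that $v$ is harmonic in all of $\R^2_+$ (here $\singset=\{\bm 0\}$, so $\mathcal T^{-1}(\singset\setminus\{\bm 0\})=\emptyset$ and \eqref{eq:sys_v} holds with $\Delta v=0$ in the whole half-plane). Writing $v=e^{\alpha x}w$ and imposing $\Delta v=0$ gives a perturbed equation for $w$, namely $\Delta w + 2\alpha\, w_x + \alpha^2 w = 0$. Substituting the Fourier series and using $\partial_x e^{ikx}=ik\,e^{ikx}$, each coefficient must satisfy the second-order linear ODE
\[
c_k''(y) + \bigl(\alpha^2 - k^2 + 2i\alpha k\bigr)c_k(y) = 0,
\]
whose characteristic roots are $\pm(ik-\alpha)$ — one checks $(ik-\alpha)^2 = -k^2-2i\alpha k+\alpha^2 = -(\alpha^2-k^2+2i\alpha k)$. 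Hence $c_k(y) = A_k e^{(ik-\alpha)y} + B_k e^{-(ik-\alpha)y}$ for complex constants $A_k,B_k$.

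\textbf{Step 3: killing the growing modes via finite energy.} The mode $e^{-(ik-\alpha)y}=e^{\alpha y}e^{-iky}$ grows like $e^{\alpha y}$, so in $v=e^{\alpha x}w$ it contributes a term whose modulus grows exponentially in $y$. I would show this is incompatible with $\int_S|\nabla v|^2<\infty$. The cleanest route uses the trace inequality from Remark~\ref{rem:poicare_on_S}: $\int_{S_{\bar y}}v^2\,dx \leq 10\int_{S\cap\{y>\bar y\}}|\nabla v|^2\,dxdy \to 0$ as $\bar y\to\infty$. Translating back through $w$, and noting that $S_y$ always contains an interval of definite length bounded below (since $w$, hence $v/e^{\alpha x}$, has exactly $2\np$ nodal regions per period, the components of $S_y$ cannot all shrink — this is where the $2\np$ nodal-region hypothesis enters, though for this lemma one may only need that $S_y$ is nonempty with some uniform lower bound on $\int_{S_y}$ of a fixed Fourier projection), one gets a uniform bound $\int_0^{2\pi} w(x,\bar y)^2\,dx = \sum_k |c_k(\bar y)|^2 \lesssim e^{-2\alpha\bar y}\cdot(\text{const})$... more carefully: $\int_{S_{\bar y}} v^2 = \int_{S_{\bar y}} e^{2\alpha x} w^2\,dx$, bounded by $e^{2\alpha\Xmax(\bar y)}\int_0^{2\pi}w^2$, but also $\int_{S_{\bar y}}v^2$ is small, forcing control on $\int w^2$ modulo the location of $S_{\bar y}$. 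The honest argument is: any term $B_k e^{\alpha y}e^{-iky}$ with $B_k\neq0$ makes $|w(x,y)|$ unbounded as $y\to\infty$ for a positive-measure set of $x$, and since $|S_y|$ is bounded away from $0$ on a sequence $y_n\to\infty$ (from the nodal structure), $v=e^{\alpha x}w$ would have $\int_{S_{y_n}}v^2$ growing, contradicting the trace bound which forces it to $0$. Therefore $B_k=0$ for all $k$, leaving $c_k(y)=A_k e^{(ik-\alpha)y}$.

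\textbf{Step 4: assembling the real series.} With $c_k(y)=A_k e^{-\alpha y}e^{iky}$ we get $w(x,y)=\sum_k A_k e^{-\alpha y}e^{ik(x+\text{?})}$ — wait, the $y$ in the exponent must recombine; write $A_k e^{(ik-\alpha)y}e^{ikx} = A_k e^{ik(x+y)}e^{-\alpha y}$? That is not right either since the two $y$'s differ. Let me restate: $c_k(y)e^{ikx} = A_k e^{iky}e^{-\alpha y}e^{ikx}$. Hmm — this would give $e^{ik(x+y)}$, not matching $\cos(kx+\alpha y)$. So I must recheck the characteristic roots: we want $c_k(y)\sim e^{i\alpha y}e^{-ky}$-type behavior. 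Indeed $c_k'' + (\alpha^2-k^2+2i\alpha k)c_k=0$ has roots $r$ with $r^2 = -(\alpha^2-k^2+2i\alpha k) = k^2 - \alpha^2 - 2i\alpha k = (k-i\alpha)^2$, so $r = \pm(k - i\alpha)$, giving $c_k(y) = A_k e^{(k-i\alpha)y} + B_k e^{-(k-i\alpha)y}$. Now $e^{-(k-i\alpha)y} = e^{-ky}e^{i\alpha y}$, the bounded/decaying-in-appropriate-sense mode, while $e^{(k-i\alpha)y}=e^{ky}e^{-i\alpha y}$ grows for $k>0$. The finite-energy argument (Step 3) kills the modes that grow as $y\to\infty$; keeping $c_k(y)=B_k e^{-ky}e^{i\alpha y}$ we obtain
\[
w(x,y)=\sum_{k\in\Z} B_k\, e^{i(kx+\alpha y)}e^{-ky},
\]
and collecting $\pm k$ with $B_{-k}=\overline{B_k}$ (reality) yields $w(x,y)=\sum_{k\in\Z}[a_k\cos(kx+\alpha y)+b_k\sin(kx+\alpha y)]e^{-ky}$ with real $a_k,b_k$, as claimed; one also records that $\sum(|a_k|+|b_k|)<\infty$ from $C^2$-smoothness at $y=0$.

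\textbf{Main obstacle.} The delicate part is Step~3: rigorously excluding the modes that blow up as $y\to+\infty$ using only $\int_S|\nabla v|^2<\infty$ together with the geometry of the strip $S$. One cannot naively integrate over a full period $[0,2\pi]$ in $x$, because $v$ is only defined (and the energy controlled) on $S$, whose horizontal section $S_y$ moves and changes shape with $y$. The resolution is to combine the trace/Poincar\'e inequalities of Remark~\ref{rem:poicare_on_S} (which give $\int_{S_{\bar y}}v^2\to0$) with a uniform lower bound on how much of each period the set $S_{\bar y}$ must occupy — or more precisely on the $L^2$-mass that a fixed low-frequency Fourier projection of $w$ carries over $S_{\bar y}$ — which is exactly where the hypothesis that $v|_S$ has precisely $2\np$ nodal regions is used. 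Balancing these forces $B_k=0$ for every $k$, i.e.\ only the decaying modes survive.
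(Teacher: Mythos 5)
Your Steps~1 and~2 are correct and match the paper: you verify the exact $2\pi$-periodicity of $w$, derive the perturbed equation $\Delta w+2\alpha w_x+\alpha^2 w=0$, expand in a Fourier series $w=\sum_k W_k(y)e^{ikx}$, and obtain the correct ODE $W_k''-(k-i\alpha)^2W_k=0$ with general solution $W_k(y)=A_ke^{(k-i\alpha)y}+B_ke^{(-k+i\alpha)y}$.

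Step~3, however, contains a genuine error. You try to invoke $\int_S|\nabla v|^2<\infty$ to annihilate the $A_k$ modes and keep only $W_k(y)=B_ke^{-ky}e^{i\alpha y}$, claiming $w(x,y)=\sum_kB_k\,e^{i(kx+\alpha y)}e^{-ky}$. But that expression cannot represent a real function unless $w\equiv 0$ when $\alpha\neq 0$: conjugating and reindexing $k\mapsto -k$ shows that reality would force $B_k\,e^{2i\alpha y}e^{-2ky}=\overline{B_{-k}}$ for all $y$, which is impossible for nonzero $B_k$. The reality of $w$ pairs each $A_{-k}$ with $B_k$ (the reality constraint is $\overline{A_{-k}}=B_k$), so you cannot keep one family and discard the other. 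More importantly, the lemma is not claiming that any mode vanishes. It is a purely algebraic restatement of the general solution: after substituting $k\to -k$ in the $\sum_k A_ke^{ky}e^{i(kx-\alpha y)}$ part, one gets $\sum_k A_{-k}e^{-ky}e^{-i(kx+\alpha y)}$, so that
\[
w(x,y)=\sum_{k\in\Z}\left[A_{-k}e^{-i(kx+\alpha y)}+B_ke^{i(kx+\alpha y)}\right]e^{-ky},
\]
and $a_k:=A_{-k}+B_k$, $b_k:=i(-A_{-k}+B_k)$ are real because $\overline{A_{-k}}=B_k$. The ``growing'' modes survive and are encoded as the terms with $k<0$ (these are precisely $v_{\mathrm{bad}}$ in Corollary~\ref{coro:vnicevbad}). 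Ruling them out is the central difficulty of the section and occupies Lemmas~\ref{lem:coeff_to_0} through~\ref{lem:kstrozz} and the proof of Proposition~\ref{prop:target}; it is \emph{not} part of Lemma~\ref{fourier}, and the heuristic trace/Poincar\'e argument you sketch (and yourself flag as incomplete) does not establish it. Drop Step~3, and replace Step~4 by the reindex-and-pair argument above.
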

\begin{proof}
A direct calculation shows that, for every $(x,y)$,
\[
w(x+2\pi,y) = e^{-\alpha (x + 2\pi)}v(x+2\pi,y)
= \lambda e^{-2\alpha\pi} e^{-\alpha x}v(x,y) = w(x,y).
\]
Since $\Delta (e^{\alpha x} w(x,y))=0$ on the half-plane, $w$ satisfies
\[
\Delta w+2\alpha w_x+\alpha^2 w=0.
\]
Using the periodicity of $w$ we can write
$w(x,y)=\sum_k W_k(y)e^{ikx}$;
substituting we obtain
\[
\sum \left(-k^2W_k + W''_k + 2 ik\alpha W_k + \alpha^2 W_k \right)e^{ikx}
=0
\]
for all $(x,y)$ in the half-space. This implies, for every $k$,
\[
W''_k-(k-i\alpha)^2W_k=0,
\qquad
\text{that is}
\quad
W_k(y)=A_k e^{(k-i\alpha)y}+B_k e^{(-k+i\alpha)y},
\]
where $A_k$, $B_k$ are suitable complex constants. As a consequence
\[
\begin{split}
w(x,y)&=\sum\left[A_k e^{ky}e^{i(kx-\alpha y)} + B_k e^{-ky}e^{i(kx+\alpha y)}\right]\\
&=\sum\left[A_{-k}e^{-i(kx+\alpha y)} + B_k e^{i(kx+\alpha y)}\right]e^{-ky}.
\end{split}
\]
Since $w$ is real, we have that $A_{-k}$ and ${B_k}$ are conjugated,
and the lemma follows by choosing $a_k=A_{-k}+{B_k}$, $b_k=i(-A_{-k}+{B_k})$.
\end{proof}
\begin{corollary}\label{coro:vnicevbad}
If $v$ satisfies \eqref{eq:sys_v} then
\[
v = v_{\mathrm{nice}} + v_{\mathrm{bad}},
\]
where
\begin{equation}\label{fouri}
\begin{split}
 v_{\mathrm{nice}}(x,y) &= \sum_{k=0}^{+\infty} \left[a_k \cos{(kx+\alpha y)}+b_k
\sin{(kx+\alpha y)}\right] e^{\alpha x-ky}, \\
v_{\mathrm{bad}}(x,y) &= \sum_{k=1}^{+\infty} \left[
a_{-k}
\cos{(kx-\alpha y)}
-b_{-k}
\sin{(kx-\alpha y)}\right] e^{\alpha x+ky},
\end{split}
\end{equation}
where the constants $a_k$, $b_k$ are those in Lemma \ref{fourier} (of course, the terminology is due to the fact that
there exist points in $S$ having vertical coordinate $y$ arbitrarily large).
\end{corollary}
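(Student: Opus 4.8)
The plan is simply to unwind Lemma~\ref{fourier}. Since $v(x,y)=e^{\alpha x}w(x,y)$ by definition of $w$, multiplying the expansion of $w$ furnished by Lemma~\ref{fourier} by $e^{\alpha x}$ yields
\[
v(x,y)=\sum_{k\in\Z}\bigl[a_k\cos(kx+\alpha y)+b_k\sin(kx+\alpha y)\bigr]e^{\alpha x-ky}.
\]
First I would split this bilateral series according to the sign of the index, $\sum_{k\in\Z}=\sum_{k\ge0}+\sum_{k\le-1}$. The nonnegative part is, term by term, exactly the series defining $v_{\mathrm{nice}}$ in \eqref{fouri}, so nothing more is required for that piece.

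For the negative part I would reindex by setting $k=-j$ with $j\ge1$, so that $e^{\alpha x-ky}=e^{\alpha x+jy}$, and then invoke the parities $\cos(-jx+\alpha y)=\cos(jx-\alpha y)$ and $\sin(-jx+\alpha y)=-\sin(jx-\alpha y)$. The $j$-th summand then becomes $\bigl[a_{-j}\cos(jx-\alpha y)-b_{-j}\sin(jx-\alpha y)\bigr]e^{\alpha x+jy}$, which is precisely the $j$-th summand of $v_{\mathrm{bad}}$ in \eqref{fouri}; relabelling $j$ as $k$ completes the identity $v=v_{\mathrm{nice}}+v_{\mathrm{bad}}$.

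There is no substantial obstacle here: the argument is pure bookkeeping once Lemma~\ref{fourier} is in hand. The only point deserving a word is the legitimacy of grouping the terms, which is immediate because the expansion of $w$ already converges (it is the Fourier series in $x$ of the $C^2$, $2\pi$-periodic function $w$) and the two groups are indexed by the disjoint sets $\{k\ge0\}$ and $\{k\le-1\}$; incidentally, the factor $e^{\alpha x+ky}$ with $k\ge1$ appearing in $v_{\mathrm{bad}}$ is exactly what diverges on points of $S$ with $y$ large, which explains the chosen terminology.
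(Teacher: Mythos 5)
Your proposal is correct and coincides with the paper's (implicit) argument: the corollary is stated as an immediate consequence of Lemma \ref{fourier}, obtained exactly as you do by multiplying the expansion of $w$ by $e^{\alpha x}$, splitting the bilateral sum into indices $k\geq 0$ and $k\leq -1$, and using the parity of cosine and sine after the reindexing $k=-j$. No further justification is needed, so your bookkeeping matches the intended proof.
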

Now we want to exploit the further conditions about $v$ to
determine the constants in \eqref{fouri}. Roughly speaking, the
idea is that the condition
$$
\int_{S}|\nabla v|^2\,dxdy<+\infty
$$
should annihilate the ``bad'' part (namely, it should imply that
$a_k=b_k=0$ for every $k<0$), whereas the number of nodal regions of $v$
should determine the dominant part (that is, the first nonzero
$a_k$ and $b_k$). Actually, the first step is not so straight:
indeed, since we do not know the actual position of $S$, it is not trivial
to exclude the integrability on $S$ of quantities of order $e^{2(\alpha x+ky)}$, $k>0$,
even for arbitrarily large $k$.

To start with, we collect in the following lemma some routine consequences
of the theory of Fourier series, when applied to $w$.
\begin{lemma}\label{lem:coeff_to_0}
There exist a constant $C$ (only depending on $\|w\|_{C^2([0,2\pi]\times[0,\pi/(2\alpha)])}$)
such that, for every $k\in\Z$, $k\neq0$,
\begin{enumerate}
 \item $|a_k|+|b_k| \leq \dfrac{C}{k^2}$;
 \item $\displaystyle\pi(a_k^2+b^2_{k})e^{-2ky}<\int_0^{2\pi}w^2(x,y)\,dx + C$.
\end{enumerate}
\end{lemma}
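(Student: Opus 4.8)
The plan is to read both estimates directly off the Fourier expansion supplied by Lemma~\ref{fourier}. Writing $w(x,y)=\sum_{k\in\Z}W_k(y)e^{ikx}$ with $W_k(y)=A_ke^{(k-i\alpha)y}+B_ke^{(-k+i\alpha)y}$, and recalling that $a_k=A_{-k}+B_k$ and $b_k=i(B_k-A_{-k})$, one has $a_k^2+b_k^2=|2B_k|^2=4|B_k|^2$ and, since $w$ is real valued, $A_{-k}=\overline{B_k}$ and $B_{-k}=\overline{A_k}$; in particular $|A_{-k}|=|B_k|$, so both statements reduce to a single bound on $|B_k|$. Throughout I fix the reference height $y_0:=\pi/(2\alpha)>0$ (here $\alpha$ is a fixed datum, so all constants below may depend on it and on $\|w\|_{C^2([0,2\pi]\times[0,y_0])}$).

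For statement~1, the first step is the textbook estimate: since $W_k(y)=\tfrac1{2\pi}\int_0^{2\pi}w(x,y)e^{-ikx}\,dx$ and $w$ is $C^2$ in $x$ and $2\pi$-periodic, two integrations by parts give $|W_k(y)|\le\|w_{xx}(\cdot,y)\|_{C^0}/k^2$ for every $k\neq0$; in particular $|W_k(0)|,|W_k(y_0)|\le\|w\|_{C^2}/k^2$. The second step recovers $B_k$ from these two values: evaluating the explicit form of $W_k$ at heights $0$ and $y_0$ yields a $2\times2$ linear system for $(A_k,B_k)$ whose determinant is $\pm(e^{(k-i\alpha)y_0}-e^{(-k+i\alpha)y_0})=\mp2\sinh((k-i\alpha)y_0)$, of modulus $\ge2\sinh(|k|y_0)\ge e^{|k|y_0}(1-e^{-2y_0})$, hence bounded away from zero because $|k|\ge1$. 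Solving by Cramer's rule and simplifying (the large factor $e^{ky_0}$ arising for $k>0$ being absorbed by the determinant), one gets $|B_k|\le(|W_k(0)|+|W_k(y_0)|)/(1-e^{-2y_0})\le C/k^2$; then $|a_k|+|b_k|\le2(|A_{-k}|+|B_k|)=4|B_k|\le C/k^2$.

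For statement~2, the idea is Parseval combined with the structure of $W_k$. Parseval's identity gives, for every $k\neq0$ and every $y$, $|W_k(y)|^2+|W_{-k}(y)|^2\le\sum_{j\in\Z}|W_j(y)|^2=\tfrac1{2\pi}\int_0^{2\pi}w^2(x,y)\,dx$. On the other hand, expanding the two moduli and using $A_{-k}=\overline{B_k}$, $B_{-k}=\overline{A_k}$ (which makes the two cross terms coincide) one obtains the identity
\[
|W_k(y)|^2+|W_{-k}(y)|^2=2|B_k|^2e^{-2ky}+2|A_k|^2e^{2ky}+4\real(A_k\overline{B_k}e^{-2i\alpha y}).
\]
Bounding the last term by $2(|A_k|^2+|B_k|^2)$, dropping the nonnegative term $2|A_k|^2e^{2ky}$, and invoking $|A_k|^2+|B_k|^2\le C^2/2$ from statement~1, one concludes $2|B_k|^2e^{-2ky}\le\tfrac1{2\pi}\int_0^{2\pi}w^2(x,y)\,dx+C^2$; since $a_k^2+b_k^2=4|B_k|^2$ this is exactly $\pi(a_k^2+b_k^2)e^{-2ky}\le\int_0^{2\pi}w^2(x,y)\,dx+2\pi C^2$, i.e.\ statement~2 after enlarging $C$ to make the inequality strict.

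I do not expect a genuinely hard step — the lemma is a bookkeeping exercise in Fourier series — but the point deserving care is the second step of statement~1. Because $w$ only satisfies the perturbed equation $\Delta w+2\alpha w_x+\alpha^2w=0$ and is a priori under control only on the compact box $[0,2\pi]\times[0,y_0]$, there is no maximum principle forcing the exponentially growing Fourier modes to vanish, so the decaying coefficient $B_k$ cannot simply be read off from global boundedness and must instead be separated from its growing partner $A_k$ by comparing the two heights $y=0$ and $y=y_0$. Everything else reduces to two integrations by parts and Parseval.
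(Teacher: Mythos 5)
Your proof is correct and takes essentially the same route as the paper's: both compare the Fourier data at the two heights $y=0$ and $y=\pi/(2\alpha)$ (chosen so that $\alpha y=\pi/2$), extract the $1/k^2$ decay by two integrations by parts, and invoke Parseval for the second estimate; the only cosmetic difference is that you solve a $2\times2$ system in the complex coefficients $A_k,B_k$ whereas the paper manipulates $a_k$ and $a_{-k}$ directly, with the same $2\cosh(ky_0)$ playing the role of the determinant in both. Your treatment of item 2 is marginally leaner, since you use the single-pair inequality $|W_k|^2+|W_{-k}|^2\le\sum_j|W_j|^2$ rather than expanding the whole Parseval sum and resumming the $1/k^4$ bounds as the paper does, but this does not alter the argument in any essential way.
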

\begin{proof}
Choosing $y=0$ in the expression of $w$ we obtain, for every $k\geq1$,
\[
\left|a_k+a_{-k}\right| = \frac{1}{\pi}\left|\int_0^{2\pi} w(x,0)\cos{kx}\,dx\right| = \frac{1}{\pi k^2 }\left|\int_0^{2\pi} w_{xx}(x,0)\cos{kx}\,dx\right|\leq \frac{C_1}{k^2}.
\]
Analogously, the choice $y=\pi/(2\alpha)$ yields
\[
\left|e^{k\pi/(2\alpha)} a_{-k} - e^{-k\pi/(2\alpha)} a_k\right| = \frac{1}{\pi}\left|\int_0^{2\pi} w\left(x,\frac{\pi}{2\alpha}\right)\sin{kx}\,dx\right|\leq \frac{C_2}{k^2}.
\]
We deduce
\[
|a_k|\leq  \left|a_k + a_{-k} - a_{-k} +  e^{-2k\pi/\alpha} a_k\right| \leq \frac{C_1 + C_2 e^{-k\pi/(2\alpha)} }{k^2}\leq \frac{C_1 + C_2  }{k^2},
\]
\[
|a_{-k}|\leq  \left|a_k + a_{-k} - a_k\right| \leq \frac{2C_1 + C_2  }{k^2}.
\]
One can obtain analogous estimates for $|b_k|$, $|b_{-k}|$ similarly, thus concluding estimate 1.

For the second part, we resume the notation of the proof of Lemma \ref{fourier}.
Applying Parseval's identity we obtain, for every $y\geq0$,
\begin{multline*}
\frac{1}{2\pi}\int_0^{2\pi}w^2(x,y)\,dx = \sum_{k\in\Z}W_k(y)\overline{W_k(y)}\\
 = \sum_{k\in\Z} A_kB_{-k} e^{2ky} +  A_{-k}B_{k} e^{-2ky} +
 \underbrace{A_kA_{-k} e^{-2i\alpha y} + B_kB_{-k} e^{2i\alpha y}
 }_{=2\real A_kA_{-k} e^{-2i\alpha y}} \\
 = \sum_{k\in\Z} \frac{a_{-k}^2+b_{-k}^2}{4} e^{2ky} +
 \frac{a_{k}^2+b_{k}^2}{4} e^{-2ky} \\
 + \frac{a_k a_{-k} - b_{k}b_{-k}}{2} \cos(2\alpha y)
 + \frac{a_k b_{-k} + a_{-k}b_{k}}{2} \sin(2\alpha y),
 \end{multline*}
that is
\[
\int_0^{2\pi}w^2(x,y)\,dx = \pi\sum_{k\in\Z}(a_k^2+b^2_{k})e^{-2ky}
+\Rcal(y),
\]
where
\[
|\Rcal(y)|\leq 2\sum_{k\in\Z} \left(|a_k|+ |b_{k}|\right)\left(|a_{-k}|+|b_{-k}|\right)\leq C,
\qquad\text{for every }y,
\]
by the first part of the lemma.
\end{proof}
The above estimates allow to show that, in case $v_{\mathrm{bad}}\not\equiv0$, then infinitely
many coefficients $a_k,b_k$ must be different from zero.
\begin{lemma}\label{lem:infiniti_cattivi}
Let us assume that
\[
\bar k := \inf\left\{k\in\Z : a_k^2+b_k^2 \neq 0\right\}>-\infty.
\]
Then Proposition \ref{prop:target} follows, and $\bar k = \np >0$ (recall that $2\np$ is the number
of nodal regions of $v(x,0)$ in $[0,2\pi]$).
\end{lemma}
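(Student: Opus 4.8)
The plan is to read off the leading-order behaviour of $v$ from the Fourier series of Corollary~\ref{coro:vnicevbad}, using $\bar k>-\infty$ to isolate a single dominant mode, and then to match it against the Dirichlet condition $v=0$ on $\Gamma$ and against the prescribed nodal count. \emph{Step 1 (isolating the leading mode).} Put $A:=(a_{\bar k}^2+b_{\bar k}^2)^{1/2}>0$ and pick $\psi$ with $a_{\bar k}\cos\theta+b_{\bar k}\sin\theta=A\cos(\theta-\psi)$. Splitting off the $\bar k$-th term (recall $v=e^{\alpha x}w$),
\[
v(x,y)=e^{\alpha x-\bar k y}\bigl[A\cos(\bar k x+\alpha y-\psi)+R(x,y)\bigr],\qquad \sup_{\R^2_+}|R|\le C\sum_{k>\bar k}(|a_k|+|b_k|)\,e^{-(k-\bar k)y}\le C'e^{-y},
\]
the last bound using Lemma~\ref{lem:coeff_to_0}(1); hence $R=o(1)$ uniformly as $y\to+\infty$. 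As in the text we may assume $\alpha>0$.

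\emph{Step 2 (the bad modes vanish: $\bar k\ge1$).} Since $\Gamma$ issues from $(0,0)$ and runs to $y=+\infty$, it meets every horizontal line; fix $(x_y,y)\in\Gamma$. From $v=0$ on $\Gamma$ and $e^{\alpha x_y-\bar k y}\ne0$ we get $A\cos(\bar k x_y+\alpha y-\psi)=-R(x_y,y)=o(1)$. If $\bar k=0$ this reads $A\cos(\alpha y-\psi)=o(1)$ along the whole half-line $y\to+\infty$, which is impossible; so $\bar k\ne0$. Suppose $\bar k=-J$, $J\ge1$. Once $y$ is large enough that $|R|<\tfrac12$ on $\Gamma$, the continuous phase $\bar k x+\alpha y-\psi$ along $\Gamma$ stays within a fixed neighbourhood of a single point of $\tfrac\pi2+\pi\Z$ (it cannot jump to a neighbouring one without the cosine leaving a neighbourhood of $0$); hence $\Xmin(y)=\tfrac{\alpha}{J}y+q+o(1)\to+\infty$ and, using $v=0$ on $\Gamma+(2\pi,0)$, $\Xmax(y)=\tfrac{\alpha}{J}y+q+2\pi+o(1)$ for some constant $q$. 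On $S_y$ the phase then sweeps an interval of length $2\pi J+o(1)$, so $\int_{S_y}\cos^2(\bar k x+\alpha y-\psi)\,dx\ge c_0>0$ for $y$ large, and therefore
\[
\int_{S_y}v^2\,dx\ \ge\ e^{2\alpha\Xmin(y)}e^{2Jy}\!\!\int_{S_y}\!\bigl[A\cos(\bar k x+\alpha y-\psi)+R\bigr]^2dx\ \ge\ c\exp\Bigl(\bigl(\tfrac{2\alpha^2}{J}+2J\bigr)y\Bigr)\longrightarrow+\infty.
\]
This contradicts $\int_{S_{\bar y}}v^2\,dx\le 10\int_{S\cap\{y>\bar y\}}|\nabla v|^2\,dx\,dy\to0$ as $\bar y\to+\infty$ (the trace inequality of Remark~\ref{rem:poicare_on_S} together with $\int_S|\nabla v|^2<\infty$). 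Hence $\bar k\ge1$; write $m:=\bar k$, so $v=v_{\mathrm{nice}}$ and Step~1 is already the asymptotic formula of Proposition~\ref{prop:target} (with $a=a_m$, $b=b_m$). Repeating the phase-pinning argument with $\bar k=m>0$ gives $\Xmin(y)=-\tfrac{\alpha}{m}y+q+o(1)$, $\Xmax(y)=-\tfrac{\alpha}{m}y+q+2\pi+o(1)$, which is the stated localization of $S_y$.

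\emph{Step 3 ($m=\np$).} For $m\le\np$: $v(x,0)=e^{\alpha x}w(x,0)$ has the sign of $w(x,0)=\sum_{k\ge m}(a_k\cos kx+b_k\sin kx)$, a nonzero real $2\pi$-periodic function $L^2(0,2\pi)$-orthogonal to every trigonometric polynomial of degree $<m$; by the classical fact that such a function has at least $2m$ sign changes per period, $v(x,0)|_{(0,2\pi)}$ has at least $2m$ nodal regions, and this number is $2\np$ by hypothesis, so $m\le\np$. For $\np\le m$: by the reductions of Section~\ref{sec:reduction}, $\mathcal T|_S$ is a homeomorphism of $S$ onto $B\setminus\Gamma_{1k}$ carrying the nodal regions of $v|_S$ onto $\omega_1,\dots,\omega_k$ with $k=2\np$; each $\omega_i$ has the singular point $\bm{0}$ in its closure, so each nodal region of $v|_S$ meets $S\cap\{y>Y\}$ for every $Y$. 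On the other hand, by the localization of $S_y$ just obtained, $S\cap\{y>Y\}$ is for large $Y$ a connected (slanted) half-strip on which $v$ has the sign of $\cos(mx+\alpha y-\psi)$, so $\{v\ne0\}\cap S\cap\{y>Y\}$ has exactly $2m$ connected components; each of them lies in a single nodal region of $v|_S$, and each of the $2\np$ nodal regions contains at least one of them, whence $2\np\le 2m$. Thus $m=\np\ge1$, and Proposition~\ref{prop:target} follows.

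\emph{Main difficulty.} As the discussion preceding the lemma warns, the delicate point is Step~2: a priori nothing controls the position of the strip $S$ at infinity, so the growing modes $e^{2(\alpha x+Jy)}$ cannot be excluded by a naive integrability estimate. What makes it work is that the Dirichlet condition $v=0$ on $\Gamma$, once combined with the one-mode form of $v$ from Step~1, \emph{pins down} the asymptotic location of $\partial S$ — so the very geometric information that was missing becomes the mechanism that annihilates the bad part. The only genuinely technical check inside this step is that the phase of the leading mode along $\Gamma$ stabilizes, i.e.\ that $\Gamma$ cannot oscillate in $y$ indefinitely near $y=+\infty$; this rests on continuity of the phase and smallness of $R$ for large $y$.
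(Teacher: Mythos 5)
Your proposal is correct and follows the same strategy as the paper: isolate the leading Fourier mode using Lemma~\ref{lem:coeff_to_0}(1), use $v=0$ on $\Gamma$ together with continuity of the phase along the (connected) tail of $\Gamma$ to pin $\partial S$ between two asymptotic straight lines, derive a contradiction with the finiteness of the Dirichlet energy when $\bar k\le0$, and then identify $\bar k$ with $\np$ via the nodal count in $S_y$ for $y$ large. Two of your choices differ mildly in execution from the paper's: for the $\bar k<0$ contradiction you estimate $\int_{S_y}[A\cos(\cdot)+R]^2\,dx$ directly from the pinned form and the trace inequality of Remark~\ref{rem:poicare_on_S}, whereas the paper invokes the Parseval bound of Lemma~\ref{lem:coeff_to_0}(2) (both reach the same $e^{\text{linear in }y}$ lower bound on $\int_{S_y}v^2\,dx$); and for the direction $\bar k\le\np$ you appeal to the classical Hurwitz/Sturm fact that a nonzero $2\pi$-periodic function orthogonal to all trigonometric polynomials of degree $<m$ has at least $2m$ sign changes per period, which is a clean alternative to the paper's terse ``the $C^2$-limit preserves the nodal count''. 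Your treatment of the reverse inequality $\np\le\bar k$ (each $\omega_i$ has $\bm 0$ in its closure, so every nodal region of $v|_S$ reaches arbitrarily large $y$, hence must occupy at least one of the $2\bar k$ asymptotic sectors) is a useful explicit justification of a step the paper states without proof. The only place where one should be a bit careful is the claim that $\{v\ne0\}\cap S\cap\{y>Y\}$ has \emph{exactly} $2m$ components: since the boundary curves only approach lines up to $o(1)$, this set may a priori have spurious small components; however your argument only needs the surjectivity of the map from components to nodal regions, for which it suffices that the $2m$ ``asymptotic sectors'' (e.g.\ the components of $S\cap\{y>Y\}$ with phase bounded away from $\frac\pi2+\pi\Z$) are already hit by all $2\np$ nodal regions, and that is ensured by the phase-pinning. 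So the argument is sound.
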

\begin{proof}
From \eqref{fouri}, we infer
that
\begin{multline*}
\left|e^{\bar ky}w(x,y) - \left[a_{\bar k} \cos{(\bar kx+\alpha y)}+
b_{\bar k} \sin{(\bar kx+\alpha y)}\right]\right| \\
=\left|\sum_{k\geq \bar k + 1} \left[a_k \cos{(kx+\alpha y)}+b_k
\sin{(kx+\alpha y)}\right] e^{-(k-\bar k)y}\right|.
\end{multline*}
In particular, since the zero sets of $v$ and $w$ coincide,
\[
(x,y)\in\Gamma
\qquad\implies\qquad
\left|a_{\bar k} \cos{(\bar kx+\alpha y)}+
b_{\bar k} \sin{(\bar kx+\alpha y)}\right|\leq C e^{-y}.
\]
Since $\Gamma$ is connected, we obtain that there exists $q\in\R$ such that
for any $\eps>0$ there exists $\bar y$ large such that
\[
(\Gamma\cap\{y\geq\bar y\}) \subset \{(x,y):q \leq \bar k x+\alpha y \leq q+\eps\} .
\]
The same holds for any nodal line of $v$, in particular for $\Gamma + (2\pi,0)$. Thus we
have shown that, for $y\geq\bar y$, $S$ lies between the two straight
lines of equations $\bar k x+\alpha y=q$, $\bar k x+\alpha y=q+2\pi+\eps$.

Let us assume by contradiction $\bar k<0$. We infer that, for any $(x,y)\in S$ with
$y\geq \bar y$ sufficiently large, $x$ has to be positive. Since $\alpha$ is positive too,
recalling Remark \ref{rem:poicare_on_S} we can write
\[
\begin{split}
+\infty&>\int_{S}v^2\,dxdy\geq
\int_{\bar y}^{+\infty}dy\int_{S_y}e^{2\alpha x }w^2\,dx
\geq
\int_{\bar y}^{+\infty}dy\int_{S_y}w^2\,dx \\ &\geq
\int_{\bar y}^{+\infty}\left[(a_{\bar k}^2+b^2_{\bar k})e^{-2\bar
k y}-C\right]\,dy,
\end{split}
\]
forcing $a_{\bar k}^2+b^2_{\bar k}=0$, in contradiction with the definition of $\bar k$.

Therefore $\bar k\geq0$, and $v=v_{\mathrm{nice}}$ (with summation starting from $k=\bar k$).
Recalling again that $S$ is controlled above and below by straight lines, we finally deduce
that $v$ and the first non-zero term in $v_{\mathrm{nice}}$ are close each other, for $y$ large,
in the $C^2$ norm. But then the number of nodal zones of $v$ in $S_y$, i.e. $2\np$,
must be equal to that of $\cos{(\bar kx+\alpha y)}$, i.e. $2\bar k$, concluding the proof.
\end{proof}
The previous lemma, together with Lemma \ref{lem:coeff_to_0}, suggests that
if the bad part is non-zero then the quantity $\int_0^{2\pi}w^2(x,y)\,dx$, as a function of
$y$, must increase more than exponentially.
\begin{lemma}\label{lem:se decade no cattivi}
If there exist constants $A$, $\beta$ and a sequence $y_n\to+\infty$ such that
\[
\int_0^{2\pi} w^2(x,y_n)\,dx\leq A e^{\beta y_n},
\]
then Proposition \ref{prop:target} follows.
\end{lemma}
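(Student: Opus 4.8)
The plan is to show that the hypothesis forces all but finitely many of the ``bad'' coefficients $a_k,b_k$ with $k<0$ to vanish, so that the quantity $\bar k$ appearing in Lemma \ref{lem:infiniti_cattivi} is finite; the conclusion of Proposition \ref{prop:target} then follows at once from that lemma (which moreover identifies $\bar k=\np$).

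First I would invoke part 2 of Lemma \ref{lem:coeff_to_0}: for every integer $k\neq 0$ and every $y\geq0$,
\[
\pi\,(a_k^2+b_k^2)\,e^{-2ky} < \int_0^{2\pi} w^2(x,y)\,dx + C .
\]
Specializing to a negative index $k=-j$ with $j\geq1$, so that $e^{-2ky}=e^{2jy}$, and evaluating along the sequence $y_n$, the right-hand side is bounded by $A\,e^{\beta y_n}+C$ by assumption; hence
\[
\pi\,(a_{-j}^2+b_{-j}^2)\,e^{2j y_n} < A\,e^{\beta y_n} + C
\qquad\text{for all }n.
\]
Dividing by $e^{2j y_n}$ and letting $n\to+\infty$, the right-hand side tends to $0$ as soon as $2j>\beta$ and $2j>0$. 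Therefore $a_{-j}=b_{-j}=0$ for every integer $j$ with $2j>\max\{\beta,0\}$.

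It remains to observe that $v$ is not identically zero (it has $2\np\ge2$ nodal regions), so $w\not\equiv0$ and at least one coefficient $a_k^2+b_k^2$ is nonzero. Combined with the previous step, the set $\{k\in\Z:a_k^2+b_k^2\neq0\}$ is nonempty and bounded below, so $\bar k=\inf\{k:a_k^2+b_k^2\neq0\}>-\infty$, and Lemma \ref{lem:infiniti_cattivi} gives Proposition \ref{prop:target}. I do not expect a genuine obstacle in this lemma: the only point requiring a little care is matching the exponential growth rates in the displayed inequality and absorbing the harmless additive constant $C$, which is automatic once $2j$ exceeds $\beta$.
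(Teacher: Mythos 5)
Your proof is correct and follows essentially the same route as the paper's: combine part 2 of Lemma \ref{lem:coeff_to_0} with the hypothesis to get $\pi(a_k^2+b_k^2)e^{-2ky_n}\leq Ae^{\beta y_n}+C$, then let $n\to\infty$ to kill all coefficients with $2k<-\beta$, so that $\bar k>-\infty$ and Lemma \ref{lem:infiniti_cattivi} applies. The only cosmetic difference is that you spell out the nonemptiness/boundedness-below of $\{k:a_k^2+b_k^2\neq0\}$, which the paper leaves implicit.
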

\begin{proof}
Combining such assumption with the second estimate in  Lemma \ref{lem:coeff_to_0} we have,
for every $k$ and $n$,
\[
\pi(a_{k}^2+b^2_{k})e^{-2k y_n} \leq A e^{\beta y_n} + C.
\]
Choosing $n$ sufficiently large, this inequality forces $a_k=b_k=0$ whenever $2k<-\beta$,
hence Lemma \ref{lem:infiniti_cattivi} applies.
\end{proof}
Since $v^2=e^{2\alpha x}w^2$ is integrable on $S$, it is easy to see that the assumption
of Lemma \ref{lem:se decade no cattivi} is fulfilled when the strip $S$ lies on the right of a
fixed straight line (it is even trivial if it lies in the
sector $\{x\geq0\}$). In fact, by the trace inequality it is sufficient to assume this property
only for a sequence $(S_{y_n})_n$.
\begin{lemma}\label{lem:no_a_dx}
If there exist a constant $m\in\R$ and a sequence $y_n\to+\infty$ such that
\[
\frac{\Xmin(y_n)}{y_n}\geq m,
\]
then Proposition \ref{prop:target} follows.
\end{lemma}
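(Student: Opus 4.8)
The plan is to verify, along the given sequence $(y_n)$, the hypothesis of Lemma \ref{lem:se decade no cattivi}: namely to exhibit constants $A,\beta$ such that $\int_0^{2\pi} w^2(x,y_n)\,dx \le A e^{\beta y_n}$ for all large $n$. Once this is in place, Lemma \ref{lem:se decade no cattivi} applies verbatim and yields Proposition \ref{prop:target}.

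First I would control the ``mass of $v$ on the horizontal slice $S_{y_n}$'' uniformly in $n$. This is exactly the content of the trace/Poincar\'e estimate of Remark \ref{rem:poicare_on_S}: applied with $\bar y = y_n$, it gives
\[
\int_{S_{y_n}} v^2(x,y_n)\,dx \;\le\; 10\int_{S\cap\{y>y_n\}}|\nabla v|^2\,dx\,dy \;\le\; 10\int_{S}|\nabla v|^2\,dx\,dy \;=:\; C_0 < +\infty,
\]
a bound independent of $n$, since the Dirichlet integral of $v$ on $S$ is finite. Next I would trade $v$ for $w$ using $v = e^{\alpha x} w$ together with the hypothesis on $\Xmin$. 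Indeed $\alpha>0$ and, by assumption, $x \ge \Xmin(y_n) \ge m\,y_n$ for every $(x,y_n)\in S$, so $e^{-2\alpha x} \le e^{-2\alpha m y_n}$ on $S_{y_n}$, whence $\int_{S_{y_n}} w^2(x,y_n)\,dx \le e^{-2\alpha m y_n}\int_{S_{y_n}} v^2(x,y_n)\,dx \le C_0\, e^{-2\alpha m y_n}$. Note that the sign of $m$ is irrelevant: if $m<0$ the right-hand side is a growing exponential, which is still an admissible datum for Lemma \ref{lem:se decade no cattivi}; and it is precisely here that the standing assumption $\alpha>0$ (i.e. $\lambda>1$) enters essentially.

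Finally I would upgrade the slice integral over $S_{y_n}$ to an integral over a full period $[0,2\pi]$. For this one uses that $S$ is a fundamental domain for the translation $x\mapsto x+2\pi$ on $\R^2_+$, a consequence of the covering structure set up in Section \ref{sec:reduction}: the translates $\{S+2h\pi : h\in\Z\}$ are pairwise disjoint and their closures cover $\overline{\R^2_+}$, so for every $y$ the translates $\{S_y+2h\pi : h\in\Z\}$ tile $\R$ up to a null set. Since $w(\cdot,y)$ is $2\pi$-periodic, folding $S_y$ into one period then gives $\int_0^{2\pi} w^2(x,y)\,dx = \int_{S_y} w^2(x,y)\,dx$. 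Combining the three steps, $\int_0^{2\pi} w^2(x,y_n)\,dx \le C_0\, e^{-2\alpha m y_n}$ for all large $n$, so Lemma \ref{lem:se decade no cattivi} applies with $A = C_0$ and $\beta = -2\alpha m$, which concludes the proof.

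The only genuinely delicate point is the last one: the fact that folding a horizontal slice of the strip onto a single period of width $2\pi$ exhausts it up to measure zero. This is where one really exploits that $S$ is the lift of a disk minus a cut (a fundamental domain of the universal covering $\mathcal{T}$), rather than an arbitrary planar domain carrying a finite-energy harmonic function; everything else is a routine combination of the trace inequality, the substitution $v=e^{\alpha x}w$, and the one-sided bound on $\Xmin$.
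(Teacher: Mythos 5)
Your proof is correct and follows essentially the same route as the paper's: both start from the trace/Poincar\'e bound of Remark \ref{rem:poicare_on_S}, substitute $v=e^{\alpha x}w$ together with $x\geq\Xmin(y_n)\geq m y_n$ and $\alpha\geq 0$ to get $\int_0^{2\pi}w^2(x,y_n)\,dx\leq 10\,e^{-2\alpha m y_n}\int_{S\cap\{y>y_n\}}|\nabla v|^2$, and then invoke Lemma \ref{lem:se decade no cattivi}. The only difference is that you spell out the tiling/periodicity argument behind the identity $\int_0^{2\pi}w^2(x,y)\,dx=\int_{S_y}w^2(x,y)\,dx$, which the paper simply asserts as the first step.
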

\begin{proof}
Remark \ref{rem:poicare_on_S} yields, for every $n$,
\[
\begin{split}
\int_0^{2\pi}w^2(x,y_n)\,dx &= \int_{S_{y_n}} e^{-2\alpha x } v^2\,dx \leq
e^{- 2\alpha \Xmin(y_n) }\int_{S_{y_n}}v^2\,dx \\
&\leq e^{- 2\alpha m y_n } \cdot 10\int_{S\cap\{y >  y_n\}}|\nabla v|^2\,dxdy
\end{split}
\]
(recall that $\alpha\geq0$). But then we can conclude by applying
Lemma \ref{lem:se decade no cattivi}.
\end{proof}
At this point, we have ruled out the case when $S$ stays frequently
on the right of some straight line. To face the complementary case, we need the
following result, which can be seen as a one-phase version of the
Alt-Caffarelli-Friedman monotonicity lemma, adapted to this situation.
\begin{lemma}\label{lem:ACF}
Let $t\mapsto (x(t),y(t))$ denote a regular parameterization of $\Gamma$.
For $\bar y>0$ let
\[
t_1(\bar y) := \min\{t:y(t)=\bar y\},\qquad
X_1(\bar y) := x(t_1(\bar y)) + 2\pi,
\]
so that $(X_1(\bar y),\bar y)$ belongs to $\Gamma + (2\pi,0)$ and $\Xmin(\bar y)
\leq X_1(\bar y)\leq  \Xmax (\bar y)$.

If $X_1(\bar y)<0$ then
\[
\int_{S\setminus([X_1(\bar y),2\pi]\times[0,\bar y])}|\nabla v|^2\,dxdy \leq e^{- X_1 ^2(\bar y)/\bar y}
\int_{S}|\nabla v|^2\,dxdy.
\]
\end{lemma}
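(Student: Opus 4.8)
The plan is to exploit the fact that $v$ vanishes on $\Gamma$ together with the $C^2$ regularity of $v$ up to $\{y=0\}$, and to compare the Dirichlet energy of $v$ restricted to vertical sections of $S$ with the energy of an optimal one-dimensional competitor. The key geometric observation is the one already recorded in the statement: for $x \le X_1(\bar y) < 0$, the section $S_x := \{y : (x,y) \in S\}$ cannot reach the height $\bar y$. Indeed, $t_1(\bar y)$ is the first time the curve $\Gamma$ attains height $\bar y$, so for any $x \le x(t_1(\bar y))$ — equivalently for $x + 2\pi \le X_1(\bar y)$ — the portion of $S$ over the abscissa $x$ is ``capped'' below $\bar y$ by $\Gamma$ (or by its translate $\Gamma + (2\pi,0)$, whichever bounds $S$ on the appropriate side). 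Thus on such a vertical line the one-dimensional section $\{y > 0 : (x,y)\in S\}$ is contained in a bounded interval whose length is at most $\bar y$ (after using that $v = 0$ on $\Gamma \cup (\Gamma + (2\pi,0))$, i.e. at both endpoints of each component of the section). This will give a one-dimensional Poincar\'e inequality on these sections with a constant that improves as $x \to -\infty$.

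First I would set $\Sigma := S \setminus ([X_1(\bar y), 2\pi] \times [0,\bar y])$ and note that $\Sigma$ is contained in the union of two pieces: the part of $S$ lying over abscissae $x \le X_1(\bar y)$, and the part of $S$ lying above height $\bar y$. On the first piece I would use the vertical-section argument above: since $v$ vanishes at the endpoints of each vertical segment of $S$ over $x$ and the segment has length at most $\bar y$, Poincar\'e in the $y$ variable yields
\[
\int_{S_x} v^2(x,y)\, dy \le \frac{\bar y^2}{\pi^2}\int_{S_x} v_y^2(x,y)\, dy \le \frac{\bar y^2}{\pi^2}\int_{S_x} |\nabla v|^2(x,y)\, dy .
\]
The second step is to upgrade this into an exponential decay estimate in the horizontal direction. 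This is where the precise constant $e^{-X_1^2(\bar y)/\bar y}$ comes from: it is the Gaussian-type tail produced by integrating a linear ODE differential inequality for the ``energy to the left of $x$'', in the spirit of the Alt–Caffarelli–Friedman argument. Concretely, letting
\[
E(x) := \int_{S \cap \{\xi < x\}} |\nabla v|^2 \, d\xi\, dy ,
\]
I would derive a differential inequality of the form $E'(x) \ge c(x)\, E(x)$ where $c(x)$ grows linearly as $x \to -\infty$ (the linear growth coming from the fact that the vertical sections over abscissa $x$ are confined to an interval of length shrinking like… — more precisely, confined below height depending on how far left $x$ is, because $\Gamma$ has already risen). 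Integrating this from $X_1(\bar y)$ leftward, or rather integrating it over $(-\infty, X_1(\bar y))$ and comparing with the total energy, produces the factor $\exp(-X_1^2(\bar y)/\bar y)$.

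The main obstacle, I expect, is making the ``confinement'' quantitative enough to get exactly the quadratic exponent $X_1^2(\bar y)/\bar y$ rather than merely \emph{some} decay. The subtlety is that $S$ can have a complicated, multi-component section geometry (Figure \ref{fig:strip}), so the one-dimensional Poincar\'e constant on a vertical line at abscissa $x$ is controlled not by a single interval but by the union of the components of $S_x$; one must use that each component has both endpoints on $\Gamma \cup (\Gamma + (2\pi,0))$ where $v=0$, together with Lemma \ref{lem:S_first_prop} (total length $\le 2\pi$) to rule out pathologies, and crucially that for $x$ far to the left the curve $\Gamma$ forces every such component to lie below a height that tends to $0$; quantifying the rate at which this cap descends as $x \to -\infty$, and feeding it into the ODE comparison, is the delicate point. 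Once that linear-in-$x$ Poincar\'e constant is in hand, the rest is the standard separation-of-variables / Gronwall computation, and the boundary term on $\{y = \bar y\} \cap \{x \ge X_1(\bar y)\}$ is discarded precisely because we removed the rectangle $[X_1(\bar y),2\pi]\times[0,\bar y]$ from the domain of integration on the left-hand side.
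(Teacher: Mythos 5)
Your setup — define an energy to the left of abscissa $\xi$, differentiate, apply a one-dimensional Poincar\'e inequality on the vertical slices where $v$ vanishes at both endpoints, and then Gronwall — is exactly the paper's strategy (the paper calls this energy $\Phi(\xi)$ and obtains $\Phi'(\xi)/\Phi(\xi)\ge 2\pi/|\Gamma_\xi|$, where $\Gamma_\xi$ is the relevant portion of the vertical slice). The gap is in the step you yourself flag as ``delicate'': you want the Poincar\'e constant to improve linearly as $x\to-\infty$, i.e.\ you want $|\Gamma_\xi|\lesssim 1/|\xi|$, and nothing in the geometry guarantees this. The slices $\Gamma_\xi$ need not shrink at any particular rate, nor even monotonically; the heights $\eta(\xi)$ are only bounded above by $\bar y$, not tending to $0$.

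What saves the day in the paper, and what your proposal is missing, is \emph{Jensen's inequality applied to the averaged slice length}. Rather than trying to control $|\Gamma_\xi|$ pointwise, one estimates
\[
-\frac{1}{\xi}\int_{\xi}^{0}\frac{dx}{|\Gamma_x|}\ \ge\ \frac{1}{\ -\frac{1}{\xi}\int_{\xi}^{0}|\Gamma_x|\,dx\ }\ =\ \frac{-\xi}{\area(\Xi_\xi)}\ \ge\ \frac{-\xi}{2\pi\bar y},
\]
where the last bound uses only Lemma \ref{lem:S_first_prop} ($|S_y|\le 2\pi$) and the fact that $\Xi_\xi\subset S\cap\{y<\bar y\}$. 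This gives $\int_\xi^0 \frac{2\pi}{|\Gamma_x|}\,dx \ge \xi^2/\bar y$, and after integrating the differential inequality one gets exactly $\Phi(\xi)\le e^{-\xi^2/\bar y}\Phi(0)$; taking $\xi=X_1(\bar y)$ finishes. So the quadratic exponent comes from an $L^1$ (area) control on the slices converted through Jensen, not from any pointwise linear decay of the Poincar\'e constant. Without this identification the Gronwall step has nothing to integrate, and the proposal, as written, does not close.

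One secondary point: your Poincar\'e inequality using length $\le \bar y$ gives a constant that does not vary with $x$ at all, so it cannot feed into a Gronwall estimate with $X_1$-dependent gain; the inequality that enters must involve $|\Gamma_\xi|$ itself (as in the paper's $2\pi/|\Gamma_\xi|$), and the $\xi$-dependence is then recovered on average, not slice by slice.
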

\begin{proof}
For easier notation, throughout the proof of the lemma we will assume that $v$
is truncated to $0$ in $\R^2_+ \setminus S$.
Furthermore, for $ X_1 (\bar y)< \xi < 2\pi$, we write (see Figure \ref{fig:ACF})
\begin{figure}[htbp]
\centering
\begin{tikzpicture}
\begin{axis}[
    every axis x label/.style={at={(current axis.right of origin)},anchor=north west},
    width=.96\linewidth,
    height=.5\linewidth,
    xmin=-6.15, xmax=3.55,
    ymin=0, ymax=6.35,
    axis y line=center, axis x line=bottom,
    typeset ticklabels with strut,
    axis line style={-Stealth},
    xlabel=$x$, ylabel=$y$,
    x label style={below left},
    y label style={below right},
    xtick={-2.89,-2,0,3},
    xticklabels={$X_1(\bar y)$,$\xi$,0,$2\pi$},
    ytick={4,5},
    yticklabels={$\ \ \eta(\xi)$,$\ \ \bar y$},
    y tick label style={right},
   ]
            \begin{scope}
            \clip (-2,0) -- (-2,4) -- (-1,2.8) -- (3,0) --cycle;
\addplot[name path=A, smooth, pattern=north east lines, pattern color=black!75!yellow,
        tension=0.9] coordinates
        {(0,0) (-3,1) (-5,4) (-5.65,3.85) (-4.9,3.5) (-5.2,3.2) (-5.8,3.8) (-6,4.4) (-5.9,5)
         (-5,4.9) (-3.8,3) (-3.7,3.1) (-5,6)} -- (-5,7) -- (3,7) -- (3,0) --cycle;
            \end{scope}
            \begin{scope}
            \clip (-2.01,0) -- (-2.01,4.1) -- (-1,2.9) -- (3.1,0) --cycle;
\addplot[name path=B, smooth, fill=white, tension=0.9] coordinates
        {(3,0) (0,1) (-2,4) (-2.65,3.85) (-1.9,3.5) (-2.2,3.2) (-2.8,3.8) (-3,4.4) (-2.9,5)
         (-2,4.9) (-.8,3) (-.7,3.1) (-2,6)
        } -- (-2,7) -- (3,7) -- (3,0) --cycle;
            \end{scope}
\addplot[name path=A, smooth, tension=0.9] coordinates
        {(0,0) (-3,1) (-5,4) (-5.65,3.85) (-4.9,3.5) (-5.2,3.2) (-5.8,3.8) (-6,4.4) (-5.9,5)
         (-5,4.9) (-3.8,3) (-3.7,3.1) (-5,6)
        };
\addplot[name path=B, smooth, tension=0.9] coordinates
        {(3,0) (0,1) (-2,4) (-2.65,3.85) (-1.9,3.5) (-2.2,3.2) (-2.8,3.8) (-3,4.4) (-2.9,5)
         (-2,4.9) (-.8,3) (-.7,3.1) (-2,6)
        };
\addplot[gray!10] fill between[of=A and B,
        ];
\addplot[smooth] coordinates {(0,0) (0,2.2)};
\addplot[very thin, dashed, domain=-2:0] {4};
\addplot[very thin, dashed, domain=-6.15:0] {5};
\addplot[very thin, dashed, domain=0:4] ({-2},x);
\addplot[very thin, dashed, domain=0:5] ({-2.89},x);
\addplot[thick, domain=.423:3.205] ({-2},x);
\addplot[thick, domain=3.58:4] ({-2},x);
\draw (-3.5,3.7) node {$S$};
\draw (-1,.87) node[fill=gray!10] {$\Xi(\xi)$};
\draw (-2.2,2.2) node {$\Gamma_{\xi}$};
\end{axis}
\end{tikzpicture}
\caption{Notation for the proof of Lemma \ref{lem:ACF} (the thick vertical
segments correspond to the connected components of $\Gamma_{\xi}$).
\label{fig:ACF}}
\end{figure}
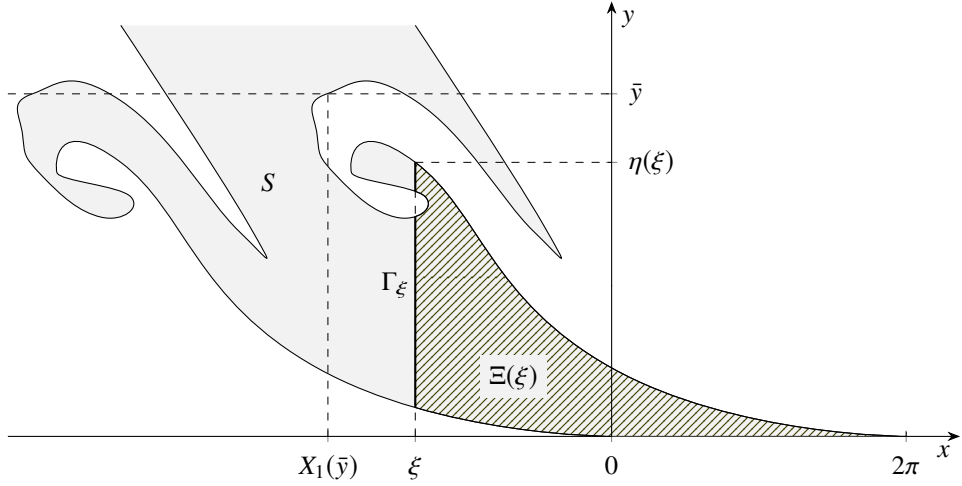%
\[
\begin{split}
\eta(\xi) &:= \max\{y(t):x(t)+2\pi=\xi,\,t<t_1(\bar y)\}<\bar y,
\\
\Gamma_{\xi} &:= \{(\xi,y)\in S: y\leq \eta(\xi)\},
\\
\Xi_{\xi} &:= \bigcup_{\xi\leq x \leq 2\pi} \Gamma_x,
\end{split}
\]
and
\[
\Phi(\xi) := \int_{S\setminus\Xi_{\xi}} |\nabla v|^2\,dxdy
= \int_{\Gamma_{\xi}} vv_x\,dy
\]
(recall that $v$ is harmonic where it is not zero).
Note that
\[
\Phi(\xi) = \Phi( X_1 (\bar y)) + \int_{ X_1 (\bar y)}^{\xi} dx\int_{\Gamma_x} |\nabla v|^2\,dy.
\]
Thus $\Phi$ is absolutely continuous and, since $v=0$ on $\partial \Gamma_{\xi}$,
Poincar\'e inequality implies (for a.e. $\xi$)
\[
\begin{split}
\Phi'(\xi)&= \int_{\Gamma_{\xi}} |\nabla v|^2\,dy =
\int_{\Gamma_{\xi}} v_x^2\,dy + \int_{\Gamma_{\xi}} v_y^2\,dy \geq
\int_{\Gamma_{\xi}} v_x^2\,dy + \frac{\pi^2}{|\Gamma_{\xi}|^2}\int_{\Gamma_{\xi}} v^2\,dy
\\
&\geq
\frac{2\pi}{|\Gamma_{\xi}|}\left(\int_{\Gamma_{\xi}} v^2\,dy\right)^{1/2}
\left(\int_{\Gamma_{\xi}} v_x^2\,dy\right)^{1/2}\geq \frac{2\pi}{|\Gamma_{\xi}|}
\int_{\Gamma_{\xi}} vv_x\,dy=\frac{2\pi}{|\Gamma_{\xi}|}\,\Phi(\xi),
\end{split}
\]
that is
\begin{equation}\label{eq:ACF1}
\frac{\Phi'(\xi)}{\Phi(\xi)} \geq \frac{2\pi}{|\Gamma_{\xi}|}.
\end{equation}
In view of integrating this equation, we can use Jensen's inequality to write
\[
-\frac{1}{\xi} \int_{\xi}^0 \frac{1}{|\Gamma_{x}|}\,dx \geq \frac{1}{
-\frac{1}{\xi} \int_{\xi}^0 |\Gamma_{x}|\,dx} =
\frac{-\xi}{\area(\Xi_{\xi})}\geq
\frac{-\xi}{2\pi\bar y}
\]
(recall that $\Xi_{\xi}\subset S\cap\{y<\bar y\}$ and $|S_y|=2\pi$),
and therefore
\[
\int_{\bar x}^0 \frac{2\pi}{|\Gamma_{x}|}\,dx \geq \frac{\xi^2}{\bar y}.
\]
From \eqref{eq:ACF1} we deduce
\[
\Phi(\xi) \leq e^{-\xi^2/\bar y} \Phi(0) \leq e^{-\xi^2/\bar y} \int_{S}|\nabla v|^2\,dxdy,
\]
and the lemma follows by choosing $\xi =  X_1 (\bar y)$ and recalling that
\[
S\setminus([X_1(\bar y),2\pi]\times[0,\bar y])\quad \subset \quad S\setminus\Xi_{X_1 (\bar y)}.
\qedhere
\]
\end{proof}
The previous lemma allows to treat the case when the diameter of $S_y$ does not
grow too much, for some subsequence.
\begin{lemma}\label{lem:no_diam_bdd}
If there exist a constant $\delta>0$ and a sequence $y_n\to+\infty$ such that
\[
\frac{ X_1 (y_n)-\Xmin(y_n)}{y_n}\leq \delta,
\]
then Proposition \ref{prop:target} follows (of course, the same holds if
$X_1 (y_n)$ is replaced with $\Xmax(y_n)$).
\end{lemma}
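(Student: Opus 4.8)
The plan is to run a dichotomy on the behaviour of $\Xmin(y_n)/y_n$, so that the only case not already settled by an earlier lemma is the one in which the strip $S$ drifts far to the left; there, the one-phase monotonicity estimate of Lemma~\ref{lem:ACF} is fed into the very chain of inequalities used in the proof of Lemma~\ref{lem:no_a_dx}, and the conclusion follows from Lemma~\ref{lem:se decade no cattivi}.

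First I would dispose of the easy alternative: if there is a subsequence $(y_{n_j})$ and a constant $m$ with $\Xmin(y_{n_j})/y_{n_j}\geq m$, then Lemma~\ref{lem:no_a_dx} applied along $(y_{n_j})$ already gives Proposition~\ref{prop:target}. Hence we may assume $\Xmin(y_n)/y_n\to-\infty$. Since by hypothesis $X_1(y_n)\leq \Xmin(y_n)+\delta y_n = y_n\bigl(\Xmin(y_n)/y_n+\delta\bigr)$, for all large $n$ the bracket is negative, so $X_1(y_n)<0$ and Lemma~\ref{lem:ACF} applies with $\bar y=y_n$; combined with the inclusion $S\cap\{y>y_n\}\subset S\setminus\bigl([X_1(y_n),2\pi]\times[0,y_n]\bigr)$ it yields
\[
\int_{S\cap\{y>y_n\}}|\nabla v|^2\,dxdy\ \leq\ e^{-X_1^2(y_n)/y_n}\int_{S}|\nabla v|^2\,dxdy .
\]

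Next, exactly as in the proof of Lemma~\ref{lem:no_a_dx} (using $\alpha>0$, the periodicity identity $\int_0^{2\pi}w^2(x,y_n)\,dx=\int_{S_{y_n}}e^{-2\alpha x}v^2\,dx$, the bound $e^{-2\alpha x}\leq e^{-2\alpha\Xmin(y_n)}$ on $S_{y_n}$, and the trace inequality of Remark~\ref{rem:poicare_on_S}), I would obtain
\[
\int_0^{2\pi}w^2(x,y_n)\,dx\ \leq\ 10\,e^{-2\alpha\Xmin(y_n)}\!\!\int_{S\cap\{y>y_n\}}\!\!|\nabla v|^2\,dxdy\ \leq\ 10\Bigl(\int_S|\nabla v|^2\Bigr)\,e^{\,-2\alpha\Xmin(y_n)-X_1^2(y_n)/y_n}.
\]
Then I would estimate the exponent. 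Setting $a:=-\Xmin(y_n)>0$, so that $a/y_n\to+\infty$, the bounds $-a\leq X_1(y_n)\leq -a+\delta y_n<0$ give $X_1^2(y_n)\geq (a-\delta y_n)^2\geq a^2-2\delta a y_n$, whence
\[
-2\alpha\Xmin(y_n)-\frac{X_1^2(y_n)}{y_n}\ \leq\ 2\alpha a-\frac{a^2}{y_n}+2\delta a\ =\ a\Bigl(2\alpha+2\delta-\frac{a}{y_n}\Bigr)\ \longrightarrow\ -\infty .
\]
Therefore $\int_0^{2\pi}w^2(x,y_n)\,dx$ stays bounded (in fact it tends to $0$), so the hypothesis of Lemma~\ref{lem:se decade no cattivi} holds with, say, $\beta=0$, and Proposition~\ref{prop:target} follows. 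The case in which $X_1(y_n)$ is replaced by $\Xmax(y_n)$ is a formal consequence, since $\Xmin(y_n)\leq X_1(y_n)\leq \Xmax(y_n)$ for every $n$.

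The delicate point is the competition in the last two displays: the weight $e^{-2\alpha x}$ defining $w^2$ can grow like $e^{2\alpha|\Xmin(y_n)|}$ on the far-left of the strip, while Lemma~\ref{lem:ACF} only furnishes a decay $e^{-X_1^2(y_n)/y_n}$, governed by the ``return point'' $X_1(y_n)$ rather than by $\Xmin(y_n)$. It is precisely the hypothesis $X_1(y_n)-\Xmin(y_n)\leq\delta y_n$ that forces $X_1(y_n)$ to be as negative as $\Xmin(y_n)$ up to a sublinear error, so that the quadratic gain $X_1^2(y_n)/y_n\approx |\Xmin(y_n)|\cdot\bigl(|\Xmin(y_n)|/y_n\bigr)$ overwhelms the linear loss $2\alpha|\Xmin(y_n)|$ — and this works exactly because, in the surviving case, $|\Xmin(y_n)|/y_n\to+\infty$.
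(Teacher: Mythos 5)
Your proof is correct and follows the paper's strategy exactly: after disposing of the case covered by Lemma~\ref{lem:no_a_dx}, you feed Lemma~\ref{lem:ACF} into the chain of estimates from Lemma~\ref{lem:no_a_dx} and invoke Lemma~\ref{lem:se decade no cattivi}. The only (harmless) difference is that by keeping track of $|\Xmin(y_n)|/y_n\to+\infty$ you show the quantity actually tends to zero, whereas the paper contents itself with the coarser bound $C\exp((2\alpha\delta+\alpha^2)y_n)$ via a completion of the square, which is equally sufficient for Lemma~\ref{lem:se decade no cattivi}.
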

\begin{proof}
Recalling Lemma \ref{lem:no_a_dx} we can assume $\Xmin(y_n)/y_n \to - \infty$ so that, in
particular, $ X_1 (y_n)$ must be negative for $n$ large. But then Lemma \ref{lem:ACF}
applies, and we have
\[
\begin{split}
\int_0^{2\pi}w^2(x,y_n)\,dx 
&\leq
e^{- 2\alpha \Xmin(y_n) }\int_{S_{y_n}}v^2\,dx\\
&
\leq 10 e^{- 2\alpha \Xmin(y_n) }\int_{S\cap\{y >  y_n\}}|\nabla v|^2\,dxdy\\
&
\leq C \exp\left(- 2\alpha \Xmin(y_n) - \frac{ X_1 ^2(y_n)}{y_n}\right)\\
&
\leq C \exp\left(\left(2\alpha\delta - 2\alpha \frac{ X_1 (y_n)}{y_n} - \frac{ X_1 ^2(y_n)}{y_n^2}\right)y_n\right)\\
&\leq C \exp\left(\left(2\alpha\delta+\alpha^2\right)y_n\right),
\end{split}
\]
and again we can conclude by Lemma \ref{lem:se decade no cattivi}.
\end{proof}
Summarizing, we are left to consider the case when, for any sequence $y_n\to+\infty$, it holds
\[
\frac{\Xmin(y_n)}{y_n}\to -\infty,\qquad
\frac{ X_1 (y_n)-\Xmin(y_n)}{y_n}\to + \infty.
\]
To this aim, a deeper understanding of the structure of $S_{\bar y}$ is in order, when
${\bar y}$ is large. Even though this analysis can be performed for every ${\bar y}$,
to avoid technicalities we prefer to consider only those values for which the line
$y={\bar y}$ has a finite number of transverse intersections with $\Gamma$ (recall
that, by Sard's Lemma, such condition holds for a.e. $\bar y$).
\begin{lemma}\label{lem:S_further_prop}
Let $t\mapsto (x(t),y(t))$ denote a regular parameterization of $\Gamma$, and
let $\bar y>0$ be a regular value for $t\mapsto y(t)$. Then:
\begin{enumerate}
 \item there exist $t_1<t_2<\dots <t_k$, $k$ odd, such that $y(t)=\bar y$ if and
 only if $t=t_i$ for some $i$;
 \item $(-1)^iy'(t_i)<0$;
 \item writing $S_{\bar y} = \bigcup_{j=1}^{k} (s_{2j-1},s_{2j})\times\{\bar y\}$,
 the intervals being disjoint and ordered, it holds
 \[
 \{s_{2j-1}\}_j=\{x(t_{2i-1}), x(t_{2i}) + 2\pi\}_i,
 \qquad
 \{s_{2j}\}_j=\{x(t_{2i}), x(t_{2i+1}) + 2\pi\}_i;
 \]
 \item $|x(t_{i+1})-x(t_i)|<2\pi$ for every $i=1,\dots,k-1$;
 \item for every $j$, $(s_{2j},\bar y)$ and
 $(s_{2j+1},\bar y)$ both belong either to $\Gamma$ or to $\Gamma + (2\pi,0)$.
\end{enumerate}
\end{lemma}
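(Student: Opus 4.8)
The plan is to handle parts 1--2 by elementary analysis of $t\mapsto y(t)$, and parts 3--5 by using that $S$ is the ``middle'' region cut out by the two disjoint crosscuts $\Gamma$ and $\Gamma+(2\pi,0)$ of $\R^2_+$. For 1--2: since $\bar y$ is a regular value, $\{t:y(t)=\bar y\}$ is discrete, and it is finite because $y(0)=0<\bar y$ forbids accumulation at a finite time (an accumulation would, by Rolle's theorem, force a critical zero of $y-\bar y$) while $y(t)\to+\infty$ forbids accumulation at $+\infty$. On each of the intervals $[0,t_1),(t_1,t_2),\dots,(t_k,+\infty)$ the function $y-\bar y$ has constant nonzero sign, and consecutive signs differ because every crossing is transversal; as the sign is $-$ on $[0,t_1)$ and $+$ on $(t_k,+\infty)$ there is an odd number of sign changes, so $k$ is odd, and $y'(t_1)>0$ with alternating signs of $y'(t_i)$, i.e. $(-1)^iy'(t_i)<0$.

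Now the geometry. $\Gamma$ and $\Gamma+(2\pi,0)$ are disjoint simple arcs from $\partial\R^2_+$ to infinity, hence crosscuts, and $S$ is the component of their complement bounded by both of them and by $[0,2\pi]\times\{0\}$. Along the connected arc $\Gamma\cap\{y>0\}$ the set $S$ lies on exactly one side (the standard separation property of a crosscut) and this side is locally constant, hence constant; call it the $\sigma$-side. At the regular height $\bar y$ one has $\Gamma\cap\{y=\bar y\}=\{(\xi_i,\bar y)\}_i$ with $\xi_i:=x(t_i)$, the $\xi_i$ pairwise distinct modulo $2\pi$ (otherwise $\Gamma$ would meet a nontrivial translate of itself), so $\partial S\cap\{y=\bar y\}$ consists of the $2k$ distinct points $\{\xi_i\}\cup\{\xi_i+2\pi\}$; near each of them $S$ lies on one side of the transversal $C^1$ curve through it. A short bookkeeping argument---each of the $2k$ points is an endpoint of exactly one component of the open set $S_{\bar y}$, no two of the $2k-1$ bounded intervals they determine can both be contained in $S_{\bar y}$, and $\Xmin(\bar y),\Xmax(\bar y)$ are the extreme points---forces $S_{\bar y}$ to be the union of the odd-numbered of those intervals; in particular it has $k$ components, with left (resp. right) endpoints the odd- (resp. even-)ranked of the $2k$ points. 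This is the combinatorial skeleton of part 3.

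Part 5 then follows at once: the open gap between two consecutive components of $S_{\bar y}$ contains none of the $2k$ points, hence meets neither $\Gamma$ nor $\Gamma+(2\pi,0)$ nor $S$; being connected it lies in a single one of the two outer regions, whose trace on $\{y=\bar y\}$ lies in $\Gamma$, resp. in $\Gamma+(2\pi,0)$, so its two endpoints lie on the same one of these curves. For the labelling in part 3, I would identify $\sigma$ from the local picture at $(0,0)$, where the bottom edge---hence $S$---lies to the right of $\Gamma$ read with increasing $t$, and then combine with part 2: a displacement-versus-tangent sign check shows that, $\sigma$ being this side, $(\xi_i,\bar y)$ is a left endpoint of $S_{\bar y}$ exactly when $y'(t_i)>0$, i.e. when $i$ is odd; since $\Gamma+(2\pi,0)$ carries the opposite side of $S$ (it is $\Gamma$ translated, whereas the sheet sharing it with $S$ is $S+(2\pi,0)$), $(\xi_i+2\pi,\bar y)$ is a left endpoint exactly when $i$ is even, which is the assertion of part 3. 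Finally part 4: $\Gamma([t_i,t_{i+1}])$ lies on one side of $\{y=\bar y\}$, meeting that line only at its two endpoints, and together with the segment joining them bounds a Jordan region $R$; if $|\xi_{i+1}-\xi_i|\ge2\pi$ then, of $\xi_i\mp2\pi$ and $\xi_{i+1}\mp2\pi$ (sign opposite to that of $\xi_{i+1}-\xi_i$), one lies in the relative interior of the base of $R$ and the other outside $\overline R$, so the corresponding translate of $\Gamma([t_i,t_{i+1}])$ would run from outside $\overline R$ to a point strictly inside $R$, crossing $\partial R=\Gamma([t_i,t_{i+1}])\cup(\mathrm{base})$---impossible on the base (the translate meets $\{y=\bar y\}$ only at its endpoints) and impossible on $\Gamma([t_i,t_{i+1}])$ ($\Gamma$ is disjoint from its translates).

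I expect the main difficulty to be exactly the global orientation bookkeeping in part 3: assembling the $2k$ local one-sided pictures into the single stated pattern, i.e. pinning down $\sigma$ unambiguously and propagating the alternation of part 2 through it. Equivalently, one must control the cyclic order of the $\xi_i$ well enough to read off the labelling, the disjointness of $\Gamma$ from its translates being what keeps this order rigid. Making the crosscut-separation claim and the part-4 Jordan-curve argument fully rigorous is routine but needs care.
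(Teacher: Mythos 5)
Your proof is correct and, for parts 1, 2, 4, and the labelling step of part 3, it follows essentially the same route as the paper: parts 1--2 by regularity of $\bar y$ and alternation of signs of $y-\bar y$; the labelling by identifying the inward normal to $\partial S$ along $\Gamma$ as $(y',-x')$ (your ``$\sigma$-side'' is exactly the paper's inward-normal observation) and propagating the alternation of part 2; and part 4 by the same Jordan-region argument, bounding the region by $\Gamma|_{[t_i,t_{i+1}]}$ and the chord, and showing that a horizontal $2\pi$-translate of this arc would have to cross $\partial R$ either on the chord (impossible, the translate meets $\{y=\bar y\}$ only at endpoints) or on $\Gamma$ (impossible, $\Gamma$ is disjoint from its translates); the degenerate case $|x(t_{i+1})-x(t_i)|=2\pi$ is excluded automatically because it would put a point of $\Gamma$ on $\Gamma\pm(2\pi,0)$.

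The genuine divergence is part 5. The paper proves it by another explicit Jordan-curve construction: assuming the two endpoints of a gap lie on different boundary curves, it builds the closed curve $\Gamma|_{(0,t_{2i'})}\cup(\Gamma+(2\pi,0))|_{(0,t_{2i''})}\cup[s_{2j},s_{2j+1}]\times\{\bar y\}\cup[0,2\pi]\times\{0\}$, and observes that $\Gamma|_{(t_{2i'},+\infty)}$ starts inside the enclosed region and ends outside, so it must recross the gap segment, contradicting that $(s_{2j},s_{2j+1})\times\{\bar y\}$ is free of zeros of $\partial S$ at height $\bar y$. You instead use the three-component decomposition of $\R^2_+\setminus(\Gamma\cup(\Gamma+(2\pi,0)))$ into $L$, $S$, $R$ and note that the open gap, being connected and disjoint from $\Gamma$, $\Gamma+(2\pi,0)$ and $S$, lies entirely in one of $L$ or $R$, whose closure meets $\partial S\cap\{y=\bar y\}$ only in $\Gamma$ or only in $\Gamma+(2\pi,0)$. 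Your route is slightly cleaner once the crosscut-separation fact is in hand (and it sidesteps the dependence on the exact labelling in part 3), but it front-loads the topological work into establishing that separation, which the paper never states explicitly and instead re-derives piecemeal via Jordan curves each time it is needed. Two small imprecisions worth fixing in your write-up: the bookkeeping clause should read ``no two \emph{adjacent} of the $2k-1$ intervals can both lie in $S_{\bar y}$'', and the translated endpoint landing ``strictly inside $R$'' in part 4 should be the point of the translate at a parameter slightly before $t_{i+1}$, not the endpoint itself, which lies on the base.
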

\begin{proof}
Since $\bar y$ is regular, we immediately deduce that $\Gamma \cap \{y=\bar y\}$ is finite;
thus properties 1 and 2 are trivial consequences of the fact that $y(t)<\bar y$ for $t<t_1$ and
$y(t)>\bar y$ for $t>t_k$.

Observe that $t\mapsto (x(t)+2\pi,y(t))$ parameterize $\Gamma+(2\pi,0)$: we deduce that
the inward normal to $\partial S$ is given by $(y'(t),-x'(t))$ for points of $\Gamma$ and
by $(-y'(t),x'(t))$ for points of $\Gamma+(2\pi,0)$.
By 2, we deduce that if  $i$ is odd (resp. even), then $S_{\bar y}$ lies on the
right (resp. on the left) of $x(t_i)$ and on the left (resp. on the right) of  $x(t_i)+2\pi$, and
also 3 follows.

Let us assume by contradiction that property 4 is false and, say,
\[
  \begin{split}
  x&(t_i)< x(t_{i})+2\pi < x(t_{i+1}) < x(t_{i+1})+2\pi, \\
  &y(t_i)=y(t_{i+1})=\bar y \qquad y(t)\neq \bar y\text{ in }[a,b].
  \end{split}
\]
Let $A$ be the bounded region of $\R^2$, surrounded by the curve joining
$\Gamma|_{(t_i,t_{i+1})}$ and the segment $[x(t_i),x(t_{i+1})]\times\{\bar y\}$.
We deduce that the connected curve $(\Gamma+(2\pi,0))|_{(t_i,t_{i+1})}$ belongs to $A$ for $t\to t_{i}^+$, and to $\R^2\setminus A$ for
$t\to t_{i+1}^+$, so that for some $t^*\in(t_i,t_{i+1})$ it intersects $\partial A$, i.e. $\Gamma$, a contradiction.

In a similar fashion, let 5 be false and, for concreteness, let $i',i''$ be such that $s_{2j} = x(t_{2i'})$ and
 $s_{2j+1}=x(t_{2i''}) + 2\pi$. Let us consider the Jordan curve $\gamma$ obtained by joining
$\Gamma|_{(0,t_{2i'})}$, $(\Gamma+(2\pi,0))|_{(0,t_{2i''})}$ and the two segments
$[s_{2j},s_{2j+1}]\times\{\bar y\}$, $[0,2\pi]\times\{0\}$. Calling $A$ the bounded region
delimited by $\gamma$ we have that $\Gamma|_{(t_{2i'},+\infty)}$ belongs to $A$ for
$t\to t_{2i'}^+$, while it belongs to its complement for $t\to +\infty$. Thus
$\Gamma|_{(t_{2i'},+\infty)}$ must intersect $\gamma$, a contradiction since
$(s_{2j},s_{2j+1})\times\{\bar y\} \cap S_{\bar y} =\emptyset$.
\end{proof}
\begin{remark}\label{rem:prop4}
Property 4 and Property 5 in the above lemma imply that, if $I\subset[\Xmin(\bar y),\Xmax(\bar y)]$ is a closed interval, with $|I|\geq2\pi$,
then it contains an interval $(s_{2j-1},s_{2j})\subset S_{\bar y}$ which has one endpoint on $\Gamma$ and the other on $\Gamma + (2\pi,0)$: indeed,
by 4, $I$ contains at least one point of $\Gamma$ and one point of $\Gamma + (2\pi,0)$, and the claim follows by 5. Moreover,
if $x(t_{i_1})$ and $x(t_{i_2})$ belong to different connected components of $\R\setminus I$, then either $s_{2j-1}$ or $s_{2j}$ belong
to $\Gamma|_{(t_{i_1},t_{i_2})}$.
\end{remark}
\begin{definition}\label{def:bottleneck}
If $S_{\bar y}$ is not connected we call its connected components \emph{bottlenecks}.
A bottleneck
\[
\Sigma = (s',s'')\times\{\bar y\}
\]
disconnects $S$ in two (open, connected) components, only one of which is adjacent to $\{y = 0\}$ (where the non-homogeneous Dirichlet datum is assigned for $v$):
\[
S \setminus \Sigma = \mathcal{N}_\Sigma \cup \Zcal_\Sigma,\qquad \text{with}\qquad
\begin{cases}
\mathcal{N}_\Sigma \cap \Zcal_\Sigma = \emptyset \\
\partial \mathcal{N}_\Sigma \supset [0,2\pi] \times \{0\}\\
v=0\text{ on }\partial\Zcal_\Sigma\setminus\Sigma.
\end{cases}
\]
In the following we will only deal with the component $\Zcal_\Sigma$ ($\Zcal$ stays
for ``Zero
Dirichlet data''), which may be either unbounded (when the endpoints of $\Sigma$
belong to different components of $\partial S$) or not (when they both belong either
to $\Gamma$ or to $\Gamma+(2\pi,0)$).

Finally, we denote with $B_r(\Sigma)$ the disk of radius $r$ centered at the middle point of
$\Sigma$, and with
\[
B'_r(\Sigma) := \text{the connected component of }S \cap B_r(\Sigma)\text{
which contains }\Sigma.
\]
\end{definition}
In fact, any small bottleneck provides a strong decay of the
gradient of $v$, as shown in the following lemma.
\begin{lemma}\label{lem:1strozz}
Let $\pi<R_1<R_2$. Under the notation of Definition \ref{def:bottleneck} it holds
\[
\int_{\Zcal_{\Sigma}\setminus B'_{R_1}(\Sigma)}|\nabla v|^2\,dxdy \leq
\frac{C(R_1,R_2)}{\log(2R_1/|\Sigma|)} \int_{B'_{R_2}(\Sigma)}|\nabla v|^2\,dxdy.
\]
\end{lemma}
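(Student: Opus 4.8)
The plan is to prove the estimate by a logarithmic-cutoff argument exploiting the narrowness of the bottleneck $\Sigma$ together with the vanishing of $v$ on $\partial\Zcal_\Sigma\setminus\Sigma$. The key geometric feature is that $B'_r(\Sigma)$, the connected component of $S\cap B_r(\Sigma)$ containing $\Sigma$, is trapped between the two boundary curves of $S$, so its horizontal slices at each height have length at most $|\Sigma|$ near $\Sigma$ and at most $2\pi$ further away, by Lemma \ref{lem:S_first_prop}. First I would introduce a radial cutoff function $\psi_r(\bx) = \psi\bigl(|\bx-\bx_\Sigma|\bigr)$, where $\bx_\Sigma$ is the midpoint of $\Sigma$, which equals $1$ for $|\bx-\bx_\Sigma|\leq R_1$, equals $0$ for $|\bx-\bx_\Sigma|\geq R_2$, and interpolates logarithmically in between: $\psi(s) = \dfrac{\log(R_2/s)}{\log(R_2/R_1)}$ for $R_1\leq s\leq R_2$, so that $\int_{R_1}^{R_2}|\psi'(s)|^2\, s\, ds = \dfrac{1}{\log(R_2/R_1)}$.

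**The Caccioppoli-type inequality.** Since $v$ is harmonic on $\Zcal_\Sigma$ (where it does not vanish) and $v = 0$ on $\partial\Zcal_\Sigma\setminus\Sigma$, testing $\Delta v = 0$ against $(1-\psi_r^2)v$ — which vanishes on $\Sigma$ provided we cut off appropriately at $\Sigma$, or more directly testing against $\phi^2 v$ with $\phi = 1-\psi_r$ on the set $\Zcal_\Sigma\setminus B'_{R_1}(\Sigma)$ where $\phi\equiv 1$ — yields, by integration by parts and the fact that the boundary terms on $\partial\Zcal_\Sigma\setminus\Sigma$ drop out,
\[
\int_{\Zcal_\Sigma} \phi^2|\nabla v|^2\,d\bx \leq C\int_{\Zcal_\Sigma} |\nabla\phi|^2 v^2\,d\bx.
\]
The gradient of $\phi$ is supported in the annular region $B'_{R_2}(\Sigma)\setminus B'_{R_1}(\Sigma)$, where $|\nabla\phi(\bx)|\leq \dfrac{C}{|\bx-\bx_\Sigma|\,\log(R_2/R_1)}$. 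On this annulus, since $v$ vanishes on both boundary curves $\Gamma$ and $\Gamma+(2\pi,0)$ bounding $S$, the Poincaré inequality on horizontal slices (Remark \ref{rem:poicare_on_S}) gives $\int_{(B'_{R_2})_y}v^2\,dx\leq 4\int_{(B'_{R_2})_y}|\nabla v|^2\,dx$. Combining, the right-hand side is bounded by $\dfrac{C(R_1,R_2)}{\log^2(R_2/R_1)}\int_{B'_{R_2}(\Sigma)}|\nabla v|^2\,d\bx$; absorbing the left-hand contribution coming from the annulus itself into the right-hand side is harmless since $R_1,R_2$ are fixed.

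**Capturing the bottleneck width.** The point where $|\Sigma|$ enters is the region between $\Sigma$ and the circle of radius $R_1$: there one must replace the crude Poincaré constant $4$ by one of order $|\Sigma|^2$, since the slices of $B'_{R_1}(\Sigma)$ immediately above and below $\Sigma$ have length comparable to $|\Sigma|$ (they are pinched by the two boundary curves converging to the endpoints of $\Sigma$). Concretely, I would use a cutoff that is linear (in $\log$-scale) already from radius $|\Sigma|$ outward: set $\psi(s) = \dfrac{\log(2R_1/s)}{\log(2R_1/|\Sigma|)}$ for $|\Sigma|\leq s\leq 2R_1$ and $\psi\equiv 1$ for $s\leq|\Sigma|/2$ — wait, more carefully, one takes $\phi$ supported near $\Sigma$ and runs the energy estimate on the complementary region. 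The cleanest route: test against $\psi^2 v$ with $\psi$ the logarithmic cutoff equal to $1$ on $B'_{R_1}(\Sigma)$ and $0$ outside $B'_{R_2}(\Sigma)$, but started from the scale $|\Sigma|$; the annular contribution of $|\nabla\psi|^2 v^2$ over radii from $|\Sigma|$ to $R_1$ is controlled by $\dfrac{1}{\log(2R_1/|\Sigma|)}$ using that on each slice of width $\leq \min(s,2\pi)$ the Poincaré constant is $\lesssim s^2$, which exactly cancels the $s^{-2}$ from $|\nabla\psi|^2$ and leaves $\int \dfrac{ds}{s\log^2}$.

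**Main obstacle.** The delicate point is the geometric claim that the slices of $B'_r(\Sigma)$ for $r$ between $|\Sigma|$ and $\pi$ have length $\lesssim r$: this requires that the two endpoints of $\Sigma$ lie on boundary curves of $S$ that do not immediately splay apart, which follows because $B'_r(\Sigma)$ is by definition the connected component containing $\Sigma$, so it cannot "leak" around either endpoint without crossing $\partial S$. I would establish this by a short topological argument using the structure of $S_{\bar y}$ from Lemma \ref{lem:S_further_prop} (properties 4 and 5) and Remark \ref{rem:prop4}: any horizontal slice of $B'_r(\Sigma)$ is a union of intervals each pinched between $\Gamma$ and $\Gamma + (2\pi,0)$ or within one of them, hence of total length at most $\min(2\pi, Cr)$ once $r$ is small relative to the distance from $\Sigma$ to where $S$ widens. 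Granting this, the integration over $r\in(|\Sigma|, R_1)$ of $\dfrac{dr}{r\,\log^2(2R_1/|\Sigma|)}$ telescopes to $\dfrac{1}{\log(2R_1/|\Sigma|)}$, giving the stated bound with $C(R_1,R_2)$ absorbing the fixed geometry of the outer annulus.
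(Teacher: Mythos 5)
Your skeleton is right — a logarithmic cutoff starting at scale $|\Sigma|$ and ending at $R_1$, harmonicity of $v$ in $\Zcal_\Sigma$ combined with $v=0$ on $\partial\Zcal_\Sigma\setminus\Sigma$ to annihilate the cross term, leaving only $\int|\nabla\eta|^2v^2$ — and this matches the paper's strategy. However, the step where you estimate the annular term contains a genuine gap, and the paper handles it quite differently. You propose a slice-wise Poincar\'e argument with constant $\sim s^2$ at ``scale $s$,'' asserting that this cancels the $s^{-2}$ from $|\nabla\psi|^2$ and produces $\int ds/(s\log^2)$. This requires a delicate geometric fact about horizontal slices of $B'_r(\Sigma)$ having width $\lesssim r$, which you yourself flag as ``the main obstacle'' but do not establish; and it is not clear how to make the slicing compatible with the radial weight $|\bx-\bx_\Sigma|^{-2}$, since the radial variable is not constant along a horizontal slice. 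Your cutoff descriptions are also internally inconsistent: ``$\psi$ equal to $1$ on $B'_{R_1}(\Sigma)$ and $0$ outside $B'_{R_2}(\Sigma)$, but started from the scale $|\Sigma|$'' cannot all hold simultaneously.

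The paper sidesteps this entirely. Writing $\ell=|\Sigma|/2$, the cutoff $\eta$ is $0$ on $|\bx-\bx_\Sigma|\leq\ell$, $1$ on $\Zcal_\Sigma\setminus B'_{R_1}(\Sigma)$, and logarithmic in between; then one simply bounds
\[
\int_{\Zcal_\Sigma}|\nabla\eta|^2v^2\;\leq\;\Bigl(\max_{\Zcal_\Sigma\cap B'_{R_1}(\Sigma)}v^2\Bigr)\int_{\{\ell\leq|\bx-\bx_\Sigma|\leq R_1\}}|\nabla\eta|^2
\;=\;\Bigl(\max_{\Zcal_\Sigma\cap B'_{R_1}(\Sigma)}v^2\Bigr)\cdot\frac{2\pi}{\log(R_1/\ell)},
\]
which already produces the $1/\log(2R_1/|\Sigma|)$ factor with \emph{no} geometry of $S$ used beyond containment in the planar annulus. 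The remaining $\max v^2$ is then handled by a separate, scale-$R_1,R_2$ argument: $v^2$ extended by zero is subharmonic on $B_{R_2}(\Sigma)$, so the sup on $B'_{R_1}(\Sigma)$ is controlled by a mean value $\tfrac{1}{\pi(R_2-R_1)^2}\int_{B_{R_2-R_1}}v^2$, and finally the Poincar\'e inequality on $B_{R_2}$ (valid since $v$ vanishes on a connected curve running from $\partial B_{R_1}$ to $\partial B_{R_2}$) trades $\int v^2$ for $\int|\nabla v|^2$ over $B'_{R_2}(\Sigma)$, yielding the constant $C(R_1,R_2)$. This decoupling — crude sup bound with the $\log$-energy of the cutoff, then subharmonicity plus Poincar\'e at the outer scale — is both simpler and avoids the slice-width claim your sketch relies on. You should replace your slice Poincar\'e step with this argument.
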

\begin{proof}
Let $2\ell=|\Sigma|$ and
\[
\eta(x) =
\begin{cases}
\dfrac{\log |x| - \log \ell}{\log R_1 - \log \ell}  &
x \in \Zcal(\Sigma)\cap B'_{R_1}(\Sigma),\ \ell\leq |x| \leq R_1\\
1  &  x \in \Zcal(\Sigma)\setminus B'_{R_1}(\Sigma)\\
0  &  \text{elsewhere}.
\end{cases}
\]
Note that $\eta^2 v$ is in $H^1_0(\Zcal(\Sigma))$ and $\nabla\eta \not \equiv 0$ only
in $B'_{R_1}(\Sigma)$. Since $v$ is harmonic and $v^2|_{B'_{R_2}(\Sigma)}$ is
subharmonic on $B_{R_2}(\Sigma)$ (when extended to zero outside $B'_{R_2}(\Sigma)$), we obtain
\[
\begin{split}
&\int_{\Zcal_{\Sigma}\setminus B'_{R_1}(\Sigma)}|\nabla v|^2\,dxdy
\leq \int_{\Zcal_{\Sigma}}|\nabla(\eta v)|^2\,dxdy\\
&\qquad
= \int_{\Zcal_{\Sigma}}\nabla v \cdot
\nabla(\eta^2 v)\,dxdy + \int_{\Zcal_{\Sigma}}|\nabla\eta|^2v^2\,dxdy\\
&\qquad \leq
2\pi \int_{\ell}^{1} \frac{r\,dr}{r^2(\log R_1 - \log \ell)^2}
\cdot \left(\max_{\Zcal_{\Sigma}\cap B'_{R_1}(\Sigma)} v^2 \right)\\
&\qquad \leq
\frac{1}{\log R_1 - \log \ell} \cdot \frac{2}{(R_2-R_1)^2}\max_{x\in \Zcal_{\Sigma}\cap
B'_{R_1}(\Sigma)}  \int_{B_{R_2-R_1}(x)} (v|_{B'_{R_2}(\Sigma)})^2\,dxdy\\
&\qquad \leq \frac{1}{\log(2R_1/|\Sigma|)} \cdot \frac{2}{(R_2-R_1)^2}
\int_{B'_{R_2}(\Sigma)}v^2\,dxdy.
\end{split}
\]
To conclude, we recall that there exists $C_P=C_P(R_1,R_2)$ such that the Poincar\'e inequality
\[
 \int_{B_{R_2}}u^2\,dxdy\leq
 C_P\int_{B_{R_2}}|\nabla u|^2\,dxdy
\]
holds, for every $u\in H^1(B_{R_2})$ such that $u|_\gamma=0$, for some connected curve
$\gamma$ having endpoints on $\partial B_{R_1}$ and $\partial B_{R_2}$, respectively (indeed, this implies a Poincar\'e inequality
in  $B_{R_2}\setminus B_{R_1}$, with constant $4/R_2^2$ independent of $\gamma$; then the inequality on $B_{R_2}$ follows using the trace inequality both on
$B_{R_1}$ and $B_{R_2}\setminus B_{R_1}$).
\end{proof}
The above estimate can be used to deal with groups of bottlenecks belonging to localized intervals of $S_{\bar y}$.
\begin{lemma}\label{lem:tantestrozz1intev}
Let $\bar y$ be a regular value of $t\mapsto y(t)$ and $X_1(\bar y)$ be defined as in Lemma \ref{lem:ACF}. For $h \in \N$ we define
\[
\begin{split}
\sigma_h &:= X_1(\bar y) - 3(2h+1)\pi,\\
I_h &:= (\sigma_h - 3\pi,\sigma_h+3\pi)\times\{\bar y\},\\
\delta_h &:= \max\{|\Sigma|:\Sigma\text{ is a bottleneck in }S_{\bar y},\ \Sigma\subset I_h\},\\
B'_h &:= \left\{\bigcup_{i}A_i: A_i \text{ connected component of }S\cap B_{3\pi}(\sigma_h,\bar y)\text{ intersecting }S_{\bar y}\cap I_h
\right\}.
\end{split}
\]
Then there exists a constant $C>0$ such that
\[
\int_{B'_{h+1}} |\nabla v|^2\,dxdy \leq
\frac{C}{\log(3\pi/\delta_{h})} \int_{B'_h}|\nabla v|^2\,dxdy.
\]
\end{lemma}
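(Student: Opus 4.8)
The plan is to obtain the estimate from a single application of Lemma~\ref{lem:1strozz} to a bottleneck lying in the central third of $I_h$. If $B'_{h+1}=\emptyset$ there is nothing to prove, so assume $B'_{h+1}\neq\emptyset$; by the definition of $B'_{h+1}$ this forces $S_{\bar y}\cap I_{h+1}\neq\emptyset$, so $S$ contains a point at height $\bar y$ with abscissa $<\sigma_h-3\pi$, whence $\Xmin(\bar y)<\sigma_h-3\pi$. On the other hand, by Lemma~\ref{lem:ACF}, $\Xmax(\bar y)\ge X_1(\bar y)=\sigma_h+3(2h+1)\pi\ge\sigma_h+3\pi$. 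Hence the closed interval $[\sigma_h-\pi,\sigma_h+\pi]$ is contained in $[\Xmin(\bar y),\Xmax(\bar y)]$, while $\diam S_{\bar y}=\Xmax(\bar y)-\Xmin(\bar y)>6\pi$; by Lemma~\ref{lem:S_first_prop} the section $S_{\bar y}$ is then not a single interval, so it has genuine bottlenecks, and $\delta_h$ is a well-defined number with $0<\delta_h\le|S_{\bar y}|\le2\pi<3\pi$.

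Next I would produce the bottleneck $\Sigma$ by applying Remark~\ref{rem:prop4} to $[\sigma_h-\pi,\sigma_h+\pi]$ (a closed interval of length $2\pi$ inside $[\Xmin(\bar y),\Xmax(\bar y)]$): this yields a bottleneck $\Sigma=(s',s'')\times\{\bar y\}$ with $\overline\Sigma\subset[\sigma_h-\pi,\sigma_h+\pi]\times\{\bar y\}$ and with one endpoint on $\Gamma$ and the other on $\Gamma+(2\pi,0)$, so that, in the notation of Definition~\ref{def:bottleneck}, $\mathcal{N}_\Sigma$ is the bounded component of $S\setminus\Sigma$ (whose boundary contains $[0,2\pi]\times\{0\}$) and $\Zcal_\Sigma$ the unbounded one. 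Since $\Sigma\subset I_h$ we have $0<|\Sigma|\le\delta_h$. Denoting by $\xi_\Sigma$ the midpoint of $\Sigma$, so $|\xi_\Sigma-\sigma_h|\le\pi$, I would apply Lemma~\ref{lem:1strozz} to $\Sigma$ with $R_1:=3\pi/2$ and $R_2:=2\pi$ (note $\pi<R_1<R_2$ and $2R_1=3\pi$). Two elementary inclusions then hold. First, $B_{R_2}(\Sigma)=B_{2\pi}(\xi_\Sigma,\bar y)\subset B_{3\pi}(\sigma_h,\bar y)$, and since $B'_{R_2}(\Sigma)$ is a connected subset of $S$ containing $\Sigma\subset S_{\bar y}\cap I_h$, it lies inside one of the components that define $B'_h$; hence $B'_{R_2}(\Sigma)\subset B'_h$. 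Second, $B'_{R_1}(\Sigma)\subset\{x>\xi_\Sigma-3\pi/2\}\subset\{x>\sigma_h-5\pi/2\}$, while $B'_{h+1}\subset B_{3\pi}(\sigma_h-6\pi,\bar y)\subset\{x<\sigma_h-3\pi\}$, so that $B'_{h+1}\cap B'_{R_1}(\Sigma)=\emptyset$.

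The one substantial point, which I expect to be the main obstacle, is the inclusion $B'_{h+1}\subset\Zcal_\Sigma$. Every connected component $A$ of $B'_{h+1}$ lies in $\{x<\sigma_h-3\pi\}$, hence misses $\overline\Sigma\subset\{x\ge\sigma_h-\pi\}$, so $A$ lies entirely in $\mathcal{N}_\Sigma$ or entirely in $\Zcal_\Sigma$; since $A$ meets $S_{\bar y}$ at a point with abscissa $<\sigma_h-3\pi$, i.e.\ strictly to the left of $\Sigma$, it is enough to show that no section of $S_{\bar y}$ lying to the left of $\Sigma$ belongs to $\mathcal{N}_\Sigma$. Here the choice of the reference abscissa $X_1(\bar y)$ is essential: $\mathcal{N}_\Sigma$ is the bounded region delimited by $\Sigma$, by the finite arc of $\Gamma$ from $(0,0)$ to the endpoint of $\Sigma$ on $\Gamma$, by $[0,2\pi]\times\{0\}$, and by the finite arc of $\Gamma+(2\pi,0)$ from $(2\pi,0)$ to the other endpoint of $\Sigma$; because $\Sigma$ lies strictly to the left of $X_1(\bar y)$ and $t_1(\bar y)$ is the first time the curve $\Gamma$ attains height $\bar y$, a combinatorial analysis of the intersections of $\{y=\bar y\}$ with $\partial S$, based on Lemma~\ref{lem:S_further_prop} and the last assertion of Remark~\ref{rem:prop4}, should show that the portion of $S_{\bar y}$ on $\partial\mathcal{N}_\Sigma$ stays to the right of $\Sigma$. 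Everything else is bookkeeping with the explicit radii, which are tuned precisely so that the central third of $I_h$ leaves room for $B_{2\pi}(\Sigma)$ inside $B_{3\pi}(\sigma_h,\bar y)$ while $B_{3\pi/2}(\Sigma)$ stays clear of $B'_{h+1}$.

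Finally, granting $B'_{h+1}\subset\Zcal_\Sigma\setminus B'_{R_1}(\Sigma)$, I would combine Lemma~\ref{lem:1strozz} (with $2R_1=3\pi$) with $B'_{R_2}(\Sigma)\subset B'_h$, the bound $0<|\Sigma|\le\delta_h<3\pi$, and the fact that $t\mapsto 1/\log(3\pi/t)$ is increasing on $(0,3\pi)$, to get
\begin{align*}
\int_{B'_{h+1}}|\nabla v|^2\,dxdy
&\le\int_{\Zcal_\Sigma\setminus B'_{R_1}(\Sigma)}|\nabla v|^2\,dxdy\\
&\le\frac{C(R_1,R_2)}{\log(3\pi/|\Sigma|)}\int_{B'_{R_2}(\Sigma)}|\nabla v|^2\,dxdy
\le\frac{C(R_1,R_2)}{\log(3\pi/\delta_h)}\int_{B'_h}|\nabla v|^2\,dxdy,
\end{align*}
which is the claim with $C:=C(3\pi/2,2\pi)$.
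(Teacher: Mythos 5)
Your overall architecture is close to the paper's: both reduce the estimate to Lemma~\ref{lem:1strozz} applied with $R_1=3\pi/2$, $R_2=2\pi$ to bottlenecks located in the central third $(\sigma_h-\pi,\sigma_h+\pi)$ of $I_h$, and your preliminary bookkeeping (nonemptiness forces $\Xmin(\bar y)<\sigma_h-3\pi$ and $X_1(\bar y)\ge\sigma_h+3\pi$, so Remark~\ref{rem:prop4} applies; $B'_{R_2}(\Sigma)\subset B'_h$; $B'_{R_1}(\Sigma)\cap B'_{h+1}=\emptyset$; the monotonicity of $t\mapsto 1/\log(3\pi/t)$) is correct and matches the paper's choices.

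The place where you diverge is the covering step, and that is exactly where you flag ``the main obstacle'' and only sketch. You want to produce a \emph{single} transition bottleneck $\Sigma$ (one endpoint on $\Gamma$, the other on $\Gamma+(2\pi,0)$) in the central third and show $B'_{h+1}\subset\Zcal_\Sigma$, so that one application of Lemma~\ref{lem:1strozz} suffices. The paper proves something weaker but sufficient: writing $S_{\bar y}\cap(\sigma_h-\pi,\sigma_h+\pi)=\bigcup_{i=1}^n\Sigma_i$ (all of $S_{\bar y}$'s components there, not just the transition ones), it shows $B'_{h+1}\subset\bigcup_{i=1}^n\Zcal_{\Sigma_i}$, applies Lemma~\ref{lem:1strozz} to each $\Sigma_i$, and sums, using $|\Sigma_i|\le\delta_h$ uniformly. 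That sum is precisely what accommodates the case in which different components of $B'_{h+1}$ are cut off from the Dirichlet side by \emph{different} bottlenecks.

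This matters because your inclusion $B'_{h+1}\subset\Zcal_\Sigma$ for one $\Sigma$ is a strictly stronger statement, and it is not established by your sketch. Your reduction ``it suffices to show no component of $S_{\bar y}$ to the left of $\Sigma$ lies in $\mathcal{N}_\Sigma$'' is fine, but the asserted conclusion is not obvious. The $\Gamma$-endpoint of $\Sigma$ corresponds to a time $t'>t_1(\bar y)$ (since $x(t_1)=X_1-2\pi$ lies far to the right of the central third while $\Sigma$ lies inside it), so the arc $\Gamma|_{[0,t']}\subset\partial\mathcal{N}_\Sigma$ does cross $\{y=\bar y\}$ at intermediate times, and a priori some of these crossings, and hence pockets of $\mathcal{N}_\Sigma$, may lie in $I_{h+1}$. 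Concretely, a non-transition bottleneck $\Sigma'\subset(\sigma_h-\pi,\sigma_h+\pi)$ with $\Sigma'\subset\mathcal{N}_\Sigma$ has $\Zcal_{\Sigma'}\subset\mathcal{N}_\Sigma$, disjoint from $\Zcal_\Sigma$; if that pocket reaches $I_{h+1}$, a component of $B'_{h+1}$ escapes your $\Zcal_\Sigma$. Nothing you cite (Lemma~\ref{lem:S_further_prop}, Remark~\ref{rem:prop4}) rules this out, and the paper's choice to sum over all $\Sigma_i$ is a tacit admission that such configurations must be handled. Either carry out the combinatorial argument in full and show it excludes pockets of $\mathcal{N}_\Sigma$ reaching $I_{h+1}$, or, more safely, replace the single-bottleneck step by the paper's covering $B'_{h+1}\subset\bigcup_i\Zcal_{\Sigma_i}$ and sum.
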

\begin{proof}
We  recall that, as shown in the proof of Lemma \ref{lem:S_first_prop}, $\overline{S}_{{\bar y}} \cap \{X_1+2j\pi:j\in\Z\} = \{X_1\}$ (for easier notation we drop the dependence
on $\bar y$). As a consequence, each connected component of $S_{\bar y}$ is compactly contained in $(X_1 +2j\pi,X_1+2(j+1)\pi)\times\{\bar y\}$, for some $j\in\Z$.

If $X_1 - 6h\pi < \Xmin$ then there is nothing to prove, thus we can assume $h<(X_1-\Xmin)/(6\pi)$. Recalling the definition of $X_1$, and using Remark \ref{rem:prop4}, we have that
every bottleneck $\Sigma \in I_{h+1}$ belongs to $\Zcal_{\Sigma'}$, for some $\Sigma' \in (\sigma_h - \pi,\sigma_h+\pi)\times\{\bar y\}$.
More formally, writing
\[
S_{\bar y} \cap \{\sigma_h - \pi<x<\sigma_h+\pi\} = \bigcup_{i=1}^{n} \Sigma_i,
\]
we obtain that
\[
B'_{h+1}\ \subset\ B_{3\pi}(\sigma_h,\bar y)\cap \bigcup_{i=1}^{n}\Zcal_{\Sigma_i}.
\]
In order to apply Lemma \ref{lem:1strozz} we choose $R_1 = 3\pi/2$, $R_2=2\pi$; we obtain that, for every $i$,
\[
B'_{R_2}(\Sigma_i) \subset B_{3\pi}(\sigma_h,\bar y), \qquad B'_{R_1}(\Sigma_i)\cap B_{3\pi}(\sigma_{h+1},\bar y) = \emptyset,
\]
and thus
\begin{multline*}
\int_{B'_{h+1}}|\nabla v|^2\,dxdy \leq
\sum_{i=1}^n \int_{\Zcal_{\Sigma_i}\setminus B'_{R_1}(\Sigma_i)}|\nabla v|^2\,dxdy\\ \leq
\sum_{i=1}^n \frac{C}{\log(3\pi/|\Sigma_i|)} \int_{B'_{R_2}(\Sigma_i)}|\nabla v|^2\,dxdy
\leq
\frac{C}{\log(3\pi/\delta_{h})} \int_{B'_h}|\nabla v|^2\,dxdy,
\end{multline*}
for a suitable constant $C>0$.
\end{proof}
Such decay estimate can be easily iterated.
\begin{lemma}\label{lem:kstrozz}
There exists $\gamma>0$ such that, for $1\leq k< (X_1-\Xmin)/(6\pi)$,
\[
\int_{S_{\bar y} \cap  I_k} v^2\,dx \leq
\left(\frac{\gamma}{\log(3k/2)}\right)^k \int_{S\cap\{x < X_1(\bar y)\}}|\nabla v|^2\,dxdy.
\]
\end{lemma}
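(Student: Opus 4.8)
The plan is to iterate the one-step decay estimate of Lemma~\ref{lem:tantestrozz1intev} along the chain $B'_0,B'_1,\dots,B'_k$, and then to convert the resulting bound on the Dirichlet energy into the asserted bound on a trace $L^2$-norm. Two inclusions are at the heart of the reduction. Since $\sigma_0=X_1(\bar y)-3\pi$, the disk $B_{3\pi}(\sigma_0,\bar y)$ lies in $\{x<X_1(\bar y)\}$, hence $B'_0\subset S\cap\{x<X_1(\bar y)\}$; on the other hand every point of $S_{\bar y}\cap I_k$ belongs to $B_{3\pi}(\sigma_k,\bar y)$ and to a connected component of $S\cap B_{3\pi}(\sigma_k,\bar y)$ meeting $S_{\bar y}\cap I_k$, so $S_{\bar y}\cap I_k\subset B'_k$. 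The hypothesis $k<(X_1(\bar y)-\Xmin(\bar y))/(6\pi)$ guarantees that each $h\in\{0,\dots,k-1\}$ satisfies $h<(X_1(\bar y)-\Xmin(\bar y))/(6\pi)$, so Lemma~\ref{lem:tantestrozz1intev} applies at every step, and moreover $I_h$ is contained in $(\Xmin(\bar y),X_1(\bar y))\times\{\bar y\}$, so by Remark~\ref{rem:prop4} each $I_h$ contains a bottleneck and $0<\delta_h\le 2\pi$ is well defined. Iterating Lemma~\ref{lem:tantestrozz1intev} $k$ times then gives
\[
\int_{B'_k}|\nabla v|^2\,dxdy\le\Bigl(\prod_{h=0}^{k-1}\frac{C}{\log(3\pi/\delta_h)}\Bigr)\int_{S\cap\{x<X_1(\bar y)\}}|\nabla v|^2\,dxdy.
\]

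Next I would pass from the energy on $B'_k$ to the left-hand side by a localized trace inequality. Each connected component $A$ of $S\cap B_{3\pi}(\sigma_k,\bar y)$ must meet $\Gamma\cup(\Gamma+(2\pi,0))$: its boundary is contained in $\Gamma\cup(\Gamma+(2\pi,0))\cup\partial B_{3\pi}(\sigma_k,\bar y)$ (for $\bar y$ large the segment $\{y=0\}$ is far away), and if $\partial A$ were contained in $\partial B_{3\pi}(\sigma_k,\bar y)$ then, the disk being convex, $A$ would be the whole disk — impossible since $|S_{\bar y}|\le 2\pi<6\pi$. Hence $v$ vanishes on a nontrivial connected boundary arc of each such $A$, and, arguing exactly as at the end of the proof of Lemma~\ref{lem:1strozz} (Poincar\'e on $A$ with a constant depending only on $3\pi$, together with the trace inequality on $A\cap\{y>\bar y\}$ and on $A\cap\{y<\bar y\}$, summed over the components), one obtains
\[
\int_{S_{\bar y}\cap I_k}v^2\,dx\le C'\int_{B'_k}|\nabla v|^2\,dxdy.
\]

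Finally I would estimate the product. The intervals $I_h$ are pairwise disjoint, so Lemma~\ref{lem:S_first_prop} gives $\sum_{h=0}^{k-1}\delta_h\le\sum_{h=0}^{k-1}|S_{\bar y}\cap I_h|\le|S_{\bar y}|\le 2\pi$; ordering the $\delta_h$ decreasingly as $\delta_{(1)}\ge\dots\ge\delta_{(k)}$ yields $j\,\delta_{(j)}\le 2\pi$, hence $\delta_{(j)}\le 2\pi/j$, and therefore $\log(3\pi/\delta_{(j)})\ge\log(3j/2)>0$ for every $j\ge1$. Since reordering does not change the product, $\prod_{h=0}^{k-1}C/\log(3\pi/\delta_h)\le\prod_{j=1}^{k}C/\log(3j/2)$. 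A direct integral comparison (using that $t\mapsto\log\log(3t/2)$ is increasing on $[1,\infty)$ and $\int_1^k dt/\log(3t/2)\le k/\log(3/2)$) shows $\sum_{j=1}^k\log\log(3j/2)\ge k\log\log(3k/2)-k/\log(3/2)$, i.e. $\prod_{j=1}^{k}\log(3j/2)\ge(\log(3k/2))^k e^{-k/\log(3/2)}$, so that $\prod_{j=1}^k C/\log(3j/2)\le(\gamma_0/\log(3k/2))^k$ with $\gamma_0:=Ce^{1/\log(3/2)}$. Combining the three displays (and enlarging the constant if necessary so that $C'\le\gamma^k$ for all $k\ge1$) gives the claim with $\gamma:=C'\gamma_0$. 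I expect the product estimate to be the delicate point: the $\delta_h$ depend on the unknown geometry of $S$, and the whole gain $(\gamma/\log(3k/2))^k$ must be squeezed out of $|S_{\bar y}|\le 2\pi$; the decisive observation is that $\sum_h\delta_h\le 2\pi$ forces $\delta_{(j)}\le 2\pi/j$ on the decreasing rearrangement, whereas a cruder split of the $\delta_h$ into ``large'' and ``small'' ones would lose a factor that does not stay bounded as $k\to\infty$. The trace step, by contrast, is routine once one notices that every component of $S\cap B_{3\pi}(\sigma_k,\bar y)$ hits the zero set of $v$.
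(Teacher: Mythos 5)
Your proof is correct and follows the same three-step strategy as the paper: iterate Lemma~\ref{lem:tantestrozz1intev} along the chain of localized disks, pass from energy to trace via a Poincar\'e--trace argument, and bound the resulting product of decay factors. The only genuine difference is in the final product estimate. The paper invokes, as an ``elementary estimate,'' the exact constrained maximum
\[
\max\left\{\prod_{h=1}^k\frac{1}{\log(3\pi/\delta_{h})}:\ \sum_{h}\delta_{h}\leq 2\pi,\ \delta_h>0\right\}
= \left(\frac{1}{\log(3k/2)}\right)^k,
\]
attained at the equal split $\delta_h=2\pi/k$, which implicitly requires a convexity or Lagrange-multiplier justification. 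Your route --- the decreasing rearrangement $\delta_{(j)}\le 2\pi/j$, followed by an integral comparison giving $\sum_{j=1}^k\log\log(3j/2)\ge k\log\log(3k/2)-k/\log(3/2)$ --- is strictly more elementary, and it only loses a bounded factor $e^{1/\log(3/2)}$ per term, which is absorbed into $\gamma$. Since the lemma's statement only asserts existence of some $\gamma>0$, nothing is lost. A second, cosmetic difference: you iterate to $B'_k$ and observe directly that $S_{\bar y}\cap I_k\subset B'_k$, whereas the paper iterates to $B'_{k+1}$ with a product indexed $h=1,\dots,k$ (a small bookkeeping mismatch that is immaterial up to constants); your indexing is the cleaner of the two. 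Finally, your justification that each connected component of $S\cap B_{3\pi}(\sigma_k,\bar y)$ meets the zero set of $v$ --- because $|S_{\bar y}|\le 2\pi<6\pi$ prevents the component from being the full disk --- spells out exactly what the paper leaves implicit when it cites ``a Poincar\'e inequality analogous to that mentioned at the end of the proof of Lemma~\ref{lem:1strozz}.''
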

\begin{proof}
Iterating Lemma \ref{lem:tantestrozz1intev} we obtain
\[
\int_{B'_{k+1}} |\nabla v|^2\,dxdy \leq
\left(\prod_{h=1}^k\frac{C}{\log(3\pi/\delta_{h})}\right) \int_{B'_0}|\nabla v|^2\,dxdy.
\]
Now, using a Poincar\'e inequality analogous to that mentioned at the end of the proof of Lemma \ref{lem:1strozz}, together with
the trace inequality in $B'_{k+1} \cap \{y>\bar y\}$, we obtain the existence of a constant $C_{T}$
such that
\[
\int_{S_{\bar y} \cap  I_k} v^2\,dx \leq C_T  \int_{B'_{k+1}} |\nabla v|^2\,dxdy.
\]
Since $B'_0 \subset \{x < X_1(\bar y)\}$, this implies
\[
\int_{S_{\bar y} \cap  I_k} v^2\,dx \leq
\left(\prod_{h=1}^k\frac{\gamma}{\log(3\pi/\delta_{h})}\right) \int_{S\cap\{x < X_1(\bar y)\}}|\nabla v|^2\,dxdy,
\]
where $\gamma = \max\{C_T,1\} \cdot C$. But then the lemma follows by the elementary estimate
\[
\max\left\{\prod_{h=1}^k\frac{1}{\log(3\pi/\delta_{h})}:\sum_{h=1}^k\delta_{h}\leq 2\pi, \delta_h>0\text{ for every }h\right\} \leq
\left(\frac{1}{\log(3k/2)}\right)^k. \qedhere
\]
\end{proof}
At this point, we have all the ingredients to conclude.
\begin{proof}[End of the proof of Proposition \ref{prop:target}]
As we already mentioned, after Lemmas \ref{lem:infiniti_cattivi}, \ref{lem:no_a_dx} and \ref{lem:no_diam_bdd},
we can assume that, along the  sequence $y_n\to+\infty$ of regular values of $t\mapsto y(t)$, it holds
\[
\frac{\Xmin(y_n)}{y_n}\to -\infty,\qquad
\frac{ X_1 (y_n)-\Xmin(y_n)}{y_n}\to + \infty.
\]
Let $k_n \in\N$, $k_n\to+\infty$, be defined by
\[
\frac{X_1(y_n)-\Xmin(y_n)}{6\pi} -1 < k_n < \frac{X_1(y_n)-\Xmin(y_n)}{6\pi}.
\]
We have ($X_{1,n} := X_1(y_n)$)
\begin{multline*}
\int_0^{2\pi}w^2(x,y_n)\,dx 
= \int_{S_{y_n}\cap\{x > X_{1,n}\}} e^{-2\alpha x } v^2\,dx + \sum_{h=1}^{k_n} \int_{S_{y_n} \cap  I_h}e^{-2\alpha x }  v^2\,dx  \\
\leq \underbrace{e^{-2\alpha X_{1,n} } \int_{S_{y_n}\cap\{x > X_{1,n}\}} v^2\,dx}_{(A)} + \underbrace{\sum_{h=1}^{k_n} e^{-2\alpha (X_{1,n} - 6(h+1)\pi) } \int_{S_{y_n} \cap  I_h}  v^2\,dx}_{(B)}.
\end{multline*}
The first term can be easily estimated, as usual, using Remark \ref{rem:poicare_on_S}:
\[
(A) \leq
e^{- 2\alpha X_{1,n} } \int_{S_{y_n}}v^2\,dx
\leq 10 e^{- 2\alpha X_{1,n} }\int_{S\cap\{y >  y_n\}}|\nabla v|^2\,dxdy.
\]
On the other hand, we can bound the second term using Lemma \ref{lem:kstrozz}:
\[
\begin{split}
(B) & \leq
\sum_{h=1}^{k_n} e^{-2\alpha (X_{1,n} - 6(h+1)\pi) } \left(\frac{\gamma}{\log(3h/2)}\right)^h \int_{S\cap\{x < X_{1,n}\}}|\nabla v|^2\,dxdy \\
& \leq
e^{-2\alpha (X_{1,n} - 6\pi) } \int_{S\cap\{x < X_{1,n}\}}|\nabla v|^2\,dxdy \\
&\hspace{5em} \times \sum_{h=1}^{k_n} \exp\left[12h\pi + h \left(\log\gamma-\log \log \frac{3h}{2}\right)\right]  \\
& \leq
e^{-2\alpha (X_{1,n} - 6\pi) } \int_{S\cap\{x < X_{1,n}\}}|\nabla v|^2\,dxdy\\
&\hspace{5em}  \times \underbrace{\sum_{h=1}^{\infty} \exp\left[\left(12\pi + \log\gamma-\log \log \frac{3h}{2}\right)h\right]}_{\leq C}  \\
& \leq
C e^{-2\alpha X_{1,n} } \int_{S\cap\{x < X_{1,n}\}}|\nabla v|^2\,dxdy .
\end{split}
\]
Now, there are two possibilities: either (up to subsequences) $X_{1,n} \geq 0$, in which case
\[
\int_0^{2\pi}w^2(x,y_n)\,dx  \leq (A)+ (B) \leq \int_{S}|\nabla v|^2\,dxdy < +\infty,
\]
and we can conclude using Lemma \ref{lem:se decade no cattivi}; otherwise, if $X_{1,n} < 0$ we can use Lemma \ref{lem:ACF}, obtaining
\[
\begin{split}
\int_0^{2\pi}w^2(x,y_n)\,dx  & \leq (A)+ (B) \leq C e^{-2\alpha X_{1,n} } \int_{S\setminus([X_1(\bar y),2\pi]\times[0,\bar y])}|\nabla v|^2\,dxdy \\
& \leq C \exp\left(-2\alpha X_{1,n} - \frac{X_{1,n}^2}{y_n}\right) \leq C \exp\left(\alpha^2 y_n\right),
\end{split}
\]
and again, by  Lemma \ref{lem:se decade no cattivi}, the proposition follows.
\end{proof}

\section{High multiplicity points are isolated}\label{sec high}
In this section we go back to the general case of system \eqref{eq:main_on_the_halfplane},
aiming at excluding that $\singset$ contains more than one point (one can stick to
the case of an even number of nodal regions, even though all the arguments hold also in the odd
case, taking into account Remark \ref{rem:oddk}). This will allow to complete the proof of Theorem \ref{thm topologia sing}
\begin{proposition}\label{prop:sing are point}
Under the reduction of Section \ref{sec:reduction}, we have that $\singset=\{\bm{0}\}$.
\end{proposition}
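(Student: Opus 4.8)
The plan is to prove that, assuming $\singset\neq\{\bm 0\}$, the set $\zeroset$ coincides, in a neighbourhood of \emph{every} point of $\singset$, with the nodal set of a non-constant planar harmonic function. Since the critical nodal points of such a function are isolated, $\singset$ would then be discrete, hence — being connected and compact — a single point, contradicting $\singset\neq\{\bm 0\}$. Thus $\singset=\{\bm 0\}$, which is precisely the position fixed in (MCS); Proposition \ref{prop:target} then applies and, together with Remarks \ref{rem:asy1} and \ref{rem:Tallamenouno}, yields the asymptotic expansion \eqref{eq:asympt_expans}, completing the proof of Theorem \ref{thm topologia sing}.

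The elementary building block is: if $B_R(\bm p)\subset B$ and $B_R(\bm p)\cap\singset=\emptyset$, then $\zeroset\cap B_R(\bm p)=\zeroset_2\cap B_R(\bm p)$ is the nodal set of a function $h$ harmonic in $B_R(\bm p)$ with $\nabla h\neq 0$ on $\{h=0\}$. Indeed, by Lemma \ref{lem z2 curves} this set is a disjoint family of analytic arcs laminating the disk, and its complementary regions — the connected components of the various $\omega_i\cap B_R(\bm p)$ — form a tree under adjacency. On each region set $h=c\,u_i$ and fix the constants by imposing $C^1$-matching across every interface $\Gamma_{ij}$: since $a_{ji}u_i-a_{ij}u_j$ is harmonic and simply vanishing there (Lemma \ref{thm:intro_S}(\ref{item:3}), cf. \eqref{eq:match_grad}), the two constants must be in ratio $-a_{ij}/a_{ji}$, and, there being no cycle, this is consistent; the resulting $h$ is harmonic across each interface, hence in $B_R(\bm p)$, with the stated properties. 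Sliding such balls toward $\partial\singset$ exactly as in Lemmas \ref{lem:Hopf} and \ref{lem bound w and z2} moreover gives $\singset\subset\overline{\zeroset_2}$.

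Now fix $\bm q\in\singset$. For any unit vector $\be$ along which $\bx\cdot\be$ attains its maximum over $\singset$ at $\bm q$ — such $\be$ exist, e.g. at $\bm 0$ by \eqref{eq:Winthehalfball} — and any small $R>0$, the ball $B_R(\bm q+R\be)$ lies in $B$, misses $\singset$, and meets $\singset$ only where $\bx\cdot\be$ is maximal; the building block provides a harmonic $h_{R,\be}$ on it. Since $h_{R,\be}=c_iu_i$ on each region and every $u_i$ is Lipschitz on $\overline B$ with $\sup_{B_r(\bm q)}|\nabla u_i|\to 0$ as $r\to0$ (Lemma \ref{thm:intro_S}(4)), $h_{R,\be}$ extends Lipschitz-continuously to $\bm q$ with $h_{R,\be}(\bm q)=0$ and $\nabla h_{R,\be}(\bx)\to 0$ as $\bx\to\bm q$. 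Letting $R$ and $\be$ vary, these balls fill a sectorial neighbourhood of $\bm q$ complementary to the tangent cone of $\singset$ at $\bm q$; on overlaps the corresponding functions share their nodal set and — the tree normalisation fixing the constants up to one global scalar — are proportional, so they patch into a single harmonic $H$ with $H,\nabla H\to0$ at $\bm q$ whose nodal set there is $\zeroset$. The crucial, and hardest, step is then to extend $H$ — equivalently $\mathcal U$, equivalently its half-plane lift $v$ of Section \ref{sec rep} — harmonically \emph{across} $\singset$, i.e.\ to show that $\mathcal T^{-1}(\singset\setminus\{\bm q\})$ carries no distributional Laplacian for $v$ and is thus removable. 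This is where the Almgren monotonicity formula fails and must be replaced by the one-phase tools of Section \ref{sec rep}: one exploits that the boundary of any small disk $B_\rho(\bm q)$ meets $\singset$ (connectedness of $\singset$), the decay $\nabla u_i\to0$, the Lipschitz bound, the connectedness of $\zeroset$ (Lemma \ref{ref zero conn}), and the ACF-type and bottleneck estimates (Lemmas \ref{lem:ACF}, \ref{lem:1strozz}--\ref{lem:kstrozz}). I expect this removability to be the main obstacle and the analytic heart of the section.

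Granting it, $\zeroset$ agrees near each $\bm q\in\singset$ with the nodal set of a non-constant planar harmonic function, so $\singset$ is discrete, hence — connected and compact — a single point. Therefore $\singset=\{\bm 0\}$, which is Proposition \ref{prop:sing are point}; as explained above, Proposition \ref{prop:target} then completes the proof of Theorem \ref{thm topologia sing}.
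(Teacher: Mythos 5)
Your high-level strategy — show that near each point of $\singset$ the nodal set $\zeroset$ coincides with that of a harmonic function, deduce that $\singset$ consists of isolated points, then use connectedness to conclude $\singset=\{\bm 0\}$ — is the same conclusion-shape as the paper's, and your ``building block'' (patching $c_i u_i$ over the tree of regions in a singularity-free ball with ratios $-a_{ij}/a_{ji}$ across interfaces) is a correct reformulation of the function $v$ of Section \ref{sec rep} restricted to a lift of that ball. However, the proof as written has a decisive gap, and it also misses the structure of the paper's argument in a way that matters.

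First, the gap: you explicitly punt on what you yourself call ``the crucial, and hardest, step'' — the removability of $\mathcal T^{-1}(\singset\setminus\{\bm 0\})$, i.e.\ that $v$ extends harmonically across the singular set. You say ``Granting it, \dots'' and move on. That step \emph{is} essentially the entire content of Section \ref{sec high}, so asserting it without proof leaves the proposition unproved.

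Second, and more substantively, the paper does \emph{not} establish removability in all cases, and your approach would not work uniformly. The paper splits the analysis according to a dichotomy (Lemma \ref{lem:tuttiperuno}): either the lifted interfaces $\tilde\Gamma_{ij}^m$ are horizontally bounded in $\R^2_+$ (Case~1, the non-spiraling case), or they are not (Case~2, the spiraling case). In Case~2 the paper indeed proves harmonicity of $v$ across $\mathcal T^{-1}(\singset\setminus\{\bm 0\})$, by first establishing $v\in H^1(\mathcal Q)$ via the bottleneck estimates of Section \ref{sec rep} (Lemma \ref{lem v is H1}), then — crucially — showing that every nodal component $\concomp_i$ of $v$ in $\mathcal Q$ has closure disjoint from $\mathcal T^{-1}(\singset\setminus\{\bm 0\})$ (Lemma \ref{lem:integrbypartsinD}); this last step uses the feature, special to Case~2, that the lifted nodal arcs cross the whole slab $\{-\pi<x<\pi\}$, and it is precisely what makes the integration by parts of Lemma \ref{lem:case2harmonicity} legitimate. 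In Case~1 those arcs can terminate inside the slab on $\mathcal T^{-1}(\singset)$, the nodal regions can accumulate at $\singset$, and $\singset$ could a priori even have nonempty interior ($\zeroset_0\neq\emptyset$, a possibility the paper itself notes), in which case $v$ vanishing on an open set of positive measure is manifestly incompatible with harmonicity. So you cannot argue harmonicity across $\singset$ in Case~1 by the same route; the paper instead proves a weak Pohozaev/stationarity identity for the vector of lifted densities (Lemma \ref{lem:classG}), places the configuration in the class $\mathcal G$ of \cite{MR2984134}, and invokes the Almgren-monotonicity machinery of \cite{MR2529504,MR2393430,MR2984134} to get that the singular set is locally finite — without ever concluding that $v$ is harmonic. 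Your remark that ``this is where the Almgren monotonicity formula fails'' is therefore also the wrong intuition: after lifting, the Almgren formula is exactly the tool that \emph{does} work in Case~1, and it is Case~2 (where it does not apply cleanly) that requires the bespoke bottleneck/ACF-type machinery of Section \ref{sec rep} to reach removability.

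Finally, a couple of smaller points. The ``tree under adjacency'' claim for the components of $\omega_i\cap B_R(\bm p)$ should be justified (it follows from simple connectedness of $B_R$ and of the sets $\omega_i\cup\Gamma_{ij}\cup\omega_j$, but it is not obvious), and note that the resulting constants are not bounded uniformly — going around a spiral they scale by $\lambda$ at each turn, which is precisely why the lift $v$ satisfies $v(x+2\pi,y)=\lambda v(x,y)$ rather than being periodic. And the patching of the $h_{R,\be}$ over varying $R$, $\be$ into a single $H$ around $\bm q$ presupposes more structure on $\singset$ (a tangent cone, a sectorial complement) than is available before the proposition is proved.
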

By contradiction, throughout the section we assume that the above statement is false. Nonetheless, recall that by construction we can
bound the set $\singset$ into a strip of width $\pi$.
\begin{lemma}\label{lem:confinedW}
Assume that $\singset\supsetneq\{\bm{0}\}$. Then $\mathcal{T}^{-1}(\singset\setminus\{\bm{0}\})$
is the union of infinitely many connected, simply connected, components. Moreover there exists $a>0$
such that, up to a translation, we can assume that any of such components is contained in
$[-\pi/2+2m\pi,\pi/2+2m\pi]\times[a,+\infty)$, for some $m\in\Z$.
\end{lemma}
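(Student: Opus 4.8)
The plan is to exploit that the map $\mathcal{T}$ of \eqref{eq:defT} is a $2\pi$-periodic covering: $\mathcal{T}^{-1}(\singset\setminus\{\bm{0}\})$ will decompose into $2\pi$-translated copies of one fundamental piece, each a homeomorphic copy of $\singset\setminus\{\bm{0}\}$, so that all topological statements follow from the corresponding facts about $\singset$ itself. To set up the geometry, recall from \eqref{eq:Winthehalfball} that there is a unit vector $\be=(\cos\theta_0,\sin\theta_0)$ with $\bm{0}\in\singset\subset\{\bx\in B:\bx\cdot\be\geq0\}$; replacing $x$ by $x-\theta_0$ (the announced translation) we may take $\be=(1,0)$. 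By Lemma \ref{lem:noWonpartialOmega}, $\overline{\singset}$ is a compact subset of $B$, so $c:=\max_{\bx\in\overline{\singset}}|\bx|<1$; put $a:=-\log c>0$. Since $\mathcal{T}^{-1}(r\cos\vartheta,r\sin\vartheta)=(\vartheta,-\log r)$ (Remark \ref{rem:Tallamenouno}), the constraints $\bx\cdot\be\geq0$ and $0<|\bx|\leq c$ translate into
\[
\mathcal{T}^{-1}(\singset\setminus\{\bm{0}\})\ \subset\ \bigcup_{m\in\Z}\Pi_m,\qquad \Pi_m:=\bigl[-\tfrac{\pi}{2}+2m\pi,\tfrac{\pi}{2}+2m\pi\bigr]\times[a,+\infty),
\]
where consecutive $\Pi_m$'s are separated by the nonempty gaps $(\tfrac{\pi}{2}+2m\pi,\tfrac{3\pi}{2}+2m\pi)\times\R$. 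A connected set cannot meet two disjoint $\Pi_m$'s, so this already yields the final assertion of the lemma.

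Next I would establish the product structure. The map $\mathcal{T}$ is $2\pi$-periodic in $x$, and its restriction to each closed strip $\Pi_m$ is a homeomorphism onto the ``pie slice'' $\{\bx:\bx\cdot\be\geq0,\ 0<|\bx|<1\}$: injectivity holds because the $x$-width $\pi$ is less than $2\pi$, while the two vertical edges of $\Pi_m$ are mapped onto the two halves of the $y$-axis. Since $\singset\setminus\{\bm{0}\}$ is contained in that pie slice, $\mathcal{W}_m:=\mathcal{T}^{-1}(\singset\setminus\{\bm{0}\})\cap\Pi_m$ is carried homeomorphically onto $\singset\setminus\{\bm{0}\}$ by $\mathcal{T}$; moreover the $\mathcal{W}_m$ are pairwise disjoint, $\mathcal{W}_{m+1}=\mathcal{W}_m+(2\pi,0)$, and $\mathcal{T}^{-1}(\singset\setminus\{\bm{0}\})=\bigcup_{m\in\Z}\mathcal{W}_m$. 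Since $\singset\supsetneq\{\bm{0}\}$, each $\mathcal{W}_m$ is nonempty, and, lying at distance at least $\pi$ from the others, it is clopen in $\mathcal{T}^{-1}(\singset\setminus\{\bm{0}\})$. Hence there are infinitely many connected components, and each is contained in a single $\mathcal{W}_m$, therefore homeomorphic to a connected component of $\singset\setminus\{\bm{0}\}$.

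It therefore remains to prove that the connected components of $\singset\setminus\{\bm{0}\}$ are simply connected, which I would do by showing that every Jordan curve $\gamma\subset\singset\setminus\{\bm{0}\}$ bounds a closed disk contained in $\singset\setminus\{\bm{0}\}$. Let $\overline{\Sigma}=\Sigma\cup\gamma$ be the closed region bounded by $\gamma$; it is a compact subset of $\Omega$. All $u_i$ vanish on $\gamma\subset\zeroset$ and are nonnegative and subharmonic ($-\Delta u_i\leq0$), so the maximum principle forces $u_i\equiv0$ on $\Sigma$; thus $\overline{\Sigma}\subset\zeroset$ and, $\Sigma$ being open, $\Sigma\subset\interior\zeroset=\zeroset_0$. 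Moreover $\bm{0}\notin\zeroset_0$: otherwise some ball $B_r(\bm{0})$ would lie in $\zeroset\subset\singset$, contradicting $\singset\subset\{\bx\cdot\be\geq0\}$; hence $\bm{0}\notin\overline{\Sigma}$. Finally no point of $\overline{\Sigma}$ lies in $\zeroset_2$, for at such a point, by Lemma \ref{thm:intro_S}(\ref{item:3}), $a_{ji}u_i-a_{ij}u_j$ would be harmonic in a neighbourhood and vanish on the positive-measure set $\overline{\Sigma}\cap\zeroset$, hence vanish identically there, against multiplicity two. Thus $\overline{\Sigma}\subset\singset\setminus\{\bm{0}\}$, so $\gamma$ is contractible there.

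The main obstacle is the bookkeeping in the second step: one must check carefully that $\mathcal{T}$ really restricts to a homeomorphism on each closed strip whose image contains all of $\singset\setminus\{\bm{0}\}$, which is what makes the decomposition into $2\pi$-translates exact and lets every topological statement about $\mathcal{T}^{-1}(\singset\setminus\{\bm{0}\})$ be read off from $\singset$, and one must pin down that $\bm{0}$ lies on $\partial\singset$ (it is not interior to $\zeroset$) --- precisely the property for which the M\"obius reduction in Section \ref{sec:reduction} was arranged. The analytic ingredients (subharmonicity of the $u_i$ and Lemma \ref{thm:intro_S}) are routine.
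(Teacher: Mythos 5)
Your proof is correct, and it takes the same route the paper gestures at: the paper's proof of Lemma~\ref{lem:confinedW} is a single sentence, citing the definition of $\mathcal{T}$ in \eqref{eq:defT} and property \eqref{eq:Winthehalfball} of (MCS), and you have fleshed out precisely the reading that sentence demands. The translation to the strips $\Pi_m$, the compactness $\overline{\singset}\subset B$ (which gives $a=-\log c>0$), the injectivity of $\mathcal{T}$ on strips of width $\pi<2\pi$, and the resulting identification $\mathcal{W}_m\cong\singset\setminus\{\bm{0}\}$ with $\mathcal{W}_{m+1}=\mathcal{W}_m+(2\pi,0)$ are all exactly what the paper intends. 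Your observation at the end --- that $\bm{0}$ being a boundary point of $\singset$ rather than an interior point is exactly what the M\"obius normalization in the proof of Proposition~\ref{prop:reduction} arranges --- is the key geometric point and is well placed.

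The one piece the paper genuinely leaves implicit is the simple-connectedness of the components, and here your maximum-principle argument is a real addition rather than a mere unfolding of the cited references: showing that a Jordan curve $\gamma\subset\singset\setminus\{\bm{0}\}$ encloses a region $\Sigma$ with $u_i\equiv0$ on $\overline{\Sigma}$ (subharmonicity plus zero boundary data), and then excluding both $\bm{0}$ and any $\zeroset_2$-point from $\overline{\Sigma}$ (the latter via Lemma~\ref{thm:intro_S}\eqref{item:3} and unique continuation for the harmonic function $a_{ji}u_i-a_{ij}u_j$), is a clean, self-contained proof that the components have no holes. Two microscopic slips: the containment ``$\zeroset\subset\singset$'' in your exclusion of $\bm{0}\in\zeroset_0$ is reversed --- you mean $B_r(\bm{0})\subset\zeroset_0\subset\singset$, using that $\zeroset_0=\interior\zeroset$ is open --- and the image of $\Pi_m$ under $\mathcal{T}$ is the half-disk of radius $c=e^{-a}$ rather than radius $1$. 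Neither affects the argument.
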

\begin{proof}
The lemma follows by the definition of the map $\mathcal{T}$ (see equation \eqref{eq:defT}),
together with Proposition \ref{prop:reduction}, and in particular property
\eqref{eq:Winthehalfball} of the model case scenario (MCS).
\end{proof}
\begin{definition}
We denote the unique lifting of $\Gamma_{ij}$ (i.e. the connected component of $\mathcal{T}^{-1}(\Gamma_{ij})$), with an endpoint in $[2m\pi,2(m+1)\pi)\times\{0\}$, $m\in\Z$, as
\[
\tilde\Gamma_{ij}^m,
\]
as long as it is non-empty (i.e., if $i$ and $j$ are consecutive). Analogously, for any $m\in\Z$,
we introduce the set
\[
S^m = S + (2m\pi, 0)
\]
(recall that the strip $S$ has been defined in equation \eqref{eq:defS}).
\end{definition}
To prove the proposition we have to distinguish two cases, according to the horizontal behaviour of the smooth curves $\tilde\Gamma_{ij}^m$.
\begin{lemma}\label{lem:tuttiperuno}
Consider the families of infima, depending on $i$, $j$, $m$,
\[
\inf\{x:(x,y)\in\tilde\Gamma_{ij}^m\},\qquad \inf\{x:(x,y)\in S^m\}.
\]
Then, if one of them is finite, each of them is. An analogous statement holds
for the suprema $\sup\{x:(x,y)\in\tilde\Gamma_{ij}^m\}$, $\sup\{x:(x,y)\in S^m\}$.
\end{lemma}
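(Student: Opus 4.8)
The plan is to reduce the statement, via the $2\pi$--periodicity of the whole configuration, to a dichotomy for finitely many numbers, and then to settle that dichotomy using the elementary geometry of the strip $S$ (Lemma \ref{lem:S_first_prop}, Lemma \ref{lem:S_further_prop}, Remark \ref{rem:prop4}). First I would eliminate the dependence on $m$: since the zero set of $\mathcal{U}$ is $2\pi$--periodic in $x$, one has $\tilde\Gamma_{ij}^m=\tilde\Gamma_{ij}^0+(2m\pi,0)$, so $\inf\{x:(x,y)\in\tilde\Gamma_{ij}^m\}=\inf\{x:(x,y)\in\tilde\Gamma_{ij}^0\}+2m\pi$, and likewise $\inf\{x:(x,y)\in S^m\}=\inf\{x:(x,y)\in S^0\}+2m\pi$. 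Hence finiteness is independent of $m$, and it suffices to show that the finitely many numbers $q_{ij}:=\inf\{x:(x,y)\in\tilde\Gamma_{ij}^0\}$ (one per consecutive pair: $q_{12},\dots,q_{(k-1)k},q_{1k}$) and $q_S:=\inf_y\Xmin(y)=\inf\{x:(x,y)\in S^0\}$ are either all finite or all equal to $-\infty$. Since by definition $(\Xmin(y),y)\in\Gamma=\tilde\Gamma_{1k}^0$ and $\Gamma\subset\overline{S^0}$, a direct double inequality gives $q_S=\inf\{x:(x,y)\in\Gamma\}=q_{1k}$, so $q_S$ is itself one of the $q_{ij}$'s.

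One implication is easy: if $q_S>-\infty$ then all $q_{ij}>-\infty$. Indeed $q_{1k}=q_S$, and for $i=1,\dots,k-1$ the curve $\tilde\Gamma_{i(i+1)}^0$ is the lift, lying inside $S^0$, of the interface $\Gamma_{i(i+1)}$ between the lifted supports $\tilde\omega_i$ and $\tilde\omega_{i+1}$; here I use that, with the counterclockwise labelling of (MCS) and $\Gamma$ normalised so that its trace on $\partial B$ lifts to $(1,0)$, the regions $\tilde\omega_1,\dots,\tilde\omega_k$ occur in this order across $S^0$, with $\tilde\omega_1$ adjacent to $\Gamma$ and $\tilde\omega_k$ adjacent to $\Gamma+(2\pi,0)$. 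Thus $\tilde\Gamma_{i(i+1)}^0\subset S^0$, whence $q_{i(i+1)}\ge q_S>-\infty$.

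The converse, "$q_S=-\infty\Rightarrow$ all $q_{i(i+1)}=-\infty$", is the crux. Assume $\Gamma$ has points of arbitrarily negative abscissa; using Sard's lemma and the continuity of $\Gamma$, pick a sequence $y_n\to+\infty$ of regular values of $t\mapsto y(t)$ with $\Xmin(y_n)\to-\infty$. Because $(\Xmin(y_n),y_n)\in\Gamma$ forces $(\Xmin(y_n)+2\pi,y_n)\in\Gamma+(2\pi,0)\subset\overline{S^0}$, we get $\Xmax(y_n)\ge\Xmin(y_n)+2\pi$, so $I_n:=[\Xmin(y_n),\Xmin(y_n)+2\pi]\subset[\Xmin(y_n),\Xmax(y_n)]$ has length $2\pi$; by Remark \ref{rem:prop4}, $I_n$ contains a connected component $(s_{2j-1},s_{2j})\times\{y_n\}$ of $S_{y_n}$ with one endpoint on $\Gamma$ and the other on $\Gamma+(2\pi,0)$. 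Near the former, this horizontal segment lies in $\tilde\omega_1$ (the only region bordering $\Gamma$ from inside $S^0$), and near the latter it lies in $\tilde\omega_k$. Now $S^0$ is simply connected (as recalled when $S$ was introduced, $B\setminus(\Gamma_{1k}\cup\singset)$ is simply connected), and for each $i=1,\dots,k-1$ the $C^1$, non self-intersecting arc $\tilde\Gamma_{i(i+1)}^0$ is properly embedded in $S^0$ with both ends on $\partial S^0$ (its finite end on $\{y=0\}$, the other running out to infinity or to a lifted singular component); hence it separates $S^0$ into two pieces, one collecting $\tilde\omega_1,\dots,\tilde\omega_i$ and the other $\tilde\omega_{i+1},\dots,\tilde\omega_k$. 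Being connected and meeting both pieces, the segment must cross $\tilde\Gamma_{i(i+1)}^0$ at a point with abscissa in $(s_{2j-1},s_{2j})\subset I_n$, hence $\le\Xmin(y_n)+2\pi\to-\infty$. Therefore $q_{i(i+1)}=-\infty$ for every $i$, which together with $q_{1k}=q_S=-\infty$ and the first reduction yields the claim. The statement for the suprema follows verbatim, exchanging $\Xmin,\Gamma$ with $\Xmax,\Gamma+(2\pi,0)$.

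The two points that I expect to require genuine care are: first, the bookkeeping identifying, for each consecutive pair, the correct lift sitting inside $S^0$ and recording the cyclic ordering $\tilde\omega_1,\dots,\tilde\omega_k$ of the lifted supports (a consequence of the counterclockwise ordering of (MCS) and the conformality of $\mathcal{T}$, together with $\Gamma$ bordering exactly $\tilde\omega_1$ and $\Gamma+(2\pi,0)$ exactly $\tilde\omega_k$ from the $S^0$ side); and second, the separation statement for the arcs $\tilde\Gamma_{i(i+1)}^0$ inside the simply connected planar set $S^0$ — a Jordan--type argument, but one that must be arranged (compactifying $S^0$ at its end at infinity, in the spirit of Lemma \ref{lem:S_further_prop} and Remark \ref{rem:prop4}) so that the two sides of the arc really collect the stated groups of regions.
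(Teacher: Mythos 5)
Your proof is correct and in substance follows the paper's argument, which compresses everything into the sandwiching
\[
\inf\{x:(x,y)\in\tilde\Gamma_{k1}^m\}=\inf\{x:(x,y)\in S^m\}\le\inf\{x:(x,y)\in\tilde\Gamma_{ij}^m\}\le\inf\{x:(x,y)\in\tilde\Gamma_{k1}^{m+1}\}=\inf\{x:(x,y)\in S^m\}+2\pi:
\]
your ``easy direction'' is exactly the first inequality (inclusion $\tilde\Gamma_{ij}^m\subset\overline{S^m}$), and your ``crux''---the separation of $S^0$ by the cross-cut $\tilde\Gamma_{i(i+1)}^0$ combined with Remark \ref{rem:prop4}'s horizontal segment in $[\Xmin(y),\Xmin(y)+2\pi]$---is precisely what establishes the second inequality, which the paper calls ``trivial'' without spelling it out. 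So your write-up is a legitimate (and more careful) unpacking of the same proof rather than a different route.
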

\begin{proof}
The lemma easily follows from the trivial property
\[
\begin{split}
\inf\{x:(x,y)\in \tilde\Gamma_{k1}^m\}&=\inf\{x:(x,y)\in S^m\} \leq \inf\{x:(x,y)\in\tilde\Gamma_{ij}^m\} \\ &\leq \inf\{x:(x,y)\in\tilde\Gamma_{k1}^{m+1}\} = \inf\{x:(x,y)\in\tilde\Gamma_{k1}^m\} + 2\pi. \qedhere
\end{split}
\]
\end{proof}
\textbf{Case 1)} Both $\inf\{x:(x,y)\in\tilde\Gamma_{ij}^m\}$ and
$\sup\{x:(x,y)\in\tilde\Gamma_{ij}^m\}$ are finite, for every $i,j,m$.

Let $\mathcal{Q} = (-\pi,\pi) \times \R_+$ (notice that $\partial\mathcal{Q}\cap\singset=\emptyset$, by Lemma \ref{lem:confinedW}). We can cover the set $\mathcal{Q} \setminus \zeroset$ with a finite number of copies of $S$, that is, there exists $M \in \N$ such that
\[
	\mathcal{Q} \setminus \mathcal{T}^{-1}(\zeroset \setminus \{0\}) \subset \bigcup_{m = -M}^{M} S^{m}.
\]
On $\mathcal{Q}$ we introduce the $k(2M+1)$ functions $v^{m}_i \in Lip(\mathcal{Q})$, defined for $i=1,\dots,k$ and $m=-M,\dots,M$ as
\[
	v^m_i(x,y) = \begin{cases}
		|v(x,y)| &\text{if $(x,y) \in S^m$, $\mathcal{T}(x,y)\in\omega_i$}\\
		0 &\text{otherwise.}
	\end{cases}
\]
Using the notation of \cite{MR2984134}, we want to show that the vector of such densities
belongs to the class $\mathcal{G}(\mathcal{Q})$ defined in that paper. More precisely,
in the present context this is equivalent to the following lemma.
\begin{lemma}\label{lem:classG}
Let $v^m_i$ be defined as above. Then
\begin{itemize}
	\item each $v^m_i$ is a non-negative, locally Lipschitz, subharmonic function and there exists $\mu^m_i \in \mathcal{M}(\mathcal{Q})$ non-negative Radon measure, supported on $\mathcal{Q} \cap \partial \{ v^m_i > 0 \}$ such that
	\[
		-\Delta v^m_i = - \mu^m_i \quad \text{in the sense of distributions $\mathcal{D}'(\mathcal{Q})$;}
	\]
	\item for every $\bx_0 \in \mathcal{Q}$ and $0 < r < \dist(\bx_0, \partial \mathcal{Q})$ the following identity holds
	\[
		\frac{d}{dr} \int_{B_r(\bx_0)}  \sum_{ i,m } |\nabla v^m_i|^2 = 2 \int_{\partial B_r(\bx_0)} \sum_{ i,m } (\partial_\nu v^m_i) ^2 d \sigma.
	\]
\end{itemize}
\end{lemma}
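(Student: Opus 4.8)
The plan is to verify, in turn, the two defining properties of the class $\mathcal{G}(\mathcal{Q})$ of \cite{MR2984134}, the first being soft and the second resting on a reflection/cancellation phenomenon at the free boundary. For the first property I would note that, by \eqref{eq:vtilde}, on the open set $S^m\cap\mathcal{T}^{-1}(\omega_i)$ the function $v$ equals a fixed non-zero constant times $u_i\circ\mathcal{T}$; hence (by Lemma \ref{thm:intro_S}(1), transported through the smooth conformal map $\mathcal{T}$) it is locally Lipschitz, harmonic there, of a constant sign, and it vanishes on the relative boundary of that set (where some $u_j$, or $u_i$ itself on a lift of $\Gamma_{1k}$, or $\mathcal{U}$ at a singular point, vanishes). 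Thus $v^m_i$ is the zero extension of a non-negative, locally Lipschitz, subharmonic function vanishing on $\partial\{v^m_i>0\}$, so the standard gluing lemma makes $v^m_i$ subharmonic in $\mathcal{Q}$; the Riesz representation theorem then gives $-\Delta v^m_i=-\mu^m_i$ with $\mu^m_i\ge0$ a Radon measure, locally finite since $\mu^m_i(B_\rho(\bx))=\int_{\partial B_\rho(\bx)}\partial_\nu v^m_i\le 2\pi\rho\,\mathrm{Lip}(v^m_i)$ for a.e.\ $\rho$, and supported in $\mathcal{Q}\cap\partial\{v^m_i>0\}$ because $v^m_i$ is harmonic on $\{v^m_i>0\}$ and vanishes on the interior of its complement.

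The crucial preliminary for the second property is a reflection ($C^1$-matching) statement, which I would establish as follows: the weights in \eqref{eq:vtilde} are tailored so that, by Lemma \ref{thm:intro_S}(\ref{item:3}), the one-sided gradients of $\mathcal{U}$ agree across each interface $\Gamma_{i(i+1)}$ with $i<k$, while across $\Gamma_{1k}$ they differ by exactly the factor $\lambda$ of \eqref{eq:lmbd}, cf.\ \eqref{vi}. Since $\mathcal{T}$ is a conformal local diffeomorphism, $2\pi$-periodic in $x$, the $\lambda$-periodically extended function $v$ of \eqref{eq:defvi} is therefore of class $C^1$ across \emph{every} lift of an interface --- along the lifts of $\Gamma_{1k}$ the jump of $\nabla\mathcal{U}$ is precisely absorbed by the relation $v(\,\cdot+2\pi,\cdot)=\lambda v$ and the periodicity of $\mathcal{T}$. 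Hence $v\in C^1\bigl(\mathcal{Q}\setminus\mathcal{T}^{-1}(\singset\setminus\{\bm{0}\})\bigr)$, is harmonic there, and $|\nabla v|\to0$ approaching $\mathcal{T}^{-1}(\singset\setminus\{\bm{0}\})$ by Lemma \ref{thm:intro_S}(4).

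For the second property, fix $\bx_0\in\mathcal{Q}$ and $0<r<\dist(\bx_0,\partial\mathcal{Q})$; take $\bx_0=\bm{0}$. The sets $\{v^m_i>0\}$ are pairwise disjoint, open, and cover $\mathcal{Q}$ up to the Lebesgue-null set $\mathcal{T}^{-1}(\zeroset)$, with $|\nabla v^m_i|=|\nabla v|$ on $\{v^m_i>0\}$; so $E(r):=\int_{B_r}\sum_{i,m}|\nabla v^m_i|^2$ is absolutely continuous with $E'(r)=\int_{\partial B_r}\sum_{i,m}|\nabla v^m_i|^2$ for a.e.\ $r$, and the claim reduces to
\[
\int_{\partial B_r}\sum_{i,m}|\nabla v^m_i|^2=2\int_{\partial B_r}\sum_{i,m}(\partial_\nu v^m_i)^2\qquad\text{for a.e. }r.
\]
I would then use the pointwise identity $\diverg\bigl[(\bx\cdot\nabla w)\nabla w-\tfrac12|\nabla w|^2\bx\bigr]=(\bx\cdot\nabla w)\Delta w$ for $w=v^m_i$ (applied to a mollification $v^m_i*\rho_\delta$, since $\Delta v^m_i=\mu^m_i$ is only a measure), integrate over $B_r$, use $\bx=r\nu$ on $\partial B_r$, and sum over $i,m$; everything then reduces to
\[
\sum_{i,m}\int_{B_r}\bx\cdot\overline{\nabla v^m_i}\,d\mu^m_i=0,
\]
where $\overline{\nabla v^m_i}$ is the $\mu^m_i$-a.e.\ representative seen in the limit $\delta\to0$: one half the one-sided gradient of $v^m_i$ from $\{v^m_i>0\}$ at a regular point of $\partial\{v^m_i>0\}$ (a point of a lift of some $\Gamma_{ij}$), and $0$ on the lifts of $\singset\setminus\{\bm{0}\}$. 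The singular lifts contribute nothing (the integrand is $0$ there, $o(1)$ nearby, while $\sum_{i,m}\mu^m_i(\overline{B_r})<+\infty$); across a regular arc $\gamma\subset\{v=0\}$ exactly two of the densities, say $v^m_i$ and $v^{m'}_j$, are positive, on opposite sides, and the reflection property gives $\nabla v^m_i=-\nabla v^{m'}_j$ along $\gamma$ with equal moduli, together with $\mu^m_i=\mu^{m'}_j=|\nabla v|\,\mathcal{H}^1\llcorner\gamma$ locally, so the two corresponding terms cancel. Hence the sum vanishes, which is exactly the second bullet.

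The step I expect to be the real obstacle is precisely this pairwise cancellation --- equivalently, establishing the reflection identity $\nabla v^m_i=-\nabla v^{m'}_j$ across regular interfaces. It is the substitute, in the asymmetric regime, for the gradient-matching underpinning the Alt--Caffarelli--Friedman and Almgren monotonicity formul\ae: it is what forces the free-boundary (singular) terms in the domain-variation formula to cancel in pairs rather than accumulate, and the whole point of the class $\mathcal{G}$ is to package this. A secondary, purely technical nuisance is the passage to the limit in the mollified Pohozaev identity near the lifts of $\singset$: at this stage $\singset$ is not yet known to be a finite set of points, so one cannot simply excise it, but the argument should go through using only that $\nabla v$ vanishes there and that the measures $\mu^m_i$ are locally finite.
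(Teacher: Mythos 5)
Your proposal is correct and follows essentially the same route as the paper: both verify the first bullet directly from harmonicity on the positivity sets, and both prove the Pohozaev-type identity of the second bullet by showing that the free-boundary remainder vanishes, via the very same pairwise cancellation of the two $v^m_i$ touching a regular arc (using the $C^1$-matching of $v$ across lifts of the $\Gamma_{ij}$) plus an estimate showing the contribution near $\mathcal{T}^{-1}(\singset\setminus\{\bm 0\})$ is negligible because $|\nabla v|$ vanishes there while the measures $\mu^m_i$ stay locally finite. The only cosmetic difference is your choice of regularization by mollification, where the paper integrates over the superlevel sets $\{v^m_i>\delta\}$ and lets $\delta\to 0^+$, splitting the remainder into the region $S_\eps=\{\sum_{i,m}|\nabla v^m_i|\le\eps\}$ and its complement.
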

\begin{proof}
The first point follows directly from the definition of the involved functions, while for the second one we adopt the same strategy of \cite[Theorem 15]{MR2529504} and \cite[Theorem 8.4]{MR2984134}. We consider $\delta > 0$ as a small quantity and we integrate the equation in $v^m_i$ against $(\bx-\bx_0) \cdot \nabla v^m_i$ on the set $B_r(\bx_0) \cap \{v^m_i > \delta\}$. Some integrations by parts (the same one exploits in order to prove the Pohozaev identity) yield
\begin{multline*}
	  \int_{\partial B_r(\bx_0)\cap \{v^m_i > \delta\} } |\nabla v^m_i|^2 = 2 \int_{\partial B_r(\bx_0)\cap \{v^m_i > \delta\}} \sum_{ i,m } (\partial_\nu v^m_i) ^2 d \sigma \\ + \frac{1}{r} \int_{B_r(\bx_0)\cap \partial \{v^m_i > \delta\} } |\nabla v^m_i|^2 (\bx-\bx_0) \cdot \nu d \sigma
\end{multline*}
and thus
\begin{multline*}
	\frac{d}{dr} \int_{B_r(\bx_0)}  \sum_{ i,m } |\nabla v^m_i|^2 =  2 \int_{\partial B_r(\bx_0)} \sum_{ i,m } (\partial_\nu v^m_i) ^2 d \sigma \\+ \frac{1}{r} \lim_{\delta \to 0^+}  \sum_{ i,m } \int_{B_r(\bx_0)\cap \partial \{v^m_i > \delta\} } |\nabla v^m_i|^2 (\bx-\bx_0) \cdot \nu d \sigma.
\end{multline*}

To evaluate the last limit, let $\eps > 0$ and let us consider the set $S_\eps = \{ x \in \mathcal{Q} : \sum_{ i,m } |\nabla v^m_i| \leq \eps\}$. Thanks to Lemma \ref{thm:intro_S} we have that
\[
	 \mathcal{Q} \cap  \mathcal{T}^{-1}(\singset \setminus \{0\}) \subset \mathcal{Q} \cap  S_\eps \qquad \text{for all $\eps > 0$}.
\]
We also observe that, since each $v^m_i$ is harmonic when positive,
\[
	 \int_{B_r(\bx_0) \cap \partial \{v^m_i > \delta \} } |\nabla v^m_i| d \sigma = -\int_{B_r(\bx_0) \cap \partial \{v^m_i > \delta \} } \partial_\nu v^m_i 
	 = \int_{\partial B_r(\bx_0) \cap \{v^m_i > \delta \} } \partial_\nu v^m_i \leq C
\]
for a constant $C$ independent of $\delta$.

We split the remainder into two parts. Outside of $S_\eps$ we can exploit the definition of $v^m_i$ and the regularity of the zero set (which is given by the union of the curves $\tilde \Gamma_{ij}^m$), and find
\begin{multline*}
	\lim_{\delta \to 0^+}  \sum_{ i,m } \int_{B_r(\bx_0)\cap \partial \{v^m_i > \delta\} \setminus S_\eps} |\nabla v^m_i|^2 (\bx-\bx_0) \cdot \nu d \sigma  \\
	=  \sum_{ i,m } \int_{B_r(\bx_0)\cap \partial \{v^m_i > 0\} \setminus S_\eps} |\nabla v^m_i|^2 (\bx-\bx_0) \cdot \nu d \sigma \\
	= \sum_{i,j,m} \int_{B_r(\bx_0)\cap \tilde \Gamma_{ij}^m   \setminus S_\eps} |\nabla v|^2 (\bx-\bx_0) \cdot J d \bm{s} \\
	- \sum_{i,j,m} \int_{B_r(\bx_0)\cap \tilde \Gamma_{ij}^m   \setminus S_\eps} |\nabla v|^2 (\bx-\bx_0) \cdot J d \bm{s} = 0.
\end{multline*}
Here $J$ is the symplectic matrix and the two opposite contributions follows by the fact that each $\tilde \Gamma_{ij}^m$ appears as the boundary of the positivity set of two functions $v^m_i$. On the other hand, inside of $S_\eps$, there exists $C>0$ independent of $\delta$ such that
\begin{multline*}
	\int_{B_r(\bx_0)\cap \partial \{v^m_i > \delta\} \cap S_\eps} |\nabla v^m_i|^2 (\bx-\bx_0) \cdot \nu d \sigma \\
	\leq C \eps \int_{B_r(\bx_0)\cap \partial \{v^m_i > \delta\} \cap S_\eps} |\nabla v^m_i|  d \sigma \leq C \eps.
\end{multline*}
From the arbitrariness of $\eps$ we conclude
\[
	\lim_{\delta \to 0^+}  \sum_{ i,m } \int_{B_r(\bx_0)\cap \partial \{v^m_i > \delta\} } |\nabla v^m_i|^2 (\bx-\bx_0) \cdot \nu d \sigma = 0.\qedhere
\]
\end{proof}
\begin{proof}[Proof of Proposition \ref{prop:sing are point} (Case 1)] By Lemma \ref{lem:classG},
we are in a  position to apply the results of \cite[Theorem 16]{MR2529504},
\cite[Theorem 4.7]{MR2393430} and \cite[Theorem 1.1]{MR2984134}. We find in particular that
the set $\mathcal{T}^{-1}(\singset \setminus \{0\}) \cap \mathcal{Q}$ is made of (at most countable
many) isolated points. Since $\mathcal{W}$ is a connected set, it must be that $\mathcal{T}^{-1}
(\singset \setminus \{0\}) \cap \mathcal{Q}$ contains at most a point $\tilde \bx_0 \in \mathcal{Q}$. But then, going back to the original coordinates, we find that
\[
	\mathcal{W} = \{0\} \cup \{\mathcal{T}(\tilde \bx_0) \}.
\]
Again by the connectedness of $\mathcal{W}$, we conclude that $\mathcal{W} = \{0\}$.
\end{proof}
\medskip

\textbf{Case 2)} Either $\inf\{x:(x,y)\in\tilde\Gamma_{ij}^m\}$ or
$\sup\{x:(x,y)\in\tilde\Gamma_{ij}^m\}$ are not finite, for every $i,j,m$.

Also in this case we write $\mathcal{Q} = (-\pi, \pi)\times (0,+\infty)$ (we recall again that $\partial\mathcal{Q}\cap\singset=\emptyset$, by Lemma \ref{lem:confinedW}).
Possibly up to a further translation and using Sard's Lemma, we can assume that
any intersection between each $\tilde\Gamma_{ij}^m$ and $\partial\mathcal{Q}$
is transverse. Furthermore, we denote by $\concomp_i$, $i \in \N$, the (open) connected components of $\mathcal{Q} \setminus \{v=0\}$.
\begin{lemma}\label{lem v is H1}
If $v$ solves \eqref{eq:main_on_the_halfplane} then $v \in H^1(\mathcal{Q})$.
\end{lemma}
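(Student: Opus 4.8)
The plan is to use the quasi-periodicity $v(x+2\pi,y)=\lambda v(x,y)$ to reduce the whole statement to a single weighted energy estimate on the fixed strip $S$, and then to prove that estimate by combining the pointwise decay of $v$ on $S$ with the bottleneck/ACF decay already developed in Section \ref{sec rep}.

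Since $v$ is locally Lipschitz on $\overline{\R^2_+}$ (Lemma \ref{thm:intro_S}), all integrals below make sense, and I would begin by recording two facts about $v$ on $S$. First, $v=\mathcal{U}\circ\mathcal{T}$ there, with $\mathcal{U}$ $L$-Lipschitz on $\overline{B}$ and $\mathcal{U}(\bm{0})=0$ (as $\bm{0}\in\singset$); since $|\mathcal{T}(x,y)|=e^{-y}$, this gives $|v(x,y)|\le L e^{-y}$ on $S$, and then $|S_y|\le2\pi$ (Lemma \ref{lem:S_first_prop}) yields $\int_S v^2\le 2\pi L^2\!\int_0^{\infty}\!e^{-2y}\,dy<+\infty$, so $v\in H^1(S)$. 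Second, setting $w:=e^{-\alpha x}v$ and decomposing $\mathcal{Q}$ into the translated strips $S^m=S+(2m\pi,0)$ (up to a null set), the substitution $x\mapsto x-2m\pi$ together with $v(x,y)=\lambda^m v(x-2m\pi,y)$ gives
\[
\int_{\mathcal{Q}}(|\nabla v|^2+v^2)=\sum_{m\in\Z}\lambda^{2m}\int_{S\cap V_m}(|\nabla v|^2+v^2),\qquad V_m:=\big(-(2m+1)\pi,(1-2m)\pi\big)\times\R_+ .
\]
Assuming without loss of generality $\lambda>1$ (so $\alpha>0$), the terms with $m\le0$ sum to at most $\int_S(|\nabla v|^2+v^2)<+\infty$. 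For $m\ge1$ one uses, on $V_m$, the weight identity $\lambda^{2m}e^{2\alpha x}\le e^{2\alpha\pi}$ (because $\lambda=e^{2\pi\alpha}$ and $x<(1-2m)\pi$) together with $|\nabla v|^2+v^2\le 2e^{2\alpha x}(|\nabla w|^2+(1+\alpha^2)w^2)$; summing over $m\ge1$ bounds the tail by a constant times $\int_{S\cap\{x<-\pi\}}(|\nabla w|^2+w^2)$. Thus the lemma is reduced to proving
\[
\int_{S\cap\{x<-\pi\}}(|\nabla w|^2+w^2)\,dx\,dy<+\infty,
\]
equivalently $\int_{S\cap\{x<-\pi\}}e^{-2\alpha x}(|\nabla v|^2+v^2)\,dx\,dy<+\infty$.

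To prove this I would exploit that $v$ is harmonic on the whole of $S$, vanishes on $\partial S\setminus([0,2\pi]\times\{0\})$, and satisfies $|S_y|\le2\pi$ --- properties that hold regardless of the (a priori wild) shape of $\singset$ --- so that Lemma \ref{lem:ACF} and Lemmas \ref{lem:1strozz}--\ref{lem:kstrozz} carry over to the present setting, the only change being that the curve $\Gamma$ now also encircles $\singset$. Slicing $S\cap\{x<-\pi\}$ at a regular height $y$ and splitting $S_y\cap\{x<-\pi\}$ into blocks according to the distance $x-\Xmin(y)$ (in steps of order $\pi$), the weight $e^{-2\alpha x}$ is of size $\lesssim e^{-2\alpha\Xmin(y)}\lambda^{Ck}$ on the $k$-th block, while by Lemma \ref{lem:kstrozz} the mass of $v$ on the $k$-th block decays like $(\gamma/\log(3k/2))^k$; since this decay is super-exponential it absorbs the factor $\lambda^{Ck}$, leaving a slice bound $\le C\,e^{-2\alpha\Xmin(y)}\int_S|\nabla v|^2$, and a final application of Lemma \ref{lem:ACF} (Gaussian decay in $X_1(y)$, hence in $\Xmin(y)$) makes the $y$-integration converge. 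Assembling the pieces yields $\int_{\mathcal{Q}}(|\nabla v|^2+v^2)<+\infty$, i.e.\ $v\in H^1(\mathcal{Q})$.

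The step I expect to be the real obstacle is precisely this weighted estimate. Mere finiteness of $\int_S|\nabla v|^2$ does not control $\int_{S\cap\{x<-\pi\}}e^{-2\alpha x}|\nabla v|^2$, since in Case 2 the strip $S$ may develop arbitrarily long leftward tentacles carrying a complicated cascade of bottlenecks; one genuinely needs the quantitative, shape-independent decay of Section \ref{sec rep}, and the delicate point is that the super-exponential gain in the number of bottlenecks has to overcome exactly the exponential loss $e^{-2\alpha x}=\lambda^{-x/\pi}$ coming from the weight.
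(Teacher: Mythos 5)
Your reduction step is correct and, in fact, equivalent to the paper's: you decompose $\mathcal{Q}$ into the translated strips $S^m\cap\mathcal{Q}$ and use $v(x+2m\pi,y)=\lambda^m v(x,y)$ to pull everything back to $S$, arriving at $\int_{S\cap\{x<-\pi\}}e^{-2\alpha x}(|\nabla v|^2+v^2)<\infty$. The paper's decomposition is by nodal components $\concomp_i$ (regrouped per translate), but since $\bigcup_{i:\,m_i=m}(\concomp_i-(2m\pi,0))$ exhausts $S\cap V_m$ up to a null set, the two reductions say exactly the same thing, and your bookkeeping ($\lambda^{2m}e^{2\alpha x}\le e^{2\alpha\pi}$ on $V_m$, $|\nabla v|^2+v^2\le 2e^{2\alpha x}(|\nabla w|^2+(1+\alpha^2)w^2)$) is sound.

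The gap is in the last step, and it is more serious than your closing paragraph suggests. You propose to prove the weighted estimate by transporting Lemmas \ref{lem:ACF}--\ref{lem:kstrozz} from Section \ref{sec rep}, claiming they ``carry over.'' There are two obstacles. First, those lemmas are proved under $\singset=\{\bm0\}$, where $\partial S$ consists exactly of $\Gamma$, $\Gamma+(2\pi,0)$, and $[0,2\pi]\times\{0\}$, and $\Gamma$ is a regular curve from $(0,0)$ to $y=+\infty$ (this structure is what Lemma \ref{lem:S_further_prop} and Remark \ref{rem:prop4} codify, and the bottleneck-counting in Lemmas \ref{lem:tantestrozz1intev}--\ref{lem:kstrozz} leans on it). In Case 2 of Section \ref{sec high}, $\partial S$ also contains the lifted components of $\singset\setminus\{\bm0\}$, and $\Gamma$ may be a spiral accumulating at $y=+\infty$ while sweeping over all $x$; the structural description of $S_{\bar y}$ would have to be re-established, not invoked. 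Second, and more decisively: even where Section \ref{sec rep} applies, the horizontal ACF/bottleneck machinery only yields the \emph{growth} bound $\int_0^{2\pi}w^2(x,y_n)\,dx\le C\exp(\alpha^2 y_n)$ along a sequence; the conclusion is then extracted via the Fourier representation of $w$ (Lemma \ref{fourier}, Corollary \ref{coro:vnicevbad}, Lemma \ref{lem:se decade no cattivi}). That Fourier step requires $w$ to solve $\Delta w+2\alpha w_x+\alpha^2w=0$ on \emph{all} of $\R^2_+$, which is precisely what we do not have in Case 2 (indeed, proving $v$ is globally harmonic is Lemma \ref{lem:case2harmonicity}, which \emph{uses} the present lemma). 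So your proposed route is circular or at best incomplete. (There is also a small sign slip in your last paragraph: on a block at $x\approx\Xmin(y)+k\pi$ the weight is $e^{-2\alpha\Xmin(y)}\lambda^{-k}$, not $e^{-2\alpha\Xmin(y)}\lambda^{Ck}$, which makes it unclear which quantities you intend to balance.)

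The paper's proof avoids all of this with a genuinely different device: instead of horizontal bottlenecks, it uses \emph{vertical} bottlenecks $S\cap\sigma^m$, where $\sigma^m=\{(2m\pi,y):0\le y\le y_0\}$ connects $\{y=0\}$ to $\mathcal{T}^{-1}(\singset\setminus\{\bm0\})$. The topological point is that $\sigma^m\cup\mathcal{T}^{-1}(\singset\setminus\{\bm0\})$ disconnects $\R^2_+$, so $S$ must thread through the $\sigma^m$ to reach $\concomp_i-(2m_i\pi,0)$; and the injectivity of $\mathcal{T}|_S$ forces $\sum_m|S\cap\sigma^m|\le y_0$, so the iterated Lemma \ref{lem:kstrozz} gives a super-exponential decay $\big(C/\log C(m_i-1)\big)^{m_i-1}$ that beats $\lambda^{2m_i}$. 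This is a self-contained $H^1$ argument with no Fourier input and no dependence on the fine structure of $\partial S$ beyond what is topologically forced. If you want to salvage your plan, the honest fix is to replace the appeal to Section \ref{sec rep} by this vertical-bottleneck argument in your final step.
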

\begin{proof}
Notice that we already know that $v \in H^1(S^m)$, for every $m\in\Z$, and $v \in H^1(\concomp_i)$, for every $i \in \N$. More precisely, there exists a sequence $\{m_i\}_{i \in \N} \subset \Z$ such that
\[
	\concomp_i \subset S^m \iff m = m_i
\]
and, consequently, we have
\[
	v(x,y)|_{\concomp_i} = v(x+2m_i\pi,y)|_{\concomp_i - (2m_i \pi,0)} =  \lambda^{m_i} v(x,y)|_{\concomp_i - (2m_i \pi,0)},
\]
where now each $\concomp_i - (2m_i \pi,0)$ is a subset of $S^0$ and, moreover, for each $i \neq j$, $\concomp_i - (2m_i \pi,0) \cap \concomp_j - (2m_j \pi,0) = \emptyset$. Let us define, for every $m\in\Z$, the 
(possibly empty) sets
\[
E_m := \bigcup_{m_i = m} \concomp_i,
\]
so that
\[
	\int_{E_m}|\nabla v |^2 = \lambda^{2m} \int_{E_m - (2m \pi,0)} |\nabla v|^2.
\]
With this notation, for any $n \in \N$ we define the sequence of functions
\[
	v_n = \sum_{m = -n}^n v |_{E_m} = \sum_{m = -n}^n \lambda^{m}v |_{E_m - (2m \pi,0)}.
\]
By definition, clearly we have that $v_n \in H^1(\mathcal{Q})$. We want to show that the limit of the sequence, that is the function $v$ itself, is a $H^1(\mathcal{Q})$ function. To this end, it is sufficient to show that the series of the norm
\[
	\int_{\mathcal{Q}} |\nabla v|^2 + v^2 = \lim_{n \to +\infty}  \int_{\mathcal{Q}} |\nabla v_n|^2 +v_n^2 = \sum_{m = -\infty}^{+\infty} \int_{E_m}  |\nabla v|^2 +v^2
\]
converges. Here we shall only address the contribution regarding the $L^2$-norm of gradient, as the $L^2$-norm of the functions themselves can be estimated by means of the Poincar\'e inequality (see
Remark \ref{rem:poicare_on_S}, or Lemma \ref{lem:1strozz}).

Since the general term in the series is positive, we can write
\[
	\sum_{m = -\infty}^{+\infty} \int_{E_m}  |\nabla v|^2 = \sum_{m \leq 1} \int_{E_m}  |\nabla v|^2 + \sum_{m \geq 2} \int_{E_m}  |\nabla v|^2.
\]
In the following we deal with the case $\lambda \geq 1$, the opposite case following with minor changes. Consequently, we can estimate the first term as
\[
	\sum_{m\leq 1} \lambda^{2m} \int_{E_m - (2m \pi,0)} |\nabla v|^2 \leq  \lambda^2\sum_{m\leq 1} \int_{E_m - (2m \pi,0)} |\nabla v|^2 \leq  \lambda^2\int_{S} |\nabla v|^2 < +\infty,
\]
in such a way that we are left to consider only the terms with $m\geq 2$. We now have two alternatives. If only a finite number of $E_m$ are non-empty, for $m\geq 2$, then $v \in H^1(\mathcal{Q})$ since $v \in H^1(S^0)$.
Otherwise, let $\bar m \geq 2$ be such that $E_{\bar m}\neq \emptyset$. By definition, it means that the set $E_{\bar m} - (2\bar m \pi,0)$ belongs to $\mathcal{Q}-(2\bar m \pi,0)$. Up to a further translation, we can assume that $\{x = 0\} \cap
\mathcal{T}^{-1}(\singset\setminus\{\bm{0}\}) \neq \emptyset$ and we denote
\begin{itemize}
 \item $y_0 := \min\{y:(0,y)\in\mathcal{T}^{-1}(\singset\setminus\{\bm{0}\})\}$,
 \item $\sigma^m:= \{(2m\pi,y):0\leq y\leq y_0\}$ for $m \in \Z$.
\end{itemize}
In this setting we have that, while $\R^2_+ \setminus \mathcal{T}^{-1}(\singset \setminus \{0\})$ is
connected,
\[
	\R^2_+ \setminus \left(\sigma^m \cup \mathcal{T}^{-1}(\singset \setminus \{0\}) \right)
\]
is disconnected, for every $m$. We deduce that, for any $2 \leq m < \bar m$, the segment $\sigma^m$ disconnects $S^0$ in at least two connected components, one containing $S^0 \cap \mathcal{Q}$, the other $E_{\bar m} - (2\bar m \pi,0)$. We are then in a similar position to that of Definition \ref{def:bottleneck}, with the only difference that now we have to consider vertical \emph{bottlenecks}. With this language, $S^0$ goes through at least $\bar m-1$ bottlenecks before
reaching $E_{\bar m} - (2\bar m \pi,0)$. Using the same proof of Lemma \ref{lem:kstrozz} we have that,
for $\bar m$ large enough,
\[
	\int_{E_{\bar m} -  (2\bar m \pi,0)} |\nabla v|^2 \leq \left(\frac{C}{\log C (\bar m -1)}\right)^{\bar m-1} \int_{S}|\nabla v|^2
\]
where the constant $C > 1$ is universal. Consequently we have
\[\begin{split}
	\sum_{m \geq 2} \int_{E_m}  |\nabla v|^2 &\leq \sum_{m \geq 2} \lambda^{2m} \int_{E_m - (2m \pi,0)} |\nabla v|^2\\
     &\leq \sum_{m \ge 2} \lambda^{2m} \left(\frac{C}{\log  C(m-1)}\right)^{m-1} \int_{S} |\nabla v|^2 < +\infty. \qedhere
\end{split}\]
\end{proof}
To conclude, we want to show that $v$ is harmonic across the singular set.
The key step in this direction is that $v$ can be integrated by parts
on each of its nodal connected components.
\begin{lemma}\label{lem:integrbypartsinD}
For every $i$,
\[
\partial \concomp_i \cap \mathcal{T}^{-1}(\singset\setminus\{\bm{0}\})=\emptyset.
\]
\end{lemma}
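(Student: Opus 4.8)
We argue by contradiction: suppose that for some index $i$ there is a point
$\tilde\bx_0\in\partial\concomp_i\cap\mathcal{T}^{-1}(\singset\setminus\{\bm{0}\})$, and set
$\bx_0:=\mathcal{T}(\tilde\bx_0)\in\singset\setminus\{\bm{0}\}$. On the connected set $\concomp_i$ the harmonic function $v$ never vanishes, hence $\mathcal{U}=v\circ\mathcal{T}^{-1}$ never vanishes on $\mathcal{T}(\concomp_i)$, and by the definition \eqref{eq:vtilde} of $\mathcal{U}$ there is a single index $j$ with $\mathcal{T}(\concomp_i)\subseteq\omega_j$. From $\tilde\bx_0\in\partial\concomp_i$ we get $\bx_0\in\overline{\omega_j}$, so $|\omega_j\cap B_r(\bx_0)|>0$ for every $r>0$ and $m(\bx_0)\ge1$; since $\bx_0\in\singset$ lies in the interior of $\Omega$ (Lemma \ref{lem:noWonpartialOmega}) and interior zeroes of multiplicity one do not exist (Lemma \ref{lem simply conn}), necessarily $m(\bx_0)=h\ge3$.

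Next, each connected component $\concomp_i$ of $\mathcal{Q}\setminus\{v=0\}$ is contained in exactly one translated strip $S^{m_i}$, since the $S^m$ are the pairwise disjoint lifts of the simply connected set $B\setminus(\Gamma_{1k}\cup\singset)$. Therefore $\tilde\bx_0\in\overline{\concomp_i}\subseteq\overline{S^{m_i}}$, while $\tilde\bx_0\notin S^{m_i}$, so $\tilde\bx_0\in\partial S^{m_i}$. The boundary $\partial S^{m_i}$ consists of the segment $[2m_i\pi,2(m_i+1)\pi]\times\{0\}$, of lifted copies of $\Gamma_{1k}$, and of lifted copies of $\singset$: the first is excluded because $-\log|\bx_0|>0$, the second because its interior projects onto $\Gamma_{1k}\subseteq\zeroset_2$, i.e.\ onto multiplicity-two points, against $m(\bx_0)=h\ge3$ (the endpoint of a lifted $\Gamma_{1k}$, which is singular, is dealt with together with the next step). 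Hence $\tilde\bx_0$ lies on a lifted copy of $\singset$, whose connected component $\mathcal{W}^*$ through $\tilde\bx_0$ — adjacent, by the above, to the strip $S^{m_i}$ — is confined, by Lemma \ref{lem:confinedW}, to a vertical strip $[-\tfrac{\pi}{2}+2\mu\pi,\tfrac{\pi}{2}+2\mu\pi]\times[a,+\infty)$.

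Finally we invoke the standing hypothesis of Case 2, namely that every lifted nodal curve is horizontally unbounded. Since $m(\bx_0)=h\ge3$, the point $\bx_0$ lies on the closures of $h$ distinct regular arcs $\Gamma_{j_\ell j_{\ell+1}}$ (Lemma \ref{lem bound w and z2} and the local structure at a multiplicity-$h$ point), whose lifts emanate from $\tilde\bx_0$ and bound, near $\tilde\bx_0$, the local sectors of the positivity components, one of which is $\concomp_i$. Following any such lift away from $\tilde\bx_0$ it must, being horizontally unbounded, leave every vertical strip of width $2\pi$ while remaining in $\{v=0\}$; but $\concomp_i\subseteq S^{m_i}$ has $|S^{m_i}_y|\le2\pi$ for every $y$ (Lemma \ref{lem:S_first_prop}) and $\mathcal{W}^*$ is trapped in a thin vertical strip, and these width constraints, tracked through a neighbourhood of $\tilde\bx_0$, cannot be met simultaneously — a contradiction. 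Equivalently, in Case 2 a \emph{bounded} nodal curve accumulating at $\bx_0$ would wind around $\bm{0}$ only finitely often, so its lift would stay in a fixed vertical strip and could not be horizontally unbounded, whence $\bx_0$ cannot be attained by $\omega_j$, i.e.\ $\bx_0\notin\overline{\omega_j}$, contradicting Step 1.

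The main obstacle is precisely the last step: converting the qualitative dichotomy of Case 2 (spiraling nodal curves versus singular components confined to thin vertical strips) into a rigorous contradiction requires careful bookkeeping of how the lifted positivity regions and nodal arcs distribute among the translated strips $S^m$ near $\tilde\bx_0$, weighed against the uniform section bound $|S_y|\le2\pi$ and the confinement of Lemma \ref{lem:confinedW}. A cleaner alternative, worth pursuing, is to show first that the zero set of $v$ in a punctured neighbourhood of $\tilde\bx_0$ is a finite union of arcs, so that $\bx_0$ is an \emph{isolated} point of $\singset$; this then contradicts the connectedness of $\singset$ (from (MCS)) together with $\bm{0}\in\singset$ and $\bm{0}\neq\bx_0$ — the delicate point being to rule out that a fat nodal component ($\zeroset_0$) clusters at $\bx_0$.
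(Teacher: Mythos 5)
Your Steps 1--3 are correct but do not advance the argument: Step~3 in particular only re-derives that $\tilde\bx_0$ lies on a lifted copy of $\singset$, which was already the hypothesis, so the real work is entirely in the final step, and that step has a genuine gap which you yourself flag.

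Concretely, the final step assumes that ``the point $\bx_0$ lies on the closures of $h$ distinct regular arcs $\Gamma_{j_\ell j_{\ell+1}}$'' whose lifts emanate from $\tilde\bx_0$ and bound the local sectors of positivity. This local structure at a point of $\singset$ is \emph{not} available at this stage. Lemma~\ref{lem bound w and z2} gives only that \emph{some} regular arc accumulates at each point of $\partial\singset$; the fact that exactly $h$ arcs meet there with a sector structure is part of what Proposition~\ref{prop:target} eventually establishes \emph{after} one knows singular points are isolated. Invoking it here is circular. Moreover, even granting those arcs, the sentence ``these width constraints, tracked through a neighbourhood of $\tilde\bx_0$, cannot be met simultaneously'' is not a derivation; nothing forbids the lifted arcs from spiraling off to $y\to+\infty$ while staying inside $S^{m_i}$, which has $|S^{m_i}_y|\le 2\pi$ at each height but unbounded diameter. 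The proposed alternative (``show $\bx_0$ is isolated, contradict connectedness of $\singset$'') is in the spirit of Case~1, but you give no mechanism to prove isolatedness in Case~2, and in fact the paper does \emph{not} argue this way here.

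The paper's proof uses a different and crucial observation that is absent from your attempt: since $\singset$ is connected, $\bm{0}\in\singset$, and $\mathcal{T}$ is a covering of $B\setminus\{\bm{0}\}$, every connected component of $\mathcal{T}^{-1}(\singset\setminus\{\bm{0}\})$ is \emph{unbounded} (it must wind indefinitely as it approaches $\bm{0}$). Combined with the Case~2 hypothesis --- via Lemma~\ref{lem:tuttiperuno}, infinitely many lifted regular arcs $\gamma_j$ cross the whole slab $\mathcal{Q}=(-\pi,\pi)\times\R_+$ from side to side --- one gets a nested decomposition $\mathcal{Q}\setminus\gamma_j=\mathcal{Q}_j^-\cup\mathcal{Q}_j^+$ with $\overline{\mathcal{Q}_j^-}\cap\mathcal{T}^{-1}(\singset\setminus\{\bm{0}\})=\emptyset$, and $\mathcal{Q}^+:=\bigcap_j\mathcal{Q}_j^+\subset[-\pi/2,\pi/2]\times[a,+\infty)$ by Lemma~\ref{lem:confinedW}. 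Since each $\concomp_i$ touches $\partial\mathcal{Q}$, it cannot sit in $\mathcal{Q}^+$, hence $\concomp_i\subset\mathcal{Q}_j^-$ for some $j$, giving $\overline{\concomp_i}\cap\mathcal{T}^{-1}(\singset\setminus\{\bm{0}\})=\emptyset$. Your write-up noticed the confinement half of this (Lemma~\ref{lem:confinedW}) but missed both the unboundedness of the lifted singular components and the role of the $\gamma_j$ as nested barriers separating the $\concomp_i$ from the singular lift.
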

\begin{proof}
We are dealing with Case 2. By Lemma \ref{lem:tuttiperuno} we know that $\overline{\mathcal{Q}} \cap \mathcal{T}^{-1}
(\zeroset_2)$ consists in the countable union of smooth paths of finite lengths, pairwise disjointed.
Moreover, infinitely many of them have one endpoint on the line $\{x=-\pi\}$ and the other on
$\{x=\pi\}$. Let us denote with
$\gamma_j$, $j\in\N$ such paths. We recall that $\singset$ is connected and $\bm{0}\in\singset$,
so that any connected component of $\mathcal{T}^{-1}(\singset \setminus \{0\})$ is unbounded.
We deduce that, for every $j$, there exist two
open connected sets $\mathcal{Q}_j^\pm$ satisfying
\[
\mathcal{Q}\setminus\gamma_i = \mathcal{Q}_j^- \cup \mathcal{Q}_j^+,\qquad
\mathcal{Q}_j^-\cap\mathcal{Q}_j^+=\emptyset, \qquad
\overline{\mathcal{Q}_j^-}\cap\mathcal{T}^{-1}(\singset \setminus \{0\}) = \emptyset.
\]
With this notation, we are left to prove that any $\concomp_i$ is contained in some $\mathcal{Q}_j^-$.
Notice that, up to a relabelling, one can assume that
$\mathcal{Q}_j^-\subset\mathcal{Q}_{j+1}^-$, $\mathcal{Q}_j^+\supset\mathcal{Q}_{j+1}^+$. Let us consider the open set $\mathcal{Q}^-=\bigcup_j \mathcal{Q}_j^-$. Since
$\singset \setminus \{0\}$ is non-empty, we have that $\mathcal{Q}^-\neq \mathcal{Q}$,
and by construction
\[
\partial\mathcal{Q}^-\setminus\partial\mathcal{Q} = \lim_j\gamma_j
\subset\mathcal{T}^{-1}(\singset \setminus \{0\}).
\]
Recalling Lemma \ref{lem:confinedW} we infer that
\[
\mathcal{Q}^+:=\bigcap_j \mathcal{Q}_j^+ \subset \left[-\frac{\pi}{2},\frac{\pi}{2}\right]\times[a,+\infty).
\]
Now, take any $\concomp_i$. Of course, for every $j$, either $\concomp_i\subset\mathcal{Q}_j^-$ or
$\concomp_i\subset\mathcal{Q}_j^+$; on the other hand, $\partial \concomp_i\cap\partial\mathcal{Q}
\neq0$. We deduce that $\concomp_i\not\subset\mathcal{Q}^+$, or equivalently that
$\concomp_i\subset\mathcal{Q}_j^-$ for some $j$.
\end{proof}
\begin{lemma}\label{lem:case2harmonicity}
The function $v$ is harmonic in $\mathcal{Q}$ (i.e., it is harmonic in $\R^2_+$).
\end{lemma}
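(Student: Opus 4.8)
The plan is to prove that $v$ is \emph{weakly} harmonic in $\mathcal{Q}$, i.e. that $\int_{\mathcal{Q}}\nabla v\cdot\nabla\phi\,dxdy=0$ for every $\phi\in C^\infty_c(\mathcal{Q})$; Weyl's lemma then yields harmonicity in $\mathcal{Q}$, and the relation $v(x+2\pi,y)=\lambda v(x,y)$ together with Lemma \ref{lem:confinedW} (which keeps $W':=\mathcal{T}^{-1}(\singset\setminus\{\bm0\})$ off the lines $x=\pi+2m\pi$) upgrades this to harmonicity in all of $\R^2_+$. By Lemma \ref{lem v is H1} we already know $v\in H^1(\mathcal{Q})$, so all the integrals below make sense; moreover $\nabla v=0$ a.e. on the level set $\{v=0\}\cap\mathcal{Q}$, whence
\[
\int_{\mathcal{Q}}\nabla v\cdot\nabla\phi\,dxdy=\sum_i\int_{\concomp_i}\nabla v\cdot\nabla\phi\,dxdy,
\]
the series being absolutely convergent since $\sum_i\int_{\concomp_i}|\nabla v|^2\le\int_{\mathcal{Q}}|\nabla v|^2<+\infty$ and $\sum_i|\concomp_i\cap\supp\phi|\le|\supp\phi|$.

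First I would integrate by parts on each nodal component. On $\concomp_i$ the function $v$ is harmonic (by the first line of \eqref{eq:main_on_the_halfplane}, since $\concomp_i\cap W'=\emptyset$), and by Lemma \ref{lem:integrbypartsinD} one has $\partial\concomp_i\cap W'=\emptyset$; since $\{v=0\}\cap\mathcal{Q}=\bigl(\mathcal{T}^{-1}(\zeroset_2)\cap\mathcal{Q}\bigr)\cup\bigl(W'\cap\mathcal{Q}\bigr)$, it follows that $\partial\concomp_i\cap\mathcal{Q}\subset\mathcal{T}^{-1}(\zeroset_2)$. In Case 2, recalling Lemma \ref{lem:tuttiperuno} and the description of $\mathcal{T}^{-1}(\zeroset_2)$ in the proof of Lemma \ref{lem:integrbypartsinD}, the set $\mathcal{T}^{-1}(\zeroset_2)$ is a union of pairwise disjoint $C^1$ arcs $\gamma_j$ which is locally finite away from $W'$; since $\partial\concomp_i$ is closed and disjoint from the closed set $W'$, for any $\phi\in C^\infty_c(\mathcal{Q})$ the set $\partial\concomp_i\cap\supp\phi$ is compact and stays away from $W'$, hence meets only finitely many $\gamma_j$, along each of which $v$ is even real-analytic, being harmonic on the whole open set $\mathcal{Q}\setminus W'\supset\gamma_j$. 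Green's identity on $\concomp_i$ then applies, and since $\supp\phi\Subset\mathcal{Q}$ (so the contribution on $\partial\concomp_i\cap\partial\mathcal{Q}$ vanishes) it gives
\[
\int_{\concomp_i}\nabla v\cdot\nabla\phi\,dxdy=\int_{\partial\concomp_i\cap\mathcal{Q}}\phi\,\partial_{\nu_i}v\,d\sigma,
\]
$\nu_i$ being the outward unit normal to $\concomp_i$.

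Next I would sum over $i$ and regroup by arcs. On $\mathcal{T}^{-1}(\zeroset_2)$ the function $v$ changes sign (by the alternating pattern of the weights in \eqref{eq:vtilde}), so each arc $\gamma_j$ is the common boundary of exactly two components of $\mathcal{Q}\setminus\{v=0\}$, on which the two outward normals are opposite; since $v\in C^1$ across $\gamma_j$, the two boundary densities $\phi\,\partial_\nu v$ cancel there. Regrouping the absolutely convergent double sum $\sum_i\int_{\partial\concomp_i\cap\mathcal{Q}}\phi\,\partial_{\nu_i}v\,d\sigma$ according to the arcs $\gamma_j$ — each appearing exactly twice — every arc contributes $0$, so $\int_{\mathcal{Q}}\nabla v\cdot\nabla\phi\,dxdy=0$. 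This proves that $v$ is harmonic in $\mathcal{Q}$, and by periodicity in all of $\R^2_+$.

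The delicate point is exactly the integration by parts of the second paragraph: the nodal components $\concomp_i$ are not a priori regular domains, and near $W'$ infinitely many of them (and infinitely many arcs $\gamma_j$) may cluster, so a fixed $\supp\phi$ may meet infinitely many of each. This is precisely what Lemma \ref{lem:integrbypartsinD} is designed to handle — by forcing $\partial\concomp_i\cap W'=\emptyset$ it makes the relevant portion of each $\partial\concomp_i$ a finite collection of the smooth arcs $\gamma_j$ — while the energy bound $\sum_i\int_{\concomp_i}|\nabla v|^2<+\infty$ makes the resulting series absolutely convergent, legitimizing the regrouping by arcs and hence the pairwise cancellation even when infinitely many components are involved.
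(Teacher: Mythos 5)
Your overall strategy (weak harmonicity via integration by parts on nodal components, then cancellation in pairs across the arcs $\gamma_j$) is the same as the paper's, and most of the intermediate claims are correct. However, there is a genuine gap at the one point you flag as delicate: the justification of the regrouping of the sum by arcs.

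You reduce matters to showing
\[
\sum_i\sum_{j:\gamma_j\subset\partial\concomp_i}\int_{\gamma_j}\phi\,\partial_{\nu_i}v\,d\sigma=0,
\]
and to pass from ``sum over $i$, then over $j$'' to ``sum over $j$, collecting the two sides of each arc'' you invoke absolute convergence. But the bound you actually establish is only
\[
\sum_i\Bigl|\int_{\concomp_i}\nabla v\cdot\nabla\phi\,dxdy\Bigr|\leq\|v\|_{H^1(\mathcal{Q})}\|\phi\|_{H^1(\mathcal{Q})}<+\infty,
\]
i.e.\ absolute convergence of the \emph{single} sum over $i$. What the regrouping by arcs requires is absolute convergence of the \emph{double} sum,
\[
\sum_i\sum_{j:\gamma_j\subset\partial\concomp_i}\Bigl|\int_{\gamma_j}\phi\,\partial_{\nu_i}v\,d\sigma\Bigr|<+\infty,
\]
and this does not follow from the energy bound alone: for a sign-changing test function $\phi$, the individual flux integrals $\int_{\gamma_j}\phi\,\partial_{\nu_i}v$ can oscillate in sign, so nothing controls $\sum_j\bigl|\int_{\gamma_j}\phi\,\partial_{\nu_i}v\bigr|$ in terms of $\bigl|\int_{\concomp_i}\nabla v\cdot\nabla\phi\bigr|$.

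The paper closes this gap by restricting, without loss of generality, to non-negative test functions $\varphi\geq0$ and observing that since each $\concomp_i$ is a nodal region, $\partial_{\nu_i}v$ has a constant sign on $\partial\concomp_i\cap\mathcal{Q}$; hence $\varphi\,\partial_{\nu_i}v$ does not change sign there, which yields the crucial identity
\[
\sum_{j:\gamma_j\subset\partial\concomp_i}\Bigl|\int_{\gamma_j}\varphi\,\partial_{\nu_i}v\Bigr|
=\Bigl|\sum_{j:\gamma_j\subset\partial\concomp_i}\int_{\gamma_j}\varphi\,\partial_{\nu_i}v\Bigr|
=\Bigl|\int_{\concomp_i}\nabla v\cdot\nabla\varphi\Bigr|\leq\int_{\concomp_i}|\nabla v|\,|\nabla\varphi|.
\]
Summing over $i$ then gives the absolute convergence of the double sum, and the rearrangement by arcs (together with the cancellation you correctly identified) becomes legitimate. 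Your proof should be amended to include both the reduction to $\phi\geq0$ and this sign observation; as written, the claim that the energy bound ``legitimizes the regrouping by arcs'' is not substantiated.
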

\begin{proof}
Let $\varphi\in C^\infty_0(\mathcal{Q})$, $\varphi\geq0$, and let us write
\[
\supp\varphi\cap\mathcal{T}^{-1}(\zeroset_2)=\bigcup_{j=0}^{+\infty}\gamma_j.
\]
By Lemmas \ref{lem v is H1} and \ref{lem:integrbypartsinD},
we have that
\begin{equation}\label{eq:perparti}
	\int_\mathcal{Q} \nabla v \cdot\nabla \varphi = \sum_{i = 0} ^{+\infty} \int_{D_i} \nabla v \cdot
	\nabla \varphi = \sum_{i = 0} ^{+\infty} \int_{\partial \concomp_i} \varphi\partial_\nu v
	=\sum_{i = 0} ^{+\infty}\sum_{j:\gamma_j \subset \partial\concomp_i}  \int_{\gamma_j} \varphi\partial_{\nu_i} v,
\end{equation}
where $\nu_i$ denotes the normal direction to $\gamma_j$ which points outwards with respect to
$D_i$. Of course, for every $j$ there exist exactly two indexes $i_1$, $i_2$ such that
$\gamma_j = \overline{\concomp}_{i_1} \cap \overline{\concomp}_{i_2} \cap \supp\varphi$, and
$\nu_{i_1} = -\nu_{i_2}$;
as a consequence, for every $j$,
\begin{equation}\label{eq:0is0}
\sum_{i:\partial\concomp_i\supset \gamma_j }  \int_{\gamma_j} \varphi\partial_{\nu_i} v =
\int_{\gamma_j} \varphi\partial_{\nu_{i_1}} v + \int_{\gamma_j}
\varphi\partial_{\nu_{i_2}} v = 0.
\end{equation}
In order to plug this relation into \eqref{eq:perparti} and conclude the proof, we need to
show that the right hand side of \eqref{eq:perparti} converges absolutely,
so that we are allowed to rearrange its terms. To this aim let us notice that, since each $D_i$ is a
nodal region of $v$, and $\varphi$ is non-negative (and compactly supported in $\mathcal{Q}$), we have that $\varphi\partial_\nu v$ does not
change its sign on $\partial D_i$. This yields, for every $i$,
\begin{multline*}
\sum_{j:\gamma_j \subset \partial\concomp_i}  \left|
\int_{\gamma_j} \varphi\partial_{\nu_i} v\right| =
\left| \sum_{j:\gamma_j \subset \partial\concomp_i}
\int_{\gamma_j} \varphi\partial_{\nu_i} v\right| \\
=\left|\int_{\partial \concomp_i} \varphi\partial_\nu v\right|=
 \left|\int_{D_i} \nabla v \cdot\nabla \varphi\right| \leq
 \int_{D_i} |\nabla v| |\nabla \varphi| ,
\end{multline*}
and finally
\[
\sum_{i,j:\gamma_j \subset \partial\concomp_i}  \left|
\int_{\gamma_j} \varphi\partial_{\nu_i} v\right|
 \leq \| v\|_{H^1(\mathcal{Q})}
 \| \varphi\|_{H^1(\mathcal{Q})}<+\infty
\]
by Lemma \ref{lem v is H1}. Therefore, combining \eqref{eq:perparti} and \eqref{eq:0is0}
we have, for every $\varphi\in C^\infty_0(\mathcal{Q})$, $\varphi\geq0$,
\[
\int_\mathcal{Q} \nabla v \nabla \varphi = 0. \qedhere
\]
\end{proof}
\begin{proof}[Proof of Proposition \ref{prop:sing are point} (Case 2)]
Since the zeroes of higher multiplicity of a (non trivial) harmonic function in the plane are
isolated, using Lemma \ref{lem:case2harmonicity} we find again that
the set $\mathcal{T}^{-1}(\singset \setminus \{0\}) \cap \mathcal{Q}$ is made of (at most countable
many) isolated points, and we can conclude as in Case 1.
\end{proof}

\begin{proof}[Proof of Theorem \ref{thm topologia sing}]
By Proposition \ref{prop:reduction} we have that the singular set $\singset$ consists in the finite
union of connected components, which are single points by Proposition \ref{prop:sing are point},
therefore the only thing that is left to prove is the asymptotic expansion \eqref{eq:asympt_expans}. Let $m(\bx_0)=h\ge3$, and let $\mathcal{U}$, $v$ be defined as in \eqref{eq:vtilde}, \eqref{eq:defvi},
respectively. Taking into account Proposition \ref{prop:target}, together with Remarks \ref{rem:asy1} and \ref{rem:Tallamenouno}, we obtain
\[
\begin{split}
v(x,y) &= \exp\left(\alpha x-\frac{h}{2} y\right) \times
\left[a \cos{\left(\frac{h}{2} x+\alpha y\right)}+
b \sin{\left(\frac{h}{2} x+\alpha y\right)} + o(1)\right]\\
&= C \exp\left(\alpha x + \frac{2\alpha^2}{h}y \right) \times \exp\left(-\frac{2\alpha^2}{h}y-\frac{h}{2} y\right)\\
&\hspace{10.2em} \times \left[ \cos{\left(\frac{h}{2} (x-x_0) + \alpha y\right)} + o(1)\right].
\end{split}
\]
Recalling \eqref{eq:defT} we infer (up to a rotation) \eqref{eq:asympt_expans}, where
\[
A(r,\vartheta) := C \exp \left(\alpha (\vartheta + x_0) - \frac{2\alpha^2}{h}\log r \right).
\]
Finally, using again Proposition \ref{prop:target} we infer that, for some fixed $q$,
\[
q \leq \vartheta - \frac{2\alpha}{h}\log r \leq q + 2\pi + o(1)
\qquad\text{ as }r\to0^+,
\]
and also the second condition in \eqref{eq:gamma} follows.
\end{proof}

\textbf{Acknowledgements.} Work supported by the PRIN-2015KB9WPT Grant: ``Variational methods, with applications to problems in mathematical physics and geometry'', the ERC Advanced Grant 2013 n. 339958: ``Complex Patterns for Strongly Interacting Dynamical Systems -- COMPAT'', the ERC Advanced Grant 2013 n. 321186 ``ReaDi -- Reaction-Diffusion Equations, Propagation and Modelling'',
and by the INDAM-GNAMPA group.

\bibliography{spirals}
\bibliographystyle{abbrv}

\end{document}